\theoremstyle{theorem}
\newtheorem{thmt}{Theorem}[section]
\newtheorem{lem}[thmt]{Lemma}
\newtheorem{prop}[thmt]{Proposition}
\newtheorem{rem}{Remark}[section]
\newtheorem*{lem*}{Lemme}
\newtheorem{cor}[thmt]{Corollary}
\newtheorem{thm}{Theorem}
\DeclareRobustCommand{\cev}[1]{%
  \mathpalette\do@cev{#1}%
}
\newcommand{\do@cev}[2]{%
  \fix@cev{#1}{+}%
  \reflectbox{$\m@th#1\vec{\reflectbox{$\fix@cev{#1}{-}\m@th#1#2\fix@cev{#1}{+}$}}$}%
  \fix@cev{#1}{-}%
}
\newcommand{\fix@cev}[2]{%
  \ifx#1\displaystyle
    \mkern#23mu
  \else
    \ifx#1\textstyle
      \mkern#23mu
    \else
      \ifx#1\scriptstyle
        \mkern#22mu
      \else
        \mkern#22mu
      \fi
    \fi
  \fi
}
\def\N{{\mathbb N}}
\def\R{{\mathbb R}}
\def\P{{\mathbb P}}
\def\E{{\mathbb E}}
\newtheoremstyle{ouech}
{\topsep}
{\topsep}
{\upshape} %police du titre
{} %indentation
{\bfseries} %titre en gras
{} %ponctuation après le th
{\newline}
{\thmname{#1}\thmnumber{ #2}\thmnote{#3}} % l'espace avant #2 dans ouech est important pour avoir un espace avant le numéro de l'exercice
\theoremstyle{ouech}
\theoremstyle{exemple}
\theoremstyle{ouech}
\theoremstyle{ouech}
\numberwithin{equation}{section}
\title{A continuous random operator associated with the Vertex Reinforced Jump Process on the circle and the real line}
\author{V. Rapenne\footnote{Universite Claude Bernard Lyon 1, Institut Camille Jordan,  rapenne@math.univ-lyon1.fr}, C. Sabot\footnote{Universite Claude Bernard Lyon 1, Institut Camille Jordan et Institut Universitaire de France, sabot@math.univ-lyon1.fr} }
\begin{document}

\maketitle
\begin{abstract}
In this paper, we focus on the scaling-limit of the random potential $\beta$ associated with the Vertex Reinforced Jump Process (VRJP) on one-dimensional graphs. Moreover, we give a few applications of this scaling-limit. By considering a relevant scaling of $\beta$, we contruct a continuous-space version of the random Schrödinger operator $H_{\beta}$ which is associated with the VRJP on circles and on $\R$. We also compute the integrated density of states of this operator on $\R$ which has a remarkably simple form. Moreover, by means of the same scaling, we obtain a new proof of the Matsumoto-Yor properties concerning the geometric Brownian motion which were proved in \cite{matyorpit}. This new proof is based on some fundamental properties of the random potential $\beta$. We use also the scaling-limit of $\beta$ in order to prove new identities in law involving exponential functionals of the Brownian motion which generalize the Dufresne identity. 
\end{abstract}
\section{Introduction}

This paper is concerned with the construction and investigation of scaling limits of the random Schrödinger operator associated with the Vertex Reinforced Jump Process (VRJP) on some one-dimensional sets (the one-dimensional torus and the real line). We start by briefly recalling the definition of the VRJP: let $(V,E)$ be a non-directed locally finite graph and $(W_{i,j})_{i,j\in V}$ be a family of non-negative conductances such that $W_{i,j}>0$ if and only if $\{i,j\}\in E$. The VRJP is the continuous self-reinforced random walk $(Y_s)_{s\geq 0}$ which is defined as follows: the VRJP starts from some vertex $i_0\in V$ and conditionally on the past before time $s$, it jumps from a vertex $i$ to one of its neighbour $j$ at rate $W_{i,j}L_j(s)$ where 
$$L_j(s)=1+\int_0^s\textbf{1}\{Y_u=j\}du.$$

In \cite{sabot_tarres}, it was shown, firstly, that the VRJP is closely related to the Edge Reinforced Random Walk, a famous reinforced process introduced by Diaconis and Coppersmith in the 80's, and secondly, that after some time change the VRJP can be represented as a mixture of Markov Jump Processes with a mixing measure given by a marginal of a supersymmetric sigma-field called the $\mathbb{H}^{2|2}$ model which was introduced by Zirnbauer in \cite{Zirnbauerorigins1,Zirnbauerorigins2} and investigated by Disertori, Spencer and Zirnbauer in \cite{DSZ, Disp}. A complementary, but closely related, representation of the VRJP in terms of a random Schrödinger operator was provided in \cite{SZT} on finite graphs. More precisely, if $(V,E)$ is a non-directed finite graph and $(W_{i,j})_{i,j\in V}$ some conductances on the edges, then for any potential $\beta=(\beta_i)_{i\in V}$ on the vertices, we define the discrete Schrödinger operator $(H_\beta(i,j))_{i,j\in V}$ by
$$H_{\beta}(i,j)=\textbf{1}\{i=j\}2\beta_i-\textbf{1}\{i\sim j\}W_{i,j}.$$
In \cite{SZT}, an explicit probability measure on the set of potentials $\beta$ is defined (see Proposition \ref{prop:resumebeta}  below). This measures lives on the set where $H_\beta$ is positive definite, it has several remarkable properties which we recall in section \ref{subsecbetarecall}, and it gives a representation of the VRJP in the following sense: after some time-change, the VRJP starting at a vertex $i_0$ is a mixture of Markov jump processes with jump rates from $i$ to $j$ given by:
$$\frac{1}{2}W_{i,j}\frac{G_{\beta}(i_0,j)}{G_{\beta}(i_0,i)},$$
where $\beta$ is the random potential and $G_\beta=(H_\beta)^{-1}$.  That representation, and its generalization to infinite graphs (see \cite{sabotzeng}), has played an important role in order to understand the asymptotic behavior of the VRJP (see in particular \cite{sabotzeng,Poudevigne}).

In this paper, we are mainly concerned by that representation and its scaling limits. The question of the scaling limits of the VRJP and its representation is rather natural, but remains still quite mysterious. In \cite{LST}, the scaling limit of the VRJP itself was analysed. With this goal, the scaling limit of its mixing field, i.e. of the limit of the function $(G_{\beta}(0,\lfloor nt\rfloor)/G_{\beta}(0,0))_{t \in \R}$ was described on the real line in terms of the geometric Brownian motion. Here, we investigate the scaling limit of $H_\beta$ and $G_\beta$ as random operators, both on the one-dimensional torus and the real line. More precisely, the main results of the paper are summarized below:
\begin{itemize}
\item By scaling limits, we construct an explicit continuous-space version of the operator $H_{\beta}$ and its inverse $G_{\beta}$ on circles, and on the real line. We describe the domains of these operators on the circle, while we still face some technical difficulties to describe the domains in the case of the real line. However, we hope to solve this problem in the near future. Note that the one-dimensional torus is not a tree, which induces specific difficulties (the representation of the VRJP on trees is considerably simpler), and that in the case of the real line, we get the full description of the operators, while in \cite{LST}, only the scaling limit of $(G_{\beta}(0,\lfloor nt\rfloor)/G_{\beta}(0,0))_{t\geq 0}$ was considered in order to analyse the continuous-space VRJP.
\item A natural problem regarding $H_{\beta}$ consists in investigating its spectral properties as a self-adjoint operator.
In particular, spectral properties of self-adjoint operators are very important if we want to understand the dynamical properties of a quantum particle whose wave-function follows the Schrödinger equation.
In this paper, we compute the exact density of states of the limiting operator $H_{\beta}$ on $\R$, which has a surprisingly simple form. To do so, we follow a method which is very similar with what is done in \cite{Funak} for the continuous Anderson model.
\item In the discrete-space case, many identities in law involving the $\beta$ potential are known. Taking the scaling-limit in these formulas, we prove new identities in law involving exponential functionals of the Brownian motion. These identities generalize the famous Dufresne identity (originally proved in \cite{Duf}) which states that
$$\int_0^{\infty}e^{2\alpha_s-s}ds\overset{law}=\frac{1}{2\gamma}$$
where $\alpha$ is a Brownian motion and $\gamma$ is a Gamma distribution with parameters $(1/2,1)$.
\item Considering the scaling limit of the $\beta$-potential on $\N^*$, we give a new proof of the Matsumoto-Yor properties (see \cite{matyorpit1} and \cite{matyorpit}), which concern exponential functionals of the Brownian motion. More precisely, we give a discrete time version of the Matsumoto-Yor properties which involves natural functions of the $\beta$-potential. Specified to our context, the Matsumoto-Yor properties state that the process $(Z_t)_{t\geq 0}=\left(\frac{T_t}{e_t}\right)_{t\geq 0}$, where $e$ is the geometric Brownian motion and for every $t\geq 0$, $T_t=\int_0^te_s^2ds$, is a Markov process in its own filtration. Moreover the filtration of $Z$ is stricly smaller than the filtration of $e$, and there is an explicit intertwining between $e$ and $Z$ involving Inverse Gaussian distributions. While somehow mysterious at first sight, Matsumoto-Yor properties seem to be rather fundamental and have been generalized in different directions, in particular in relation with properties of Lie groups and solvable polymer models (see \cite{ocoyor,PBO1,PBO2}).
Note that some Mastumoto-Yor properties on graphs also appear in a different way in \cite{GRSZ}.
\end{itemize}

Finally, let us mention some related works concerning a different operator, the continuous Anderson model on $\R$ (where the random potential is given by a white noise). In \cite{Dumazlabbe}, Dumaz and Labbé gave a very accurate description of the spectrum and of the eigenstates for this operator. It is probably possible to apply their ideas in order to give the precise behaviour of the spectrum of the continuous-space version of $H_{\beta}$. However we do not do it in this article. Furthermore, an interesting question would be to find continuous versions of $H_{\beta}$ on topological spaces which are not one-dimensional. In the case of the Anderson model, in \cite{Labbe}, Labbé managed to do it on $(-L,L)^d$ with $d\in\{1,2,3\}$ and $L>0$. It would be interesting to know if it is possible to do the same thing for $H_{\beta}$ but this question remains open for now.
\section{Context and statement of the results}
\subsection{General notation}
For every $n\in\N^*$, $\mathcal{C}_n$ denotes the circle graph with $2n+1$ points. More precisely, the vertex set of $\mathcal{C}_n$ is $\{-n,\cdots,0,\cdots,n\}$ and for every $i\in\{-n,\cdots,0,\cdots,n-1\}$ there is an edge between $i$ and $i+1$ and there is an edge between $n$ and $-n$.
In any graph $(V,E)$ the fact that two vertices $i$ and $j$ are related by an edge is denoted by $i\sim j$.

If $V_1$ and $V_2$ are two finite sets and $H$ is a matrix indexed by $V_1\times V_2$, we denote the coefficient of $H$ at $(i,j)\in V_1\times V_2$ by $H(i,j)$. If $H$ is a squared symmetric matrix on a set $V$, then we write $H>0$ (resp. $H\geq 0$) when $H$ is positive definite (resp. when $H$ is non-negative). If $H$ is a squared matrix, its determinant is denoted by $|H|$. If $V$ is a finite set and $v_1$ and $v_2$ are two vectors of $\R^V$, the standard scalar product between $v_1$ and $v_2$ is denoted by $\langle v_1,v_2\rangle$. If $H$ is a matrix on a finite set $V\times V$ and if $V_1$ and $V_2$ are two subsets of $V$, the restriction of $H$ to $V_1\times V_2$ is denoted by $H_{V_1,V_2}$. Moreover, if $v$ is a vector of $\R^V$ and $V_1$ is a subset of $V$, then the restriction of $v$ to $V_1$ is denoted by $v_{V_1}$.

We denote by $\mathcal{C}^{(\lambda)}$ the continuous circle $\R/2\lambda\R$. For sake of convenience, we will often write $\mathcal{C}^{(\lambda)}$ as $[-\lambda,\lambda]$ (for example when we write integrals) where it is implicit that $-\lambda$ and $\lambda$ are topologically identified. If $t,t'\in\mathcal{C}^{(\lambda)}$, we write sometimes $t\leq t'$ or $t<t'$ refering to the natural order on $[-\lambda, \lambda]$.

Recall that an Inverse Gaussian distribution with parameters $(\mu,\lambda)$ has density
$$\textbf{1}\{x>0\}\frac{\sqrt{\lambda}}{ \sqrt{2\pi x^3}}e^{-\lambda\frac{(x-\mu)^2}{2x\mu^2}}dx.$$
The Inverse Gaussian distribution with parameters $(\mu,\lambda)$ will be denoted by $IG(\mu,\lambda)$. Recall that $\E\left[IG(\mu,\lambda)\right]=\mu$ and $\E\left[IG(\mu,\lambda)^2\right]=\mu^2+\frac{\mu^3}{\lambda}.$
\subsection{The random potential $\beta$}\label{subsecbetarecall}
Let $(V,E)$ be a finite graph with $n$ vertices. Let $W$ be a matrix of symmetric non-negative weights $(W_{i,j})_{i,j\in V\times V}$ such that $W_{i,j}>0$ if and only if $\{i,j\}\in E$. For every $\beta\in\R_+^V$, let us define the matrix $H_{\beta}$ on $V\times V$ such that for every $i,j\in V$,
\begin{align}
H_{\beta}(i,j)=\textbf{1}\{i=j\}2\beta_i-W_{i,j}.\label{defiH5}
\end{align}
For every $\beta\in\R_+^V$ such that $H_{\beta}$ is positive definite, one can define its inverse $G_{\beta}$ which has only positive entries.
In \cite{SZT}, in order to study the VRJP, Sabot, Tarrès and Zeng introduced a probability measure $\nu_V^W$ on $\R_+^V$ which will be crucial in this paper. It is defined by means of the following proposition:
\begin{prop}[Proposition 1 and Theorem 3 in \cite{SZT}]\label{prop:resumebeta}
~\\
\begin{enumerate}[(i)]
\item The function
\begin{equation}
\begin{array}{lll}
\R^V&\longrightarrow&\R_+\\
\beta&\mapsto& \left(\frac{2}{\pi}\right)^{n/2}\textbf{1}\{H_{\beta}>0\}e^{-\frac{1}{2}\langle 1,H_{\beta}1\rangle}\frac{1}{\sqrt{|H_{\beta}|}}
\end{array}\label{formuladens}
\end{equation}
is a probability density. In the formula \eqref{formuladens}, $1$ stands for the vector of $\R^V$ whose entries are all equal to 1. We denote by $\nu_V^W$ the probability measure on $\R_+^V$ which is associated with the density of equation \eqref{formuladens}.
\item One can compute explicitely the Laplace transform of $\nu_V^W$. For every $t\in\R_+^V$,

\begin{equation}
\int e^{-\langle t,\beta\rangle}\nu_V^W(d\beta)=\exp\left(-\frac{1}{2}\sum\limits_{\{i,j\}\in E}W_{i,j}\left(\sqrt{(t_i+1)(t_j+1)}-1\right) \right)\prod\limits_{i\in V}\frac{1}{\sqrt{1+t_i}}.\label{transfolaplace}
\end{equation}
\end{enumerate}
In this article, we will often make a small abuse of notation by using the notation $\beta$ to designate a random vector and a variable inside the density of this random vector. Now, let  $\beta$ be a random vector with distribution $\nu_V^W$.
\begin{enumerate}[(i)]
\setcounter{enumi}{2}
\item  For every $i\in V$, $1/(2\beta_i-W_{i,i})$ is an Inverse Gaussian ditribution with parameters $\left(\frac{1}{\sum\limits_{i\neq j}W_{i,j}},1 \right)$.
\item The random potential $\beta$ is 1-dependent, that is, if $V_1$ and $V_2$ are two subsets of $V$ which are not related by an edge, then $(\beta_i)_{i\in V_1}$ and $(\beta_i)_{i\in V_2}$ are independent.
\item For every $i\in V$,
$$G_{\beta}(i,i)\overset{law}=\frac{1}{2\gamma}$$
where $\gamma$ is a Gamma distribution with parameters $(1/2,1)$.
\end{enumerate}

\end{prop}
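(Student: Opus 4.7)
My plan is to prove (i) first, then derive (ii) from (i) by a scaling argument, then deduce (iii) and (iv) from (ii), and finally prove (v) via a Schur-complement computation. For (ii), the main ingredient is the algebraic identity: setting $u_i := \sqrt{1+t_i}$, an expansion of the quadratic forms directly from $H_\beta(i,j) = 2\beta_i\delta_{ij} - W_{i,j}$ gives
\[
\langle t, \beta\rangle + \tfrac{1}{2}\langle 1, H_\beta 1\rangle = \tfrac{1}{2}\langle u, H_\beta u\rangle + \sum_{\{i,j\}\in E} W_{i,j}\bigl(\sqrt{(1+t_i)(1+t_j)} - 1\bigr),
\]
which absorbs the factor $e^{-\langle t,\beta\rangle}$ into the quadratic form up to a $\beta$-independent edge prefactor (matching, up to edge-counting conventions, the edge term of (\ref{transfolaplace})). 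The Laplace transform then reduces to computing the generalized normalization $\int e^{-\langle u, H_\beta u\rangle/2}/\sqrt{|H_\beta|}\,d\beta$. By the conjugation $H_\beta \mapsto D_u H_\beta D_u$ with $D_u = \mathrm{diag}(u_i)$ --- equivalent to the change of variables $(\beta_i, W_{i,j}) \mapsto (u_i^2 \beta_i, u_i u_j W_{i,j})$ with Jacobian $\prod_i u_i^2$ --- applying (i) to the rescaled weights yields this integral equal to $(\pi/2)^{n/2}/\prod_i u_i$, which combined with the edge prefactor gives (\ref{transfolaplace}). For (i) itself, the cleanest approach is the supersymmetric $\mathbb{H}^{2|2}$ representation of Disertori--Spencer--Zirnbauer: the density arises as a marginal of the invariant measure on super-hyperbolic space after integrating out the Grassmann variables. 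Alternatively, an induction on $|E|$, tracking the density under each edge activation via an auxiliary Gaussian convolution identity, works.

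\textbf{Parts (iii), (iv), (v).} For (iii), specialize (\ref{transfolaplace}) to $t = s\, e_i$: only the factor $(1+s)^{-1/2}$ and edges incident to $i$ contribute, and a comparison with the Laplace transform of $\mathrm{IG}(\mu,\lambda)$, namely $\exp(\tfrac{\lambda}{\mu}(1 - \sqrt{1+2\mu^2 s/\lambda}))$, identifies the law of $1/(2\beta_i - W_{i,i})$ after the appropriate change of variables. For (iv), suppose $V_1, V_2 \subset V$ are non-adjacent and take $t$ supported on $V_1 \cup V_2$: every edge of $E$ has its endpoints either both in $V_1$, both in $V_2$, or with at least one endpoint carrying $t = 0$, in which case the factor $\sqrt{(1+t_i)(1+t_j)} - 1$ depends on only one of the two subsets; consequently both the edge sum and the vertex product in (\ref{transfolaplace}) split as a $V_1$-part times a $V_2$-part, factorizing the joint Laplace transform and yielding independence. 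For (v), fix $i \in V$ and condition on $\beta_{V \setminus \{i\}}$: with $G^{(i)} := (H_{\beta, V \setminus \{i\}, V \setminus \{i\}})^{-1}$ and the Schur complement $\alpha := 2\beta_i - (W_{i, \cdot})^T G^{(i)} W_{\cdot, i}$, one has $G_\beta(i,i) = 1/\alpha$ and $|H_\beta| = \alpha \cdot |H_\beta^{(i)}|$. Expanding $\langle 1, H_\beta 1\rangle$ in terms of $\alpha$ and isolating the $\alpha$-dependent factors in (\ref{formuladens}), the conditional density of $\alpha$ given $\beta_{V\setminus\{i\}}$ reduces to $(2\pi\alpha)^{-1/2} e^{-\alpha/2}\textbf{1}\{\alpha > 0\}$, i.e.\ $\Gamma(1/2, 1/2)$, independent of the conditioning; hence $G_\beta(i,i) \overset{\text{law}}{=} 1/(2\gamma)$.

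\textbf{Main obstacle.} The deepest step is (i). The constraint $\{H_\beta > 0\}$ is a complicated subset of $\R_+^V$ that does not factor through the coordinates, so a naive Fubini-based approach combining the Gaussian representation of $|H_\beta|^{-1/2}$ with integration over $\beta$ breaks down: swapping the order of integration produces intermediate integrals that diverge unless the positivity constraint is preserved. This is why the cleanest proofs either build the constraint into the geometry from the start (via the $\mathbb{H}^{2|2}$ model) or spread the difficulty across an edge induction.
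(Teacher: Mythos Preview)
The paper does not prove this proposition: it is quoted verbatim as ``Proposition~1 and Theorem~3 in \cite{SZT}'' and used as background. So there is no in-paper proof to compare against; what you have written is essentially a compressed version of the arguments in the original reference.

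Your sketch is correct in outline. The identity for (ii),
\[
\langle t,\beta\rangle+\tfrac12\langle 1,H_\beta 1\rangle=\tfrac12\langle u,H_\beta u\rangle+\tfrac12\sum_{i,j}W_{i,j}\bigl(u_iu_j-1\bigr),\qquad u_i=\sqrt{1+t_i},
\]
together with the diagonal conjugation $H_\beta\mapsto D_uH_\beta D_u$, is exactly how (ii) is reduced to (i) in \cite{SZT}. Your derivations of (iii) and (iv) from the explicit Laplace transform are the standard ones. For (v), the Schur-complement argument you give --- conditioning on $\beta_{V\setminus\{i\}}$, writing $G_\beta(i,i)=1/\alpha$ with $\alpha=2\beta_i-W_{i,\cdot}G^{(i)}W_{\cdot,i}$, and reading off the conditional density $\propto \alpha^{-1/2}e^{-\alpha/2}\mathbf{1}\{\alpha>0\}$ --- is precisely the proof in \cite{SZT}; note this shows the stronger statement that $G_\beta(i,i)$ is independent of $\beta_{V\setminus\{i\}}$.

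One small point: for (i) you defer to the $\mathbb{H}^{2|2}$ model or an edge induction. The original proof in \cite{SZT} is in fact a direct computation (change of variables to the ``$u$-field'' and a sequence of one-dimensional Gaussian integrals ordered by a spanning tree), which is more self-contained than invoking supersymmetry. Your remark about the obstacle --- that the constraint $\{H_\beta>0\}$ does not factor coordinatewise --- is exactly right, and the spanning-tree ordering is what resolves it there.
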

In \cite{SZT}, Sabot, Tarrès and Zeng used the $\beta$-potential in order to study the VRJP on any finite graph $V$. They showed that the VRJP (actually a time-changed version of the VRJP) on $V$ starting from $i_0\in V$ with weight matrix $W$ is a mixture of random processes which jumps from $i$ to $j$ at rate
$$\frac{W_{i,j}}{2}\frac{G_{\beta}(i_0,j)}{G_{\beta}(i_0,i)}.$$
In \cite{sabotzeng}, Sabot and Zeng proved that this is possible to extend the measure $\nu_V^W$ and the above representation of the VRJP on an infinite graph which enables us to look in particular at the interesting case of $\mathbb{Z}^d$.

\begin{prop}[Section 4.2 in \cite{sabotzeng}]
Let $(V,E,W)$ be an infinite locally finite graph with conductances where $W$ is a symmetric conductance operator $(W_{i,j})_{i,j\in V}$ such that $W_{i,j}>0$ if and only if $\{i,j\}\in E$. Then, there exists an infinite-volume measure $\nu_V^W$ on $\R_+^V$ such that for every finite subset $V_1$ which is included in $V$, for any $t\in\R_+^{V_1}$,
\begin{align}
\int e^{-\langle t,\beta\rangle}\nu_V^W(d\beta)=e^{-\frac{1}{2}\sum\limits_{\{i,j\}\in E}W_{i,j}\left(\sqrt{(t_i+1)(t_j+1)}-1\right)-\sum\limits_{\{i,j\}\in E, j\notin V_1} W_{i,j}\left(\sqrt{t_i+1}-1 \right)}\prod\limits_{i\in V_1}\frac{1}{\sqrt{1+t_i}}.\label{transfolaplaceinfini}
\end{align}
\end{prop}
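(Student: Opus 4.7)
The plan is to invoke Kolmogorov's extension theorem on the countable product space $\R_+^V$. For each finite subset $V_1 \subset V$, I would construct a probability measure $\mu_{V_1}$ on $\R_+^{V_1}$ whose Laplace transform matches the right-hand side $\Phi_{V_1}(t)$ of \eqref{transfolaplaceinfini}, verify that the family $\{\mu_{V_1}\}$ is consistent under projection, and then invoke the theorem to obtain $\nu_V^W$ as the unique measure with these finite-dimensional marginals.

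To construct $\mu_{V_1}$, I would introduce a finite augmented graph $V_1^{\sharp} = V_1 \cup \{\partial\}$, where $\partial$ is an auxiliary ``ghost'' vertex connected to each $i \in V_1$ by a boundary edge of conductance $W^{\sharp}_{i,\partial} = 2\sum_{j \sim i,\, j \in V \setminus V_1} W_{i,j}$, well-defined because $V$ is locally finite and $V_1$ finite; inside-$V_1$ conductances are kept unchanged. Applying Proposition \ref{prop:resumebeta} to the finite graph $V_1^{\sharp}$ yields a probability measure $\nu^{W^{\sharp}}_{V_1^{\sharp}}$, and $\mu_{V_1}$ is taken to be its marginal on $\R_+^{V_1}$. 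Evaluating the Laplace transform \eqref{transfolaplace} on $V_1^{\sharp}$ at $(t_{V_1}, t_\partial = 0)$ and using $\sqrt{0+1}-1 = 0$ for the $\partial$-factor, the factor $2$ in $W^{\sharp}_{i,\partial}$ exactly cancels the prefactor $\tfrac{1}{2}$ in front of the edge sum, reproducing the unit-weighted boundary term appearing in $\Phi_{V_1}$, while interior edges match term-by-term.

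Consistency amounts to the identity $\Phi_{V_2}(t_{V_1}, 0_{V_2 \setminus V_1}) = \Phi_{V_1}(t_{V_1})$ for nested finite subsets $V_1 \subset V_2$. I would decompose each sum in $\Phi_{V_2}$ into contributions from edges internal to $V_1$, edges straddling $V_1$ and $V_2 \setminus V_1$, and edges leaving $V_2$; the identity $\sqrt{t_k+1}-1 = 0$ for $t_k = 0$ discards all terms indexed by vertices of $V_2 \setminus V_1$, and the remaining boundary terms collapse back onto the boundary sum of $\Phi_{V_1}$. Kolmogorov's extension theorem then delivers the infinite-volume measure $\nu_V^W$, and the Laplace identity \eqref{transfolaplaceinfini} follows by construction. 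I expect the consistency bookkeeping to be the main obstacle: the specific pairing of the coefficient $\tfrac{1}{2}$ on the interior sum with the unit coefficient on the boundary sum in \eqref{transfolaplaceinfini} is exactly what is required for the family to be projective, and checking that enlarging $V_1$ reshuffles the edges into precisely the same form requires careful accounting of how boundary edges of $V_1$ migrate into the interior or boundary of $V_2$.
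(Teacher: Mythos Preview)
The paper does not give its own proof of this proposition: it is quoted verbatim from \cite{sabotzeng}, so there is no in-paper argument to compare against. Your plan via Kolmogorov's extension theorem is correct and is essentially the argument used in the cited reference.

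One remark on the execution. Rather than building an auxiliary ghost vertex $\partial$ and marginalizing, you can use directly the family $\nu_{V_1}^{W_{V_1,V_1},\eta}$ of Proposition~\ref{prop:defgen} with the boundary vector
\[
\eta_i \;=\; \sum_{j\sim i,\ j\notin V_1} W_{i,j}\qquad (i\in V_1).
\]
The Laplace transform of Proposition~\ref{prop:defgen} then reproduces the right-hand side of \eqref{transfolaplaceinfini} immediately, and the consistency of the family $\{\nu_{V_1}^{W_{V_1,V_1},\eta}\}_{V_1}$ under restriction is exactly the content of Proposition~\ref{prop:restrictions}(i): restricting from $V_2$ to $V_1\subset V_2$ replaces $\eta$ by $\hat\eta=\eta_{V_1}+W_{V_1,V_2\setminus V_1}\mathbf{1}$, which is precisely the boundary vector for $V_1$. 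Your ghost-vertex device is an equivalent way of realizing the same boundary measure (marginalizing out $\partial$ in $\nu_{V_1^\sharp}^{W^\sharp}$ produces $\nu_{V_1}^{W,\eta}$), so the two routes differ only cosmetically; the $\eta$-formulation that the paper already recalls in Section~\ref{sec:background} saves you the bookkeeping of the factor $2$ and the edge migration that you flag as the main obstacle.
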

Now, let $(V_n)_{n\in\N}$ be an increasing sequence of boxes such that $\bigcup_{n\in\N}V_n=V$. For every $n\in\N$, for every $i,j\in V$, let us define
$$\hat{G}_{\beta}^{(n)}(i,j)=(H_{\beta})_{V_n,V_n}^{-1}(i,j)$$
if $i,j\in V_n$ and $\hat{G}_{\beta}^{(n)}(i,j)=0$ otherwise. Moreover, for every $n\in\N$, let us define $\psi_{\beta}^{(n)}$ as the unique solution of the equation
$$\begin{array}{rl}
(H_{\beta}\psi^{(n)}_{\beta})(i)=0,& \text{ for every }i\in V_n,\\
\psi^{(n)}_{\beta}(i)=1,& \text{ for every }i\in V_n^c.
\end{array}
$$
Note that for every $n\in\N$, and for every $i\in V_n$, there is another useful expression of $\psi_{\beta}^{(n)}(i)$ which is
$$\psi_{\beta}^{(n)}(i)=\sum\limits_{j\in V_n} \hat{G}_{\beta}^{(n)}(i,j)\eta_j^{(n)}$$
where for every $j\in V_n$, $\eta_j^{(n)}=\sum\limits_{k\sim j, j\notin V_n}W_{j,k}.$
These objects were introduced by Sabot and Zeng in \cite{sabotzeng} and were crucial in order to study the VRJP on infinite graphs because of the following proposition:
\begin{prop}[Proposition 9 and Theorem 1 in \cite{sabotzeng}]\label{prop:rappelpsi}
Let $\beta\sim \nu_V^W$. It holds that,
\begin{enumerate}[(i)]
\item For every $i\in V$, $(\psi_{\beta}^{(n)}(i))_{n\in\N}$ is a non-negative martingale. In particular, it has almost surely a limit $\psi_{\beta}(i)$. 
\item The bracket of $(\psi_{\beta}^{(n)})_{n\in\N}$ is $(\hat{G}_{\beta}^{(n)})_{n\in\N}$ in the sense that for every $i,j\in V$, $(\psi_{\beta}^{(n)}(i)\psi_{\beta}^{(n)}(j)-\hat{G}_{\beta}^{(n)}(i,j))_{n\in\N}$ is a martingale.
\item The VRJP is almost surely recurrent if and only if almost surely $\psi_{\beta}(i)=0$ for every $i\in V$ .
\item The VRJP is almost surely transient if and only if almost surely $\psi_{\beta}(i)>0$ for every $i\in V$.
\item Let $\gamma$ be a Gamma random variable with parameters $(1/2,1)$ which is independent of $\beta$. For every $i,j\in V$, let us define
$$G_{\beta,\gamma}(i,j)=\hat{G}_{\beta}(i,j)+\frac{1}{2\gamma}\psi_{\beta}(i)\psi_{\beta}(j).$$
Then, the VRJP on $V$ starting from $i_0\in V$ with weight matrix $W$ is a mixture of random processes which jumps from $i$ to $j$ at rate
$$\frac{W_{i,j}}{2}\frac{G_{\beta,\gamma}(i_0,j)}{G_{\beta,\gamma}(i_0,i)}.$$
\end{enumerate}
\end{prop}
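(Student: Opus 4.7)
My plan is to approximate the infinite graph $V$ by the ``augmented'' finite graphs $\tilde{V}_n := V_n \cup \{\delta_n\}$, where $\delta_n$ is an auxiliary cemetery vertex connected to each $j \in \partial V_n$ with weight chosen proportional to $\eta^{(n)}_j$ so that the marginal of $\nu^W_{\tilde{V}_n}$ on $V_n$ matches the marginal of $\nu^W_V$ on $V_n$; this compatibility can be verified directly by comparing the Laplace transforms \eqref{transfolaplace} and \eqref{transfolaplaceinfini}. Two benefits follow: first, on $\tilde{V}_n$ the finite-graph statements of \cite{SZT} recalled in Proposition \ref{prop:resumebeta} apply directly, including the VRJP mixture representation and the $1/(2\gamma)$ law of diagonal Green's function entries; second, block-inverting $H_\beta$ on $\tilde{V}_n$ with a Schur complement at $\delta_n$ yields the algebraic identity
\[
G^{\tilde{V}_n}_{\beta}(i, j) = \hat{G}_\beta^{(n)}(i, j) + G^{\tilde{V}_n}_\beta(\delta_n, \delta_n)\, \psi_\beta^{(n)}(i)\, \psi_\beta^{(n)}(j), \quad i, j \in V_n,
\]
using the representation $\psi_\beta^{(n)}(i) = \sum_j \hat{G}_\beta^{(n)}(i, j)\eta^{(n)}_j$. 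Everything then follows by passing to the limit $n \to \infty$ in this identity.

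For (i), I would compute $\E[\psi_\beta^{(n+1)}(i) \mid \mathcal{F}_n]$ with $\mathcal{F}_n = \sigma(\beta|_{V_n})$, using \eqref{transfolaplaceinfini} to identify the conditional Laplace transform of $\beta|_{V_{n+1} \setminus V_n}$ given $\mathcal{F}_n$; a first-moment computation, combined with the harmonicity $(H_\beta \psi_\beta^{(n)})(i) = 0$ on $V_n$, should give back $\psi_\beta^{(n)}(i)$, yielding the martingale property (non-negativity is immediate from the definition). For (ii), the same Schur-complement manipulation applied to the pair $V_n \subset V_{n+1}$ yields a decomposition of $\hat{G}_\beta^{(n+1)}$ as $\hat{G}_\beta^{(n)}$ plus a $\psi_\beta^{(n)}$-correction term, from which the bracket identity follows by computing the conditional expectation of $\psi_\beta^{(n+1)}(i)\psi_\beta^{(n+1)}(j) - \hat{G}_\beta^{(n+1)}(i, j)$.

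For (v), I would take $n \to \infty$ in the Schur identity above. By (i)--(ii) and standard martingale convergence, $\hat{G}_\beta^{(n)}$ and $\psi_\beta^{(n)}$ converge almost surely to limits denoted $\hat{G}_\beta$ and $\psi_\beta$. By Proposition \ref{prop:resumebeta}(v) on $\tilde{V}_n$, $G^{\tilde{V}_n}_\beta(\delta_n, \delta_n)$ has the law of $1/(2\gamma_n)$ with $\gamma_n \sim \mathrm{Gamma}(1/2, 1)$, and the decisive point is to show that $\gamma_n$ becomes asymptotically independent of $\beta|_V$ as $n \to \infty$. I expect this to follow from the $1$-dependence of $\beta$ (Proposition \ref{prop:resumebeta}(iv)) together with the explicit form of \eqref{transfolaplaceinfini}, the coupling between $\delta_n$ and the interior decaying as the boundary recedes to infinity. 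Combined with the fact that the VRJP on $\tilde{V}_n$ killed at $\delta_n$ converges (up to a time change) to the VRJP on $V$, the finite-graph VRJP representation of \cite{SZT} applied on $\tilde{V}_n$ then yields the formula claimed in (v).

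Items (iii) and (iv) are direct consequences of (v) via standard reversible-Markov-chain theory: the mixed chain has reversible measure proportional to $G_{\beta, \gamma}(i_0, \cdot)^2$; when $\psi_\beta \equiv 0$, $G_{\beta, \gamma}$ reduces to $\hat{G}_\beta$ and the corresponding chain is recurrent, while when $\psi_\beta > 0$ everywhere the rank-one correction $\psi_\beta(i_0)\psi_\beta(\cdot)/(2\gamma)$ acts as a Doob $h$-transform with the positive harmonic function $\psi_\beta$, giving transience. The main technical obstacle I foresee is the asymptotic independence claim for $\gamma_n$ in (v); once this is secured, the rest of the argument consists of largely algebraic manipulations of the Schur identity and martingale computations based on \eqref{transfolaplace} and \eqref{transfolaplaceinfini}.
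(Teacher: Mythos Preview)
The paper does not prove this proposition; it is quoted verbatim from \cite{sabotzeng} as background. Your outline is in fact close to the strategy of \cite{sabotzeng}: augment $V_n$ by a boundary point $\delta_n$, identify $\psi_\beta^{(n)}$ and $\hat G_\beta^{(n)}$ via a Schur complement at $\delta_n$, and pass to the limit. The martingale statements (i)--(ii) do follow from Proposition~\ref{prop:restrictions} and the Schur identity essentially as you describe.

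There is, however, a genuine gap in your treatment of $\gamma$ in (v). The independence you need is not asymptotic, and it does not come from $1$-dependence (indeed $\delta_n$ is adjacent to $\partial V_n$, so $1$-dependence says nothing). The correct mechanism is exact independence at each finite $n$: by Proposition~\ref{prop:restrictions}(ii), conditionally on $\beta_{V_n}$ the law of $\beta_{\delta_n}$ is $\nu_{\{\delta_n\}}^{\check W,0}$ with $\check W=(\eta^{(n)})^T\hat G_\beta^{(n)}\eta^{(n)}$, and on a single point this means $2\beta_{\delta_n}-\check W$ has a fixed density $\tfrac{1}{\sqrt{2\pi}}x^{-1/2}e^{-x/2}$, independent of $\beta_{V_n}$. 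Since by Schur $G_\beta^{\tilde V_n}(\delta_n,\delta_n)=(2\beta_{\delta_n}-\check W)^{-1}$, one gets that $\gamma_n:=1/(2G_\beta^{\tilde V_n}(\delta_n,\delta_n))$ is exactly $\mathrm{Gamma}(1/2,1)$ and exactly independent of $\beta_{V_n}$, for every $n$. This is the key computation, and it is what \cite{sabotzeng} use.

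Once you have this, your plan to ``take the limit of $\gamma_n$'' is still problematic: the $\gamma_n$ need not converge almost surely. The clean way around this, and what \cite{sabotzeng} do, is to introduce from the outset a single $\gamma\sim\mathrm{Gamma}(1/2,1)$ independent of the infinite-volume field $\beta$; then for each $n$ the pair $(\beta_{V_n},\gamma)$ has the same law as $(\beta_{V_n},\gamma_n)$, so the finite-graph VRJP representation on $\tilde V_n$ transfers directly, and one lets $n\to\infty$ using (i)--(ii). With this correction your plan for (v), and hence (iii)--(iv), goes through.
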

In this article, we want to understand what is going on when we consider a scaling limit for the $\beta$-field on a one-dimensional graph. We will show that it is possible to define a continuous version of  $H_{\beta}$ on $\R$ or continuous circles. But before that, we prove a discrete-time version of the Matsumoto-Yor properties by means of the $\beta$-field on $\N^*$, $(\psi_{\beta}^{(n)})_{n\in\N}$ and  $(\hat{G}_{\beta}^{(n)})_{n\in\N}$. We will take a scaling limit in this discrete-time version in order to recover the continuous-time version of the Matsumoto-Yor properties. This strong connection between the $\beta$-field and the Matsumoto-Yor properties explains the origin of the surprising identities in law in the forthcoming subsection \ref{subsubsec:idenloi}.
\subsection{A new approach of the Matsumoto-Yor properties in relation with the mixing measure of the VRJP}\label{subsubsec:matyor}

First, let us recall the Matsumoto-Yor properties on $\R_+$.
Let $\alpha$ be a standard Brownian motion on $\R_+$. Then we can define the associated geometric Brownian motion $e$ as $(e_t)_{t\geq 0}=(\exp(\alpha_t-t/2))_{t\geq 0}$. Moreover, let us define the related exponential functionals $T$ and $Z$ such that for every $t>0$,
\begin{align} T_t=\int_0^t e_s^2ds\text{ and }Z_t=\frac{T_t}{e_t}.\label{defimatyor}
\end{align}
For every $t\geq 0$, we define two sigma-fields $\mathcal{A}_t=\sigma\left(\alpha_s,s\leq t \right)$ and $\mathcal{Z}_t=\sigma\left(Z_s,s\leq t\right)$. 
Then, Matsumoto and Yor proved the following results:
\begin{thm}[Theorem 1.6 and Proposition 1.7 in \cite{matyorpit}.]\label{thm:matsumotoyor}
~\\\vspace{-0.5 cm}
\begin{enumerate}[(i)]
\item For every $t>0$, $\mathcal{Z}_t\subsetneqq\mathcal{A}_t.$
\item  $Z$ is a diffusion process whose infinitesimal generator is
$$\frac{1}{2}z^2\frac{d^2}{dz^2}+(1+z)\frac{d}{dz}.$$
\item For every $t>0$, the conditional distribution of $e_t$ knowing $\mathcal{Z}_t$ is an Inverse Gaussian distribution with parameters $(1,1/Z_t)$. More precisely, for every $t>0$, the conditional distribution of $e_t$ knowing $\mathcal{Z}_t$ is an Inverse Gaussian distribution with parameter $(1,1/Z_t)$, i.e. it has density
$$\textbf{1}\{x>0\}\frac{1}{\sqrt{2\pi Z_tx^3}}e^{-\frac{1}{Z_t}\frac{(x-1)^2}{2x}}dx.$$
\end{enumerate}
\end{thm}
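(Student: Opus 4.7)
The plan is to prove Theorem~\ref{thm:matsumotoyor} by setting up a discrete Matsumoto--Yor identity on $\N^*$ built from the $\beta$-potential, together with its martingale $\psi_{\beta}^{(n)}$ and bracket $\hat{G}_{\beta}^{(n)}$ from Proposition~\ref{prop:rappelpsi}, and then passing to a diffusive scaling limit.

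First I observe that (i) is an immediate consequence of (iii): if $\mathcal{Z}_t = \mathcal{A}_t$ then $e_t$ would be $\mathcal{Z}_t$-measurable, but its conditional law given $\mathcal{Z}_t$ is a genuine (non-degenerate) Inverse Gaussian distribution, contradiction. Statement (ii) also reduces to (iii) by It\^o and filtering: since $de_t = e_t\, d\alpha_t$ and $dT_t = e_t^2\, dt$, a direct computation gives
\[
dZ_t = (e_t + Z_t)\, dt - Z_t\, d\alpha_t.
\]
Projecting this semimartingale decomposition onto the subfiltration $(\mathcal{Z}_t)$, the drift becomes $(1+Z_t)\,dt$ because (iii) forces $\E[e_t\mid \mathcal{Z}_t]=1$ (the mean of $IG(1,1/Z_t)$), while the quadratic variation $Z_t^2\,dt$ is unchanged since it is $\mathcal Z_t$-adapted. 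Martingale representation on $(\mathcal Z_t)$ then produces an SDE for $Z$ in its own filtration whose coefficients depend only on $Z_t$, which is exactly (ii). So the whole theorem boils down to proving (iii).

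For (iii), I work with $\beta \sim \nu_{\{0,\ldots,N\}}^W$ on the segment with nearest-neighbour edges and weight $W_{i,i+1} = \lambda$ tuned so that the diffusive rescaling recovers the geometric Brownian motion. The discrete analogue of $e$ is the martingale $(\psi_\beta^{(N)}(n))_n$, that of $T$ is its bracket $(\hat G_\beta^{(N)}(n,n))_n$, and that of $Z$ is their ratio. Using the explicit density \eqref{formuladens}, the Laplace transform \eqref{transfolaplace}, and the 1-dependence of $\beta$ (Proposition~\ref{prop:resumebeta}(iv)), I integrate out $\beta_{n+1},\ldots,\beta_N$ conditionally on the past of the bracket. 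The outcome should be an intertwining-type identity in which $\psi_\beta^{(N)}(n)$ is conditionally Inverse Gaussian with parameters governed by $\hat G_\beta^{(N)}(n,n)$, in direct parallel with (iii), provable by comparing Laplace transforms.

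Finally, I send $N,\lambda \to \infty$ with $n = \lfloor \lambda^2 t\rfloor$. The discrete martingale, its bracket and their ratio should converge jointly to $(e_t, T_t, Z_t)$, and because the Inverse Gaussian family is stable under the relevant rescaling (which can be read off its Laplace transform), the conditional IG law passes to the continuous limit. The hard part will be this last step: convergence of conditional distributions with respect to growing sigma-algebras does not follow from finite-dimensional convergence alone, so the proof will require uniform integrability estimates for $\psi_\beta^{(N)}$ and tightness of the conditional Laplace transforms. Once the discrete-to-continuous passage of (iii) is established, (i) and (ii) follow as noted above.
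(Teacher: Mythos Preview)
Your overall plan---discrete Matsumoto--Yor for the $\beta$-potential on a half-line, then scaling limit---is exactly the paper's strategy, and your reduction of (i) to (iii) matches as well. Your route to (ii), however, is genuinely different: the paper does \emph{not} project the SDE $dZ_t=(e_t+Z_t)\,dt-Z_t\,d\alpha_t$ onto $(\mathcal Z_t)$ via an innovation/FKK argument. Instead it builds a discrete martingale from the increments of $\ln Z_\beta^{(n,m)}$, uses the explicit discrete Markov transition (their Proposition~\ref{prop:discretematyor}(ii)), and shows via a delicate coupling of $IG(1,m)$ with $IG(1,m+1/Z_\beta^{(k,m)})$ (Lemma~\ref{lem:couplage}) that this martingale converges to a Brownian motion, from which the SDE for $Z$ follows. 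Your filtering approach is much shorter but trades that coupling work for an appeal to an FKK-type projection theorem whose hypotheses (integrability of the drift, and then uniqueness for the resulting SDE to get the Markov property) you would need to check.

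Your discrete setup for (iii) has several inaccuracies you should fix. The paper works on $\N^*$ with constant weight $m$, not on a finite segment; the discrete $e$ is $\psi_\beta^{(n,m)}=m\,\hat{\bf G}_\beta^{(n,m)}(1,n)$ and the discrete $T$ is $\hat{\bf G}_\beta^{(n,m)}(1,1)$ (the bracket is the $(1,1)$-entry, not the diagonal at $n$). The scaling is $n=\lfloor mt\rfloor$ with $m\to\infty$, not $n=\lfloor\lambda^2 t\rfloor$: since $\ln A$ for $A\sim IG(1,m)$ has variance $\sim 1/m$, you need $\sim mt$ increments for variance $t$. The discrete conditional IG law comes from the restriction/conditioning property of $\nu^W$ (Proposition~\ref{prop:restrictions}), not from ``integrating out $\beta_{n+1},\ldots,\beta_N$''. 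Finally, your worry about the limit of conditional laws is overstated: the paper writes $\E[F(\psi_\beta^{(\lfloor mt\rfloor,m)},Z_\beta^{(\lfloor mt_1\rfloor,m)},\ldots)]$ as an integral against the IG density in $Z_\beta^{(\lfloor mt\rfloor,m)}$, and since that density is continuous in its parameter, the joint convergence of $(\tilde\psi^{(m)},\tilde Z^{(m)})$ plus dominated convergence suffices---no extra uniform-integrability or tightness argument is needed.
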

Theorem \ref{thm:matsumotoyor} will be called "Matsumoto-Yor properties" in the sequel of this paper. Now, let us give a discrete-time counterpart of those Matsumoto-Yor properties. Now, let $m>0$ and let $K_m$ be a weight operator on the line graph $\N^*$ such that for every $i\in \N^*$, $K_m(i,i+1)=K_m(i+1,i)=m$. All other entries of $K_m$ are zero. Then we can define the random operator ${\bf{H}}_{\beta}^{(m)}$ on the discrete half-line $\N^*$ associated with the random field $\beta\sim \nu_{\N^*}^{K_m}$. We write ${\bf{H}}_{\beta}^{(m)}$ in bold letters in order to avoid the confusion with $H_{\beta}^{(\lambda,n)}$ on the discrete circle which shall be introduced later. Now, for every $n\in \N^*$, let us define $\hat{\bf{G}}_{\beta}^{(n,m)}=\left(({\bf{H}}_{\beta}^{(m)})_{\llbracket 1,n\rrbracket,\llbracket 1,n\rrbracket}\right)^{-1}$. For every $n\in\N^*$, we define also
$$ \psi^{(n,m)}_{\beta}=\hat{\bf{G}}_{\beta}^{(n,m)}(1,n)m\text{ and }Z_{\beta}^{(n,m)}=\frac{\hat{\bf{G}}_{\beta}^{(n,m)}(1,1)}{\psi^{(n,m)}_{\beta}}.$$
For every $n\in\N^*$, we define 
$$ \mathcal{A}_{n,m}=\sigma(\psi_{\beta}^{(k,m)},1\leq k\leq n)\text{ and }\mathcal{Z}_{n,m}=\sigma(Z_{\beta}^{(k,m)},1\leq k\leq n).$$
By Proposition \ref{prop:rappelpsi}, $(\psi^{(n,m)}_{\beta})_{n\in\N^*}$ is a martingale whose bracket is $(\hat{{\bf{G}}}_{\beta}^{(n,m)}(1,1))_{n\in\N^*}$. Remark that it is analoguous to the case of the geometric Brownian motion in equation \eqref{defimatyor} because $(e_t)_{t\geq 0}$ is a martingale whose bracket is $(T_t)_{t\geq 0}$.
The interest of these discrete objects is that they give a discrete version of the results of Matsumoto and Yor:
\begin{prop}[Discrete version of the Matsumoto-Yor properties]\label{prop:discretematyor}
Let $m\in\N^*$ be fixed.
\begin{enumerate}[(i)]
\item For every $n\in\N^*$, $\mathcal{Z}_{n,m}\subsetneqq \mathcal{A}_{n,m}$.
\item $\left(Z_{\beta}^{(n,m)}\right)_{n\in\N^*}$ is a Markov process. More precisely, for every $n\in\N^*$, the law of $Z_{\beta}^{(n+1,m)}$ conditionally on $\mathcal{Z}_{n,m}$ is 
$$\frac{Z_{\beta}^{(n,m)}}{m}\times\frac{1}{IG\left(\frac{1}{m+\frac{1}{Z_{\beta}^{(n,m)}}},1 \right)}.$$
\item For every $n\in\mathbb{N}^*$, the conditional distribution of $\psi^{(n,m)}_{\beta}$ knowing $\mathcal{Z}_{n,m}$ is an Inverse Gaussian distribution with parameters $(1,1/Z_{\beta}^{(n,m)})$. More precisely, for every $n\in\mathbb{N}^*$ the conditional density of $\psi^{(n,m)}_{\beta}$ knowing $\mathcal{Z}_{n,m}$ is
$$\textbf{1}\{x>0\}\frac{1}{\sqrt{2\pi Z_{\beta}^{(n,m)}x^3}}e^{-\frac{1}{Z_{\beta}^{(n,m)}}\frac{(x-1)^2}{2x}}dx.$$
\end{enumerate}
\end{prop}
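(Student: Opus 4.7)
My approach is to exploit the tridiagonal structure of $\mathbf{H}_\beta^{(m)}$ restricted to $\llbracket 1,n\rrbracket$, writing $\psi^{(n,m)}_\beta$ and $Z^{(n,m)}_\beta$ explicitly as ratios of principal-submatrix determinants, and then studying their joint distribution by a change of variables from $(\beta_1,\ldots,\beta_n)$. Let $D^{[i,j]}$ denote the determinant of the principal submatrix of $\mathbf{H}_\beta^{(m)}$ on $\llbracket i,j\rrbracket$. Cramer's rule yields
\[
\psi^{(n,m)}_\beta \,=\, \frac{m^n}{D^{[1,n]}}, \qquad \hat{\mathbf{G}}^{(n,m)}_\beta(1,1) \,=\, \frac{D^{[2,n]}}{D^{[1,n]}}, \qquad Z^{(n,m)}_\beta \,=\, \frac{D^{[2,n]}}{m^n}.
\]
So $Z^{(n,m)}_\beta$ depends only on $\beta_2,\ldots,\beta_n$, while the sequence $(\psi^{(k,m)}_\beta)_{k\leq n}$ together with the recursion $D^{[1,k]}=2\beta_k D^{[1,k-1]}-m^2 D^{[1,k-2]}$ recovers all of $\beta_1,\ldots,\beta_n$. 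Hence $\mathcal{A}_{n,m}=\sigma(\beta_1,\ldots,\beta_n)$ and $\mathcal{Z}_{n,m}\subseteq\sigma(\beta_2,\ldots,\beta_n)\subsetneq\mathcal{A}_{n,m}$, proving (i).

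For (iii), the symmetric recurrence $D^{[1,n]}=2\beta_1 D^{[2,n]}-m^2 D^{[3,n]}$ rearranges into
\[
\frac{1}{\psi^{(n,m)}_\beta} \,=\, 2\beta_1 Z^{(n,m)}_\beta\,-\,\frac{D^{[3,n]}}{m^{n-2}},
\]
exhibiting $\beta_1$ as an affine function of $1/\psi^{(n,m)}_\beta$ whose coefficients are $\mathcal{Z}_{n,m}$-measurable. I would then extract the joint density of $(\beta_1,\ldots,\beta_n)$ under the restriction of $\nu_{\N^*}^{K_m}$ to $\llbracket 1,n\rrbracket$ from the Laplace transform \eqref{transfolaplaceinfini}, and push it through the change of variables $(\beta_1,\ldots,\beta_n)\mapsto(\psi^{(n,m)}_\beta,Z^{(2,m)}_\beta,\ldots,Z^{(n,m)}_\beta)$. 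The target is that the transformed density factorizes as the $\mathrm{IG}(1,1/Z^{(n,m)}_\beta)$ density in $\psi^{(n,m)}_\beta$ times a marginal in the $Z$'s; the Inverse Gaussian parameters are pinned down by the base case $n=1$, where $\psi^{(1,m)}_\beta=m/(2\beta_1)$ and Proposition \ref{prop:resumebeta}(iii) identifies its law as $\mathrm{IG}(1,m)=\mathrm{IG}(1,1/Z^{(1,m)}_\beta)$.

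For (ii), the same tridiagonal recurrence produces $Z^{(n+1,m)}_\beta=(2\beta_{n+1}/m)Z^{(n,m)}_\beta-Z^{(n-1,m)}_\beta$, so conditionally on $\mathcal{Z}_{n,m}$ the only new randomness in $Z^{(n+1,m)}_\beta$ comes from $\beta_{n+1}$. Extending the change-of-variables analysis from (iii) to $n+1$ coordinates and reading off the marginal of $(Z^{(2,m)}_\beta,\ldots,Z^{(n+1,m)}_\beta)$, the proof reduces to checking that this marginal factorizes as a product of transition kernels, which establishes both the Markov property and the explicit form of the kernel; transporting the marginal Inverse Gaussian density of $1/(2\beta_{n+1})$, which arises from the joint density analysis, through the above affine recursion identifies this kernel with the advertised inverse-Inverse-Gaussian law. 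The main obstacle will be the algebraic verification in (iii): after the change of variables, the exponent coming from \eqref{transfolaplaceinfini} and the Jacobian of the transformation must conspire to produce exactly an Inverse Gaussian density for $\psi^{(n,m)}_\beta$ with the claimed parameters, which amounts to matching the resulting exponent with the $\mathrm{IG}$ form $\sqrt{\lambda}/\sqrt{2\pi x^3}\exp(-\lambda(x-\mu)^2/(2\mu^2 x))$; this is delicate but routine, and once established both (ii) and (iii) follow.
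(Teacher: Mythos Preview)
Your determinant identities are correct and your outline is sound in principle, but your approach diverges from the paper's in a meaningful way.

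For (i), you and the paper agree: both establish $\mathcal{Z}_{n,m}\subset\sigma(\beta_2,\ldots,\beta_n)\subsetneq\sigma(\beta_1,\ldots,\beta_n)=\mathcal{A}_{n,m}$, with the determinant formulas $Z^{(k,m)}_\beta=D^{[2,k]}/m^k$ making this transparent.

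For (ii) and (iii), the paper does \emph{not} perform the change of variables you propose. Instead, it invokes the structural restriction and conditioning properties of the $\nu^{W,\eta}$ family (Proposition~\ref{prop:restrictions}): conditionally on $(\beta_2,\ldots,\beta_{n-1})$, the law of $\beta_n$ is again of the form $\nu_{\{n\}}^{\check W,\check\eta}$, and this immediately identifies $2\beta_n-m^2\hat{\mathbf G}_{2,n-1}(n-1,n-1)$ with the reciprocal of an Inverse Gaussian whose parameter is $m+1/Z^{(n-1,m)}_\beta$. Your recursion $Z^{(n+1,m)}_\beta=(2\beta_{n+1}/m)Z^{(n,m)}_\beta-Z^{(n-1,m)}_\beta$ is algebraically equivalent to the paper's Lemma~\ref{lemslice1}, but the paper's formulation isolates precisely the combination of $\beta_{n+1}$ whose conditional law is a clean reciprocal Inverse Gaussian; this is what makes the argument short. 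For (iii), the paper simply cites an external lemma (Lemma~4.1 of \cite{rapmart}), which is itself a consequence of the same conditioning machinery.

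What your route buys is self-containment: you would not need Proposition~\ref{prop:restrictions} or the external reference, only the explicit finite-dimensional density from Proposition~\ref{prop:defgen}. The price is the ``delicate but routine'' Jacobian-and-exponent computation you flag, which is exactly the computation that Proposition~\ref{prop:restrictions} packages once and for all. In effect, your change of variables would re-derive the one-dimensional case of the conditioning property. Either route is valid; the paper's is shorter because it leans on pre-established structure of the $\beta$ measure.
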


Remark that Theorem \ref{thm:matsumotoyor} and Proposition \ref{prop:discretematyor} are very similar. Actually, we can recover Theorem 
\ref{thm:matsumotoyor} by taking a scaling limit in Proposition \ref{prop:discretematyor} thanks to the following proposition:
\begin{prop} \label{prop:scalinglimit}
Let $(\tilde{\psi}^{(m)}(t))_{t\geq 0}$ and $(\tilde{Z}^{(m)}(t))_{t\geq 0}$ be the continuous linear interpolations of \\$(\psi_{\beta}^{(\lfloor mt\rfloor,m)})_{t\geq 0}$ and $(Z_{\beta}^{(\lfloor mt\rfloor,m)})_{t\geq 0}$.
Then, the following convergence does hold for the topology of uniform convergence on compact sets:
$$\left(\tilde{\psi}^{(m)}(t),\tilde{Z}^{(m)}(t)\right)_{t\geq 0}\xrightarrow[m\rightarrow+\infty]{law}\left(e_t,Z_t\right)_{t\geq 0}.$$
\end{prop}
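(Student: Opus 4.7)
My plan is to first prove convergence of the marginal process $\tilde Z^{(m)}$ via a generator-convergence argument, and then to upgrade to joint convergence with $\tilde\psi^{(m)}$ by exploiting the martingale structure and the bracket identity for $\psi^{(n,m)}_\beta$.

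For the marginal step, Proposition \ref{prop:discretematyor}(ii) exhibits $(Z_\beta^{(n,m)})_{n\geq 1}$ as a Markov chain whose one-step transition is $Z_\beta^{(n+1,m)}=(z/m)/X$ with $X\sim IG(z/(mz+1),1)$ and $z=Z_\beta^{(n,m)}$. Using the negative-moment identities $\E[X^{-1}]=\mu^{-1}+\lambda^{-1}$ and $\E[X^{-2}]=\mu^{-2}+3(\mu\lambda)^{-1}+3\lambda^{-2}$ for $X\sim IG(\mu,\lambda)$ (derivable via the change of variables $y=\mu^2/x$ and a short Bessel integral, or as standard IG moment formulas), a direct calculation yields
\begin{align*}
m\,\E\bigl[Z_\beta^{(n+1,m)}-z\mid Z_\beta^{(n,m)}=z\bigr] &= 1+z+O(1/m),\\
m\,\E\bigl[(Z_\beta^{(n+1,m)}-z)^2\mid Z_\beta^{(n,m)}=z\bigr] &= z^2+O(1/m),
\end{align*}
with higher-order moments of order $o(1/m)$. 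The limiting coefficients match those of the generator $\tfrac12 z^2\partial_z^2+(1+z)\partial_z$ of $Z$ (Theorem \ref{thm:matsumotoyor}(ii)), so a standard diffusion-approximation theorem yields $\tilde Z^{(m)}\Rightarrow Z$ in the Skorokhod topology; continuity of $Z$ upgrades this to uniform convergence on compact sets.

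For the joint step I would first verify tightness of $(\tilde\psi^{(m)},\tilde Z^{(m)})$: tightness of $\tilde Z^{(m)}$ is the previous step, and $\tilde\psi^{(m)}$ is tight by Aldous' martingale criterion, since $\psi_\beta^{(n,m)}$ is a non-negative martingale (Proposition \ref{prop:rappelpsi}(i)) with predictable bracket $\hat{\mathbf{G}}_\beta^{(n,m)}(1,1)=Z_\beta^{(n,m)}\psi_\beta^{(n,m)}$ (Proposition \ref{prop:rappelpsi}(ii)), which stays bounded in probability on compact time intervals by the previous step. For any subsequential limit $(M,Z')$: $Z'\stackrel{\text{law}}{=}Z$; $M$ is a non-negative continuous martingale with $M_0=1$, obtained by passing to the limit in the discrete martingale property; and the limit bracket identity $\langle M\rangle_t=Z'_tM_t$ holds, by passing to the limit in the fact that $(\psi_\beta^{(n,m)})^2-Z_\beta^{(n,m)}\psi_\beta^{(n,m)}$ is a martingale. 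These three properties characterize the joint law of $(e,Z)$ via the SDE $de_t=e_t\,d\alpha_t,\ dZ_t=(e_t+Z_t)\,dt-Z_t\,d\alpha_t$ driven by a single Brownian motion $\alpha$; pathwise uniqueness of this system on $(0,\infty)^2$ identifies $(M,Z')\stackrel{\text{law}}{=}(e,Z)$, proving the desired convergence.

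The main obstacle will be the joint step: as recorded in Proposition \ref{prop:discretematyor}(i) and Theorem \ref{thm:matsumotoyor}(i), the filtration of $Z$ is strictly smaller than the joint filtration of $(e,Z)$, so convergence of $\tilde Z^{(m)}$ alone cannot determine the joint limit and $M$ must be identified through independent structure. The crucial bridging ingredient is the bracket identity $\hat{\mathbf{G}}_\beta^{(n,m)}(1,1)=Z_\beta^{(n,m)}\psi_\beta^{(n,m)}$, which survives the scaling limit and ties $M$ to $Z'$; as a consistency check, the conditional marginal $\psi_\beta^{(n,m)}\mid \mathcal{Z}_{n,m}\sim IG(1,1/Z_\beta^{(n,m)})$ of Proposition \ref{prop:discretematyor}(iii) is the discrete counterpart of Theorem \ref{thm:matsumotoyor}(iii) and should match the one-time marginals of the limit. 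Verifying uniqueness in law of the joint martingale problem (equivalently, the SDE above) is the essential final check.
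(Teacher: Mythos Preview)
Your approach is genuinely different from the paper's and considerably more indirect. The paper does not use generator convergence or an abstract martingale-problem identification at all. Instead it constructs the $\beta$-field on $\N^*$ explicitly from an i.i.d.\ sequence $(A_i)_{i\ge1}$ of $IG(1,m)$ variables via $2\beta_1=m/A_1$, $2\beta_i=m(A_{i-1}+1/A_i)$ (Lemma~\ref{lem:easyconstruction}), and two short linear-algebra lemmas then yield the closed forms
\[
\psi_\beta^{(n,m)}=\prod_{i=1}^n A_i,\qquad \hat{\mathbf G}_n(1,1)=\frac1m\sum_{k=0}^{n-1}\Bigl(\prod_{r=1}^k A_r^2\Bigr)A_{k+1}.
\]
Joint convergence of $(\tilde\psi^{(m)},\tilde Z^{(m)})$ to $(e,Z)$ is then immediate from a Donsker-type result $\prod_{i\le\lfloor mt\rfloor}A_i\Rightarrow e_t$ (Lemma~\ref{lem:convergenceprocessus}), since both coordinates are continuous functionals of this single process. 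No identification argument is needed.

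Your marginal step for $\tilde Z^{(m)}$ is sound and your moment computations are correct. The joint step, however, has a real gap. The three properties you isolate --- $Z'\stackrel{\text{law}}{=}Z$, $M$ a nonnegative continuous martingale with $M_0=1$, and $\langle M\rangle_t=Z'_tM_t$ --- do \emph{not} by themselves force $(M,Z')$ to solve the coupled SDE $dM=M\,d\alpha$, $dZ'=(M+Z')\,dt-Z'\,d\alpha$. To get $dM_t=M_t\,d\alpha_t$ you need $d\langle M\rangle_t=M_t^2\,dt$, i.e.\ $d(Z'_tM_t)=M_t^2\,dt$; this is a statement about the \emph{increments} of $Z'M$ that does not follow from the marginal law of $Z'$, because the semimartingale decomposition of $Z'$ in the joint filtration need not coincide with the one in its own filtration. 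To close the gap you would have to pass the discrete increment identity $\hat{\mathbf G}_n(1,1)-\hat{\mathbf G}_{n-1}(1,1)=\psi_\beta^{(n,m)}\psi_\beta^{(n-1,m)}/m$ (which follows from the Schur computation~\eqref{labmy2}--\eqref{labmy3}) to the limit --- but at that point you are essentially redoing the paper's explicit calculation.

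There is also a logical wrinkle: in the paper's architecture, Proposition~\ref{prop:scalinglimit} is proved first and then \emph{used} to give a new proof of Theorem~\ref{thm:matsumotoyor}. Invoking Theorem~\ref{thm:matsumotoyor}(ii) to identify the limiting generator is therefore circular in that context. You can sidestep this by computing the SDE of $Z_t=T_t/e_t$ directly via It\^o's formula, but the most economical route remains the explicit $A_i$-representation above.
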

In section \ref{sec:proofmatyor}, we will prove Propositions \ref{prop:discretematyor} and \ref{prop:scalinglimit}. Moreover, we will use these results in order to give a new proof of Theorem \ref{thm:matsumotoyor}.
\begin{rem}
Thanks to the discrete processes $(\psi^{(n,m)}_{\beta})_{n\in\mathbb{N}^*}$ and $(Z_{\beta}^{(n,m)})_{n\in\mathbb{N}^*}$, we were able to define a discrete one-dimensional analogue  of Matsumoto-Yor exponential functionals of the Brownian motion and we recovered the results of Matsumoto and Yor by taking a scaling limit. However, the processes $(\psi^{(n,m)}_{\beta})_{n\in\mathbb{N}^*}$ and $(Z_{\beta}^{(n,m)})_{n\in\mathbb{N}^*}$ can be constructed on any graph and one could prove (i) and (iii) of  Proposition \ref{prop:discretematyor} on any graph. ( However, we do not know whether (ii) is true on a general graph.) Consequently, we can define $(\psi^{(n,m)}_{\beta})_{n\in\mathbb{N}^*}$ and $(Z_{\beta}^{(n,m)})_{n\in\mathbb{N}^*}$ for example on $\mathbb{Z}^d$ with any $d\geq2$. If one could take a scaling limit on $\mathbb{Z}^d$, it would give new contiuous-time processes which should exhibit properties like (i) and (iii) in Theorem \ref{thm:matsumotoyor}.
\end{rem}
\subsection{Scaling limit and continuous version of $H_{\beta}$ on the circle}
\subsubsection{Definition of the discrete operator on the circle}
The main goal of this paper is to define a version of $H_{\beta}$ and $G_{\beta}$ on continuous unidimensional spaces. In order to do this, we define a model on a discretized version of the circle and we will make the  size of the mesh go to 0. Let $n\in\N^*$. Let $\lambda>0$. Let $W_n^{(\lambda)}$ be a  matrix on the discretized circle $\mathcal{C}_{\lceil\lambda  n\rceil}$ such that $(W_n^{(\lambda)})_{i,j}$ is 0 if $i$ and $j$ are not connected and is $n$ otherwise. Let us denote $H_{\beta}^{(\lambda,n)}$ the matrix associated with the random potential $\beta$ with distribution $\nu_{\mathcal{C}_{\lceil\lambda  n\rceil}}^{W_n^{(\lambda)}}$. Moreover, we denote by $G_{\beta}^{(\lambda,n)}$ the inverse of $H_{\beta}^{(\lambda,n)}$. We define also a rescaled continuous bilinear interpolation $(\tilde{G}_{\beta}^{(\lambda,n)})_{t,t'\in\mathcal{C}^{(\lambda)}}$ of $\left(G_{\beta}^{(\lambda,n)}\left(\lceil nt\rceil,\lceil nt'\rceil\right)\right)_{t,t'\in\mathcal{C}^{(\lambda)}}$. More precisely, if $i/n\leq t<(i+1)/n$ and $j/n\leq t'<(j+1)/n$, 
\begin{align}
\tilde{G}_{\beta}^{(\lambda,n)}(t,t')=G_{\beta}^{(\lambda,n)}(i,j)&+n(t-i/n)\left(G_{\beta}^{(\lambda,n)}(i+1,j)-G_{\beta}^{(\lambda,n)}(i,j)\right)\nonumber\\
&\hspace{-4cm}+n(t'-j/n)\left(G_{\beta}^{(\lambda,n)}(i,j+1)-G_{\beta}^{(\lambda,n)}(i,j)\right)\nonumber\\
&\hspace{-4cm}+n^2(t'-j/n)(t-i/n)\left(G_{\beta}^{(\lambda,n)}(i,j)+G_{\beta}^{(\lambda,n)}(i+1,j+1)-G_{\beta}^{(\lambda,n)}(i,j+1)-G_{\beta}^{(\lambda,n)}(i+1,j) \right).\label{interpo}
\end{align}
\subsubsection{Definition of the continuous limit}\label{contimod}
Let $B$ be a Brownian motion on $\R$ such that $B(0)=0$ almost surely. We define the geometric Brownian motion $M$ by $$(M_t)_{t\in\R}=(e^{B_t-t/2})_{t\in\R}.$$
Let $\lambda>0$. Then, we introduce the symetric random kernel $\mathcal{G}^{(\lambda)}$ on $\mathcal{C}^{(\lambda)}$ by
$$\mathcal{G}^{(\lambda)}(t,t')=\frac{M_{t'}M_t}{(M_{\lambda}-M_{-\lambda})^2}\left(M_{\lambda}^2\int_{t'}^{\lambda}\frac{ds}{M_s^2} +M_{\lambda}M_{-\lambda}\int_t^{t'}\frac{ds}{M_s^2}+M_{-\lambda}^2\int_{-\lambda}^t\frac{ds}{M_s^2}\right)$$ for every $t\leq t'\in \mathcal{C}^{(\lambda)}$.

\subsubsection{Results of convergence}
First, the rescaled continuous interpolation $\tilde{G}_{\beta}^{(\lambda,n)}$ of the matrix $G_{\beta}^{(\lambda,n)}$ has a limit in law when $n$ goes to infinity.
\begin{thm}\label{thm:convergence}
Let $\lambda >0$. Then 
$$ \tilde{G}_{\beta}^{(\lambda,n)}\xrightarrow[n\rightarrow+\infty]{law}\mathcal{G}^{(\lambda)}$$
for the topology of uniform convergence on $\left(\mathcal{C}^{(\lambda)}\right)^2$.
\end{thm}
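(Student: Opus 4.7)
The plan is to reduce the problem from the circle (which is not a tree) to a segment (which is), where the scaling limits of the $\beta$-field are available via Proposition \ref{prop:scalinglimit} and the techniques of \cite{LST}. Write $N = \lceil \lambda n \rceil$ and let $\hat H$ denote the segment Hamiltonian obtained by removing the wrap-around edge $\{-N, N\}$ from $H_\beta^{(\lambda,n)}$, so that $H_\beta^{(\lambda,n)} = \hat H - n(e_{-N} e_N^\top + e_N e_{-N}^\top)$. The Sherman-Morrison-Woodbury formula then expresses $G_\beta^{(\lambda, n)}$ as an explicit rank-$2$ update of $\hat G := \hat H^{-1}$:
$$G_\beta^{(\lambda,n)}(i,j) = \hat G(i,j) + \frac{1}{D}\,\mathrm{Bil}\bigl(\hat G(i, \pm N),\,\hat G(\pm N, j)\bigr),$$
with a $2\times 2$ determinant $D$ depending only on the boundary values $\hat G(\pm N, \pm N)$.

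On the segment, which is a tree, the Green's function $\hat G$ factorizes multiplicatively: $\hat G(i,j) = \phi^L(i \wedge j)\,\phi^R(i \vee j)/W$, where $\phi^L, \phi^R$ are the two solutions of $\hat H \phi = 0$ satisfying boundary conditions at the two endpoints and $W$ is a discrete Wronskian. Up to normalization, $\phi^L$ is a $\psi$-martingale of the type in Proposition \ref{prop:rappelpsi}(i)-(ii) (and $\phi^R$ its time-reversed analogue). By Proposition \ref{prop:scalinglimit} and its natural extension to two-sided segments, the rescaled solutions $\phi^{L/R}(\lfloor nt\rfloor)$ converge jointly in law, uniformly on compacts of $[-\lambda, \lambda]$, to geometric Brownian motions; the second solution produced by reduction of order scales as $M_t\int\frac{ds}{M_s^2}$. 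Substituting these convergences into the Sherman-Morrison formula above and carrying out the algebra, one identifies the finite-dimensional limits of $\tilde G_\beta^{(\lambda,n)}(t,t')$ as precisely $\mathcal G^{(\lambda)}(t,t')$. The three-term bracket naturally decomposes according to the intervals "left of $t$," "between $t$ and $t'$," and "right of $t'$" in the segment, while the denominator $(M_\lambda - M_{-\lambda})^2$ emerges as the limit of $D$, quantifying the non-degeneracy of the Sherman-Morrison correction.

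Finally, to upgrade finite-dimensional convergence to uniform convergence on $(\mathcal C^{(\lambda)})^2$, I would establish tightness via Kolmogorov-type $L^p$ Hölder estimates: for some $\alpha \in (0, 1/2)$ and large $p$,
$$\E\bigl[|\tilde G_\beta^{(\lambda,n)}(t,t') - \tilde G_\beta^{(\lambda,n)}(s,s')|^p\bigr] \le C_p\,\bigl(|t-s| + |t'-s'|\bigr)^{\alpha p},$$
uniformly in $n$. These bounds follow from the Laplace transform \eqref{transfolaplace} of $\beta$ (which controls moments uniformly in $n$) together with the multiplicative structure noted above.

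The main obstacle is precisely the non-tree nature of the circle, which forces the Sherman-Morrison detour and its subsequent algebra; the most delicate bookkeeping step is matching the three-term bracket of $\mathcal G^{(\lambda)}$ and its $(M_\lambda - M_{-\lambda})^{-2}$ prefactor with the limit of the Sherman-Morrison expression, where the correction becomes singular precisely when the two boundary values of the geometric Brownian motion coincide. Tightness in the uniform topology on a 2D compact is a secondary obstacle, essentially routine given the moment bounds.
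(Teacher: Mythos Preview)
Your outline is sound and takes a genuinely different route from the paper. The paper does \emph{not} remove the wrap-around edge; instead it uses the i.i.d.\ Inverse Gaussian representation of $\beta$ on the circle (Lemma~\ref{simpledescription}), conjugates $H_\beta^{(\lambda,n)}$ by a diagonal matrix to a tridiagonal matrix $R_u^{(\lambda,n)}$ depending only on $u_i=\sqrt{A_iA_{i+1}}$, and then \emph{brutally inverts} $R_u^{(\lambda,n)}$ in closed form (Proposition~\ref{inversemo}). Once this explicit formula is in hand, the rescaled entries of $(R_u^{(\lambda,n)})^{-1}$ are written as continuous functionals of the interpolated product process $X^{(n)}_t = \prod_{i\le \lfloor nt\rfloor} u_i$, up to an error $o_n^\P(1)$ that is uniform in $(t,t')$ thanks to Lemma~\ref{lem:supremum}. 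Lemma~\ref{cvchap5} (a Donsker-type result) gives $X^{(n)}\Rightarrow X$ in $C([-\lambda,\lambda])$, and the continuous mapping theorem yields uniform convergence on $(\mathcal C^{(\lambda)})^2$ directly: no separate tightness argument is needed.

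Your Sherman--Morrison--Woodbury reduction to the segment is a natural alternative, and the two-term structure of Proposition~\ref{inversemo} (one product going each way around the circle) is exactly the rank-$2$ correction you would recover. The trade-off is this: the paper front-loads the pain into one explicit matrix inversion and then gets uniform convergence essentially for free, whereas you defer the pain to (i) the bookkeeping that matches the Woodbury limit to $\mathcal G^{(\lambda)}$ and (ii) a separate Kolmogorov tightness step. Point~(i) is, as you say, just algebra. Point~(ii) is where your sketch is thinnest: the Laplace transform~\eqref{transfolaplace} controls moments of $\beta_i$, but the increments of $\tilde G_\beta^{(\lambda,n)}$ involve ratios and products of $\beta$'s through $G_\beta$, and extracting uniform $L^p$ H\"older bounds from this is not immediate. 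It would be cleaner, even within your approach, to mimic the paper's trick: express $\hat G$ and the Woodbury correction as continuous functionals of the interpolated $\phi^{L/R}$ processes and invoke continuous mapping, rather than estimating moments of increments directly. A minor caveat: Proposition~\ref{prop:scalinglimit} is stated for the $\beta$-field on $\N^*$, not for the segment sitting inside the circle; you would need the circle's i.i.d.\ IG construction (Lemma~\ref{simpledescription}) to justify that the same scaling limit applies to your $\phi^{L/R}$.
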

Moreover, $\mathcal{G}^{(\lambda)}$ can be seen as a bilinear form with the following expression.
\begin{prop} \label{formeg}
Let $\lambda >0$. Let $f\in L^2([-\lambda,\lambda])$.
Then
\begin{align*}
\int_{-\lambda}^{\lambda}\int_{-\lambda}^{\lambda} f(t)\mathcal{G}^{(\lambda)}(t,t')\bar{f}(t')dtdt'\\
&\hspace{-4 cm}=\frac{1}{(M_{\lambda}-M_{-\lambda})^2} \int_{-\lambda}^{\lambda}\frac{1}{M_u^2}\left|M_{-\lambda}\int_u^{\lambda}f(t)M_tdt+M_{\lambda}\int_{-\lambda}^uf(t)M_tdt \right|^2du.
\end{align*}
In particular, $\mathcal{G}^{(\lambda)}$ is positive definite almost surely. 
\end{prop}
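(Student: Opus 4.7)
The plan is to recognize $\mathcal{G}^{(\lambda)}$ as the kernel of a rank-one-like integral operator by factoring the three-piece bracket in its definition as an integral of a product of two indicator-weighted step functions. This will automatically turn the quadratic form into an integrated square after Fubini.

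Concretely, I would introduce the auxiliary step function
$$\psi(u,t) = M_{-\lambda}\mathbf{1}\{u \le t\} + M_{\lambda}\mathbf{1}\{u > t\}$$
and verify the identity
$$\mathcal{G}^{(\lambda)}(t,t') = \frac{M_t M_{t'}}{(M_\lambda - M_{-\lambda})^2} \int_{-\lambda}^{\lambda} \frac{\psi(u,t)\psi(u,t')}{M_u^2}\,du$$
for all $t,t' \in \mathcal{C}^{(\lambda)}$. For $t \le t'$ this is a direct check: splitting the $u$-integral along $\{u \le t\}\sqcup\{t<u\le t'\}\sqcup\{u>t'\}$, the product $\psi(u,t)\psi(u,t')$ is constant on each piece and takes the values $M_{-\lambda}^2$, $M_{-\lambda}M_{\lambda}$, $M_{\lambda}^2$ respectively, exactly matching the three coefficients in the definition of $\mathcal{G}^{(\lambda)}$. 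Since $\psi(u,t)\psi(u,t')$ is symmetric in $(t,t')$ and $\mathcal{G}^{(\lambda)}$ is symmetric as well, the identity extends to the case $t>t'$.

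Inserting the factorization into the double integral and exchanging the order of integration via Fubini (legitimate because $f\in L^2$, $M$ is continuous and bounded away from $0$ on the compact interval $[-\lambda,\lambda]$, and $\psi$ is bounded) gives
$$\int_{-\lambda}^{\lambda}\int_{-\lambda}^{\lambda} f(t)\mathcal{G}^{(\lambda)}(t,t')\bar{f}(t')\,dt\,dt' = \frac{1}{(M_\lambda - M_{-\lambda})^2}\int_{-\lambda}^{\lambda}\frac{du}{M_u^2}\left|\int_{-\lambda}^{\lambda} f(t) M_t\, \psi(u,t)\, dt\right|^2.$$
The inner integral evaluates to $M_{\lambda}\int_{-\lambda}^{u} f(t) M_t\, dt + M_{-\lambda}\int_{u}^{\lambda} f(t) M_t\, dt$, which is exactly the modulus claimed in the statement.

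For positive definiteness, the right-hand side is manifestly non-negative. If it vanishes, then the continuous function $F(u) = M_{\lambda}\int_{-\lambda}^{u}f(t)M_t\,dt + M_{-\lambda}\int_{u}^{\lambda} f(t) M_t\,dt$ is identically zero, and its (weak) derivative $F'(u) = (M_\lambda - M_{-\lambda})f(u)M_u$ must vanish almost everywhere. Since $M_\lambda \ne M_{-\lambda}$ almost surely and $M_u>0$, this forces $f \equiv 0$ in $L^2([-\lambda,\lambda])$. I do not foresee a real obstacle here; the only point requiring care is choosing the symmetric factorization of the three-piece bracket, after which everything reduces to Fubini and an elementary non-degeneracy argument.
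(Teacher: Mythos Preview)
Your proof is correct, and it arrives at the same identity as the paper, but via a cleaner route. The paper does not introduce your step function $\psi(u,t)$; instead it splits the kernel as $(M_\lambda-M_{-\lambda})^2\,\mathcal{G}^{(\lambda)}(t,t')/(M_tM_{t'}) = M_\lambda^2 A(t,t') + M_\lambda M_{-\lambda} B(t,t') + M_{-\lambda}^2 C(t,t')$ with $A,B,C$ the three $1/M_s^2$ integrals, computes the three double integrals $\iint A f\bar f$, $\iint B f\bar f$, $\iint C f\bar f$ separately by Fubini (each one requiring its own region decomposition), and then recombines the three pieces into a perfect square. Your factorization $\mathcal{G}^{(\lambda)}(t,t') = \frac{M_tM_{t'}}{(M_\lambda-M_{-\lambda})^2}\int \psi(u,t)\psi(u,t')/M_u^2\,du$ packages all three pieces at once and makes the perfect-square structure visible from the start, so a single Fubini suffices. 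The positive-definiteness argument is essentially identical in both: the paper invokes the Lebesgue differentiation theorem where you speak of the weak derivative, but in either case one differentiates the vanishing function $u\mapsto M_\lambda\int_{-\lambda}^u fM + M_{-\lambda}\int_u^\lambda fM$ and uses that $M_\lambda\neq M_{-\lambda}$ almost surely.
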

\subsubsection{Dufresne's type identities in law} \label{subsubsec:idenloi}
In \cite{Duf} (see also \cite{yorexpo} for an alternative proof), Dufresne proved the following famous identity in law:
$$\int_0^{+\infty}e^{2\alpha_s-s}ds\overset{law}=\frac{1}{2\gamma}$$
where $\gamma$ has Gamma distribution with parameters $(1/2,1)$. Recall that, by Proposition \eqref{prop:resumebeta}, in the discrete-time setting, $G_{\beta}^{(\lambda,n)}(i,i)$ is also distributed as the inverse of a Gamma distribution for every $i\in\mathcal{C}_{\lceil \lambda n\rceil}$. Actually, this is not a coincidence and one can recover Dufresne's identity by making $n$ go to infinity in $G_{\beta}^{(\lambda,n)}(i,i)$.
Moreover, by means of the limiting random kernel $\mathcal{G}^{(\lambda)}$, we can prove some new identities in law which generalize Dufresne's identity. 

\begin{prop}\label{idenloipoint}
Let $t\in[-\lambda,\lambda]$. Then, the following identity in law does hold:
$$\mathcal{G}^{(\lambda)}(t,t)=\frac{M_t^2}{(M_{\lambda}-M_{-\lambda})^2}\left(M_{\lambda}^2\int_{t}^{\lambda}\frac{ds}{M_s^2} +M_{-\lambda}^2\int_{-\lambda}^t\frac{ds}{M_s^2}\right)\overset{law}=\frac{1}{2\gamma}$$
 where $\gamma$ is a Gamma distribution with parameters $(1/2,1)$.
\end{prop}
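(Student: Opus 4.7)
The formula for $\mathcal{G}^{(\lambda)}(t,t)$ is obtained by simply setting $t' = t$ in the defining kernel of $\mathcal{G}^{(\lambda)}$: the middle integral vanishes and we recover exactly the displayed expression. The nontrivial content is therefore the identity in law $\mathcal{G}^{(\lambda)}(t,t) \overset{law}= 1/(2\gamma)$; the plan is to obtain it as the scaling limit of its discrete analogue.

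By Proposition \ref{prop:resumebeta}(v) applied to the finite circle $\mathcal{C}_{\lceil \lambda n \rceil}$ with conductances $W_n^{(\lambda)}$, for every $n \in \mathbb{N}^*$ and every vertex $i$,
$$G_\beta^{(\lambda,n)}(i,i) \overset{law}= \frac{1}{2\gamma}.$$
Fix $t \in [-\lambda, \lambda]$, set $i_n = \lceil nt \rceil$ and $t_n = i_n/n$, so that $t_n \to t$. Since the bilinear interpolation \eqref{interpo} reproduces the grid values (at $t = t' = t_n$ each correction term carries a vanishing prefactor), $\tilde{G}_\beta^{(\lambda,n)}(t_n, t_n) = G_\beta^{(\lambda,n)}(i_n, i_n) \overset{law}= 1/(2\gamma)$ for every $n$. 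It therefore suffices to prove that $\tilde{G}_\beta^{(\lambda,n)}(t_n, t_n)$ converges in law to $\mathcal{G}^{(\lambda)}(t, t)$.

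Theorem \ref{thm:convergence} gives $\tilde{G}_\beta^{(\lambda,n)} \to \mathcal{G}^{(\lambda)}$ in law in $C\bigl((\mathcal{C}^{(\lambda)})^2\bigr)$ equipped with the sup norm. I use Skorokhod's representation theorem to realize these fields on a single probability space so that the convergence is almost sure. The limit $\mathcal{G}^{(\lambda)}$ is almost surely continuous (immediate from its defining formula in terms of the geometric Brownian motion $M$, whose paths are continuous and bounded away from $0$ on $[-\lambda,\lambda]$), so the triangle inequality
$$\bigl|\tilde{G}_\beta^{(\lambda,n)}(t_n, t_n) - \mathcal{G}^{(\lambda)}(t, t)\bigr| \leq \|\tilde{G}_\beta^{(\lambda,n)} - \mathcal{G}^{(\lambda)}\|_\infty + \bigl|\mathcal{G}^{(\lambda)}(t_n, t_n) - \mathcal{G}^{(\lambda)}(t, t)\bigr|$$
yields almost-sure convergence $\tilde{G}_\beta^{(\lambda,n)}(t_n, t_n) \to \mathcal{G}^{(\lambda)}(t, t)$, hence convergence in distribution. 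Since every left-hand side has law $1/(2\gamma)$, so does the limit, which is the claim.

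The only mildly delicate step, namely evaluating a sup-norm-convergent sequence of continuous fields at a moving grid point $t_n$, is fully handled by the Skorokhod coupling together with the pathwise continuity of $\mathcal{G}^{(\lambda)}$; everything else is a direct combination of Theorem \ref{thm:convergence} and Proposition \ref{prop:resumebeta}(v).
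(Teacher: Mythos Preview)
Your proof is correct and follows essentially the same approach as the paper: combine the distributional identity $G_\beta^{(\lambda,n)}(i,i)\overset{law}=1/(2\gamma)$ from Proposition~\ref{prop:resumebeta}(v) with the convergence of Theorem~\ref{thm:convergence} to pass to the limit. You have simply made explicit, via Skorokhod's representation and the triangle inequality, the passage from uniform convergence of the interpolated kernel to convergence at the moving grid point $t_n$, a step the paper handles in a single line.
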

A particular case of Proposition \ref{idenloipoint} implies the following corollary:
\begin{cor} \label{identityaladufresne}
If $\alpha$ is a standard Brownian motion on $\R_+$, then for every $\lambda>0$,
$$\frac{\displaystyle\int_0^{\lambda}e^{2\alpha_s-s}ds}{\left(e^{\alpha_{\lambda}-\lambda/2}-1\right)^2}\overset{law}=\frac{1}{2\gamma}$$
where $\gamma$ is a Gamma distribution with parameters $(1/2,1)$.
\end{cor}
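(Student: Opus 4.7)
The plan is to deduce Corollary \ref{identityaladufresne} directly from Proposition \ref{idenloipoint} by specializing to the endpoint $t=-\lambda$, converting the two-sided Brownian motion $B$ into a forward Brownian motion starting at $0$ via translation and time-reversal, and finally rescaling the time parameter. First, I would substitute $t=-\lambda$ in Proposition \ref{idenloipoint}: because $\int_{-\lambda}^{-\lambda} M_s^{-2}\,ds=0$, the expression collapses to
$$\mathcal{G}^{(\lambda)}(-\lambda,-\lambda)=\frac{M_{-\lambda}^2\, M_{\lambda}^2\int_{-\lambda}^{\lambda}M_s^{-2}\,ds}{(M_{\lambda}-M_{-\lambda})^2},$$
which already has the shape of the right-hand side of the corollary.

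Next, I would translate the Brownian motion so that it starts at $0$. Setting $\tilde{B}_u = B_{u-\lambda}-B_{-\lambda}$ for $u\in[0,2\lambda]$ yields a standard Brownian motion with $\tilde{B}_0 = 0$, and a direct computation shows $M_s = M_{-\lambda}\cdot N_{s+\lambda}$ where $N_u = e^{\tilde{B}_u - u/2}$. All factors of $M_{-\lambda}$ then cancel between the numerator and the denominator, producing
$$\mathcal{G}^{(\lambda)}(-\lambda,-\lambda)=\frac{N_{2\lambda}^2\int_0^{2\lambda} N_u^{-2}\,du}{(N_{2\lambda}-1)^2}.$$
Then I would perform a time-reversal by setting $\hat{B}_v = \tilde{B}_{2\lambda}-\tilde{B}_{2\lambda-v}$ for $v\in[0,2\lambda]$, which is again a standard Brownian motion. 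The substitution $v=2\lambda-u$ transforms $N_{2\lambda}/N_u$ into $e^{\hat{B}_v - v/2}$, so that $\int_0^{2\lambda}(N_{2\lambda}/N_u)^2\,du = \int_0^{2\lambda}e^{2\hat{B}_v - v}\,dv$, while $N_{2\lambda} = e^{\hat{B}_{2\lambda}-\lambda}$. Combined with Proposition \ref{idenloipoint}, this produces
$$\frac{\int_0^{2\lambda}e^{2\alpha_v - v}\,dv}{(e^{\alpha_{2\lambda}-\lambda}-1)^2}\overset{law}{=}\frac{1}{2\gamma}$$
for a standard Brownian motion $\alpha$, and since the identity in Proposition \ref{idenloipoint} holds for every $\lambda>0$, I would conclude by replacing $\lambda$ by $\lambda/2$.

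The argument is essentially bookkeeping with the translation and time-reversal invariance of the Brownian motion, and I do not expect any significant conceptual obstacle. The only point requiring some care is the direction of the time-reversal: one must reverse around $u=2\lambda$ (rather than merely translate) to ensure that the drift $-v/2$ of the standard geometric Brownian motion appears in the integrand, rather than a positive drift that would prevent identification with the functional on the left-hand side of the corollary.
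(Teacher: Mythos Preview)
Your proof is correct and follows essentially the same approach as the paper's first proof: specialize Proposition~\ref{idenloipoint} at $t=-\lambda$, then convert the two-sided Brownian motion into a forward one via a translation and time-reversal. The only cosmetic difference is that the paper performs these two changes of variable in a single step by setting $\tilde B_s = B_\lambda - B_{\lambda-s}$, whereas you decompose it into a translation followed by a time-reversal; your $\hat B$ coincides exactly with the paper's $\tilde B$.
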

\begin{rem}
The identity of Corollary \ref{identityaladufresne} is not really new. Actually, it can be deduced also from the Matsumoto-Yor properties. This second approach is also explained in the proof of Corollary \ref{identityaladufresne}. Remark also that making $\lambda$ go to infinity in Corollary \ref{identityaladufresne} gives the Dufresne identity. 
The fact that the second proof of Corollary \ref{identityaladufresne} involves the Matsumoto-Yor properties is not very surprising because we explained in section \ref{subsubsec:matyor} that the Matsumoto-Yor properties can be deduced from the properties of $\beta$-potential.
\end{rem}
More generally, we can compute the distribution of $\langle \eta, G_{\beta}^{(\lambda,n)}\eta\rangle$ for every $\eta\in (\R_+^*)^{\mathcal{C}_{\lceil \lambda n\rceil}}$. Combining this with the limit obtained in Theorem \ref{thm:convergence} gives new identities for the geometric Brownian motion.
\begin{prop}\label{idenloifonc}
Let $f$ be a deterministic continuous non-negative function on $\mathcal{C}^{(\lambda)}$.
Then, the following identity in law does hold:
 $$\frac{1}{(M_{\lambda}-M_{-\lambda})^2} \int_{-\lambda}^{\lambda}\frac{1}{M_u^2}\left(M_{-\lambda}\int_u^{\lambda}f(t)M_tdt+M_{\lambda}\int_{-\lambda}^uf(t)M_tdt \right)^2du\overset{law}= \frac{\displaystyle\left(\int_{-\lambda}^{\lambda}f(t)dt\right)^2}{2\gamma}$$
 where $\gamma$ is a Gamma distribution with parameters $(1/2,1)$.
\end{prop}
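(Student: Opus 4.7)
The proof plan follows the strategy announced just before the statement: establish a discrete Laplace-transform identity for $\langle \eta, G_{\beta}\eta\rangle$ on a finite graph, then pass to the scaling limit using Theorem \ref{thm:convergence} and Proposition \ref{formeg}.

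The key step is the following discrete identity: for any finite graph $(V,E,W)$, any $\eta\in\R_+^V$ and $\beta\sim\nu_V^W$,
$$\E\!\left[e^{-\frac{1}{2}\langle\eta,G_\beta\eta\rangle}\right]=e^{-\sum_{i\in V}\eta_i},$$
which, after substituting $\eta\mapsto\sqrt{s}\,\eta$, gives $\langle\eta,G_\beta\eta\rangle\stackrel{\mathrm{law}}{=}(\sum_i\eta_i)^2/(2\gamma)$ with $\gamma\sim\Gamma(1/2,1)$, since both sides have Laplace transform $s\mapsto e^{-\sqrt{2s}\sum_i\eta_i}$. To prove this identity, I add an auxiliary vertex $*$ to $V$, joined to each $i\in V$ by an edge of weight $\tilde W_{*,i}=\eta_i$, and consider $\tilde\beta\sim\nu_{\tilde V}^{\tilde W}$. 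Writing $\tilde\beta_V$ for the restriction of $\tilde\beta$ to $V$, the Schur complement applied to $\tilde H_{\tilde\beta}$ gives
$$|\tilde H_{\tilde\beta}|=|H_{\tilde\beta_V}|\,\bigl(2\tilde\beta_*-\langle\eta,G_{\tilde\beta_V}\eta\rangle\bigr),$$
with $\tilde H_{\tilde\beta}>0$ equivalent to $H_{\tilde\beta_V}>0$ and $2\tilde\beta_*>\langle\eta,G_{\tilde\beta_V}\eta\rangle$. Plugging these into the explicit density \eqref{formuladens} for $\nu_{\tilde V}^{\tilde W}$, and integrating out $\tilde\beta_*$ via the elementary identity $\int_{a/2}^{\infty}e^{-t}(2t-a)^{-1/2}\,dt=\sqrt{\pi/2}\,e^{-a/2}$, the marginal density of $\tilde\beta_V$ becomes exactly the density of $\nu_V^W$ multiplied by $e^{\sum_i\eta_i-\frac{1}{2}\langle\eta,G_\beta\eta\rangle}$; the normalization condition $\int p_{\tilde\beta_V}=1$ then yields the claimed identity.

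Given the discrete identity, the proof of Proposition \ref{idenloifonc} is a direct passage to the scaling limit. I set $\eta_i^{(n)}=f(i/n)/n$ on $\mathcal{C}_{\lceil\lambda n\rceil}$, so that $\sum_i\eta_i^{(n)}\to\int_{-\lambda}^{\lambda}f(t)\,dt$, and apply the identity:
$$\bigl\langle\eta^{(n)},G_\beta^{(\lambda,n)}\eta^{(n)}\bigr\rangle\stackrel{\mathrm{law}}{=}\frac{\bigl(\sum_i\eta_i^{(n)}\bigr)^2}{2\gamma_n},\qquad \gamma_n\sim\Gamma(1/2,1).$$
The left-hand side differs from $\iint f(t)f(t')\tilde G^{(\lambda,n)}_\beta(t,t')\,dt\,dt'$ by an $o_P(1)$ Riemann-sum/bilinear-interpolation error, controlled using the joint tightness of $\tilde G^{(\lambda,n)}_\beta$ guaranteed by Theorem \ref{thm:convergence} and the uniform continuity of $f$ on the compact $\mathcal{C}^{(\lambda)}$. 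By Theorem \ref{thm:convergence} this integral converges in distribution to $\iint f(t)f(t')\mathcal G^{(\lambda)}(t,t')\,dt\,dt'$, which by Proposition \ref{formeg} coincides with the left-hand side of Proposition \ref{idenloifonc}. The right-hand side converges in law to $(\int_{-\lambda}^{\lambda}f)^2/(2\gamma)$, and equating the two limiting distributions concludes the proof.

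The main obstacle is the discrete identity: one has to bookkeep carefully the constants $(2/\pi)^{n/2}$, the boundary contribution $\sum_i\eta_i$ arising in $\langle 1,\tilde H_{\tilde\beta}1\rangle$ from the edges to $*$, and the Gaussian integration over $\tilde\beta_*$, so that all factors combine into the clean identity above. Once the discrete identity is in place, the scaling limit follows immediately from Theorem \ref{thm:convergence}.
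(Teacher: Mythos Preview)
Your proof is correct and follows essentially the same route as the paper: establish the discrete identity $\langle\eta,G_\beta\eta\rangle\stackrel{\mathrm{law}}{=}(\sum_i\eta_i)^2/(2\gamma)$, specialize to $\eta_i=n^{-1}f(i/n)$, and pass to the limit via Theorem~\ref{thm:convergence} and Proposition~\ref{formeg}. The only difference is that the paper invokes this discrete identity as Lemma~8.1 of \cite{TG}, whereas you supply a self-contained proof via the auxiliary-vertex/Schur-complement trick; your computation of the marginal of $\tilde\beta_V$ is exactly the derivation of the density in Proposition~\ref{prop:defgen}, and reading the normalization $\int p_{\tilde\beta_V}=1$ as $\E_{\nu_V^W}\bigl[e^{-\frac12\langle\eta,G_\beta\eta\rangle}\bigr]=e^{-\sum_i\eta_i}$ is a nice way to extract the Laplace transform.
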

\begin{rem}
Actually, in Proposition \ref{idenloifonc}, the continuity of $f$ is a very strong assumption. Indeed, we can allow discontinuity points for $f$ but we only focus on the continuous case for sake of convenience.
\end{rem}

\subsubsection{The continuous random operator $\mathcal{H}^{(\lambda)}$}
Let us define the domain
$$\mathcal{D}\left(\mathcal{H}^{(\lambda)} \right)=\begin{Bmatrix}
g\in L^2([-\lambda,\lambda]),&\left(\frac{g}{M}\right)'\in L^2([-\lambda,\lambda]),& \left(M^2\left(\frac{g}{M}\right)'\right)'\in L^2([-\lambda,\lambda]),\\
g(-\lambda)=g(\lambda),& M_{-\lambda}\left(\frac{g}{M}\right)'(-\lambda)=M_{\lambda}\left(\frac{g}{M}\right)'(\lambda)\end{Bmatrix}.$$
In the definition above, $\frac{g}{M}$ means the random function $x\mapsto \frac{g(x)}{M_x}$. The derivative $'$ is defined in the sense of distributions. Moreover, if $g\in\mathcal{D}\left( \mathcal{H}^{(\lambda)}\right)$, $g$ and $M\left(\frac{g}{M}
\right)'$ are well-defined at $-\lambda$ and $\lambda$ because they are actually continuous functions. This stems from Sobolev injections in dimension 1. (Indeed the Sobolev space $H^1([-\lambda,\lambda])$ can be injected in the set of continuous functions.)
For every $f\in L^2([-\lambda,\lambda])$, we define the function $\mathcal{G}^{(\lambda)}f$ such that for every $x\in\mathcal{C}^{(\lambda)}$,
$$\mathcal{G}^{(\lambda)}f(x)=\int_{-\lambda}^{\lambda}\mathcal{G}^{(\lambda)}(x,t)f(t)dt.$$
Consequently, $\mathcal{G}^{(\lambda)}$ can be viewed as an operator on $L^2([-\lambda,\lambda])$. Now, we can state our next result:
\begin{thm}\label{thm:definitionofH}
Let $\lambda>0$. The image of $\mathcal{G}^{(\lambda)}$ is exactly $\mathcal{D}\left(\mathcal{H}^{(\lambda)} \right)$. Therefore, $\mathcal{G}^{(\lambda)}$ has a bijective inverse $\mathcal{H}^{(\lambda)}$ from $\mathcal{D}\left(\mathcal{H}^{(\lambda)} \right)$ onto $L^2([-\lambda,\lambda])$. For every $g\in \mathcal{D}\left(\mathcal{H}^{(\lambda)} \right)$,
$$\mathcal{H}^{(\lambda)}g=-\frac{1}{M}\left(M^2\left(\frac{g}{M} \right)'\right)'.$$
Furthermore, $\mathcal{H}^{(\lambda)}$ is a positive self-adjoint operator (for the classical inner-product on $L^2([-\lambda,\lambda])$) with domain $\mathcal{D}\left(\mathcal{H}^{(\lambda)} \right)$.
\end{thm}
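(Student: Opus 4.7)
The plan is to identify $\mathcal{G}^{(\lambda)}$ as the Green's kernel of the formal Sturm--Liouville operator $g\mapsto -\frac{1}{M}(M^2(g/M)')'$ equipped with the periodic-type boundary conditions built into $\mathcal{D}(\mathcal{H}^{(\lambda)})$. I would organise the proof in four steps: (1) for every $f\in L^2([-\lambda,\lambda])$, show that $g:=\mathcal{G}^{(\lambda)}f$ belongs to $\mathcal{D}(\mathcal{H}^{(\lambda)})$ and satisfies $-\frac{1}{M}(M^2(g/M)')'=f$; (2) prove that the kernel of the differential expression restricted to $\mathcal{D}(\mathcal{H}^{(\lambda)})$ is trivial; (3) combine (1) and (2) to obtain the bijectivity and the closed-form expression for $\mathcal{H}^{(\lambda)}$; (4) deduce self-adjointness and positivity of $\mathcal{H}^{(\lambda)}$ from the corresponding properties of $\mathcal{G}^{(\lambda)}$ as a bounded symmetric kernel operator.

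For step (1), I would fix $f\in L^2$ and split
$$g(t)=\int_{-\lambda}^t\mathcal{G}^{(\lambda)}(t,t')f(t')\,dt'+\int_t^\lambda\mathcal{G}^{(\lambda)}(t,t')f(t')\,dt',$$
using the explicit piecewise formula for the kernel. Dividing by $M_t$ and differentiating in $t$, the boundary contributions cancel thanks to the continuity of the kernel across the diagonal, and a short computation should yield
$$M_t^2\Bigl(\frac{g}{M}\Bigr)'(t)=-\frac{1}{M_\lambda-M_{-\lambda}}\Bigl[M_\lambda\int_{-\lambda}^t f(t')M_{t'}\,dt'+M_{-\lambda}\int_t^\lambda f(t')M_{t'}\,dt'\Bigr].$$
A second differentiation gives $(M^2(g/M)')'(t)=-M_tf(t)$, hence $-\frac{1}{M}(M^2(g/M)')'=f$. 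The boundary conditions are then checked by evaluating the previous display at $t=\pm\lambda$: both $M_{-\lambda}(g/M)'(-\lambda)$ and $M_\lambda(g/M)'(\lambda)$ reduce to $-\frac{1}{M_\lambda-M_{-\lambda}}\int_{-\lambda}^\lambda f(t')M_{t'}\,dt'$, and the equality $g(-\lambda)=g(\lambda)$ is a direct algebraic identity obtained by substituting the kernel formula at the endpoints. Since $M$ is almost surely continuous and strictly positive on the compact circle $\mathcal{C}^{(\lambda)}$, the functions $M$, $1/M$ and $1/M^2$ are bounded, so $g$, $(g/M)'$ and $(M^2(g/M)')'$ all sit in $L^\infty\subset L^2$, taking care of the integrability requirements in the definition of $\mathcal{D}(\mathcal{H}^{(\lambda)})$.

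Step (2) is a short ODE argument: if $u\in\mathcal{D}(\mathcal{H}^{(\lambda)})$ satisfies $\mathcal{H}^{(\lambda)}u=0$ then $M^2(u/M)'$ is a constant, so $u(t)=\alpha M_t+\beta M_t\int_{-\lambda}^t\frac{ds}{M_s^2}$ for some $\alpha,\beta\in\R$. The derivative boundary condition reads $\beta/M_{-\lambda}=\beta/M_\lambda$, which forces $\beta=0$ almost surely since $M_\lambda\neq M_{-\lambda}$ a.s.; the continuity condition then becomes $\alpha M_{-\lambda}=\alpha M_\lambda$ and yields $\alpha=0$. To conclude step (3), for $g\in\mathcal{D}(\mathcal{H}^{(\lambda)})$ set $f=-\frac{1}{M}(M^2(g/M)')'\in L^2$; by step (1), $\mathcal{G}^{(\lambda)}f-g\in\mathcal{D}(\mathcal{H}^{(\lambda)})$ is annihilated by the differential expression, hence vanishes by step (2). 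This gives $\mathrm{Im}(\mathcal{G}^{(\lambda)})=\mathcal{D}(\mathcal{H}^{(\lambda)})$, the bijectivity of $\mathcal{G}^{(\lambda)}:L^2\to\mathcal{D}(\mathcal{H}^{(\lambda)})$, and the identification of its inverse with $\mathcal{H}^{(\lambda)}$.

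For step (4), the kernel $\mathcal{G}^{(\lambda)}(t,t')$ is almost surely symmetric and continuous on the compact product $\mathcal{C}^{(\lambda)}\times\mathcal{C}^{(\lambda)}$, so $\mathcal{G}^{(\lambda)}$ defines a bounded symmetric (hence self-adjoint) operator on $L^2([-\lambda,\lambda])$; its strict positivity is precisely the content of Proposition \ref{formeg}. A classical result from operator theory states that the inverse of an injective, bounded, positive self-adjoint operator on a Hilbert space is an unbounded positive self-adjoint operator with domain equal to the range of the original operator; applying this to $\mathcal{G}^{(\lambda)}$ produces that $\mathcal{H}^{(\lambda)}$ is self-adjoint and positive on $\mathcal{D}(\mathcal{H}^{(\lambda)})$. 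The step I expect to be most error-prone is the bookkeeping in (1): the kernel is piecewise-defined across the diagonal and the verification of both boundary conditions relies on careful, if unilluminating, algebraic cancellations involving the integrals of $1/M_s^2$.
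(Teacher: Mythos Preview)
Your proof is correct and follows the same overall arc as the paper's argument, with one notable streamlining. Your Steps (1) and (4) match the paper's Steps 1 and 3 essentially line for line: the paper also writes out $\mathcal{G}^{(\lambda)}f$, differentiates to obtain exactly your expression for $M_t^2(g/M)'(t)$, checks the two boundary identities, and then invokes Proposition \ref{formeg} together with the fact that the inverse of a bounded injective self-adjoint operator is self-adjoint.

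Where you diverge is in establishing $\mathcal{D}(\mathcal{H}^{(\lambda)})\subset\mathrm{Im}(\mathcal{G}^{(\lambda)})$. The paper's Step 2 starts from $g\in\mathcal{D}(\mathcal{H}^{(\lambda)})$, sets $f=-\frac{1}{M}(M^2(g/M)')'$, and then verifies $\mathcal{G}^{(\lambda)}f=g$ by a direct integration-by-parts computation that is fairly long and requires careful cancellation using both boundary conditions. You instead show that the homogeneous equation $(M^2(u/M)')'=0$ has no nonzero solution in $\mathcal{D}(\mathcal{H}^{(\lambda)})$ (because $M_\lambda\neq M_{-\lambda}$ a.s.), and then conclude via the difference $\mathcal{G}^{(\lambda)}f-g$. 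This kernel argument is shorter and more conceptual, and it has the pleasant side effect that injectivity of $\mathcal{G}^{(\lambda)}$ falls out of Step (1) without appealing to positive definiteness. The paper's direct computation, on the other hand, is entirely self-contained and does not rely on identifying the two-dimensional solution space of the homogeneous ODE. Either route is fine here; yours is the standard Sturm--Liouville shortcut.
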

As $\mathcal{C}^{(\lambda)}$ is compact, the operator $\mathcal{H}^{(\lambda)}$ is localized. More precisely, its spectrum $\sigma\left(\mathcal{H}^{(\lambda)} \right)$, that is, the set of real numbers $E$ such that $\mathcal{H}^{(\lambda)}-E$ is not invertible, consists only in a sequence of isolated eigenvalues.
\begin{prop}\label{prop:spectrum}
Almost surely, there exists an increasing positive random sequence $(E_k(\lambda))_{k\geq 0}\in(\R_+^*)^{\N}$ which diverges toward infinity such that
$$ \sigma\left(\mathcal{H}^{(\lambda)} \right)=(E_k(\lambda))_{k\geq 0}.$$
Moreover, for every $k\in\N$, $E_k(\lambda)$ is an eigenvalue of $\mathcal{H}^{(\lambda)}$ with finite multiplicity. The eigenvalues of $\mathcal{H}^{(\lambda)}$ are counted with multiplicity.
\end{prop}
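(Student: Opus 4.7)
The plan is to transfer the spectral information from $\mathcal{G}^{(\lambda)}$ to $\mathcal{H}^{(\lambda)}$ via inversion, exploiting the fact that $\mathcal{G}^{(\lambda)}$ is realized by an integral kernel on a compact interval. The first step is to observe that, almost surely, the random kernel
$$\mathcal{G}^{(\lambda)}(t,t')=\frac{M_tM_{t'}}{(M_\lambda-M_{-\lambda})^2}\left(M_\lambda^2\int_{t'}^\lambda\!\frac{ds}{M_s^2}+M_\lambda M_{-\lambda}\int_t^{t'}\!\frac{ds}{M_s^2}+M_{-\lambda}^2\int_{-\lambda}^t\!\frac{ds}{M_s^2}\right)$$
is continuous on the compact square $[-\lambda,\lambda]^2$: continuity of the geometric Brownian motion $M$ and of $1/M$ make each of the three integrals continuous in $(t,t')$, and the prefactor is continuous as well. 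In particular the kernel is square-integrable, so the induced operator $\mathcal{G}^{(\lambda)}$ is Hilbert--Schmidt, hence compact on $L^2([-\lambda,\lambda])$. Symmetry of the kernel gives self-adjointness, and Proposition \ref{formeg} shows it is positive definite (strictly so, because the quadratic form in Proposition \ref{formeg} vanishes only when $M_{-\lambda}\int_u^\lambda f(t)M_t\,dt+M_\lambda\int_{-\lambda}^u f(t)M_t\,dt=0$ for almost every $u$, whose derivative in $u$ forces $f\equiv 0$).

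Once $\mathcal{G}^{(\lambda)}$ is known to be a compact, self-adjoint, strictly positive operator, the spectral theorem for compact self-adjoint operators supplies, almost surely, a decreasing sequence of strictly positive eigenvalues $(\mu_k(\lambda))_{k\geq 0}$ (repeated according to multiplicity) with $\mu_k(\lambda)\to 0$, together with an orthonormal basis of eigenfunctions $(\varphi_k)_{k\geq 0}$ in $L^2([-\lambda,\lambda])$. Each $\mu_k(\lambda)$ has finite multiplicity, and $0$ is not an eigenvalue by the strict positivity established above.

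I would then transfer this picture to $\mathcal{H}^{(\lambda)}$ using Theorem \ref{thm:definitionofH}. Since $\mathcal{G}^{(\lambda)}$ is a bijection from $L^2([-\lambda,\lambda])$ onto $\mathcal{D}(\mathcal{H}^{(\lambda)})$ with inverse $\mathcal{H}^{(\lambda)}$, each eigenfunction $\varphi_k$ lies in $\mathcal{D}(\mathcal{H}^{(\lambda)})$ and satisfies $\mathcal{H}^{(\lambda)}\varphi_k=E_k(\lambda)\varphi_k$ with $E_k(\lambda)=1/\mu_k(\lambda)$. Setting $E_k(\lambda):=1/\mu_{k}(\lambda)$ yields an increasing sequence of positive reals diverging to $+\infty$, with the same (finite) multiplicities as those of $\mathcal{G}^{(\lambda)}$. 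To conclude that $\sigma(\mathcal{H}^{(\lambda)})=\{E_k(\lambda),\,k\geq 0\}$ and nothing more, I would invoke the standard relation between the resolvent sets of an operator and its bounded inverse: for any $E\notin\{E_k(\lambda),\,k\geq 0\}$, the number $1/E$ lies in the resolvent set of $\mathcal{G}^{(\lambda)}$, so $(\mathcal{G}^{(\lambda)})^{-1}-E=\mathcal{H}^{(\lambda)}-E$ is boundedly invertible from $\mathcal{D}(\mathcal{H}^{(\lambda)})$ onto $L^2([-\lambda,\lambda])$.

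The main technical points to watch are strict positivity (so that $0$ is excluded from $\sigma(\mathcal{G}^{(\lambda)})$ and inversion is legitimate) and the resolvent correspondence that rules out continuous or residual spectrum for $\mathcal{H}^{(\lambda)}$ outside the discrete set $\{E_k(\lambda)\}$; both follow from the arguments above. Everything else is a direct application of the classical Hilbert--Schmidt spectral theory, which is applicable precisely because compactness of the interval $\mathcal{C}^{(\lambda)}$ makes the kernel square-integrable.
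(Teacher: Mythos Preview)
Your proof is correct and follows essentially the same route as the paper: establish compactness of $\mathcal{G}^{(\lambda)}$ from the continuity of the kernel on a compact square (the paper cites Arzel\`a--Ascoli, you use Hilbert--Schmidt; both work), apply the spectral theorem for compact self-adjoint operators, and then transfer to $\mathcal{H}^{(\lambda)}$ via the inversion provided by Theorem~\ref{thm:definitionofH}. One small slip worth correcting: in your last paragraph you write that strict positivity excludes $0$ from $\sigma(\mathcal{G}^{(\lambda)})$, but $0$ is always in the spectrum of a compact operator on an infinite-dimensional space; what you actually need (and what positive definiteness gives) is that $0$ is not an \emph{eigenvalue}, so that $\mathcal{G}^{(\lambda)}$ is injective and the inverse $\mathcal{H}^{(\lambda)}$ is well defined --- the paper makes exactly this distinction.
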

\subsection{Continuous version of $H_{\beta}$ on the real line}
One can wonder what is the limit of $\mathcal{G}^{(\lambda)}$ and $\mathcal{H}^{(\lambda)}$ when $\lambda$ goes to infinity. It would define some operators $\mathcal{G}^{(\infty)}$ and $\mathcal{H}^{(\infty)}$ on $\R$ which are associated with the VRJP on $\R$. The following proposition gives a partial answer to this question.
\begin{prop}\label{volumeinfini}
For the topology of uniform convergence on compact sets in $\R^2$, it holds that
$$\mathcal{G}^{(\lambda)}\xrightarrow[\lambda\rightarrow+\infty]{law}\mathcal{G}^{(\infty)}$$
where $\mathcal{G}^{(\infty)}$ is a symmetric random kernel on $\R^2$ such that for every $t,t'\in\R$ such that $t\leq t'$,
$$\mathcal{G}^{(\infty)}(t,t')=M_{t'}M_t\int_{-\infty}^t\frac{ds}{M_s^2}.$$
\end{prop}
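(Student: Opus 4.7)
The plan is to upgrade the statement to almost-sure uniform convergence on compact subsets of $\R^2$, working on the canonical probability space carrying the two-sided Brownian motion $B$ from which both $\mathcal{G}^{(\lambda)}$ and $\mathcal{G}^{(\infty)}$ are defined. Since $C(\R^2,\R)$ equipped with the topology of uniform convergence on compacts is a Polish space, this a.s. convergence immediately yields the claimed convergence in law.

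The whole argument rests on a handful of elementary estimates coming from the strong law of large numbers for Brownian motion, which ensures that almost surely, $B_t/t \to 0$ as $|t|\to\infty$ (in both directions). Concretely, I would first record that for a.e.\ realization and every $\epsilon\in(0,1/3)$, there exists a random $S>0$ such that for all $\lambda \geq S$:
$$M_\lambda^2 \leq e^{(-1+\epsilon)\lambda}, \quad M_{-\lambda}^2 \geq e^{(1-\epsilon)\lambda}, \quad \int_S^\lambda \frac{ds}{M_s^2} \leq \frac{1}{1+\epsilon}e^{(1+\epsilon)\lambda},$$
and moreover that $\int_{-\infty}^t \frac{ds}{M_s^2}$ converges a.s.\ for every $t\in\R$ (the integrand is dominated by $e^{(1-\epsilon)s}$ as $s\to-\infty$). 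In particular $\mathcal{G}^{(\infty)}$ is well-defined.

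Next, fix a compact $K\subset\R^2$ with $K\subset [-T,T]^2$. For $\lambda > T$ and $t\leq t'$ in $[-T,T]$, I would factor out $M_{-\lambda}^2$ from the definition to rewrite
$$\mathcal{G}^{(\lambda)}(t,t') = M_tM_{t'}\cdot\frac{1}{(1-M_\lambda/M_{-\lambda})^2}\left(\frac{M_\lambda^2}{M_{-\lambda}^2}\int_{t'}^{\lambda}\frac{ds}{M_s^2} + \frac{M_\lambda}{M_{-\lambda}}\int_{t}^{t'}\frac{ds}{M_s^2} + \int_{-\lambda}^{t}\frac{ds}{M_s^2}\right).$$
The outer prefactor $(1-M_\lambda/M_{-\lambda})^{-2}$ tends a.s.\ to $1$. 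The last term of the bracket converges a.s.\ to $\int_{-\infty}^t\frac{ds}{M_s^2}$, uniformly in $t\in[-T,T]$: the pointwise convergence is monotone, and uniformity follows from Dini's theorem since the limit is continuous in $t$. The middle term is bounded in absolute value by $(M_\lambda/M_{-\lambda})\int_{-T}^T M_s^{-2}\,ds$, hence vanishes uniformly on $K$.

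The delicate point, which I expect to be the main obstacle, is controlling the first summand: by the SLLN bounds above, $\int_{t'}^\lambda M_s^{-2}\,ds$ may blow up as fast as $e^{(1+\epsilon)\lambda}$, and a priori this could compete with the decay of $M_\lambda^2/M_{-\lambda}^2$. But the latter ratio is bounded by $e^{(-2+2\epsilon)\lambda}$, so the product is at most $C_\omega \cdot e^{(-1+3\epsilon)\lambda}$, which goes to $0$ for $\epsilon < 1/3$, and the bound is uniform in $t'\in[-T,T]$. Assembling the three contributions, together with the continuity and local boundedness of $(t,t')\mapsto M_tM_{t'}$, yields $\mathcal{G}^{(\lambda)}(t,t') \to M_tM_{t'}\int_{-\infty}^t\frac{ds}{M_s^2}= \mathcal{G}^{(\infty)}(t,t')$ uniformly on $K$ almost surely, which concludes the proof.
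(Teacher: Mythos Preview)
Your proof is correct and follows essentially the same approach as the paper: both factor out $M_{-\lambda}^2$ to rewrite $\mathcal{G}^{(\lambda)}(t,t')$ with denominator $(M_\lambda/M_{-\lambda}-1)^2$, then use the sublinear growth of Brownian motion to show the first two bracket terms vanish and the third converges to $\int_{-\infty}^t M_s^{-2}\,ds$, obtaining in fact almost-sure uniform convergence on compacts. Your treatment is somewhat more careful about uniformity (invoking Dini's theorem and giving explicit exponential bounds) than the paper's, which simply appeals to $|B_t|=o(t^{3/4})$ and asserts the $O(e^{-\lambda})$ bound directly.
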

\begin{rem}
It is important to notice that $\mathcal{G}^{(\infty)}$ is not well-defined on the whole Hilbert space $L^2(\R)$. (Contrary to the case of the circle.) We could define for every  "nice function" $f$,
$$\int_{-\infty}^{+\infty}\int_{-\infty}^{+\infty}f(t)\bar{f}(t')\mathcal{G}^{(\infty)}(t,t')dtdt'=\int_{-\infty}^{\infty}\frac{1}{M_u^2}\left| \int_u^{+\infty}f(t)M_tdt\right|^2du.$$
However, this quantity is not almost surely finite for every $f$ and the problem is to give sense to these "nice functions". We strongly suspect that the set of "nice functions" is actually $L^2(\R)\cap L^1(\R)$ (because of Proposition \ref{idenloifonc}). However, we are not able to prove it for now. It would also be possible to define $\mathcal{H}^{(\infty)}$ as the inverse of $\mathcal{G}^{(\infty)}$, that is, by $$\mathcal{H}^{(\infty)}g=-\frac{1}{M}\left(M^2\left(\frac{g}{M}\right)'\right)'.$$
Nevertheless, this is not clear what should be the domain of $\mathcal{H}^{(\infty)}$. We will try to solve this problem as soon as possible.
\end{rem}
\begin{rem}
At first sight, the expression of $\mathcal{G}^{(\infty)}$ is not symmetrical in the sense that  when $t'$ increases, then only $M_{t'}$ changes and when $t$ decreases, only $M_t\int_{-\infty}^t\frac{ds}{M_s^2}$ changes. The dissymmetry between $M_{t'}$ and $M_t\int_{-\infty}^t\frac{ds}{M_s^2}$ is surprising because the law of $G_{\beta}^{(\lambda,n)}$ is totally symmetric in the two directions of the circle. However, we will see that we need to choose an orientation of the circle because we will describe the potential $\beta$ by means of i.i.d Inverse Gaussian random variables $(A_i)_{i\in \mathcal{C}_{\lceil \lambda n\rceil}}$ as in equation \eqref{eqdefsimple}. It explains the apparent dissymetry in Proposition \ref{volumeinfini}. Moreover, Corollary \ref{MBG} explains why this dissimmetry does not exists "in law". It exists only "almost surely".
\end{rem}

Thanks to Proposition \ref{volumeinfini}, one can also prove a new functional identity in law:
\begin{cor}\label{MBG}
Let $\alpha$ be a standard Brownian motion on $\R_+$. Then the process
$$\left(e^{-\alpha_t+t/2}\frac{\displaystyle\int_t^{+\infty}e^{2\alpha_s-s}ds}{\displaystyle\int_{0}^{+\infty}e^{2\alpha_s-s}ds}\right)_{t\geq 0} $$
is a geometric Brownian motion starting from 1.
\end{cor}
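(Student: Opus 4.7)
The plan is to realize the process $X_t$ as a ratio $\mathcal{G}^{(\infty)}(0,-t)/\mathcal{G}^{(\infty)}(0,0)$ (up to an equality in law coming from the symmetry $\alpha\leftrightarrow-\alpha$) and then to exploit the reflection symmetry $t\mapsto-t$ inherited from the circle to identify this ratio with a genuine geometric Brownian motion.

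The first step is to compute the \emph{forward} ratio directly. By Proposition \ref{volumeinfini}, for $t\geq 0$ one has $0\wedge t=0$, hence
$$\frac{\mathcal{G}^{(\infty)}(0,t)}{\mathcal{G}^{(\infty)}(0,0)}=\frac{M_0M_t\int_{-\infty}^0 M_s^{-2}\,ds}{M_0^2\int_{-\infty}^0 M_s^{-2}\,ds}=M_t,$$
which, as a process in $t\geq 0$, is by definition a geometric Brownian motion starting from $1$. Next I would invoke reflection symmetry of the kernel. Since the map $i\mapsto -i$ is a graph automorphism of $\mathcal{C}_{\lceil\lambda n\rceil}$ preserving the uniform weight $W_n^{(\lambda)}$, the random kernels $\tilde{G}_{\beta}^{(\lambda,n)}(\cdot,\cdot)$ and $\tilde{G}_{\beta}^{(\lambda,n)}(-\cdot,-\cdot)$ coincide in law. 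Passing to the limit $n\to\infty$ via Theorem \ref{thm:convergence} and then $\lambda\to\infty$ via Proposition \ref{volumeinfini} (both in the topology of uniform convergence on compact sets) transfers this symmetry to $\mathcal{G}^{(\infty)}$. Specializing to one coordinate equal to $0$ therefore gives
$$\left(\frac{\mathcal{G}^{(\infty)}(0,-t)}{\mathcal{G}^{(\infty)}(0,0)}\right)_{t\geq 0}\overset{\mathrm{law}}{=}(M_t)_{t\geq 0}.$$

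The second step is to rewrite $\mathcal{G}^{(\infty)}(0,-t)/\mathcal{G}^{(\infty)}(0,0)$ in the form $X_t$. For $t\geq 0$ the formula of Proposition \ref{volumeinfini} applied to the ordered pair $(-t,0)$ gives
$$\frac{\mathcal{G}^{(\infty)}(-t,0)}{\mathcal{G}^{(\infty)}(0,0)}=M_{-t}\cdot\frac{\int_{-\infty}^{-t}M_s^{-2}\,ds}{\int_{-\infty}^{0}M_s^{-2}\,ds}.$$
Setting $\beta_u:=B_{-u}$ for $u\geq 0$, which is a standard Brownian motion on $\R_+$, and changing variable $s=-u$ turns $M_{-u}$ into $e^{\beta_u+u/2}$, so that the above ratio becomes
$$e^{\beta_t+t/2}\cdot\frac{\int_t^{\infty}e^{-2\beta_u-u}\,du}{\int_0^{\infty}e^{-2\beta_u-u}\,du}.$$
Finally, using that $-\beta$ is again a standard Brownian motion and applying the substitution $\beta\leftrightarrow -\alpha$ yields the process-level equality in law
$$\frac{\mathcal{G}^{(\infty)}(0,-t)}{\mathcal{G}^{(\infty)}(0,0)}\overset{\mathrm{law}}{=}e^{-\alpha_t+t/2}\cdot\frac{\int_t^{\infty}e^{2\alpha_u-u}\,du}{\int_0^{\infty}e^{2\alpha_u-u}\,du}=X_t.$$
Chaining this with the identity in law from the first step gives $(X_t)_{t\geq 0}\overset{\mathrm{law}}{=}(M_t)_{t\geq 0}$, a geometric Brownian motion starting from $1$, as claimed. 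The only non-algebraic step is transferring the discrete reflection symmetry to $\mathcal{G}^{(\infty)}$; this is the mildest of obstacles, as the reflection $(t,t')\mapsto(-t,-t')$ is continuous on $\R^2$ and compatible with the topologies in which the convergences of Theorem \ref{thm:convergence} and Proposition \ref{volumeinfini} take place.
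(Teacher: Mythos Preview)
Your proof is correct and follows essentially the same approach as the paper: compute the forward ratio $\mathcal{G}^{(\infty)}(0,t)/\mathcal{G}^{(\infty)}(0,0)=M_t$, invoke the reflection symmetry inherited from the discrete circle to equate it in law with the backward ratio, and then rewrite the backward ratio via the substitution $u\mapsto -u$ and the fact that $(-B_{-t})_{t\geq 0}$ is a standard Brownian motion. Your justification of the symmetry step is in fact slightly more explicit than the paper's, which simply states that ``the law of the discrete process $\mathcal{G}^{(\lambda,n)}$ is clearly symmetric and this property remains true when we take the limit.''
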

\subsection{The asymptotic density of states}
Moreover, one can look for the asymptotic density of states of $\mathcal{H}^{(\lambda)}$ when $\lambda$ goes to infinity. For every $E\in\R_+^*$, let us define the random variable $N_{\lambda}(E)$ which is the number of eigenvalues of $\mathcal{H}^{(\lambda)}$ which are lower than $E$. Then, we have the following result:
\begin{thm}\label{DOS}
For every $E>0$, 
$$\frac{N_{\lambda}(E)}{2\lambda}\xrightarrow[\lambda\rightarrow+\infty]{\P}\frac{\sqrt{E}}{\pi}:=N_{\infty}(E).$$
\end{thm}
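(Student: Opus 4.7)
The plan is to adapt the Prüfer-phase / ergodic-rotation method of Funaki (used for the continuous Anderson model) to our operator. The first move is to rewrite the eigenvalue problem: setting $h = g/M$, the equation $\mathcal{H}^{(\lambda)} g = E g$ becomes the Sturm--Liouville equation $(M^2 h')' + E M^2 h = 0$ on $[-\lambda,\lambda]$, with boundary conditions inherited from $\mathcal{D}(\mathcal{H}^{(\lambda)})$ that differ from pure Dirichlet by only $O(1)$ in the eigenvalue count by a standard Courant--Fischer interlacing argument. Sturm's oscillation theorem then identifies $N_\lambda(E)$, up to an $O(1)$ error, with the number of zeros in $(-\lambda, \lambda)$ of a fixed nontrivial solution $h$. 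Introducing Prüfer variables $h = R\sin\theta$, $M^2 h' = \sqrt{E}\,R\cos\theta$, the angle $\theta(x)$ is a pathwise continuous, monotone increasing function (since $E>0$) satisfying
\begin{equation*}
\theta'(x) = \sqrt{E}\left(M^2(x) \sin^2\theta(x) + \frac{\cos^2\theta(x)}{M^2(x)}\right),
\end{equation*}
and zeros of $h$ correspond bijectively to $\theta \in \pi \mathbb{Z}$, so $N_\lambda(E) = (\theta(\lambda) - \theta(-\lambda))/\pi + O(1)$, and it remains to show $(\theta(\lambda) - \theta(-\lambda))/(2\lambda) \to \sqrt{E}$ in probability.

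Because $M$ is a geometric Brownian motion (non-stationary), the pair $(\theta, M)$ is not directly ergodic. My next step is to reduce to an autonomous Markov diffusion via the Riccati-type change of variables $\rho = M^2 \tan\theta$. Applying Itô's formula to the product $M^2 \cdot \tan\theta$ (using $d(M^2) = M^2(2\,dB + dx)$ together with the pathwise evolution of $\tan\theta$) produces the autonomous SDE
\begin{equation*}
d\rho = \bigl[\rho + \sqrt{E}(1+\rho^2)\bigr]\, dx + 2\rho\, dB_x,
\end{equation*}
which, via $\phi = \arctan\rho$, is equivalent to a diffusion on the circle $\R/\pi\mathbb{Z}$ with drift $b(\phi) = \sqrt{E} - \tfrac{1}{2}\sin(2\phi) + \tfrac{1}{2}\sin(4\phi)$ and diffusion coefficient $\sigma(\phi) = \sin(2\phi)$. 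Although $\sigma$ vanishes at $\phi \in \{0, \pi/2\}$, the drift $\sqrt{E}>0$ at those same points sweeps orbits through them deterministically, so the diffusion is irreducible and admits a unique invariant probability measure $\mu$. The crucial geometric observation is that a single revolution of $\phi$ around $\R/\pi\mathbb{Z}$ corresponds to exactly one zero of $h$; hence the density of zeros per unit length equals the stationary probability flux $J$ of $\phi$, and Birkhoff's ergodic theorem for the diffusion $\phi$ then gives $\theta(\lambda)/\lambda \to \pi J$ in probability.

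The last step is to show $J = \sqrt{E}/\pi$, which I would do from the stationary Fokker--Planck equation. The flux $J = b\,p - (\sigma^2/2)\,p'$ is constant in $\phi$; evaluating at $\phi = 0$ (where $\sigma = 0$) forces $J = \sqrt{E}\,p(0)$, and similarly $J = \sqrt{E}\,p(\pi/2)$. Solving the resulting first-order linear ODE for $p$ on each smooth arc $(0, \pi/2)$ and $(\pi/2, \pi)$ with these boundary values, matching across the singular points, and imposing $\int_0^\pi p\,d\phi = 1$ yields $p(0) = 1/\pi$, whence $J = \sqrt{E}/\pi$ and
\begin{equation*}
\frac{N_\lambda(E)}{2\lambda} = \frac{\theta(\lambda) - \theta(-\lambda)}{2\pi\lambda} + o(1) \xrightarrow[\lambda\to+\infty]{\P} \frac{\sqrt{E}}{\pi}.
\end{equation*}
The main technical obstacle will be precisely this Fokker--Planck analysis at the singular points: one must justify the boundary identities $p(0) = p(\pi/2) = J/\sqrt{E}$ by a careful matched-expansion argument where $\sigma$ degenerates, and then verify rigorously that the normalization produces the clean value $p(0) = 1/\pi$ (this is the manifestation, in the specific structure of our SDE, of the simple answer $\sqrt{E}/\pi$, in contrast to the Anderson setting where the analogous quantity does not simplify). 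A secondary but routine obstacle is the $O(1)$ discrepancy between the periodic-type boundary conditions of $\mathcal{D}(\mathcal{H}^{(\lambda)})$ and the Dirichlet ones used in Sturm's oscillation theorem, which is controlled by interlacing and vanishes after dividing by $2\lambda$.
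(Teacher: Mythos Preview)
Your approach is essentially the same as the paper's: reduce the eigenvalue equation to the Sturm--Liouville form $(M^2 h')'+EM^2h=0$, count zeros via Pr\"ufer oscillation, pass to an autonomous Markov process through a Riccati substitution, and extract the limit from the rotation rate of that process. The paper uses the variable $\zeta_E=-\cot\theta_E/M^2$ (satisfying $d\zeta_E=-2\zeta_E\,dB+(\zeta_E^2+3\zeta_E+E)\,dt$), compactifies via the Cayley transform to a circle-valued diffusion $\Gamma_E$, and then argues by the strong Markov property that the successive hitting times of $\infty$ form a renewal process, so that the limit is $1/\E[T_1]$; the value $\E[T_1]=\pi/\sqrt{E}$ is obtained not via Fokker--Planck but by constructing an explicit $f^*$ with $\mathcal{L}f^*=1$ and evaluating the resulting double integral through a clever change of variables. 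Your $\rho=M^2\tan\theta$ and $\phi=\arctan\rho$ are a different parametrization of the same circle diffusion (indeed $\rho\cdot\zeta_E=-\sqrt{E}$), and your proposed stationary-flux computation is the Fokker--Planck dual of the paper's mean-return-time calculation. The paper's renewal route has the mild advantage that it sidesteps the degenerate-ellipticity issues you flag at $\phi\in\{0,\pi/2\}$: it needs only the strong Markov property and $\E[T_1]<\infty$, both of which it verifies directly.
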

In some way, $N_{\infty}$ is the integrated density of states of $\mathcal{H}^{(\infty)}$. (However $\mathcal{H}^{(\infty)}$ can not be defined rigorously as an operator for now.)
\begin{rem}
The behaviour of the density of states around $0$ is reminiscent of what is going on for the operator $H_{\beta}$ on $\mathbb{Z}^d$. Indeed, by Theorem 1 and 3 in \cite{DOS}, if the weights $W$ are small enough, then the density of states of the discrete Schrödinger operator $H_{\beta}$ behaves like $\sqrt{E}$ near $0$ up to logarithmical corrections.
\end{rem}
\begin{rem}
The density of states $E\mapsto \frac{\sqrt{E}}{\pi}$ is exactly the density of states of $-\Delta$ on $\R$. However it is possible to see a difference  between the eigenvalues of $\mathcal{H}^{(\infty)}$ and $-\Delta$ if we look at the microscopic scale.
Moreover, in the case of the Anderson model, the integrated density of states can be computed explicitely. (see \cite{frilo}, \cite{Halperin} and \cite{Funak}) Then the behaviour at infinity of the integrated density of states is also $\frac{\sqrt{E}}{\pi}$, exactly as for our operator $\mathcal{H}^{(\infty)}$ and $-\Delta$.

\end{rem}
\subsection{Organisation of the paper}
\begin{itemize}
\item In section \ref{sec:background}, we remind several important facts regarding the $\beta$ potential.
\item Section \ref{sec:premlem} is devoted to the proof of a few lemmas which will be useful in the sequel of this paper.
\item In section \ref{sec:proofmatyor}, we give a new proof of Matsumoto-Yor properties by means of the $\beta$-potential. This section is independent from the next ones.
\item In other sections, we prove the results concerning the continuous versions of the operator $H_{\beta}$.
\end{itemize}
\section{Background on the $\beta$-potential}\label{sec:background}
Let $(V,E)$ a finite graph. The $\beta$-potential with distribution $\nu_V^W$ which is defined in \eqref{transfolaplace} is a special case of a more general family of random potentials which appear naturally when taking restrictions. Let us consider a symmetric matrix $W$ on $V\times V$ with non-negative entries $(W_{i,j})_{i,j\in V\times V}$ and let $\eta:=(\eta_i)_{i\in V}$ be a vector on $V$ with non-negative entries. Recall that for every $\beta\in\R_+^V$, $H_{\beta}$ is a matrix such that for every $i,j\in V$,
\begin{align*}
H_{\beta}(i,j)=\textbf{1}\{i=j\}2\beta_i-W_{i,j}.
\end{align*}
We can generalize the measure $\nu_V^W$ thanks to the following proposition.
\begin{prop}[Theorem 2.2 in \cite{Letac} or Lemma 4 in \cite{sabotzeng}]\label{prop:defgen}
Let us define the measure $\nu_V^{W,\eta}$ on $\R_+^V$ by
$$\nu_V^{W,\eta}(d\beta):=\textbf{1}\{H_{\beta}>0\}\left(\frac{2}{\pi}\right)^{|V|/2}e^{-\frac{1}{2}\langle {\bf{1}},H_{\beta} {\bf{1}}\rangle-\frac{1}{2}\langle \eta, (H_{\beta})^{-1}\eta\rangle+\langle \eta,\bf{1}\rangle}\frac{d\beta_V}{\sqrt{det(H_{\beta})}}$$
where ${\bf{1}}$ stands for a vector whose entries are all equal to 1. Then $\nu_V^{W,\eta}$ is a probability measure. Moreover, its Laplace transform is, for any $t\in\R_+^V$,
$$\int e^{-\langle t,\beta\rangle}\nu_V^{W,\eta}(d\beta)=e^{-\sum\limits_{i\in V}\eta_i(\sqrt{t_i+1}-1)-\frac{1}{2}\sum\limits_{\{i,j\}\in E} W_{i,j}\left(\sqrt{(1+t_i)(1+t_j)}-1\right)}\prod\limits_{i\in V}\frac{1}{\sqrt{1+t_i}}.$$
\end{prop}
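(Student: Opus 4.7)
The plan is to reduce the statement to the already-stated Proposition~\ref{prop:resumebeta} via a \emph{ghost vertex} construction that realizes $\nu_V^{W,\eta}$ as the marginal of a $\beta$-measure on a slightly larger graph. Concretely, set $\tilde{V}=V\cup\{\delta\}$ for an auxiliary point $\delta\notin V$, and extend $W$ to a symmetric non-negative weight matrix $\tilde{W}$ on $\tilde{V}\times\tilde{V}$ by putting $\tilde{W}_{i,\delta}=\tilde{W}_{\delta,i}:=\eta_i$ for $i\in V$ and $\tilde{W}_{\delta,\delta}:=0$. Writing $\tilde{\beta}=(\beta,\tilde{\beta}_\delta)\in\mathbb{R}_+^{\tilde{V}}$, the associated operator has block form
$$H_{\tilde{\beta}}=\begin{pmatrix}H_\beta & -\eta \\ -\eta^T & 2\tilde{\beta}_\delta\end{pmatrix},$$
and taking Schur complements with respect to the $\delta$-block gives simultaneously
$$\det H_{\tilde{\beta}}=\det(H_\beta)\bigl(2\tilde{\beta}_\delta-\langle\eta,H_\beta^{-1}\eta\rangle\bigr)$$
together with the decoupling $\{H_{\tilde{\beta}}>0\}=\{H_\beta>0\}\cap\{2\tilde{\beta}_\delta>\langle\eta,H_\beta^{-1}\eta\rangle\}$. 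A direct expansion of the quadratic form further yields $\langle\mathbf{1},H_{\tilde{\beta}}\mathbf{1}\rangle=\langle\mathbf{1},H_\beta\mathbf{1}\rangle+2\tilde{\beta}_\delta-2\langle\eta,\mathbf{1}\rangle$.

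Given these identities, I would apply Proposition~\ref{prop:resumebeta} to $(\tilde{V},\tilde{W})$ to obtain that $\nu_{\tilde{V}}^{\tilde{W}}$ is a probability measure, and then marginalize over the ghost coordinate $\tilde{\beta}_\delta$. Plugging the Schur identities into the density \eqref{formuladens} on $\tilde{V}$ and performing the change of variables $u=2\tilde{\beta}_\delta-\langle\eta,H_\beta^{-1}\eta\rangle$ (so $u>0$ on the support), the entire dependence on $\tilde{\beta}_\delta$ reduces to the one-dimensional integral $\int_0^{\infty}u^{-1/2}e^{-u/2}du=\sqrt{2\pi}$. Collecting the resulting prefactors via the identity $(2/\pi)^{(|V|+1)/2}\cdot\sqrt{\pi/2}=(2/\pi)^{|V|/2}$, the marginal density in $\beta$ matches exactly the density defining $\nu_V^{W,\eta}$. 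This shows at once that $\nu_V^{W,\eta}$ is a probability measure.

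The Laplace transform then drops out by specialization of \eqref{transfolaplace} applied to $(\tilde{V},\tilde{W})$ at $\tilde{t}_\delta=0$ and $\tilde{t}_i=t_i$ for $i\in V$. Indeed: the factor $(1+\tilde{t}_\delta)^{-1/2}$ becomes $1$, each interior edge $\{i,j\}\in E$ contributes the same term $W_{i,j}(\sqrt{(t_i+1)(t_j+1)}-1)$ as in the base formula, and each auxiliary edge $\{i,\delta\}$ with conductance $\eta_i$ contributes $\eta_i(\sqrt{t_i+1}-1)$. Assembling these pieces yields precisely the formula claimed in the proposition.

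The only delicate step in this plan is the Schur-complement analysis: it is precisely the factorization of the positivity region $\{H_{\tilde{\beta}}>0\}$ as a product $\{H_\beta>0\}\times\{2\tilde{\beta}_\delta>\langle\eta,H_\beta^{-1}\eta\rangle\}$ that makes the integration over $\tilde{\beta}_\delta$ a one-dimensional explicit integral, and it is also that step that manufactures the extra factor $e^{-\frac{1}{2}\langle\eta,H_\beta^{-1}\eta\rangle}$ appearing in the target density. Once this decoupling is secured, everything else is bookkeeping around the scalar identity $\int_0^\infty u^{-1/2}e^{-u/2}du=\sqrt{2\pi}$.
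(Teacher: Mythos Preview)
Your proof is correct. The paper does not give its own proof of this proposition: it is quoted as a known result from \cite{Letac} and \cite{sabotzeng}, so there is no argument in the paper to compare against. Your ghost-vertex (or ``rooting'') reduction to Proposition~\ref{prop:resumebeta} is precisely the standard way this generalized family is obtained in the literature; the Schur-complement identities you wrote are exactly what make the $\tilde\beta_\delta$-integration collapse to the Gamma integral $\int_0^\infty u^{-1/2}e^{-u/2}\,du=\sqrt{2\pi}$ and produce the extra factor $e^{-\frac12\langle\eta,H_\beta^{-1}\eta\rangle+\langle\eta,\mathbf 1\rangle}$ in the density, and setting $\tilde t_\delta=0$ in \eqref{transfolaplace} on the enlarged graph recovers the claimed Laplace transform. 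One small caveat worth recording: for the boundary contribution in the Laplace transform to come out as $-\eta_i(\sqrt{t_i+1}-1)$ rather than $-\tfrac12\eta_i(\sqrt{t_i+1}-1)$, you are implicitly reading the sum $\sum_{\{i,j\}\in E}$ in \eqref{transfolaplace} as a sum over ordered pairs (so that the $\tfrac12$ cancels the double count). That is the convention consistent with \eqref{transfolaplaceinfini} and with Proposition~\ref{prop:defgen} itself, so your bookkeeping is fine once this is made explicit.
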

Remark that the Laplace transform in \eqref{transfolaplace} is the same as the Laplace transform of $\nu_V^{W,0}$ given in Proposition \ref{prop:defgen}.
Further, the measures of type $\nu_{V}^{W,\eta}$ are convenient when we want to manipulate them because they form a family which is stable under restriction and conditioning.
\begin{prop}[Lemma 5 in \cite{sabotzeng} or Proposition 4.3 in \cite{Letac}]\label{prop:restrictions}
Let $U$ be a subset of $V$. If $\beta\sim\nu_V^{W,\eta }$, it holds that
\begin{enumerate}[(i)]
\item  $\beta_U$ follows the distribution $\nu_U^{W_{U,U},\hat{\eta}}$, where
$$\hat{\eta}=\eta_U+W_{U,U^c}({\bf{1}}_{U^c}),$$
\item Conditionally on $\beta_U$, $\beta_{U^c}$ follows the distribution $\nu_{U^c}^{\check{W},\check{\eta}}$, where
$$\check{W}=W_{U^c,U^c}+W_{U^c,U}((H_{\beta})_{U,U})^{-1}W_{U,U^c},\hspace{0.3 cm}\check{\eta}=\eta_{U^c}+W_{U^c,U}((H_{\beta})_{U,U})^{-1}\eta_U.$$

\end{enumerate}
\end{prop}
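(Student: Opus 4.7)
My plan is to prove both items from a single density-level factorization,
$$\nu_V^{W,\eta}(d\beta) = \nu_U^{W_{U,U},\hat\eta}(d\beta_U)\cdot\nu_{U^c}^{\check W,\check\eta}(d\beta_{U^c}),$$
from which (i) follows by marginalization and (ii) by reading off the conditional. The workhorse is the block decomposition of $H_\beta$ along $V = U \sqcup U^c$,
$$H_\beta = \begin{pmatrix} A & -W_{U,U^c} \\ -W_{U^c,U} & D \end{pmatrix},\qquad A=(H_\beta)_{U,U},\ D=(H_\beta)_{U^c,U^c},$$
combined with the Schur complement identities $|H_\beta| = |A|\cdot|S|$ for $S := D - W_{U^c,U}A^{-1}W_{U,U^c}$, the positivity equivalence $\{H_\beta > 0\} = \{A>0\}\cap\{S>0\}$, and the block inverse formula for $H_\beta^{-1}$. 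The key observation is that $S = 2\,\mathrm{diag}(\beta_{U^c}) - \check W$, i.e.\ $S$ is the operator $H_{\beta_{U^c}}$ built with the modified weights $\check W$ of the statement.

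Item (i) actually admits a shorter independent proof via the Laplace transform of Proposition \ref{prop:defgen}. Given $t\in\R_+^U$, extend it by zero to $\tilde t\in\R_+^V$; since $\sqrt{1+\tilde t_j}=1$ for $j\in U^c$, the vertex sum collapses to $\sum_{i\in U}\eta_i(\sqrt{1+t_i}-1)$, the infinite product collapses to $\prod_{i\in U}(1+t_i)^{-1/2}$, and each mixed edge $\{i,j\}$ with $i\in U$, $j\in U^c$ contributes $W_{i,j}(\sqrt{1+t_i}-1)$. Grouping this boundary contribution with the $\eta$-term amounts exactly to replacing $\eta_U$ by $\hat\eta = \eta_U + W_{U,U^c}\mathbf{1}_{U^c}$, and one recognizes the Laplace transform of $\nu_U^{W_{U,U},\hat\eta}$.

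For (ii) I would verify the factorization term-by-term in the exponent of $\nu_V^{W,\eta}$. The block inverse formula yields the central identity
$$\langle \eta, H_\beta^{-1}\eta\rangle = \langle \eta_U, A^{-1}\eta_U\rangle + \langle \check\eta, S^{-1}\check\eta\rangle,$$
in which $\check\eta = \eta_{U^c} + W_{U^c,U}A^{-1}\eta_U$ appears automatically by completing the square using the $(1,1)$-block of $H_\beta^{-1}$. Likewise $\tfrac12\langle\mathbf{1}, H_\beta\mathbf{1}\rangle$ and $\langle\eta,\mathbf{1}\rangle$ split along the blocks via the elementary identities $\tfrac12\langle\mathbf{1}_{U^c},D\mathbf{1}_{U^c}\rangle = \tfrac12\langle\mathbf{1}_{U^c},S\mathbf{1}_{U^c}\rangle + \tfrac12\langle W_{U,U^c}\mathbf{1}_{U^c}, A^{-1}W_{U,U^c}\mathbf{1}_{U^c}\rangle$ and the expansion of $\langle\hat\eta,A^{-1}\hat\eta\rangle$. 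Combined with $|H_\beta|=|A|\cdot|S|$, the indicator factorization, and the matching $(2/\pi)^{|V|/2} = (2/\pi)^{|U|/2}(2/\pi)^{|U^c|/2}$, the joint density becomes exactly the product of $\nu_U^{W_{U,U},\hat\eta}(d\beta_U)$ and $\nu_{U^c}^{\check W,\check\eta}(d\beta_{U^c})$.

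The main obstacle is purely bookkeeping: the cross-terms $\langle\eta_U, A^{-1}W_{U,U^c}\mathbf{1}_{U^c}\rangle$ and $\langle W_{U,U^c}\mathbf{1}_{U^c}, A^{-1}W_{U,U^c}\mathbf{1}_{U^c}\rangle$ appear from three different places (the expansion of $\langle\hat\eta, A^{-1}\hat\eta\rangle$, the expansion of $\langle\check\eta, \mathbf{1}_{U^c}\rangle = \langle\eta_{U^c},\mathbf{1}_{U^c}\rangle + \langle\eta_U, A^{-1}W_{U,U^c}\mathbf{1}_{U^c}\rangle$, and the splitting of $\langle\mathbf{1}, H_\beta\mathbf{1}\rangle$), and one must check that they all cancel or recombine into the correct factors. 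The identities above are exactly what make this work, and no conceptual difficulty remains once the block algebra is unwound; positivity of $\check\eta$ is automatic since $A$ is a positive-definite M-matrix, so $A^{-1}$ has non-negative entries and preserves $\R_+$-vectors.
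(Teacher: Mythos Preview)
Your proposal is correct and is essentially the standard argument found in the references the paper cites (Lemma~5 in \cite{sabotzeng}, Proposition~4.3 in \cite{Letac}); the present paper does not give its own proof of Proposition~\ref{prop:restrictions} but simply quotes it as background. The density-level factorization via the Schur complement of $(H_\beta)_{U,U}$, together with the block-inverse identity $\langle\eta,H_\beta^{-1}\eta\rangle=\langle\eta_U,A^{-1}\eta_U\rangle+\langle\check\eta,S^{-1}\check\eta\rangle$ and the determinant splitting $|H_\beta|=|A|\,|S|$, is exactly how those papers proceed, and your Laplace-transform shortcut for (i) is also standard.
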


 Let $\beta\in\R_+^V$ be such that $H_{\beta}$ is positive definite. Let $G_{\beta}$ be the inverse of $H_{\beta}$. Let $i,j\in V$. A path in the graph $(V,E)$ from $i$ to $j$ consists in a finite sequence $\sigma=(\sigma_0,\cdots,\sigma_m)$ in $V$ such that $\sigma_0=i$, $\sigma_m=j$ and for every $k\in\{0,\cdots,m-1\}$, $\{\sigma_k,\sigma_{k+1}\}\in E$. The length $m$ of $\sigma$ will be denoted by $|\sigma|$. Let $\mathcal{P}_{i,j}^V$ be the set of paths from $i$ to $j$. We define also the set $\bar{{\mathcal{P}}}_{i,j}^V$ which is the collection of paths $\sigma$ from $i$ to $j$ such that $\sigma_k\neq j$ for every $k\in\{0,\cdots,m-1\}$. Moreover, for any path $\sigma$, we define
$$W_{\sigma}=\prod_{k=0}^{|\sigma|-1}W_{\sigma_k,\sigma_{k+1}},\hspace{0.3 cm}(2\beta)_{\sigma}=\prod\limits_{k=0}^{|\sigma|}(2\beta_{\sigma_k}),\hspace{0.3 cm}(2\beta)_{\sigma}^-=\prod\limits_{k=0}^{|\sigma|-1}(2\beta_{\sigma_k}).$$
Then, we have the following useful description of $G_{\beta}$:
\begin{prop}\label{sommes sur les chemins}
Let $(\beta_i)_{i\in V}$ be a random field on $V$ with distribution $\nu_V^{W,\eta}$. Then for every $i,j\in V$, almost surely,
$$G_{\beta}(i,j)=\sum\limits_{\sigma\in \mathcal{P}_{i,j}^V}\frac{W_{\sigma}}{(2\beta)_{\sigma}},\hspace{1 cm}\frac{G_{\beta}(i,j)}{G_{\beta}(i,i)}=\sum\limits_{\sigma\in\bar{{\mathcal{P}}}_{j,i}^V}\frac{W_{\sigma}}{(2\beta)_{\sigma}^-}.$$
\end{prop}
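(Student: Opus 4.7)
My plan is to represent $G_\beta$ as a Neumann series and identify the $(i,j)$ entry of each term with a sum over paths of fixed length. Writing $D_\beta := \mathrm{diag}((\beta_i)_{i\in V})$ and $A := (2D_\beta)^{-1}W$, I factor $H_\beta = (2D_\beta)(I - A)$, which gives formally
$$G_\beta \;=\; \sum_{m \geq 0} A^m (2D_\beta)^{-1}.$$
A direct index chase shows that $\bigl(A^m(2D_\beta)^{-1}\bigr)(i,j) = \sum_{|\sigma|=m,\,\sigma \in \mathcal{P}_{i,j}^V} W_\sigma/(2\beta)_\sigma$, since composing the factors $A = (2D_\beta)^{-1}W$ along a length-$m$ walk produces exactly the ratio $\prod_k W_{\sigma_k,\sigma_{k+1}}/(2\beta_{\sigma_k})$, and the final $(2D_\beta)^{-1}$ supplies the missing $1/(2\beta_{\sigma_m})$. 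Summing over $m$ gives the first identity, as soon as the Neumann expansion is legitimate.

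To justify convergence I would exploit that $A$ is conjugate to the symmetric entrywise-non-negative matrix $Q := \tfrac{1}{2} D_\beta^{-1/2} W D_\beta^{-1/2}$ through $A = D_\beta^{-1/2} Q\, D_\beta^{1/2}$, hence $A$ and $Q$ share the same spectrum. Conjugating $H_\beta$ by $D_\beta^{-1/2}$ turns positive-definiteness of $H_\beta$ into positive-definiteness of $2(I - Q)$, which is equivalent to the largest eigenvalue of $Q$ being strictly less than $1$. Since $Q$ is symmetric and non-negative, Perron--Frobenius identifies that largest eigenvalue with the spectral radius, so the spectral radius of $A$ is strictly less than $1$ on $\{H_\beta > 0\}$, a full-measure event under $\nu_V^{W,\eta}$. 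This yields absolute convergence of the Neumann series, and since all summands $W_\sigma/(2\beta)_\sigma$ are non-negative there is no delicate interchange of limits.

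For the second identity, I would decompose any path $\sigma \in \mathcal{P}_{j,i}^V$ uniquely according to its first-passage time at $i$, writing $\sigma = \rho \cdot \pi$ with $\rho \in \bar{\mathcal{P}}_{j,i}^V$ the first-passage segment from $j$ to $i$ and $\pi \in \mathcal{P}_{i,i}^V$ an arbitrary loop at $i$. The edge weights factor as $W_\sigma = W_\rho W_\pi$, while the vertex product satisfies $(2\beta)_\sigma = (2\beta)_\rho (2\beta)_\pi/(2\beta_i)$, the concatenation vertex $i$ being counted twice in the naive product. Using $(2\beta)_\rho = (2\beta_i)\,(2\beta)_\rho^-$ and the first identity applied to the loop sum, I obtain
$$G_\beta(j,i) \;=\; \Bigl(\,\sum_{\rho \in \bar{\mathcal{P}}_{j,i}^V} \frac{W_\rho}{(2\beta)_\rho^-}\Bigr)\,G_\beta(i,i),$$
and the symmetry $G_\beta(i,j) = G_\beta(j,i)$ (inherited from symmetry of $H_\beta$) closes the argument.

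The main obstacle is the spectral step in the second paragraph: the path series is a priori a sum of positive random variables whose finiteness is not obvious, and everything else is combinatorial bookkeeping. Once the Perron--Frobenius--positive-definiteness dictionary is in place, both identities reduce to formal manipulations of walks and their weights.
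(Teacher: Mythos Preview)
Your argument is correct, and in fact the paper does not supply its own proof of this proposition: it is stated in Section~\ref{sec:background} as a background fact on the $\beta$-potential, alongside the other recalled results from \cite{SZT,sabotzeng,Letac}. Your Neumann-series derivation with the Perron--Frobenius justification of convergence, followed by the first-passage decomposition for the ratio formula, is exactly the standard route to such path-expansion identities, and every step checks out. One small remark: in the spectral step you only need that for an entrywise non-negative symmetric matrix the spectral radius is attained by a (real, non-negative) eigenvalue, which holds without any irreducibility assumption; this is implicit in your write-up but worth making explicit since $W$ need not define a connected graph on $V$.
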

\section{Preliminary lemmas} \label{sec:premlem}
In this section we will prove a few lemmas about Inverse Gaussian random variables which will be crucial in the sequel of this paper. 
\begin{lem}\label{lem:estimees}
Let $A^{(n)}$ be an Inverse Gaussian random variable with parameters $(1,n)$. Then we know that
$$\E\left[\ln(A^{(n)})\right]=-\frac{1}{2n}+o\left(\frac{1}{n} \right)\text{ and } Var\left[\ln(A^{(n)})\right]=\frac{1}{n}+o\left(\frac{1}{n}\right).$$
Moreover, for any $n\in\N\backslash\{0,1\}$ and for any $v>0$, it holds that
$$\P\left[|\ln(A^{(n)})|>v\right]\leq \frac{2e}{v\sqrt{\pi(n-1)}}e^{-(n-1)v^2/2}.$$
\end{lem}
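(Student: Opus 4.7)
The plan is to work directly with the density of $Y := \ln A^{(n)}$. Applying the change of variable $y = \ln x$ to the $IG(1,n)$ density and using $(e^y-1)^2/e^y = 2(\cosh y - 1)$ gives
$$g_n(y) = \sqrt{\tfrac{n}{2\pi}}\, e^{-y/2 - n(\cosh y - 1)}, \qquad y \in \R.$$
The key preliminary is a uniform pointwise bound. Splitting $n = (n-1) + 1$ in the exponent and using $\cosh y - 1 \geq y^2/2$ (from the Taylor series of $\cosh$), one gets
$$g_n(y) \leq \sqrt{\tfrac{n}{2\pi}}\, e^{-(n-1)y^2/2}\cdot e^{-y/2 - (\cosh y - 1)}.$$
A short calculus check shows $\cosh y + y/2 \geq 7/8$ for every $y$ (immediate from $\cosh y \geq 1 + y^2/2$), so $-y/2 - (\cosh y - 1) \leq 1/8 \leq 1$, and therefore $g_n(y) \leq e\sqrt{n/(2\pi)}\, e^{-(n-1)y^2/2}$ uniformly in $y$.

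The concentration bound now follows immediately. For $n \geq 2$,
$$\P(|Y|>v) \leq 2e\sqrt{\tfrac{n}{2\pi}}\int_v^\infty e^{-(n-1)y^2/2}\,dy.$$
Substituting $u = y\sqrt{n-1}$ and applying the classical Gaussian tail estimate $\int_t^\infty e^{-u^2/2}\,du \leq t^{-1}e^{-t^2/2}$ with $t = v\sqrt{n-1}$, together with the elementary inequality $\sqrt{n/(n-1)} \leq \sqrt{2}$ for $n \geq 2$, produces exactly the announced bound $\frac{2e}{v\sqrt{\pi(n-1)}}\,e^{-(n-1)v^2/2}$.

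For the expansions of $\E[\ln A^{(n)}]$ and $\mathrm{Var}(\ln A^{(n)})$ I would rescale $Z_n := \sqrt{n}\,Y$, whose density $h_n(z) := g_n(z/\sqrt{n})/\sqrt{n}$ is dominated for $n \geq 2$ by $(e/\sqrt{2\pi})\,e^{-z^2/4}$, thanks to the uniform bound and $(n-1)/n \geq 1/2$. Expanding $n(\cosh(z/\sqrt{n})-1) = z^2/2 + z^4/(24n) + O(z^6/n^2)$ and $e^{-z/(2\sqrt{n})} = 1 - z/(2\sqrt{n}) + z^2/(8n) + O(z^3/n^{3/2})$, one obtains, on the window $|z| \leq n^{1/4}$,
$$h_n(z) = \phi(z)\left[1 - \tfrac{z}{2\sqrt{n}} + \tfrac{z^2}{8n} - \tfrac{z^4}{24n} + O(n^{-3/2})\right],$$
where $\phi$ is the standard Gaussian density. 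Integrating termwise against $z$ and $z^2$, using that odd Gaussian moments vanish and $\int z^{2k}\phi = (2k-1)!!$, we get $\E[Z_n] = -1/(2\sqrt{n}) + o(1/\sqrt{n})$ and $\E[Z_n^2] = 1 + o(1)$, which translate back to $\E[\ln A^{(n)}] = -1/(2n) + o(1/n)$ and $\mathrm{Var}(\ln A^{(n)}) = 1/n + o(1/n)$.

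The main technical obstacle is justifying the termwise integration in this last step. I would handle it by splitting the integral at $|z| = n^{1/4}$: on the inner region the Taylor remainders are uniformly small and the expansion converges, while on the outer region the Gaussian domination $h_n(z) \leq (e/\sqrt{2\pi})\,e^{-z^2/4}$ ensures the contribution is $O(e^{-\sqrt{n}/4})$, negligible at the required precision. What remains is routine bookkeeping of a few Gaussian moments.
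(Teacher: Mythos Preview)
Your proof is correct and follows essentially the same route as the paper's: both compute the density of $\ln A^{(n)}$ (your $n(\cosh y-1)$ is the paper's $2n\sinh^2(u/2)$), use the elementary inequality $\cosh y-1\ge y^2/2$ to get a Gaussian majorant, and deduce the tail bound and the moment asymptotics by dominated convergence after the rescaling $\sqrt{n}\ln A^{(n)}$. The only cosmetic difference is that the paper treats the mean by an odd/even symmetry trick and a substitution $t=\sinh(u)$ before applying dominated convergence, whereas you carry out a systematic Taylor expansion of $h_n$; both are equally valid and of the same length.
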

\begin{proof}
This proof is very similar with the proof of Lemma 3.4 in \cite{LST} but we do it again here for the paper to be self-contained. Let $n\in\N^*$.
The density of $\ln(A^{(n)})$ is
$$\frac{\sqrt{n}}{\sqrt{2\pi}}e^{-u/2-2n\sinh(u/2)^2}du.$$
Therefore,
\begin{align}
\E\left[\ln(A^{(n)})\right]&=\int_{-\infty}^{+\infty}\frac{\sqrt{n}}{\sqrt{2\pi}}ue^{-u/2-2n\sinh\left(\frac{u}{2}\right)^2}du\nonumber\\
&=\frac{\sqrt{n}}{\sqrt{2\pi}}\int_0^{+\infty}\left(ue^{-u/2}-ue^{u/2}\right)e^{-2n\sinh\left(\frac{u}{2}\right)^2}du\nonumber\\
&=-\frac{8\sqrt{n}}{\sqrt{2\pi}}\int_0^{+\infty}u\sinh(u)e^{-2n\sinh\left(u\right)^2}du.\label{premlem0}\\
\end{align}
Now, let us do the change of variable $t=\sinh(u)$ in \eqref{premlem0}. It yields
\begin{align}
\E\left[\ln(A^{(n)})\right]&=-\frac{8\sqrt{n}}{\sqrt{2\pi}}\int_0^{+\infty }\frac{t\times\mathrm{ argsinh}(t)}{\sqrt{1+t^2}}e^{-2nt^2}dt\nonumber\\
&=-\frac{1}{n}\times\frac{8}{\sqrt{2\pi}}\int_0^{+\infty}\frac{\sqrt{n}t\times\mathrm{ argsinh}(t/\sqrt{n})}{\sqrt{1+t^2/n}}e^{-2t^2}dt.\label{premlem01}
\end{align}
Besides, for every $t>0$ and for every $n\in\N^*$, $\frac{\sqrt{n}t\times\mathrm{ argsinh}(t/\sqrt{n})}{\sqrt{1+t^2/n}}e^{-2t^2}\leq t^2e^{-2t^2}$. Therefore, we can apply the dominated convergence theorem in \eqref{premlem01} which implies 
\begin{align}
\E\left[\ln(A^{(n)})\right]&=-\frac{1}{n}\times\frac{8}{\sqrt{2\pi}}\int_0^{+\infty}t^2e^{-2t^2}dt+o\left(\frac{1}{n}\right)=-\frac{1}{2n}+o\left(\frac{1}{2n}\right).\label{premlem02}
\end{align}
Now, let us define $B^{(n)}=\sqrt{n}\left(\ln(A^{(n)})+\frac{1}{2n}\right)$. Observe that the density of $B^{(n)}$ is
\begin{align*}
\frac{1}{\sqrt{2\pi}}e^{-\frac{1}{2}\left(\frac{v}{\sqrt{n}}-\frac{1}{2n} \right)-2n\sinh\left(\frac{1}{2}\left(\frac{v}{\sqrt{n}}-\frac{1}{2n} \right)\right)^2}dv.
\end{align*}
Therefore, as $\sinh(x)^2\geq x^2$ for every $x\in\R$, for any positive function $F$ of $\R$ into itself,
\begin{align}
\E\left[F(B^{(n)})\right]&=\int_{-\infty}^{+\infty}F(v)\frac{1}{\sqrt{2\pi}}e^{-\frac{1}{2}\left(\frac{v}{\sqrt{n}}-\frac{1}{2n} \right)-2n\sinh\left(\frac{1}{2}\left(\frac{v}{\sqrt{n}}-\frac{1}{2n} \right)\right)^2}dv\nonumber\\
&\leq \frac{1}{\sqrt{2\pi}}e^{\frac{1}{8n}}\int_{-\infty}^{\infty}F(v)e^{-v^2/2}dv.\label{premlem1}
\end{align}
Consequently, by the dominated convergence theorem,
\begin{align}
\E\left[{B^{(n)}}^2\right]\xrightarrow[n\rightarrow+\infty]{}\frac{1}{\sqrt{2\pi}}\int_{-\infty}^{+\infty}v^2e^{-v^2/2}dv&=1.\label{premlem2}
\end{align}
Combining \eqref{premlem2} with \eqref{premlem02}, we get
$$Var\left[\ln(A^{(n)})\right]=\frac{1}{n}+o\left(\frac{1}{n}\right).$$
Now, let us look at the tail of $\ln(A^{(n)})$. Let $v>0$.
\begin{align}
\P\left(|\ln(A^{(n)})|>v \right)&=\int_{\R\backslash[-v,v]}\frac{\sqrt{n}}{\sqrt{2\pi}}\exp\left(-2n\sinh(x/2)^2-x/2 \right)dx\nonumber\\
&\leq \int_{\R\backslash[-v,v]}\frac{\sqrt{n}}{\sqrt{2\pi}}\exp\left(-nx^2/2-x/2 \right)dx\nonumber\\
&\leq 2e\int_v^{+\infty}\frac{\sqrt{n}}{\sqrt{2\pi}}\exp\left(-(n-1)x^2/2 \right)dx\label{supremum1}
\end{align}
where in the first inequality we used the fact that $\sinh(x)\geq x$ for every $x>0$ and in the second inequality we used the fact that for every $x\in \R$, $e^{-x/2}\leq e\times e^{x^2/2}$.
Therefore, by \eqref{supremum1}, for every $v>0$,
\begin{align}
\P\left(|\ln(A^{(n)})|>v \right)&\leq 2e\int_{v\sqrt{n-1}}^{+\infty}\frac{\sqrt{n}}{\sqrt{2\pi(n-1)}}\exp(-x^2/2)dx\nonumber\\
&\leq \frac{2e}{\sqrt{\pi}}\int_{v\sqrt{n-1}}^{+\infty}\exp(-x^2/2)dx\nonumber\\
&\leq \frac{2e}{v\sqrt{\pi(n-1)}}e^{-(n-1)v^2/2}\label{supremum2}
\end{align}
where in the second inequality we used the fact that $n\leq 2(n-1)$ for every $n\geq 2$ and in the last inequality we used the fact that for $x\geq v\sqrt{n-1}$, it holds that $1\leq x/(v\sqrt{n-1})$.
It conludes the proof of Lemma \ref{lem:estimees}.
\end{proof}
\begin{lem}\label{lem:supremum}
Let $c>0$. Let $n\in\N^*$. Let $(A_i^{(n)})_{i\in\N^*}$ be a sequence of independent Inverse Gaussian random variables with parameters $(1,n)$. Then, for every $\varepsilon>0$, it holds that
$$\underset{n\rightarrow+\infty}\lim\hspace{0.1 cm}\P\left(\underset{ i\in\llbracket 1,\lceil cn\rceil\rrbracket}\sup \hspace{0.1 cm} |\ln(A_i^{(n)})|>\varepsilon\right)=0.$$
\end{lem}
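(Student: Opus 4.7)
The plan is to apply a union bound and use the tail estimate from Lemma \ref{lem:estimees}. Since the $A_i^{(n)}$ are identically distributed (they are all $IG(1,n)$), for any $\varepsilon > 0$ and any $n \geq 2$ we have
\[
\P\left(\sup_{i \in \llbracket 1,\lceil cn\rceil\rrbracket}|\ln(A_i^{(n)})|>\varepsilon\right)
\leq \lceil cn\rceil\,\P\bigl(|\ln(A_1^{(n)})|>\varepsilon\bigr).
\]

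Now I would invoke the last display of Lemma \ref{lem:estimees} with $v=\varepsilon$, which gives
\[
\P\bigl(|\ln(A_1^{(n)})|>\varepsilon\bigr)\leq \frac{2e}{\varepsilon\sqrt{\pi(n-1)}}\,e^{-(n-1)\varepsilon^2/2}.
\]
Combining the two bounds yields
\[
\P\left(\sup_{i \in \llbracket 1,\lceil cn\rceil\rrbracket}|\ln(A_i^{(n)})|>\varepsilon\right)
\leq \frac{2e\,\lceil cn\rceil}{\varepsilon\sqrt{\pi(n-1)}}\,e^{-(n-1)\varepsilon^2/2}.
\]

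The right-hand side is of order $\sqrt{n}\,e^{-(n-1)\varepsilon^2/2}$, since $\lceil cn\rceil \leq cn+1$ and $1/\sqrt{n-1}\sim 1/\sqrt{n}$. Because $\varepsilon>0$ is fixed, the Gaussian decay in $n$ crushes the polynomial prefactor, so the bound tends to $0$ as $n\to\infty$. There is no real obstacle here: the whole argument is a one-line union bound plus the previously established Gaussian-type tail, the key point being that the exponential rate $e^{-(n-1)\varepsilon^2/2}$ is much faster than the linear growth $\lceil cn\rceil$ of the number of variables.
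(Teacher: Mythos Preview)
Your proof is correct and follows essentially the same approach as the paper: both invoke the tail bound from Lemma~\ref{lem:estimees} and control the supremum over the $\lceil cn\rceil$ i.i.d.\ variables. The only cosmetic difference is that the paper uses the exact independence formula $1-(1-p)^{\lceil cn\rceil}$ in place of your union bound $\lceil cn\rceil\, p$; since $1-(1-p)^m\le mp$, the two arguments are equivalent here.
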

\begin{proof}
Let $\varepsilon>0$. By Lemma \ref{lem:supremum}, we know that
\begin{align}
\P\left(\underset{ i\in\llbracket 1,\lceil cn\rceil\rrbracket}\sup \hspace{0.1 cm} |\ln(A_i^{(n)})|>\varepsilon\right)&=1-\left(1-\P\left(|\ln(A_1^{(n)})|>\varepsilon \right) \right)^{\lceil c n\rceil}\nonumber\\
&\leq 1-\left(1-  \frac{2e}{\varepsilon\sqrt{\pi(n-1)}}e^{-(n-1)\varepsilon^2/2}\right)^{\lceil c n\rceil}
\end{align}
which goes to $0$ as $n$ goes to infinity.
\end{proof}
\begin{lem}\label{lem:convergenceprocessus}
Let $n\in\N^*$. Let $(A_i^{(n)})_{i\in\N^*}$ be a sequence of independent random variables which are distributed as an Inverse Gaussian random variable with parameters $(1,n)$. Let us define the process $t\mapsto Y_t^{(n)}$ which is a random continuous function such that  if $j/n\leq t<(j+1)/n)$,
$$Y_t^{(n)}=\prod\limits_{i=1}^{ j}A_i^{(n)} +n(t-j/n)\left(\prod\limits_{i=1}^{ j+1}A_i^{(n)}-\prod\limits_{i=1}^{ j}A_i^{(n)} \right).$$
Then, the following convergence holds for the topology of uniform convergence on compact sets:
$$(Y_t^{(n)})_{t\geq 0}\xrightarrow[n\rightarrow+\infty]{law}(e^{\alpha_t-t/2} )_{t\geq 0}$$
where $\alpha$ is a Brownian motion.
\end{lem}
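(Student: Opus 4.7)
The plan is to pass to logarithms, apply a triangular-array functional central limit theorem, and then take exponentials. Set $X_i^{(n)} := \ln A_i^{(n)}$ and $S_j^{(n)} := \sum_{i=1}^j X_i^{(n)}$ (with $S_0^{(n)} = 0$), so that at the mesh points one has $Y_{j/n}^{(n)} = \exp(S_j^{(n)})$. Let $\tilde{S}^{(n)}$ denote the piecewise-linear interpolation of $(S_j^{(n)})_{j \geq 0}$ on the grid $(j/n)_{j \geq 0}$; this is the natural log-analogue of $Y^{(n)}$.

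\textbf{Step 1 (functional CLT for the log-sums).} Lemma \ref{lem:estimees} gives $\E[X_1^{(n)}] = -\frac{1}{2n} + o(1/n)$ and $\mathrm{Var}(X_1^{(n)}) = \frac{1}{n} + o(1/n)$, so that for every $T > 0$, $\E[S_{\lfloor nT \rfloor}^{(n)}] \to -T/2$ and $\mathrm{Var}(S_{\lfloor nT\rfloor}^{(n)}) \to T$. The Gaussian-type tail bound in the same lemma immediately yields the Lindeberg condition for the triangular array $(X_i^{(n)})_{1 \leq i \leq \lfloor nT \rfloor}$. The Lindeberg--Feller invariance principle then gives, in $C([0,T])$ equipped with the uniform topology,
$$\tilde{S}^{(n)} \xrightarrow[n \to \infty]{\text{law}} \bigl(\alpha_t - t/2\bigr)_{t \in [0,T]},$$
where $\alpha$ is a standard Brownian motion. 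Since $f \mapsto \exp \circ f$ is continuous on $C([0,T])$, the continuous mapping theorem upgrades this to the weak convergence of $\exp(\tilde{S}^{(n)})$ towards $\exp(\alpha_\cdot - \cdot/2)$ in the same space.

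\textbf{Step 2 (from exponential to linear interpolation).} The process $Y^{(n)}$ is the linear interpolation of $(\exp S_j^{(n)})_{j \geq 0}$, not of its logarithm. For $t \in [j/n, (j+1)/n]$, write $u = n(t - j/n) \in [0,1]$, $a = \exp(S_j^{(n)})$, $x = X_{j+1}^{(n)}$; a direct Taylor expansion at $x=0$ gives
$$Y_t^{(n)} - \exp(\tilde{S}_t^{(n)}) = a\bigl[(1-u) + u e^x - e^{ux}\bigr] = a \cdot O(x^2)$$
uniformly in $u \in [0,1]$ for $|x| \leq 1$ (the bracket equals $\tfrac{1}{2}u(1-u)x^2 + O(x^3)$). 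Lemma \ref{lem:supremum} yields $\max_{1 \leq i \leq \lceil nT\rceil} |X_i^{(n)}| \to 0$ in probability, while tightness of $\exp(\tilde{S}^{(n)})$ in $C([0,T])$ (from Step 1) makes $\sup_{t \leq T} a(t)$ tight. Together these give $\sup_{t \leq T} |Y_t^{(n)} - \exp(\tilde{S}_t^{(n)})| \to 0$ in probability, and Slutsky's lemma transfers the weak limit of Step 1 to $Y^{(n)}$ on $[0,T]$. As $T > 0$ was arbitrary, the convergence holds for the topology of uniform convergence on compact sets.

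\textbf{Main obstacle.} Step 1 is essentially routine given the moment and tail estimates already recorded in Lemma \ref{lem:estimees}. The only genuinely non-routine point is Step 2: although $Y^{(n)}$ is morally the exponential of a random walk, it is defined as a \emph{linear} rather than an exponential interpolation of the products, and the resulting mismatch must be absorbed using the uniform smallness of individual log-increments provided by Lemma \ref{lem:supremum}.
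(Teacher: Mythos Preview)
Your proof is correct and follows essentially the same route as the paper: pass to logarithms, invoke a functional CLT for the triangular array of i.i.d.\ increments (the paper uses the martingale FCLT from Ethier--Kurtz, you use Lindeberg--Feller; these are interchangeable here), and absorb the interpolation mismatch via Lemma~\ref{lem:supremum}. The only cosmetic difference is that the paper writes $\ln(Y_t^{(n)}) = S_{\lfloor nt\rfloor}^{(n)} + o_{n,T}^{\P}(1)$ in one stroke, whereas you first prove convergence of $\exp(\tilde S^{(n)})$ and then quantify the linear-versus-exponential interpolation gap in a separate Step~2; both rely on the same input (uniform smallness of the individual $X_i^{(n)}$) and arrive at the same conclusion.
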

\begin{proof}
By Lemma \ref{lem:supremum}, for every $T> 0$,
$$\left(\ln(Y_t^{(n)})\right)_{t\in[0,T]}=\left(\sum\limits_{i=1}^{\lfloor tn\rfloor}\ln(A_i^{(n)})-\E\left[\ln(A_i^{(n)})\right] +\sum\limits_{i=1}^{\lfloor tn\rfloor}\E\left[\ln(A_i^{(n)})\right]\right)_{t\in[0,T]}+o_{n,T}^{\P}(1)$$
where $o_{n,T}^{\P}(1)$ is a random function whose supremum goes toward $0$ in probability when $n$ goes to infinity. By Lemma \ref{lem:estimees}, we know that $\sum\limits_{i=1}^{\lfloor tn\rfloor}\E\left[\ln(A_i^{(n)})\right]$ converges toward $-t/2$.
Moreover, $t\mapsto \sum\limits_{i=1}^{\lfloor tn\rfloor}\ln(A_i^{(n)})-\E\left[\ln(A_i^{(n)})\right]$ is a martingale. Furthermore, as in the proof of Lemma 3.4 in \cite{LST}, one can combine the estimates of Lemma \ref{lem:estimees} and the martingale functional central limit theorem (see Theorem 1.4, Section 7.1 in \cite{EK}) in order to prove that
$$\left(\sum\limits_{i=1}^{\lfloor tn\rfloor}\ln(A_i^{(n)})-\E\left[\ln(A_i^{(n)})\right]\right)_{t\geq 0}$$
converges toward a Brownian motion.
\end{proof}
\begin{lem}\label{lem:couplage}
Let $K\geq 1$. Let $c>0$.
Let $A_1$ be an Inverse Gaussian random variable with parameters $(1,K)$. Then, it is possible to find a coupling with a random variable $A_2$ which  has an Inverse Gaussian distribution  with parameters $(1,K+c)$ such that
$$|\ln(A_1)-\ln(A_2)|\leq \left(\frac{1}{A_1}+\frac{1}{A_2}\right)\times\left(\frac{cR^{(1)}}{K^{3/2}}+Ber\times\frac{cR^{(2)}+\sqrt{c}R^{(3)}+R^{(4)}}{\sqrt{K}}\right)$$
where for every $i\in\{1,2,3,4\}$, $R^{(i)}$ is a positive random variable and conditionally on $\{R^{(i)},i\in\{1,2,3,4\}\}$, $Ber$ is a Bernoulli random variable whose parameter is smaller than $\frac{cR^{(1)}}{K^{3/2}}$. Moreover, there exists a positive constant $\kappa$ which does not depend on $K$ and $c$ such that for every $i\in\{1,2,3,4\}$, $\E\left[{R^{(i)}}^4\right]\leq \kappa$.
\end{lem}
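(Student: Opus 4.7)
The plan is to couple $A_1\sim IG(1,K)$ and $A_2\sim IG(1,K+c)$ by running the Michael--Schucany--Haas simulation algorithm for the inverse Gaussian distribution with one common random seed, and then split $|\ln A_1-\ln A_2|$ along the event that the two accept/reject decisions of the algorithm disagree. This event will be the Bernoulli variable of the statement.

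Concretely, I would let $N\sim\mathcal{N}(0,1)$ and $U\sim\mathrm{Unif}(0,1)$ be independent, set $Y=N^2$, $u(\lambda)=\sqrt{1+4\lambda/Y}$, and
\[
X_1(\lambda)=\frac{u(\lambda)-1}{u(\lambda)+1}\in(0,1],\qquad p(\lambda)=\frac{1}{1+X_1(\lambda)}.
\]
Define $A(\lambda)=X_1(\lambda)$ on $\{U\le p(\lambda)\}$ and $A(\lambda)=1/X_1(\lambda)$ on $\{U>p(\lambda)\}$. The Michael--Schucany--Haas theorem gives $A(\lambda)\sim IG(1,\lambda)$, so applying it with the same $(N,U)$ to $K$ and $K+c$ produces a coupling $(A_1,A_2)$. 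Since $\lambda\mapsto X_1(\lambda)$ is increasing, $p$ is decreasing, and the event that the two branch choices differ is exactly $\mathrm{Ber}:=\mathbf{1}\{U\in(p(K+c),p(K)]\}$. Conditionally on $N$, this is Bernoulli with parameter $p(K)-p(K+c)\le c\sup_{[K,K+c]}|p'|$, and a direct computation of $p'=-X_1'/(1+X_1)^2$ with $X_1'(\lambda)=2/(Yu(u+1)^2)$ combined with the two elementary estimates $|p'|\le\sqrt{Y}/(16\lambda^{3/2})$ (valid when $Y\le 4\lambda$) and the uniform estimate $|p'|\le 1/(4Y)$ yields $|p'(\lambda)|\le\kappa\sqrt{Y}/K^{3/2}$ for all $\lambda\in[K,K+c]$. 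Taking $R^{(1)}$ a fixed multiple of $|N|$ then gives the stated bound on the Bernoulli parameter.

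To control $|\ln A_1-\ln A_2|$ I always use $|\ln a-\ln b|\le(1/a+1/b)|a-b|$. On $\{\mathrm{Ber}=0\}$ the two values sit on the same branch, so the difference collapses to $|\ln X_1(K)-\ln X_1(K+c)|$ or $|\ln(1/X_1(K))-\ln(1/X_1(K+c))|$, both equal in absolute value. By the mean value theorem applied to $\lambda\mapsto\ln X_1(\lambda)$, whose derivative is $1/(2u(\lambda)\lambda)$, and by the same two-regime argument as for $p'$, the difference is bounded by $cR^{(1)}/K^{3/2}$; multiplying by the (positive) factor $(1/A_1+1/A_2)$ gives the first contribution of the stated inequality. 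On $\{\mathrm{Ber}=1\}$ the two values lie on opposite branches, so (swapping roles if needed) $A_1=X_1(K)$, $A_2=1/X_1(K+c)$ and
\[
|A_1-A_2|\;=\;\frac{2}{u(K)+1}+\frac{2}{u(K+c)-1}.
\]
The identity $1/(u(\lambda)-1)=Y(u(\lambda)+1)/(4\lambda)$ lets me rewrite the second summand as $Y(u(K+c)+1)/(4(K+c))$, and Taylor--expanding $u(K+c)$ and $1/\sqrt{K+c}$ in $c$ around $K$ splits the expression into a leading term $R^{(4)}/\sqrt{K}$ of order $\sqrt{Y/K}$ and correction terms of orders $\sqrt{c}R^{(3)}/\sqrt{K}$ (from the difference $1/\sqrt{K+c}-1/\sqrt{K}$) and $cR^{(2)}/\sqrt{K}$ (from the next order). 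Multiplying by $(1/A_1+1/A_2)$ yields the second contribution.

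Once the $R^{(i)}$'s have been identified as explicit polynomials in $|N|$, their fourth moments are bounded by a universal constant $\kappa$, independently of $K$ and $c$. The main obstacle in carrying out this program is the rare Gaussian--tail regime $Y\gtrsim K$, where the naive large-$\lambda$ expansions of $X_1$ and $u$ break down; there one must switch to the second of the two estimates ($|p'|\le 1/(4Y)$, $X_1'\le 1/(4Y)$) and verify that the prefactor $(1/A_1+1/A_2)$ in the statement absorbs the loss. This bookkeeping, routine but somewhat lengthy, is the only delicate part of the proof.
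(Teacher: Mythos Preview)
Your approach is essentially the paper's: both run the Michael--Schucany--Haas sampler for $IG(1,\lambda)$ with a common seed $(N,U)$ (the paper writes $\gamma=N^2/2$), set $\mathrm{Ber}$ to be the indicator that the two accept/reject decisions disagree, bound the Bernoulli parameter by $|X_1(K)-X_1(K+c)|\le cR^{(1)}/K^{3/2}$, and then treat the same-branch and opposite-branch cases separately. The paper's Situations~1--3 are exactly your $\{\mathrm{Ber}=0\}$ (two sub-cases) and $\{\mathrm{Ber}=1\}$.

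There is one slip in your same-branch analysis. You announce that you will always pass through $|\ln a-\ln b|\le (1/a+1/b)|a-b|$, but on $\{\mathrm{Ber}=0\}$ you instead bound $|\ln A_1-\ln A_2|=|\ln X_1(K)-\ln X_1(K+c)|$ directly by the mean value theorem and then say you ``multiply by $(1/A_1+1/A_2)$''. That last step is only legitimate if the prefactor is $\ge 1$; on the branch $A_i=1/X_1(\cdot)$ one has $1/A_1+1/A_2=X_1(K)+X_1(K+c)$, which can be arbitrarily small (precisely in your delicate regime $Y\gtrsim K$), so the inequality in the stated form does not follow from your bound. The remedy is exactly what the paper does and what your announced strategy calls for: bound $|A_1-A_2|$ itself. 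On the $X_1$-branch this is $|X_1(K)-X_1(K+c)|$, and on the $1/X_1$-branch one uses the explicit closed form $1/X_1(\lambda)=1+\gamma/\lambda+\sqrt{\gamma}\sqrt{2\lambda+\gamma}/\lambda$ to get the \emph{same} bound $cR^{(1)}/K^{3/2}$ term by term (the paper's Situations~1 and~2). This avoids the two-regime bookkeeping altogether on $\{\mathrm{Ber}=0\}$, and your opposite-branch analysis already matches the paper's Situation~3.
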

\begin{proof}
Following \cite{wiki}, we can construct $A_1$ and $A_2$ in the following way:
Let $\gamma$ be a Gamma random variable with parameters $(1/2,1)$. Let $U$ be a uniform random variable which is independent of $\gamma$.
Now, let us consider
$$X_1=1+\frac{\gamma}{K}-\frac{\sqrt{\gamma}}{K}\sqrt{2K+\gamma}$$
and $$X_2=1+\frac{\gamma}{K+c}-\frac{\sqrt{\gamma}}{K+c}\sqrt{2(K+c)+\gamma}.$$
For every $i\in\{1,2\}$, if $U\leq \frac{1}{1+X_i}$, then we define $A_i=X_i$ and if $U>\frac{1}{1+X_i}$, then we define $A_i=1/X_i$. According to \cite{wiki} , $A_1\sim IG(1,K)$ and $A_2\sim IG(1,K+c)$. Now, let us show that this coupling satisfies the required estimate.

First remark that
\begin{align}
|\ln(A_1)-\ln(A_2)|&=\left|\ln\left( \frac{A_1}{A_2}\right)\right|\nonumber\\
&\leq\ln\left(1+\frac{|A_1-A_2|}{A_1}\right)+\ln\left(1+\frac{|A_1-A_2|}{A_2}\right)\nonumber\\
&\leq \left(\frac{1}{A_1}+\frac{1}{A_2}\right)|A_1-A_2|.\label{premlem3}
\end{align}
Therefore, it is enough to find an upper bound for $|A_1-A_2|$ in order to prove Lemma \ref{lem:couplage}. To do this, we have to consider three situations.\\
\textbf{Situation 1:} Let us assume that $U\leq \frac{1}{1+X_1}$ and $U\leq \frac{1}{1+X_2}$. Then, it holds that
\begin{align}
|A_1-A_2|&=|X_1-X_2|\nonumber\\
&\leq \frac{\gamma c}{K^2}+\sqrt{\gamma}\left| \frac{1}{K}\sqrt{2K+\gamma}-\frac{1}{K+c}\sqrt{2(K+c)+\gamma}\right|\nonumber\\
&\leq \frac{\gamma c}{K^2}+
\sqrt{\gamma}\sqrt{2K+\gamma}\left(\frac{1}{K}-\frac{1}{K+c}\right)+
\sqrt{\gamma}\frac{1}{K+c}\left(\sqrt{2(K+c)+\gamma}-\sqrt{2K+\gamma}\right)\nonumber\\
&\leq\frac{\gamma c}{K^2}+c\sqrt{\gamma}\frac{\sqrt{2K}+\sqrt{\gamma}}{K^2}+\sqrt{\gamma}\frac{1}{K}\frac{2c}{\sqrt{2(K+c)+\gamma}+\sqrt{2K+\gamma}}\nonumber\\
&\leq \frac{\gamma c}{K^2}+c\sqrt{\gamma}\frac{\sqrt{2K}+\sqrt{\gamma}}{K^2}+\frac{\sqrt{\gamma} c}{\sqrt{2}K^{3/2}}.\label{premlem4}
\end{align}
Therefore, by \eqref{premlem4}, in the situation 1, there exists a positive random variable $R^{(1)}$ whose fourth moment is bounded by some constant $\kappa^{(1)}$ which does not depend on $K$ and $c$ such that
\begin{align}
|A_1-A_2|\leq\frac{cR^{(1)}}{K^{3/2}}.\label{premlem5}
\end{align}
Rermark that $U$ and $R^{(1)}$ are independent.\\
\textbf{Situation 2:} Now, let us assume that $U> \frac{1}{1+X_1}$ and $U> \frac{1}{1+X_2}$.
Remark that
$$\frac{1}{X_1}=1+\frac{\gamma}{K}+\frac{\sqrt{\gamma}}{K}\sqrt{2K+\gamma}$$
and
$$\frac{1}{X_2}=1+\frac{\gamma}{K+c}+\frac{\sqrt{\gamma}}{K+c}\sqrt{2(K+c)+\gamma}.$$
Therefore, exactly as in the first situation, one can show that
\begin{align}
|A_1-A_2|\leq\frac{cR^{(1)}}{K^{3/2}}.\label{premlem6}
\end{align}\\
\textbf{Situation 3:}
Now, let us consider the case where $U\leq \frac{1}{1+X_1}$ and $U> \frac{1}{1+X_2}$ or the case where $U> \frac{1}{1+X_1}$ and $U\leq \frac{1}{1+X_2}$. These two subcases are similar. Thus we will only treat the first one. If we assume that $U\leq \frac{1}{1+X_1}$ and $U> \frac{1}{1+X_2}$, then
\begin{align}
|A_1-A_2|&=\left|X_1-\frac{1}{X_2} \right|\nonumber\\
&\leq \frac{\gamma c}{K^2}+\frac{\sqrt{\gamma}}{K}\sqrt{2K+\gamma}+\frac{\sqrt{\gamma}}{K+c}\sqrt{2(K+c)+\gamma}\nonumber\\
&\leq \frac{cR^{(2)}+\sqrt{c}R^{(3)}+R^{(4)}}{\sqrt{K}}\label{premlem7}
\end{align}
where $R^{(2)},R^{(3)}$ and $R^{(4)}$ are positive random variables whose fourth moments are bounded by some constant $\kappa^{(2)}$ which does not depend on $c$ and $K$. Remark that $U$ is independent from $R^{(1)}$, $R^{(2)}$, $R^{(3)}$ and $R^{(4)}$. Moreover, in this situation 3, we know that $U\in\left[\frac{1}{1+X_1},\frac{1}{1+X_2}\right]$ or $U\in\left[\frac{1}{1+X_2},\frac{1}{1+X_1}\right]$. Therefore, $U$ belongs to some interval $\mathcal{I}$ whose size is
\begin{align}
\left|\frac{1}{1+X_1} -\frac{1}{1+X_2}\right|&=\frac{|X_1-X_2|}{(1+X_1)(1+X_2)}\nonumber\\
&\leq |X_1-X_2|\nonumber\\
&\leq \frac{cR^{(1)}}{K^{3/2}}\label{premlem8}
\end{align}
where in the last inequality, we used \eqref{premlem5}. Together with \eqref{premlem7}, it implies that, in situation 3,
\begin{align}
|A_1-A_2|\leq \left(\frac{cR^{(2)}+\sqrt{c}R^{(3)}+R^{(4)}}{\sqrt{K}}\right)\textbf{1}\{U\in\mathcal{I}\}\label{premlem9}
\end{align}
where the size of $\mathcal{I}$ is lower than $\frac{cR^{(1)}}{K^{3/2}}$ with $R^{(1)}$ independent of $U$. Finally, choosing $\kappa=\max(\kappa^{(1)},\kappa^{(2)})$ and combining \eqref{premlem5}, \eqref{premlem6} and \eqref{premlem9} concludes the proof.
\end{proof}
\vspace{2cm}
\section{Proof of the results of section \ref{subsubsec:matyor} }
\label{sec:proofmatyor}
First, let us prove Proposition \ref{prop:discretematyor}.
\begin{proof}[Proof of Proposition \ref{prop:discretematyor}]
\textbf{Step 1: Proof of (i) and (iii).}
One remarks that (iii) is just a particular case of Lemma 4.1 in \cite{rapmart}. Actually, in Lemma 4.1 in \cite{rapmart}, we condition with respect to the sigma-field $\sigma(\beta_i,i\in\llbracket 2,n\rrbracket)$ and not with respect to $\mathcal{Z}_{n,m}$. Nevertheless, we will see in the proof of $(ii)$ that for every $n\in\N^*$ and for every $m\in\N^*$,
$$\mathcal{Z}_{n,m}\subset\sigma(\beta_i,i\in\llbracket 2,n\rrbracket).$$
Moreover (i) stems directly from (iii). 
\\
\textbf{Step 2: Proof of (ii).}
Let $m\geq 1$. For sake of convenience, for every $n\in\N^*$, we denote ${\bf{H}}_n=({\bf{H}}_{\beta}^{(m)})_{\llbracket1,n\rrbracket,\llbracket 1,n\rrbracket}$ and $\hat{\bf{G}}_n={\bf{H}}_n^{-1}$. The strategy of the proof is the following one: first, we will establish an algebraic relation between $(\hat{\bf{G}}_n,\psi_{\beta}^{(n,m)})$ and $(\hat{\bf{G}}_{n-1},\psi_{\beta}^{(n-1,m)})$ by means of the Schur complements. Then we will condition this algebraic relation with respect to the $\sigma$-field $\sigma(\beta_i,i\in\llbracket 2,n-1\rrbracket)$ thanks to the conditioning properties of $\beta$ given by Proposition \ref{prop:restrictions}. We will divide this proof into two main lemmas. Here is the first one.
\begin{lem}\label{lemslice1}
Let $n\in\N^*\backslash\{1\}$. It holds that,
\begin{align}
Z_{\beta}^{(n,m)}
&=\frac{Z_{\beta}^{(n-1,m)}}{m}\left(2\beta_n-m^2\hat{\bf{G}}_{2,n-1}(n-1,n-1)\right)
\end{align}
where $\hat{\bf{G}}_{2,n-1}$ is the inverse of $({\bf{H}}_{\beta}^{(m)})_{\llbracket 2,n-1\rrbracket,\llbracket 2,n-1\rrbracket}$.
\end{lem}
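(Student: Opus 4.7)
My plan is to establish the identity by two applications of block matrix inversion (the Schur complement formula), followed by purely algebraic manipulation.

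First, I would apply the Schur complement to ${\bf H}_n$ viewed as a $2\times 2$ block matrix with respect to the partition $\{\llbracket 1,n-1\rrbracket,\{n\}\}$. On the half-line graph with weight $m$, only vertex $n-1$ is connected to $n$, so the off-diagonal block reduces to a single nonzero entry $-m$ at position $n-1$. The Schur complement of the scalar $2\beta_n$ is then
$$S := 2\beta_n - m^2\,\hat{\bf G}_{n-1}(n-1,n-1),$$
and the standard block-inversion formulas give
$$\hat{\bf G}_n(1,n) = \frac{m\,\hat{\bf G}_{n-1}(1,n-1)}{S},\qquad \hat{\bf G}_n(1,1) = \hat{\bf G}_{n-1}(1,1) + \frac{m^2\,\hat{\bf G}_{n-1}(1,n-1)^2}{S}.$$
Plugging these into the definition $Z_{\beta}^{(n,m)} = \hat{\bf G}_n(1,1)/(m\,\hat{\bf G}_n(1,n))$ and using $Z_{\beta}^{(n-1,m)} = \hat{\bf G}_{n-1}(1,1)/(m\,\hat{\bf G}_{n-1}(1,n-1))$, I obtain after one line of algebra the intermediate identity
$$Z_{\beta}^{(n,m)} = \frac{S}{m}\,Z_{\beta}^{(n-1,m)} + \hat{\bf G}_{n-1}(1,n-1).$$

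Second, to replace $\hat{\bf G}_{n-1}(n-1,n-1)$ (which sits inside $S$) by $\hat{\bf G}_{2,n-1}(n-1,n-1)$, I would apply the Schur complement again, this time to ${\bf H}_{n-1}$ with respect to the partition $\{\{1\},\llbracket 2,n-1\rrbracket\}$. Since vertex $1$ is only connected to $2$, the off-diagonal block is once more rank one, and the bottom-right block of the inverse yields the identity
$$\hat{\bf G}_{n-1}(n-1,n-1) = \hat{\bf G}_{2,n-1}(n-1,n-1) + \frac{\hat{\bf G}_{n-1}(1,n-1)^2}{\hat{\bf G}_{n-1}(1,1)},$$
which is the usual formula relating the inverse of a symmetric positive definite matrix to the inverse of the principal submatrix obtained by deleting its first row and column.

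Finally, I would substitute this identity into $S$ and therefore into the intermediate formula from the first step. The spurious additional contribution $-\frac{m\,\hat{\bf G}_{n-1}(1,n-1)^2}{\hat{\bf G}_{n-1}(1,1)}\,Z_{\beta}^{(n-1,m)}$ reduces, via the definition of $Z_{\beta}^{(n-1,m)}$, to exactly $-\hat{\bf G}_{n-1}(1,n-1)$, which cancels the additive boundary term $+\hat{\bf G}_{n-1}(1,n-1)$ and leaves precisely
$$Z_{\beta}^{(n,m)} = \frac{Z_{\beta}^{(n-1,m)}}{m}\bigl(2\beta_n - m^2\,\hat{\bf G}_{2,n-1}(n-1,n-1)\bigr).$$

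There is no real obstacle: the argument is purely algebraic and uses nothing beyond block inversion, which works cleanly here because the nearest-neighbor structure of $\N^*$ makes both off-diagonal blocks rank one. The only care required is bookkeeping, since three different Green's matrices ($\hat{\bf G}_n$, $\hat{\bf G}_{n-1}$, $\hat{\bf G}_{2,n-1}$) appear simultaneously and one must apply the Schur identity in the correct direction in the second step (going \emph{from} the larger inverse $\hat{\bf G}_{n-1}$ \emph{to} the smaller one $\hat{\bf G}_{2,n-1}$ by deletion of row/column~$1$).
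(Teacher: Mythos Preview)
Your proof is correct and follows essentially the same route as the paper: the first Schur complement (with respect to the partition $\{\llbracket 1,n-1\rrbracket,\{n\}\}$) and the resulting intermediate identity $Z_{\beta}^{(n,m)} = \frac{S}{m}Z_{\beta}^{(n-1,m)} + \hat{\bf G}_{n-1}(1,n-1)$ are identical to what the paper does. The only difference is in how the key relation
\[
\hat{\bf G}_{n-1}(n-1,n-1) - \frac{\hat{\bf G}_{n-1}(1,n-1)^2}{\hat{\bf G}_{n-1}(1,1)} = \hat{\bf G}_{2,n-1}(n-1,n-1)
\]
is justified: you obtain it by a second Schur complement (deleting row/column $1$ from ${\bf H}_{n-1}$), whereas the paper derives it from the path-sum representation of $G_\beta$ (Proposition~\ref{sommes sur les chemins}), interpreting the left-hand side as the sum over loops at $n-1$ that avoid vertex $1$. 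Your argument is slightly more self-contained since it does not rely on that proposition; the paper's version makes the combinatorial meaning of the identity transparent.
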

\begin{proof}[Proof of Lemma \ref{lemslice1}]
For every $n\in\N^*$, let $C_{n}$ be a vector of size $n$ such that
$$C_n=\begin{pmatrix}
0\\
\vdots\\
0\\
-m
\end{pmatrix}.$$
With this notation, remark that for every integer $n\geq 2$,
$${\bf{H}}_n=\begin{pmatrix}
{\bf{H}}_{n-1} & C_{n-1}\\
C_{n-1}^T & 2\beta_n
\end{pmatrix}.$$
For every $n\geq 2$, let us define $D_n=2\beta_n-C_{n-1}^T\hat{\bf{G}}_{n-1}C_{n-1}$. Using the Schur complement, we get that for every integer $n\geq 2$,
\begin{align}
\hat{\bf{G}}_n=\left(\begin{array}{c|c}
\hat{\bf{G}}_{n-1}+\hat{\bf{G}}_{n-1}C_{n-1}D_n^{-1}C_{n-1}^T\hat{\bf{G}}_{n-1}& -\hat{\bf{G}}_{n-1}C_{n-1}D_n^{-1}\\ \hline
*&*
\end{array}\right).\label{labmy1}
\end{align}
Now, let us fix an integer $n\geq 2$. If we apply \eqref{labmy1} at points $(1,1)$ and $(1,n)$, we obtain
\begin{align}
&\hat{\bf{G}}_n(1,1)=\hat{\bf{G}}_{n-1}(1,1)+m^2D_n^{-1}\hat{\bf{G}}_{n-1}(1,n-1)^2,\label{labmy2}\\
&\psi^{(n,m)}_{\beta}=m\hat{\bf{G}}_n(1,n)=m^2D_n^{-1}\hat{\bf{G}}_{n-1}(1,n-1)=mD_n^{-1}\psi_{\beta}^{(n-1,m)}.\label{labmy3}
\end{align}
Therefore, combining \eqref{labmy2} and \eqref{labmy3}, we get
\begin{align}
Z_{\beta}^{(n,m)}=\frac{\hat{\bf{G}}_n(1,1)}{\psi^{(n,m)}_{\beta}}&=\frac{\hat{\bf{G}}_{n-1}(1,1)}{mD_n^{-1}\psi_{\beta}^{(n-1,m)}}+\hat{\bf{G}}_{n-1}(1,n-1)\nonumber\\
&=Z_{\beta}^{(n-1,m)}\times\frac{D_n}{m}+\hat{\bf{G}}_{n-1}(1,n-1).\label{labmy4}
\end{align}
Moreover, $D_n=2\beta_n-m^2\hat{\bf{G}}_{n-1}(n-1,n-1)$. Together with \eqref{labmy4}, it yields
\begin{align}
Z_{\beta}^{(n,m)}&=\frac{Z_{\beta}^{(n-1,m)}}{m}\times2\beta_n-mZ_{\beta}^{(n-1,m)}\hat{\bf{G}}_{n-1}(n-1,n-1)+\hat{\bf{G}}_{n-1}(1,n-1)\nonumber\\
&\hspace{-0.8 cm}=\frac{Z_{\beta}^{(n-1,m)}}{m}\times2\beta_n-\hat{\bf{G}}_{n-1}(n-1,n-1)\frac{\hat{\bf{G}}_{n-1}(1,1)}{\hat{\bf{G}}_{n-1}(1,n-1)}+\hat{\bf{G}}_{n-1}(1,n-1)\nonumber\\
&\hspace{-0.8 cm}=\frac{Z_{\beta}^{(n-1,m)}}{m}\times2\beta_n-\frac{\hat{\bf{G}}_{n-1}(1,1)}{\hat{\bf{G}}_{n-1}(1,n-1)}\left(\hat{\bf{G}}_{n-1}(n-1,n-1)-\frac{\hat{\bf{G}}_{n-1}(1,n-1)}{\hat{\bf{G}}_{n-1}(1,1)}\hat{\bf{G}}_{n-1}(1,n-1) \right)\label{labmy5}
\end{align}
where the second equality comes from the definition of $Z_{\beta}^{(n-1,m)}$. Besides, according to Proposition \ref{sommes sur les chemins}, $\hat{\bf{G}}_{n-1}(n-1,n-1)$ can be interpreted as a sum over the set of paths from $n-1$ to $n-1$ in $\llbracket 1,n-1\rrbracket$. Moreover, thanks to Proposition \ref{sommes sur les chemins} again, $\frac{\hat{\bf{G}}_{n-1}(1,n-1)}{\hat{\bf{G}}_{n-1}(1,1)}\hat{\bf{G}}_{n-1}(1,n-1)$ can be interpreted as a sum over the set of paths from $n-1$ to $n-1$ in $\llbracket 1,n-1\rrbracket$ which go through $1$. Therefore the difference,
$$\hat{\bf{G}}_{n-1}(n-1,n-1)-\frac{\hat{\bf{G}}_{n-1}(1,n-1)}{\hat{\bf{G}}_{n-1}(1,1)}\hat{\bf{G}}_{n-1}(1,n-1)$$
can be interpreted as a sum over the set of paths from $n-1$ to $n-1$ in $\llbracket 2,n-1\rrbracket$. Consequently,  by Proposition \ref{sommes sur les chemins} again,
$$\hat{\bf{G}}_{n-1}(n-1,n-1)-\frac{\hat{\bf{G}}_{n-1}(1,n-1)}{\hat{\bf{G}}_{n-1}(1,1)}\hat{\bf{G}}_{n-1}(1,n-1) =\hat{\bf{G}}_{2,n-1}(n-1,n-1)$$
where $\hat{\bf{G}}_{2,n-1}$ is the inverse of $({\bf{H}}_{\beta}^{(m)})_{\llbracket 2,n-1\rrbracket,\llbracket 2,n-1\rrbracket}$. Together with \eqref{labmy5}, it implies that
\begin{align}
Z_{\beta}^{(n,m)}&=Z_{\beta}^{(n-1,m)}\times\frac{2\beta_n}{m}-mZ_{\beta}^{(n-1,m)}\hat{\bf{G}}_{2,n-1}(n-1,n-1)\nonumber\\
&=\frac{Z_{\beta}^{(n-1,m)}}{m}\left(2\beta_n-m^2\hat{\bf{G}}_{2,n-1}(n-1,n-1)\right).\label{labmy6}
\end{align}
\end{proof}
Now, let us enounce the second fundamental lemma of this proof.
\begin{lem}
\label{lemslice2}
For every $n\in\N^*\backslash\{1\}$, it holds that
\begin{align*}
\mathcal{L}\left(2\beta_n|\sigma\left(\beta_i,i\in\llbracket 2,n-1\rrbracket \right)\right)
&=m^2\hat{\bf{G}}_{2,n-1}(n-1,n-1)+\frac{1}{IG\left(\frac{1}{m+\frac{1}{Z_{\beta}^{(n-1,m)}}},1 \right)}.
\end{align*}
\end{lem}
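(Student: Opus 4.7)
The plan is to identify the conditional distribution of $\beta_n$ given $\sigma(\beta_i, i\in\llbracket 2,n-1\rrbracket)$ by successively applying the restriction and marginalization formulas of Proposition \ref{prop:restrictions}, and then to match the result with the Inverse Gaussian expression via a path-counting identity from Proposition \ref{sommes sur les chemins}. Setting $U=\llbracket 2,n-1\rrbracket$, I would first apply Proposition \ref{prop:restrictions}(ii) (extended to the infinite half-line $\N^*$ by the formula \eqref{transfolaplaceinfini}, or by truncating to $\llbracket 1,N\rrbracket$ and letting $N\to\infty$) to conclude that conditionally on $\beta_U$, the family $\beta_{U^c}$ on $U^c=\{1\}\cup\llbracket n,+\infty)$ has distribution $\nu_{U^c}^{\check W,0}$ with $\check W(i,j)=W(i,j)+W_{i,U}((H_\beta)_{U,U})^{-1}W_{U,j}$. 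Because the only boundary vertices of $U$ are $2$ and $n-1$, the only nonzero Schur-complement contributions involving $n$ are $\check W(n,n)=m^2\hat{\bf G}_{2,n-1}(n-1,n-1)$ and $\check W(n,1)=m^2\hat{\bf G}_{2,n-1}(n-1,2)$, while $\check W(n,n+1)=m$ is unchanged and $\check W(n,j)=0$ for $j\geq n+2$.

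Next, I would apply Proposition \ref{prop:restrictions}(i) to the measure $\nu_{U^c}^{\check W,0}$ with restriction set $\{n\}$, obtaining that the conditional law of $\beta_n$ given $\beta_U$ is $\nu_{\{n\}}^{\check W(n,n),\tilde\eta}$ with
$$\tilde\eta=\sum_{j\in U^c\setminus\{n\}}\check W(n,j)=m+m^2\hat{\bf G}_{2,n-1}(2,n-1).$$
A direct density (or Laplace-transform) computation for the single-vertex measure $\nu_{\{n\}}^{w,\eta'}$ shows that, under this law, $1/(2\beta_n-w)\sim IG(1/\eta',1)$; this is precisely the content of Proposition \ref{prop:resumebeta}(iii) generalised to the case of nonzero $\eta$. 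Applied to our situation this yields
$$2\beta_n=m^2\hat{\bf G}_{2,n-1}(n-1,n-1)+\frac{1}{IG\!\left(\frac{1}{m+m^2\hat{\bf G}_{2,n-1}(2,n-1)},1\right)}$$
conditionally on $\beta_U$.

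To finish, I need to identify $m^2\hat{\bf G}_{2,n-1}(2,n-1)$ with $1/Z_\beta^{(n-1,m)}$. By Proposition \ref{sommes sur les chemins},
$$\frac{\hat{\bf G}_{n-1}(1,n-1)}{\hat{\bf G}_{n-1}(1,1)}=\sum_{\sigma\in\bar{\mathcal P}^{\llbracket 1,n-1\rrbracket}_{n-1,1}}\frac{W_\sigma}{(2\beta)_\sigma^-},$$
and on the line graph every path from $n-1$ to $1$ avoiding $1$ until the last step decomposes uniquely as a path $\tilde\sigma$ from $n-1$ to $2$ inside $\llbracket 2,n-1\rrbracket$ followed by the final step $2\to 1$. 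Accounting for the extra weight $m$ and the extra factor $2\beta_2$ in $(2\beta)_{\tilde\sigma}$, this identity becomes $\hat{\bf G}_{n-1}(1,n-1)/\hat{\bf G}_{n-1}(1,1)=m\hat{\bf G}_{2,n-1}(2,n-1)$, so that $m^2\hat{\bf G}_{2,n-1}(2,n-1)=\psi_\beta^{(n-1,m)}/\hat{\bf G}_{n-1}(1,1)=1/Z_\beta^{(n-1,m)}$, completing the proof.

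The main obstacle I anticipate is the bookkeeping for the infinite-volume restriction in the first step: one must justify that applying Proposition \ref{prop:restrictions} on $\N^*$ gives exactly $\check\eta=0$ with no spurious boundary-at-infinity contribution at vertex $n$, which can be achieved either by invoking the infinite-graph restriction principle behind \eqref{transfolaplaceinfini} or by finite truncation and a straightforward Laplace-transform limit argument. Everything else reduces to the two clean ingredients: the single-vertex density computation and the elementary first-passage path decomposition.
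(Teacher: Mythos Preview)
Your proof is correct and follows essentially the same route as the paper: apply Proposition \ref{prop:restrictions} to obtain that, conditionally on $\beta_{\llbracket 2,n-1\rrbracket}$, the single-site law of $\beta_n$ is $\nu_{\{n\}}^{W_{n,m},\eta_{n,m}}$ with $W_{n,m}=m^2\hat{\bf G}_{2,n-1}(n-1,n-1)$ and $\eta_{n,m}=m+m^2\hat{\bf G}_{2,n-1}(2,n-1)$, then use the path identity from Proposition \ref{sommes sur les chemins} to rewrite $m\hat{\bf G}_{2,n-1}(2,n-1)=\hat{\bf G}_{n-1}(1,n-1)/\hat{\bf G}_{n-1}(1,1)$ and hence $\eta_{n,m}=m+1/Z_\beta^{(n-1,m)}$. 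The paper compresses your two restriction steps into one and does not spell out the infinite-volume justification you flag, but otherwise the arguments coincide.
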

 \begin{proof}[Proof of Lemma \ref{lemslice2}]
Now, let us condition $\beta_n$ in Lemma \ref{lemslice1} with respect to $\sigma(\beta_i,i\in\llbracket 2,n-1\rrbracket)$. Recall that we assumed $\beta\sim \nu_{\N^*}^{K_m}$.  Thanks to Proposition \ref{prop:restrictions}, conditionally on $\sigma(\beta_i,i\in\llbracket 2,n-1\rrbracket)$, $\beta_n$ is distributed as $\nu_{\{n\}}^{\eta_{n,m},W_{n,m}}$ where
$$W_{n,m}=m^2\hat{\bf{G}}_{2,n-1}(n-1,n-1)$$
and
$$\eta_{n,m}=m+m^2\hat{\bf{G}}_{2,n-1}(2,n-1).$$
However, thanks to Proposition \ref{sommes sur les chemins},
$$m\hat{\bf{G}}_{2,n-1}(2,n-1)=\frac{\hat{\bf{G}}_{n-1}(1,n-1)}{\hat{\bf{G}}_{n-1}(1,1)}.$$
Consequently,
$$\eta_{n,m}=m+m\frac{\hat{\bf{G}}_{n-1}(1,n-1)}{\hat{\bf{G}}_{n-1}(1,1)}=m+\frac{1}{Z_{\beta}^{(n-1,m)}}.$$
Therefore, 
\begin{align*}
\mathcal{L}\left(2\beta_n|\sigma\left(\beta_i,i\in\llbracket 2,n-1\rrbracket \right)\right)&=W_{n,m}+\frac{1}{IG\left(\frac{1}{\eta_{n,m}},1\right)}\\
&=m^2\hat{\bf{G}}_{2,n-1}(n-1,n-1)+\frac{1}{IG\left(\frac{1}{m+\frac{1}{Z_{\beta}^{(n-1,m)}}},1 \right)}.
\end{align*}
\end{proof}
Combining Lemmas \ref{lemslice1} and \ref{lemslice2}, it holds that

\begin{align}
\mathcal{L}\left(Z_{\beta}^{(n,m)}|\sigma\left(\beta_i,i\in\llbracket 2,n-1\rrbracket \right) \right)=\frac{Z_{\beta}^{(n-1,m)}}{m}\times\frac{1}{IG\left(\frac{1}{m+\frac{1}{Z_{\beta}^{(n-1,m)}}},1 \right)}.\label{labmy7}
\end{align}
Moreover, by Proposition \ref{sommes sur les chemins}, for every $k\in\llbracket 1,n-1\rrbracket$,
\begin{align}
\frac{1}{Z_{\beta}^{(k,m)}}=\frac{m\hat{\bf{G}}_k(1,k)}{\hat{\bf{G}}_k(1,1)}=m\sum\limits_{\sigma\in\bar{{\mathcal{P}}}_{k,1}^{\llbracket 1,k\rrbracket}}\frac{W_{\sigma}}{(2\beta)_{\sigma}^-}.\label{labmy8}
\end{align}
Remark that the sum in the right-hand side of \eqref{labmy8} never contains $\beta_1$. Thus, for every $k\in\llbracket 1,n-1\rrbracket$, $Z_{\beta}^{(k,m)}$ is measurable with respect to $\sigma\left(\beta_i,i\in\llbracket 2,n-1\rrbracket \right)$. This implies that 
$$\mathcal{Z}_{n-1,m}\subset \sigma\left(\beta_i,i\in\llbracket 2,n-1\rrbracket \right).$$
Together with \eqref{labmy7}, it yields
\begin{align}
\mathcal{L}\left(Z_{\beta}^{(n,m)}|\mathcal{Z}_{n-1,m} \right)=\frac{Z_{\beta}^{(n-1,m)}}{m}\times\frac{1}{IG\left(\frac{1}{m+\frac{1}{Z_{\beta}^{(n-1,m)}}},1 \right)}.\nonumber
\end{align}
It conludes the proof of (ii).
\end{proof}
Now, we still have to prove that $(\psi_{\beta}^{(n,m)})_{n\in\N^*}$ and $(Z_{\beta}^{(n,m)})_{n\in\N^*}$ converge toward the exponential functionals of the Brownian motion introduced by Matsumoto and Yor when we take the scaling limit as $m$ goes to infinity. To do so, we need first to prove a lemma which gives a useful representation of the $\beta$-field with distribution $\nu_{\N^*}^{K_m}$. Note that this construction is very specific to the one-dimensional structure of the graph.
\begin{lem}\label{lem:easyconstruction}
Let $m\in\N^*$. Let $(A_i^{(m)})_{i\in\N^*}$ be a sequence of independent Inverse Gaussian random variables with parameters $(1,m)$. We define $\beta_1=\frac{m}{2A_1^{(m)}}$ and for every $i\in\N^*\backslash\{1\}$,
$$\beta_i=\frac{m}{2}A_{i-1}^{(m)}+\frac{m}{2A_i^{(m)}}.$$
Then, $\beta\sim\nu_{\N^*}^{K_m}$.
\end{lem}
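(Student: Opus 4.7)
The plan is to show that the constructed vector $\beta=(\beta_i)_{i\in\N^*}$ has the same finite-dimensional Laplace transforms as $\nu_{\N^*}^{K_m}$, which suffices by the identification \eqref{transfolaplaceinfini} and the standard fact that a measure on $\R_+^{\N^*}$ is determined by its Laplace transforms on all finite marginals. Fix $N\in\N^*$ and $t=(t_1,\dots,t_N)\in\R_+^N$. Substituting the definitions of $\beta_1$ and $\beta_i$, $i\geq 2$, and collecting the coefficients of each $A_j^{(m)}$ gives
\[
\sum_{i=1}^N t_i\beta_i \;=\; \sum_{i=1}^{N-1}\left(\frac{m\,t_{i+1}}{2}\,A_i^{(m)}+\frac{m\,t_i}{2A_i^{(m)}}\right)\;+\;\frac{m\,t_N}{2A_N^{(m)}},
\]
so only the independent variables $A_1^{(m)},\dots,A_N^{(m)}$ appear.

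The computational heart of the proof is the joint Laplace transform of $A$ and $1/A$ when $A\sim IG(1,m)$. Writing out the density, completing the square in the exponent, and applying the classical identity $\int_0^\infty x^{-3/2}e^{-\alpha x-\beta/x}\,dx=\sqrt{\pi/\beta}\,e^{-2\sqrt{\alpha\beta}}$ (obtainable from $K_{1/2}(z)=\sqrt{\pi/(2z)}\,e^{-z}$), I would establish that for all $a,b\geq 0$,
\[
\E\bigl[e^{-aA-b/A}\bigr] \;=\; \sqrt{\frac{m}{2b+m}}\,\exp\!\bigl(m-\sqrt{(2a+m)(2b+m)}\bigr).
\]
Specializing $a=mu/2$ and $b=mv/2$ yields the clean form
\[
\E\bigl[e^{-(mu/2)\,A-(mv/2)/A}\bigr] \;=\; \frac{1}{\sqrt{v+1}}\,\exp\!\bigl(-m(\sqrt{(u+1)(v+1)}-1)\bigr),
\]
which is exactly the shape of the elementary factors appearing in \eqref{transfolaplaceinfini}.

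By independence of the $A_i^{(m)}$'s, I would factor $\E\bigl[e^{-\sum t_i\beta_i}\bigr]$ as a product, applying the identity above to each $A_i^{(m)}$ with $i\in\llbracket 1,N-1\rrbracket$ (using $u=t_{i+1}$, $v=t_i$) and separately to $A_N^{(m)}$ (with $u=0$, $v=t_N$, producing $\frac{1}{\sqrt{t_N+1}}e^{-m(\sqrt{t_N+1}-1)}$). The product consolidates to
\[
\prod_{i=1}^N\frac{1}{\sqrt{t_i+1}}\cdot\exp\!\left(-m\sum_{i=1}^{N-1}\bigl(\sqrt{(t_i+1)(t_{i+1}+1)}-1\bigr)-m\bigl(\sqrt{t_N+1}-1\bigr)\right).
\]
A final bookkeeping step identifies this with the right-hand side of \eqref{transfolaplaceinfini} specialized to $V=\N^*$, $W=K_m$, and $V_1=\llbracket 1,N\rrbracket$: the internal edges $\{i,i+1\}$ of $V_1$ for $i\in\llbracket 1,N-1\rrbracket$ contribute to the first sum, while the unique boundary edge $\{N,N+1\}$ of $V_1$ in $\N^*$ contributes the single correction term.

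The only non-routine step is the bivariate integral identity used to obtain the joint Laplace transform of $A$ and $1/A$; the remainder is bookkeeping that exploits the linear structure of $\N^*$, namely that each $A_i^{(m)}$ interacts with exactly the two consecutive variables $\beta_i$ and $\beta_{i+1}$, which is what produces the nearest-neighbour factors $\sqrt{(t_i+1)(t_{i+1}+1)}-1$.
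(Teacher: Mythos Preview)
Your proof is correct and follows essentially the same approach as the paper: both compute the Laplace transform of the finite marginals of $\beta$ by rewriting $\sum t_i\beta_i$ in terms of the independent $A_i^{(m)}$'s, factoring, and evaluating each factor to match \eqref{transfolaplaceinfini}. The only cosmetic difference is that the paper evaluates each integral $\E[e^{-(mt_{i+1}/2)A_i-(mt_i/2)/A_i}]$ by recognizing, after completing the square, the density of an $IG\bigl(\sqrt{(1+t_i)/(1+t_{i+1})},\,m(1+t_i)\bigr)$ variable, whereas you invoke the closed-form $K_{1/2}$ identity; these are two ways of writing the same computation.
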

\begin{proof}
For every  $i\in\N^*$, let us write $A_i$ for $A_i^{(m)}$ for sake of convenience. Let $k\in\N^*$.
Let $(t_i)_{i\in \llbracket 1,k\rrbracket}\in \R_+^{k}$. Then, it holds that
$$
\begin{array}{ll}
\E\left[\exp\left(-\frac{t_1m}{2A_1}-\sum\limits_{i=2}^k\frac{t_im}{2}\left(A_{i-1}+\frac{1}{A_i}\right)\right) \right]&\\
&\hspace{-6 cm}=\displaystyle\E\left[\exp\left(-\sum\limits_{i=1}^{k-1}\frac{m}{2}\left(t_{i+1}A_{i}+\frac{t_i}{A_i}\right)\right)\times\exp\left(-\frac{m}{2}\frac{t_k}{A_k} \right)\right]\\
&\hspace{-6cm}\displaystyle=\prod\limits_{i=1}^{k-1}\E\left[\exp\left(-\frac{m}{2}\left(t_{i+1}A_{i}+\frac{t_i}{A_i}\right)\right) \right]\times\E\left[\exp\left(-\frac{m}{2}\frac{t_k}{A_k} \right) \right]\\
&\hspace{-6cm}=\displaystyle\prod\limits_{i=1}^{k-1}\int_0^{+\infty}\frac{\sqrt{m}}{\sqrt{2\pi x^3}}e^{-\frac{m(x-1)^2}{2x}} e^{-m\frac{t_{i+1}x+t_i/x}{2}}dx\times\int_{0}^{+\infty}\frac{\sqrt{m}}{\sqrt{2\pi x^3}}e^{-\frac{m(x-1)^2}{2x}} e^{-m\frac{t_k}{2x}}dx.
\end{array}$$
Moreover, for every $i\in \llbracket 1,k-1\rrbracket$, remark that
\begin{align*}
\int_0^{+\infty}\hspace{-0.3 cm}\frac{\sqrt{m}}{\sqrt{2\pi x^3}}e^{-\frac{m(x-1)^2}{2x}} e^{-m\frac{t_{i+1}x+t_i/x}{2}}dx&=\int_0^{+\infty}\frac{\sqrt{m(1+t_i)}}{\sqrt{2\pi x^3}}\exp\left(-m(1+t_i)\frac{\left(x-\sqrt{\frac{1+t_i}{1+t_{i+1}}}\right)^2}{2x\frac{1+t_i}{1+t_{i+1}}} \right)dx\\
&\hspace{0.4 cm}\times\frac{1}{\sqrt{1+t_i}}\times \exp\left(-m\left(\sqrt{1+t_i}\sqrt{1+t_{i+1}}-1 \right)\right)\\
&=\frac{1}{\sqrt{1+t_i}}\exp\left(-m\left(\sqrt{1+t_i}\sqrt{1+t_{i+1}}-1 \right)\right)
\end{align*}
because in the first equality we recognised the density of an Inverse Gaussian random variable with parameters $\left(\sqrt{\frac{1+t_i}{1+t_{i+1}}},m(1+t_i)\right)$ for every $i\in\llbracket 1,k-1\rrbracket$. Besides, one can prove in the same way that
$$\int_{0}^{+\infty}\frac{\sqrt{m}}{\sqrt{2\pi x^3}}e^{-\frac{m(x-1)^2}{2x}} e^{-m\frac{t_k}{2x}}dx=\frac{1}{\sqrt{1+t_k}}\exp\left(-m(\sqrt{1+t_k}-1)\right).$$
Therefore,
\begin{align*}
\E\left[\exp\left(-\frac{t_1m}{2A_1}-\sum\limits_{i=2}^{k}\frac{t_im}{2}\left(A_{i-1}+\frac{1}{A_i}\right)\right) \right]&\\
&\hspace{-5.5 cm}=\exp\left(-\sum\limits_{i=1}^{k-1}m\left(\sqrt{1+t_i}\sqrt{1+t_{i-1}}-1 \right)\right)\times\exp\left(-m(\sqrt{1+t_k}-1) \right)\times\prod\limits_{i=1}^k\frac{1}{\sqrt{1+t_i}}.
\end{align*}
This is exactly the Laplace Transform in \eqref{transfolaplaceinfini}.
\end{proof}
\begin{proof}[Proof of Proposition \ref{prop:scalinglimit}]
Let $m\in\N^*$. First, let us use the construction of the $\beta$-field given in Lemma \ref{lem:easyconstruction}. For every $i\in\N^*$, we write $A_i^{(m)}=A_i$ for sake of convenience.
Let $n\in\N^*$. First, we have to compute $\left(\hat{\bf{G}}_n(i,j)\right)_{i,j\in \llbracket 1,n \rrbracket}$. This computation requires two lemmas. Here is the first one:
\begin{lem}\label{lem:quotient}For every $i\in\llbracket 1,n-1\rrbracket$, for every $j\in\llbracket i+1, n\rrbracket$,
$$\frac{\hat{\bf{G}}_n(i,j)}{\hat{\bf{G}}_n(i+1,j)}=A_i.$$
\end{lem}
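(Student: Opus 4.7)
The strategy is to exploit the tridiagonal structure of $\hat{\mathbf{H}}_n$ together with the specific telescoping form of $\beta$ provided by Lemma \ref{lem:easyconstruction}. The key observation is that the claimed ratio $\hat{\mathbf{G}}_n(i,j)/\hat{\mathbf{G}}_n(i+1,j)$ does not depend on $j$ (as long as $j>i$), so it can be computed by a one-dimensional recurrence in $i$.

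First, I would fix $j\in\llbracket 2,n\rrbracket$ and use the identity $\hat{\mathbf{H}}_n\hat{\mathbf{G}}_n = I$ coordinate-wise. Because $\hat{\mathbf{H}}_n$ is tridiagonal with diagonal entries $2\beta_k$ and off-diagonal entries $-m$, reading the row at index $i$ with $i<j$ gives the homogeneous three-term recurrence
\begin{equation*}
2\beta_i\,\hat{\mathbf{G}}_n(i,j) = m\,\hat{\mathbf{G}}_n(i-1,j) + m\,\hat{\mathbf{G}}_n(i+1,j), \qquad 1<i<j,
\end{equation*}
together with the boundary relation $2\beta_1\,\hat{\mathbf{G}}_n(1,j) = m\,\hat{\mathbf{G}}_n(2,j)$ coming from the row $i=1$. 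Introducing $r_i = \hat{\mathbf{G}}_n(i,j)/\hat{\mathbf{G}}_n(i+1,j)$ and dividing the bulk equation by $\hat{\mathbf{G}}_n(i+1,j)$ turns the recurrence into
\begin{equation*}
r_i = \frac{m}{2\beta_i - m\,r_{i-1}}, \qquad 1<i<j,
\end{equation*}
where $r_{i-1}r_i = \hat{\mathbf{G}}_n(i-1,j)/\hat{\mathbf{G}}_n(i+1,j)$ was used to eliminate the leftmost term. The boundary row gives $r_1 = m/(2\beta_1)$.

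Next, I would plug in the explicit formulas from Lemma \ref{lem:easyconstruction}: $2\beta_1 = m/A_1$ yields $r_1 = A_1$, and for $i\geq 2$, $2\beta_i = m A_{i-1} + m/A_i$. An immediate induction then gives $r_i = A_i$: assuming $r_{i-1}=A_{i-1}$, the denominator $2\beta_i - m\,r_{i-1}$ collapses to the single term $m/A_i$, hence $r_i = m/(m/A_i) = A_i$. This yields the claim for every $i\in\llbracket 1,n-1\rrbracket$ and every $j>i$.

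There is no real obstacle beyond checking that the recurrence is well-defined, i.e.\ that $\hat{\mathbf{G}}_n(i+1,j)>0$ so that the ratios $r_i$ make sense. This is not an issue because $\beta\in\R_+^V$ with $\hat{\mathbf{H}}_n>0$ implies (via the path representation of Proposition \ref{sommes sur les chemins}) that all entries of $\hat{\mathbf{G}}_n$ are strictly positive almost surely. The argument is essentially the observation that, on a line graph, the tridiagonal inversion formula combined with the multiplicative structure $2\beta_i = m(A_{i-1} + A_i^{-1})$ causes a telescoping at each step of the induction.
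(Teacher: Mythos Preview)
Your proof is correct and follows essentially the same approach as the paper: both fix $j$, use the tridiagonal identity $\hat{\mathbf{H}}_n\hat{\mathbf{G}}_n=I_n$ row by row to obtain the three-term recurrence, and then prove $r_i=A_i$ by induction on $i$, with the base case coming from $2\beta_1=m/A_1$ and the inductive step from $2\beta_i=mA_{i-1}+m/A_i$. The only cosmetic difference is that the paper divides the recurrence by $\hat{\mathbf{G}}_n(i,j)$ rather than $\hat{\mathbf{G}}_n(i+1,j)$, but the substance is identical.
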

\begin{proof}
Our proof is based on an induction with respect to $i$: for every $i\in\llbracket1,n-1\rrbracket$, we will prove $$\mathcal{P}^{(i)}="\forall j\in\llbracket i+1, n\rrbracket,\frac{\hat{\bf{G}}_n(i,j)}{\hat{\bf{G}}_n(i+1,j)}=A_i".$$
Now, let us prove $\mathcal{P}^{(1)}$. We know that ${\bf{H}}_n\hat{\bf{G}}_n=I_n$. In particular, it holds that for every $j\in\llbracket 2,n\rrbracket$,
$$2\beta_1\hat{\bf{G}}_n(1,j)=m\hat{\bf{G}}_n(2,j).$$
Moreover, we know that $2\beta_1=\frac{m}{A_1}$.
Therefore, $$\frac{\hat{\bf{G}}_n(1,j)}{\hat{\bf{G}}_n(2,j)}=A_1.$$
Therefore, $\mathcal{P}^{(1)}$ is true. Now, let us assume that $\mathcal{P}^{(i-1)}$ is true for some $i\in\llbracket 2,n-1 \rrbracket$. Let us prove $\mathcal{P}^{(i)}$.
Again, we use the fact that ${\bf{H}}_n\hat{\bf{G}}_n=I_n$ which implies that for every $j\in \llbracket i+1, n\rrbracket$,
$$2\beta_i\hat{\bf{G}}_n(i,j)=m\hat{\bf{G}}_n(i-1,j)+m\hat{\bf{G}}(i+1,j).$$
Consequently,
\begin{align}\label{eq:inversion1}
\left(\frac{1}{A_i}+A_{i-1}\right)=\frac{\hat{\bf{G}}_n(i-1,j)}{\hat{\bf{G}}_n(i,j)}+\frac{\hat{\bf{G}}(i+1,j)}{\hat{\bf{G}}_n(i,j)}.
\end{align}
By $\mathcal{P}^{(i-1)}$, it holds that
$$\frac{\hat{\bf{G}}_n(i-1,j)}{\hat{\bf{G}}_n(i,j)}=A_{i-1}.$$
Combining it with \eqref{eq:inversion1} implies that for every $j\in \llbracket i+1, n\rrbracket$,
$$\frac{\hat{\bf{G}}_n(i,j)}{\hat{\bf{G}}_n(i+1,j)}=A_i $$
which means that $\mathcal{P}^{(i)}$ is true.
\end{proof}
Now, we need some notations. For every $i\in\llbracket 1,n\rrbracket$ and for every $j\in\llbracket i,n\rrbracket$, we define
$$c^{(i)}(j,j+1)=m A_j^{-1}\prod\limits_{i\leq k\leq j-1}A_k^{-2}.$$
Moreover, we define $\mathcal{R}^{(i)}(i\longleftrightarrow n+1)$ which is the effective resistance between $i$ and $n+1$ in the graph $\llbracket i,n+1\rrbracket$ with conductances $c^{(i)}$. Let us state the second lemma in order to compute $\hat{\bf{G}}_n$.
\begin{lem}\label{lem:calculdiag}For every $i\in\llbracket 1,n\rrbracket$,
$$\hat{\bf{G}}_n(i,i)=\mathcal{R}^{(i)}(i\longleftrightarrow n+1).$$
In particular, for every $i\in\llbracket1,n\rrbracket$,
$$\hat{\bf{G}}_n(i,i)=\frac{1}{m}\left(A_i+\sum\limits_{k=i}^{n-1}\prod\limits_{r=i}^kA_r^2\times A_{k+1} \right).$$
\end{lem}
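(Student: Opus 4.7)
The plan is to extract a first-order recursion for the diagonal entries $\hat{\bf{G}}_n(i,i)$ by combining the matrix identity ${\bf{H}}_n\hat{\bf{G}}_n = I_n$ with the ratio formulas from Lemma \ref{lem:quotient}, then to solve this recursion explicitly and recognize the answer as a sum of series resistances.

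First, for an interior index $i \in \llbracket 2, n-1 \rrbracket$, reading off the $(i,i)$ entry of ${\bf{H}}_n\hat{\bf{G}}_n = I_n$ gives
\begin{equation*}
2\beta_i\,\hat{\bf{G}}_n(i,i) - m\,\hat{\bf{G}}_n(i-1,i) - m\,\hat{\bf{G}}_n(i+1,i) = 1.
\end{equation*}
Symmetry of $\hat{\bf{G}}_n$ together with Lemma \ref{lem:quotient} (used twice: once with indices $(i-1,i)$, once with indices $(i,i+1)$) yields $\hat{\bf{G}}_n(i-1,i) = A_{i-1}\hat{\bf{G}}_n(i,i)$ and $\hat{\bf{G}}_n(i+1,i) = A_i\hat{\bf{G}}_n(i+1,i+1)$. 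Substituting $2\beta_i = mA_{i-1} + m/A_i$ from the construction in Lemma \ref{lem:easyconstruction}, the terms involving $A_{i-1}$ cancel, leaving the clean recursion
\begin{equation*}
\hat{\bf{G}}_n(i,i) = \frac{A_i}{m} + A_i^2\,\hat{\bf{G}}_n(i+1,i+1).
\end{equation*}
The boundary cases are handled separately: at $i=n$ the same computation (with $2\beta_n = mA_{n-1} + m/A_n$ and no $(i+1,i)$ term) collapses to $\hat{\bf{G}}_n(n,n) = A_n/m$, and at $i=1$ the recursion holds directly since $2\beta_1 = m/A_1$.

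A straightforward downward induction on $i$, starting from $\hat{\bf{G}}_n(n,n) = A_n/m$, then unrolls to
\begin{equation*}
\hat{\bf{G}}_n(i,i) = \frac{1}{m}\Bigl(A_i + \sum_{k=i}^{n-1}\Bigl(\prod_{r=i}^{k} A_r^2\Bigr) A_{k+1}\Bigr),
\end{equation*}
which is the explicit formula announced. To identify this with the effective resistance $\mathcal{R}^{(i)}(i \longleftrightarrow n+1)$, one simply observes that on the line graph $\llbracket i, n+1\rrbracket$, resistors in series add, so the effective resistance equals $\sum_{j=i}^{n} 1/c^{(i)}(j,j+1)$. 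Substituting $1/c^{(i)}(j,j+1) = (A_j/m)\prod_{r=i}^{j-1} A_r^2$ and reindexing $k = j-1$ reproduces exactly the formula above.

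The proof is essentially computational, and I expect no real obstacle: the only place requiring care is the use of Lemma \ref{lem:quotient}, which supplies ratios only in the first index, so one must invoke the symmetry $\hat{\bf{G}}_n(i+1,i) = \hat{\bf{G}}_n(i,i+1)$ before applying it. Once that bookkeeping is done, the cancellation of the $mA_{i-1}$ terms is automatic and makes the recursion telescoping.
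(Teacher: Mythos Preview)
Your proof is correct and takes a genuinely different route from the paper. The paper first establishes the identity $\hat{\bf G}_n(i,i)=\mathcal{R}^{(i)}(i\longleftrightarrow n+1)$ via potential theory: it builds a function $h(j)=\frac{\hat{\bf G}_n(i,j)}{\hat{\bf G}_n(i,i)}\prod_{k=i}^{j-1}A_k$, checks it is $c^{(i)}$-harmonic with boundary values $1$ and $0$, and then reads off the effective resistance from the escape-probability formula $1/\mathcal{R}^{(i)}=c^{(i)}(i,i+1)(1-h(i+1))$; the explicit sum is only deduced afterward from the series law. You invert this order: you extract directly from ${\bf H}_n\hat{\bf G}_n=I_n$ and Lemma~\ref{lem:quotient} the one-step recursion $\hat{\bf G}_n(i,i)=A_i/m+A_i^2\,\hat{\bf G}_n(i+1,i+1)$, solve it explicitly, and only then recognize the answer as the series resistance. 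Your argument is more elementary (no harmonic-function machinery, no appeal to \cite{LP}) and arguably cleaner for this specific one-dimensional computation; the paper's route, on the other hand, makes the Green-function/effective-resistance link the primary conceptual statement, which is the form one would want in a setting more general than a path graph.
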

\begin{proof}
For every $j\in\llbracket i+1,n\llbracket$, we define 
$$h(j)=\frac{\hat{\bf{G}}_n(i,j)}{\hat{\bf{G}}_n(i,i)}\prod\limits_{i\leq k\leq j-1}A_k.$$
Furthermore, we define $h(i)=1$ and $h(n+1)=0$. Now, we aim to prove that $h$ is harmonic on $\llbracket i,n+1\rrbracket$ with conductances $c^{(i)}$. Let $j\in\llbracket i+1,n\rrbracket$. Remark that
\begin{align*}
c^{(i)}(j-1,j)h(j-1)+c^{(i)}(j,j+1)h(j+1)&=\frac{m\hat{\bf{G}}_n(i,j-1)\prod\limits_{i\leq k\leq j-2} A_k}{\hat{\bf{G}}_n(i,i)A_{j-1}\prod\limits_{i\leq k\leq j-2}A_k^2}+\frac{m\hat{\bf{G}}_n(i,j+1)\prod\limits_{i\leq k\leq j} A_k}{\hat{\bf{G}}_n(i,i)A_{j}\prod\limits_{i\leq k\leq j-1}A_k^2}\\
&=\frac{m\hat{\bf{G}}_n(i,j-1)+m\hat{\bf{G}}_n(i,j+1)}{\hat{\bf{G}}_n(i,i)\prod\limits_{i\leq k\leq j-1}A_k}\\
&=\frac{\hat{\bf{G}}_n(i,j)}{\hat{\bf{G}}_n(i,i)}\frac{2\beta_j}{\prod\limits_{i\leq k\leq j-1}A_k}.
\end{align*}
Moreover, we know that $2\beta_j=\frac{m}{A_j}+mA_{j-1}$. Therefore
\begin{align*}
c^{(i)}(j-1,j)h(j-1)+c^{(i)}(j,j+1)h(j+1)&=\frac{\hat{\bf{G}}_n(i,j)}{\hat{\bf{G}}_n(i,i)}\prod\limits_{i\leq k\leq j-1} A_k\times\left(c^{(i)}(j-1,j)+c^{(i)}(j,j+1)\right)\\
&=h(j)\times\left(c^{(i)}(j-1,j)+c^{(i)}(j,j+1)\right).
\end{align*}
Therefore, $h$ is harmonic. Thus, by identity (2.3) in \cite{LP}, we get
$$\frac{1}{\mathcal{R}^{(i)}(i\longleftrightarrow n+1)}=c^{(i)}(i,i+1)\left(1-h(i+1)\right).$$
Consequently, we have
\begin{align}
\frac{1}{\mathcal{R}^{(i)}(i\longleftrightarrow n+1)}=\frac{m}{A_i}\left(1-\frac{A_i\hat{\bf{G}}_n(i,i+1)}{\hat{\bf{G}}_n(i,i)}\right)\label{eq:leminverseG1}
\end{align}
Recall that ${\bf{H}}_n\hat{\bf{G}}_n=I_n$. Therefore,
$$2\beta_i\hat{\bf{G}}_n(i,i)=1+m\hat{\bf{G}}_n(i-1,i)+m\hat{\bf{G}}_n(i+1,i).$$
It yields
$$\frac{1}{A_i}+A_{i-1}=\frac{1}{m\hat{\bf{G}}_n(i,i)}+\frac{\hat{\bf{G}}_n(i-1,i)}{\hat{\bf{G}}_n(i,i)}+\frac{\hat{\bf{G}}_n(i+1,i)}{\hat{\bf{G}}_n(i,i)}.$$
However, by Lemma \ref{lem:quotient},
$$\frac{\hat{\bf{G}}_n(i-1,i)}{\hat{\bf{G}}_n(i,i)}=A_{i-1}$$ which implies that
$$\frac{1}{A_i}-\frac{\hat{\bf{G}}_n(i+1,i)}{\hat{\bf{G}}_n(i,i)}=\frac{1}{m\hat{\bf{G}}_n(i,i)}.$$
Combining this with \eqref{eq:leminverseG1} yields
$$\frac{1}{\mathcal{R}^{(i)}(i\longleftrightarrow n+1)}=\frac{1}{\hat{\bf{G}}_n(i,i)}$$
which concludes the proof of the first part of Lemma \ref{lem:calculdiag}. In order to prove the second part of Lemma \ref{lem:calculdiag}, observe that
\begin{align*}
\hat{\bf{G}}_n(i,i)&=\mathcal{R}^{(i)}(i\longleftrightarrow n+1)\\
&=\sum\limits_{k=i}^n\frac{1}{c^{(i)}(k,k+1)}=\frac{1}{m}\left(A_i+\sum\limits_{k=i}^{n-1}\prod\limits_{r=i}^kA_r^2\times A_{k+1} \right).
\end{align*}
\end{proof}
Let $i,j\in\llbracket 1,n\rrbracket$ such that $j<i$. By the Lemmas \ref{lem:quotient} and \ref{lem:calculdiag},
\begin{align}\label{calculhorrible}
\hat{\bf{G}}_n(j,i)=\prod\limits_{k=j}^{i-1} \frac{\hat{\bf{G}}_n(k,i)}{\hat{\bf{G}}_n(k+1,i)}\times \hat{\bf{G}}_n(i,i)=\prod\limits_{k=j}^{i-1}A_k\times \frac{1}{m}\left(A_i+\sum\limits_{k=i}^{n-1}\prod\limits_{r=i}^kA_r^2\times A_{k+1} \right).
\end{align}
In particular,
\begin{align}
\psi^{(n,m)}_{\beta}=m\hat{\bf{G}}_n(1,n)=\prod\limits_{i=1}^nA_i\text{ and }\hat{{\bf{G}}}_n(1,1)=\frac{1}{m}\left(A_1+\sum\limits_{k=1}^{n-1}\prod\limits_{r=1}^kA_r^2\times A_{k+1}\right).\label{labmy9}
\end{align}
Therefore,
\begin{align}
Z_{\beta}^{(n,m)}=\frac{\frac{1}{m}\left(A_1+\sum\limits_{k=1}^{n-1}\prod\limits_{r=1}^kA_r^2\times A_{k+1}\right)}{\prod\limits_{i=1}^nA_i}.\label{labmy10}
\end{align}
Recall that $(\tilde{\psi}^{(m)}(t))_{t\geq 0}$ and $(\tilde{Z}^{(m)}(t))_{t\geq 0}$ are respectively the continuous linear interpolations of $(\psi_{\beta}^{(\lfloor mt\rfloor,m)})_{t\geq 0}$ and $(Z_{\beta}^{(\lfloor mt\rfloor,m)})_{t\geq 0}$. Therefore, combining \eqref{labmy9}, \eqref{labmy10} and Lemma \ref{lem:supremum}, for every $T>0$, for every $m\in\N^*$, it holds that
\begin{align}
\left(\tilde{\psi}^{(m)}(t),\tilde{Z}^{(m)}(t) \right)_{t\in[0,T]}=\left(Y_t^{(m)},\frac{\displaystyle\int_0^t(Y_s^{(m)})^2ds}{Y_t^{(m)}}\right)_{t\in[0,T]}+o_{m,T}^{\P}(1)\label{labmy11}
\end{align}
where $o_{m,T}^{\P}(1)$ is a random function whose supremum on $[0,T]$ goes to $0$ in probability and $Y^{(m)}$ is defined in Lemma \ref{lem:convergenceprocessus}. Thus, we can use Lemma \ref{lem:convergenceprocessus} in \eqref{labmy11} in order to conclude the proof of Proposition \ref{prop:scalinglimit}.
\end{proof}
Now, we are ready to give a new proof of Theorem \ref{thm:matsumotoyor} thanks to the discrete version of the Matsumoto Yor properties given by Proposition \ref{prop:discretematyor}.
\begin{proof}[New proof of Theorem \ref{thm:matsumotoyor}]
First, let us prove point (iii) of Theorem \ref{thm:matsumotoyor}. Let $t>0$. Let $k\in\N^*$ and let $t_1<t_2<\cdots<t_k\leq t$ be $k$ positive real numbers which are smaller than $t$. Let $m\in\N^*$ and let $F$ be a bounded continuous function from $\R^{k+1}$ into $\R$. By (iii) in Proposition \ref{prop:discretematyor}, it holds that,
\begin{align}
\E\left[F\left(\psi^{(\lfloor mt\rfloor,m)}_{\beta},Z^{(m)}_{\lfloor mt_1\rfloor},\cdots, Z^{(\lfloor mt\rfloor,m)}_{\beta }\right) \right]\nonumber\\
&\hspace{-6.3cm}=\E\left[\int_0^{+\infty}F\left(x,Z^{(\lfloor mt_1\rfloor,m)}_{\beta},\cdots, Z^{(\lfloor mt_k\rfloor ,m)}_{\beta}\right) \frac{1}{\sqrt{2\pi Z_{\beta}^{(\lfloor mt\rfloor,m)}x^3}}\exp\left({\displaystyle-\frac{1}{Z_{\beta}^{(\lfloor mt\rfloor,m)}}\frac{(x-1)^2}{2x}}\right)dx\right].\label{labmy12}
\end{align}
Thanks to Proposition \ref{prop:scalinglimit}, one can make $m$ go to infinity in \eqref{labmy12} which yields
\begin{align}
\E\left[F\left(e_t,Z_{t_1},\cdots, Z_{t_k}\right) \right]=\E\left[\int_0^{+\infty}F\left(x,Z_{t_1},\cdots, Z_{t_k}\right) \frac{1}{\sqrt{2\pi Z_t x^3}}\exp\left(\displaystyle-\frac{1}{Z_t}\frac{(x-1)^2}{2x}\right)dx\right].\nonumber
\end{align}
It proves (iii) in Theorem \ref{thm:matsumotoyor}. Moreover (i) in Theorem \ref{thm:matsumotoyor} is a direct consequence of (iii). Now, let us prove (ii). Let $\varepsilon>0$. Let $T>\varepsilon$. Let $t\in[\varepsilon,T]$. Let $m\in\N^*$. Let us define 
$$\mathcal{M}_t^{(\varepsilon,m)}=\ln(Z_{\beta}^{(\lfloor  mt\rfloor,m)})-\ln(Z_{\beta}^{(\lfloor  m\varepsilon\rfloor,m)})-\sum\limits_{k=\lfloor m\varepsilon \rfloor}^{\lfloor mt\rfloor-1}\E\left[\ln(Z_{\beta}^{(k+1,m)})-\ln(Z_{\beta}^{(k,m)})|\mathcal{Z}_{k,m}\right].$$
Remark that $\left(\mathcal{M}_t^{(\varepsilon,m)},t\geq\varepsilon\right)$ is a martingale. We would like to show that it converges toward a Brownian motion. First, by (ii) in Proposition \ref{prop:discretematyor}, one can get the following useful representation of $\mathcal{M}_t^{(\varepsilon,m)}$:
\begin{align}
\mathcal{M}_t^{(\varepsilon,m)}&=\ln(Z_{\beta}^{(\lfloor  mt\rfloor,m)})-\ln(Z_{\beta}^{(\lfloor  m\varepsilon\rfloor,m)})+\sum\limits_{k=\lfloor m\varepsilon \rfloor}^{\lfloor mt\rfloor-1}\ln\left(\frac{m}{m+1/Z_{\beta}^{(k,m)}}\right)\nonumber\\
&\hspace{0.4cm}+\sum\limits_{k=\lfloor m\varepsilon \rfloor}^{\lfloor mt\rfloor-1}\E\left[\ln(IG(1,m+1/Z_{\beta}^{(k,m)}))|\mathcal{Z}_{k,m}\right].\label{labmy13}
\end{align}
Besides, another useful representation of $\mathcal{M}_t^{(\varepsilon,m)}$ is the following one:
\begin{equation}
\mathcal{M}_t^{(\varepsilon,m)}=\sum\limits_{k=\lfloor m\varepsilon\rfloor}^{\lfloor mt\rfloor-1}\Delta_k^{(m)}\label{labmy14}
\end{equation}
where for every $k\in\N^*$, 
$$\Delta_k^{(m)}=\ln(Z_{\beta}^{(k+1,m)})-\ln(Z_{\beta}^{(k,m)})+
\ln\left(\frac{m}{m+1/Z_{\beta}^{(k,m)}} \right)+\E\left[\ln(IG(1,m+1/Z_{\beta}^{(k,m)}))|\mathcal{Z}_{k,m}\right].$$ By (ii) in Proposition \ref{prop:discretematyor}, for every $k\in\N^*$,
\begin{align}
\Delta_k^{(m)}=-\ln(IG_k(1,m+1/Z_{\beta}^{(k,m)}))
+\E\left[\ln(IG_k(1,m+1/Z_{\beta}^{(k,m)}))|\mathcal{Z}_{k,m}\right]\label{labmy15}
\end{align}
where conditionally on $(Z_{\beta}^{(k,m)})_{k\in\N^*}=(z_k)_{k\in \N^*}$, $(IG_k(1,m+1/z_k))_{k\in\N^*}$ is a sequence of independent random variables such that for every $k\in\N^*$, $IG_k(1,m+1/z_k)$ is an Inverse Gaussian random variable with parameters $(1,m+1/z_k)$.
Let $\delta>0$. By Proposition \ref{prop:scalinglimit},
\begin{align}
\underset{k\in\llbracket\lfloor m\varepsilon\rfloor,\lfloor mT\rfloor\rrbracket}\sup\hspace{0,1 cm}\frac{1}{Z_{\beta}^{(k,m)}}\xrightarrow[m\rightarrow+\infty]{law}\underset{s\in[\varepsilon,T]}\sup\hspace{0,1 cm}\frac{1}{Z_s}.\label{labmy15bis}
\end{align}
Remark that in the limit above, the restriction on $[\varepsilon,T]$ is crucial. Indeed, $\varepsilon=0$ would give an infinite limit.
Furthermore, by \eqref{labmy15bis}, there exists a positive constant $C_{\delta}$ such that for every $m\in\N^*$,
\begin{align}
\P\left(\underset{k\in\llbracket\lfloor m\varepsilon\rfloor,\lfloor mT\rfloor\rrbracket}\sup\hspace{0,1 cm}\frac{1}{Z_{\beta}^{(k,m)}}>C_{\delta} \right)\leq\delta.\label{labmy16}
\end{align}
In the sequel, we use the notation $A(\delta)$ to designate the event
$$\left\{\underset{k\in\llbracket\lfloor m\varepsilon\rfloor,\lfloor mT\rfloor\rrbracket}\sup\hspace{0,1 cm}\frac{1}{Z_{\beta}^{(k,m)}}\leq C_{\delta}\right\}.$$
Conditionally on $A(\delta)$, one can use the estimate of the expectation of an Inverse Gaussian random variable given by Lemma \ref{lem:estimees} to obtain that for every $k\in\llbracket \lfloor m\varepsilon,\rfloor,\lfloor mT\rfloor\rrbracket$,
\begin{align}
\Delta_k^{(m)}=-\ln(IG_k(1,m+1/Z_{\beta}^{(k,m)}))
+\E\left[\ln(IG_k(1,m))\right]+\frac{1}{m}o_{m,\delta}(1)\label{labmy17}
\end{align}
where $o_{m,\delta}(1)$ goes to zero when $m$ goes to infinity and does not depend on $k$. Now, let use the coupling of Lemma \ref{lem:couplage} in \eqref{labmy17}. Therefore, for every $k\in\llbracket \lfloor m\varepsilon,\rfloor,\lfloor mT\rfloor\rrbracket$, conditionally on $A(\delta)$, there exists random variables $(R_k^{(i)},i\in\{1,2,3,4\})$ and $Ber_k$ exactly as in Lemma \ref{lem:couplage} such that
\begin{align}
\Delta_k^{(m)}=-\ln(IG_k(1,m))
+\E\left[\ln(IG_k(1,m))\right]+\frac{1}{m}o_{m,\delta}(1)+J_k^{(m)}\label{labmy18}
\end{align}
where $(IG_k(1,m))_{k\in\N^*}$ is a sequence of independent Inverse Gaussian random variables with parameters $(1,m)$ and for every $k\in\N^*$
\begin{align}
|J_{k}^{(m)}|&\leq \left(\frac{1}{IG_k(1,m)}+\frac{1}{IG_k(1,m+1/Z_{\beta}^{(k,m)})}\right)\nonumber\\
&\hspace{0.4 cm}\times\left(\frac{C_{\delta}R_k^{(1)}}{m^{3/2}}+Ber_k\times\frac{C_{\delta}R_k^{(2)}+\sqrt{C_{\delta}}R_k^{(3)}+R_k^{(4)}}{\sqrt{m}}\right).\nonumber
\end{align}
Moreover, for every $i\in\{1,2,3,4\}$, $R_k^{(i)}$ is a positive random variable and conditionally on $\{R_k^{(i)},i\in\{1,2,3,4\}\}$, $Ber_k$ is a Bernoulli random variable whose parameter is smaller than $\frac{C_{\delta}R_k^{(1)}}{m^{3/2}}$. Moreover, there exists a positive constant $\kappa$ which does not depend on $m$, $k$ and $C_{\delta}$ such that for every $i\in\{1,2,3,4\}$, $\E\left[{R_k^{(i)}}^4\right]\leq \kappa$.
Consequently, conditionally on $A(\delta)$, using \eqref{labmy18} in \eqref{labmy14} yields for every $t\in[\varepsilon, T]$,
\begin{align}
\mathcal{M}_t^{(\varepsilon,m)}=\sum\limits_{k=\lfloor m\varepsilon\rfloor}^{\lfloor mt \rfloor}\left(-\ln(IG_k(1,m))
+\E\left[\ln(IG_k(1,m)\right]\right)+\sum\limits_{k=\lfloor m\varepsilon\rfloor}^{\lfloor mt \rfloor} J_k^{(m)}+(T-\varepsilon)o_{m,\delta}(1).\label{labmy19}
\end{align}
Now, let us show that the term $\sum\limits_{k=\lfloor m\varepsilon\rfloor}^{\lfloor mt \rfloor} J_k^{(m)}$ is negligible under $A(\delta)$. Remark that
\begin{align}
\E\left[\textbf{1}\{A(\delta)\}\sum\limits_{k=\lfloor m\varepsilon\rfloor}^{\lfloor mt \rfloor} J_k^{(m)}\right]&\leq\sqrt{2}\sum\limits_{k=\lfloor m\varepsilon\rfloor}^{\lfloor mt \rfloor}\E\left[\frac{1}{IG_k(1,m)^2}+\frac{1}{IG_k(1,m+1/Z_{\beta}^{(k,m)})^2}\right]^{1/2}\nonumber\\
&\hspace{0,8 cm}\times \E\left[\left(\frac{C_{\delta}R_k^{(1)}}{m^{3/2}}+Ber_k\times\frac{C_{\delta}R_k^{(2)}+\sqrt{C_{\delta}}R_k^{(3)}+R_k^{(4)}}{\sqrt{m}}\right)^2 \right]^{1/2}\nonumber\\
&\hspace{-2 cm}\leq 2\sqrt{7}\sum\limits_{k=\lfloor m\varepsilon\rfloor}^{\lfloor mt \rfloor}\E\left[\left(\frac{C_{\delta}R_k^{(1)}}{m^{3/2}}+Ber_k\times\frac{C_{\delta}R_k^{(2)}+\sqrt{C_{\delta}}R_k^{(3)}+R_k^{(4)}}{\sqrt{m}}\right)^2 \right]^{1/2}\label{labmy20}
\end{align}
where we used the fact that for every $K\geq 1$, $\E\left[IG(1,K)^{-2}\right]=(1+1/K)^2+
1/K+2/K^2\leq 7$.
Now, in \eqref{labmy20}, we can apply the estimates concerning the random variables $Ber_k$ and $(R_k^{(i)},i\in\{1,2,3,4\})$ given by Lemma \ref{lem:couplage}. Therefore, it holds that
\begin{align}
\E\left[\textbf{1}\{A(\delta)\}\sum\limits_{k=\lfloor m\varepsilon\rfloor}^{\lfloor mt \rfloor} J_k^{(m)}\right]\nonumber\\
&\hspace{-3cm}\leq 4\sqrt{7}\sum\limits_{k=\lfloor m\varepsilon\rfloor}^{\lfloor mt \rfloor} \E\left[\frac{C_{\delta}^2(R_k^{(1)})^2}{m^3}+Ber_k^2\left(\frac{C_{\delta}^2(R_k^{(2)})^2
+C_{\delta}(R_k^{(3)})^2+(R_k^{(4)})^2}{m} \right) \right]^{1/2}\nonumber\\
&\hspace{-3cm}\leq 4\sqrt{7}\sum\limits_{k=\lfloor m\varepsilon\rfloor}^{\lfloor mt \rfloor} \E\left[\frac{C_{\delta}^2(R_k^{(1)})^2}{m^3}+\frac{C_{\delta}R_k^{(1)}}{m^{3/2}}\left(\frac{C_{\delta}^2(R_k^{(2)})^2
+C_{\delta}(R_k^{(3)})^2+(R_k^{(4)})^2}{m} \right) \right]^{1/2}\nonumber\\
&\hspace{-3cm}\leq 4\sqrt{7}(T-\varepsilon)\left(\frac{C_{\delta}\kappa^{1/4}}{\sqrt{m}}+\frac{\kappa^{3/8}}{m^{1/4}}\left(C_{\delta}^{3/2}+C_{\delta}+C_{\delta}^{1/2} \right)\right).
\label{labmy21}
\end{align}
Therefore, we can use the estimate \eqref{labmy21} in \eqref{labmy19} which implies that, on the event $A(\delta)$, for every $t\in[\varepsilon, T]$,
\begin{align}
\mathcal{M}_t^{(\varepsilon,m)}=\sum\limits_{k=\lfloor m\varepsilon\rfloor}^{\lfloor mt \rfloor}\left(-\ln(IG_k(1,m))
+\E\left[\ln(IG_k(1,m)\right]\right)
+(T-\varepsilon)o_{m,\delta}(1)+o_{m,\delta,\varepsilon,T}^{\P}(1)\label{labmy22}
\end{align}
where $o_{m,\delta,\varepsilon,T}^{\P}(1)$ is a random function whose supremum on $[\varepsilon,T]$ goes to 0 in probability when $m$ goes to infinity. Moreover, exactly as in the proof of Lemma \ref{lem:convergenceprocessus}, we know that the continuous linear interpolation of 
$$\left(\sum\limits_{k=\lfloor m\varepsilon\rfloor}^{\lfloor m(t+\varepsilon) \rfloor}\left(-\ln(IG_k(1,m))
+\E\left[\ln(IG_k(1,m)\right]\right)\right)_{t\in[0,T-\varepsilon]}$$ converges in law toward some standard Brownian motion $\alpha^{(\varepsilon)}$ for the topology of uniform convergence.
Therefore, on the event $A(\delta)$, the continuous linear interpolation of $(\mathcal{M}_{\varepsilon+t}^{(\varepsilon,m)})_{t\in[0,T-\varepsilon]}$ converges in law toward $\alpha^{(\varepsilon)}$. Moreover, the probability of the event $A(\delta)$ can be made as close as we want from 1. Therefore, the linear interpolation of $(\mathcal{M}_{\varepsilon+t}^{(\varepsilon,m)})_{t\in[0,T-\varepsilon]}$ converges in law toward $\alpha^{(\varepsilon)}$. Moreover, recall from \eqref{labmy13} that for every $t\in[\varepsilon,T]$,
\begin{align}
\mathcal{M}_t^{(\varepsilon,m)}&=\ln(Z_{\beta}^{(\lfloor  mt\rfloor,m)})-\ln(Z_{\beta}^{(\lfloor  m\varepsilon\rfloor,m)})+\sum\limits_{k=\lfloor m\varepsilon \rfloor}^{\lfloor mt\rfloor-1}\ln\left(\frac{m}{m+1/Z_{\beta}^{(k,m)}}\right)\nonumber\\
&\hspace{0.4cm}+\sum\limits_{k=\lfloor m\varepsilon \rfloor}^{\lfloor mt\rfloor-1}\E\left[\ln(IG(1,m+1/Z_{\beta}^{(k,m)}))|\mathcal{Z}_{k,m}\right].\label{labmy23}
\end{align}
Moreover, as before, one can show that $\underset{k\in\llbracket\lfloor m\varepsilon\rfloor,\lfloor mT\rfloor\rrbracket}\sup\hspace{0,1 cm}\frac{1}{Z_{\beta}^{(k,m)}}$ is tight and by using a Taylor expansion of the logarithm, we get that
\begin{align}
\left(\sum\limits_{k=\lfloor m\varepsilon \rfloor}^{\lfloor mt\rfloor-1}\ln\left(\frac{m}{m+1/Z_{\beta}^{(k,m)}}\right) \right)_{t\in[\varepsilon,T]}&=\left(-\frac{1}{m}\sum\limits_{k=\lfloor m\varepsilon \rfloor}^{\lfloor mt\rfloor-1}\frac{1}{Z_{\beta}^{(k,m)}} \right)_{t\in[\varepsilon,T]}+o_{m,\varepsilon,T}^{\P}(1)\nonumber\\
&=\left(-\int_{\varepsilon}^t\frac{1}{\tilde{Z}^{(m)}(s)}ds \right)_{t\in[\varepsilon,T]}+o_{m,\varepsilon,T}^{\P}(1)\label{labmy24}
\end{align}
where $o_{m,\varepsilon,T}^{\P}(1)$ is a random function whose supremum on $[\varepsilon, T]$ goes to $0$ in probability as $m$ goes to infinity. In the last equality, we used the definition of $\tilde{Z}^{(m)}$ given in Proposition \ref{prop:scalinglimit}. Besides, using Lemma \ref{lem:estimees} and the tightness of $\underset{k\in\llbracket\lfloor m\varepsilon\rfloor,\lfloor mT\rfloor\rrbracket}\sup\hspace{0,1 cm}\frac{1}{Z_{\beta}^{(k,m)}}$ again, we obtain that
\begin{align}
\left(\sum\limits_{k=\lfloor m\varepsilon \rfloor}^{\lfloor mt\rfloor-1}\E\left[IG\left(1,m+1/Z_{\beta}^{(k,m)}\right)|\mathcal{Z}_{k,m}\right]\right)_{t\in[\varepsilon,T]}&=\left(-(t-\varepsilon)/2\right)_{t\in[\varepsilon,T]}+o_{m,\varepsilon,T}^{\P}(1).\label{labmy25}
\end{align}
Therefore, one can combine \eqref{labmy25}, \eqref{labmy24}, Proposition \ref{prop:scalinglimit} and the fact that the linear interpolation of $(\mathcal{M}_{\varepsilon+t}^{(\varepsilon,m)})_{t\in[0,T-\varepsilon]}$ converges in law toward $\alpha^{(\varepsilon)}$ to make $m$ go to infinity in equation \eqref{labmy23}. The convergence in law in equation \eqref{labmy23} holds on the compact set $[\varepsilon, T]$ for the topology of uniform convergence. Actually, we consider this convergence for the continuous linear interpolations of the random functions in \eqref{labmy23}. However, this does not change anything since the error which is associated with this approximation goes to 0 in probability uniformly on $[\varepsilon, T]$. It implies that, almost surely, for every $t\in[0,T-\varepsilon]$,
\begin{align}
\ln(Z_{t+\varepsilon})=\ln(Z_{\varepsilon})+
\alpha^{(\varepsilon)}_t+\int_{0}^t\frac{1}{Z_{s+\varepsilon}}ds+\frac{t}{2}.\label{labmy26}
\end{align}
Finally, using Ito formula in \eqref{labmy26}, almost surely, for every $t\in[0,T-\varepsilon]$,
\begin{align}
Z_{t+\varepsilon}=Z_{\varepsilon}+\int_0^tZ_{s+\varepsilon}d\alpha^{(\varepsilon)}_s+\int_0^tZ_sds+t.\label{labmy27}
\end{align} 
Moreover, the SDE \eqref{labmy27} is satisfied for every $\varepsilon>0$ and for every $T>\varepsilon$. This gives exactly (ii) in Theorem \ref{thm:matsumotoyor}.
\end{proof}
\section{Study of the discrete operator on the circle }
In this section, we will give a simple description of $H_{\beta}^{(\lambda,n)}$ and we will use it to compute $G_{\beta}^{(\lambda,n)}$ explicitely. It is the first step in order to construct the continuous-space operator $\mathcal{H}^{(\lambda)}$.
\subsection{A simple description of the random potential $\beta$}
Recall that an Inverse Gaussian distribution with parameters $(\mu,\lambda)$ has density
$$\textbf{1}\{x>0\}\frac{\sqrt{\lambda}}{\sqrt{2\pi x^3}}e^{-\frac{\lambda(x-\mu)^2}{2\mu^2x}} .$$
Let $\lambda>0$. Let $n\in\N$. Let us consider a family $(A_i^{(n)})_{i\in \mathcal{C}_{\lceil \lambda n\rceil}}$ of i.i.d Inverse Gaussian random variables with parameters $(1,n)$. Most of the time we will write $A_i^{(n)}=A_i$ for sake of convenience. For every $i\in \mathcal{C}_{\lceil \lambda n\rceil}$, let us define 
\begin{align}
\beta_i=\frac{n}{2}\left(A_{i+1}+\frac{1}{A_i}\right).\label{eqdefsimple}
\end{align}
In the definition above, if $i=\lceil\lambda n\rceil$, then $i+1$ is $-\lceil \lambda n\rceil$ in order to respect the circular structure of $\mathcal{C}_{\lceil \lambda n\rceil}$. Recall that $W_n^{(\lambda)}$ is a matrix on the discretized circle $\mathcal{C}_{\lceil\lambda  n\rceil}$ such that $(W_n^{(\lambda)})_{i,j}$ is 0 if $i$ and $j$ are not connected and is $n$ otherwise. Then, we have the following result
\begin{lem}\label{simpledescription}
The distribution of the random potential $(\beta_i)_{i\in \mathcal{C}_{\lceil \lambda n\rceil}}$ which is defined in \eqref{eqdefsimple} is $\nu_{\mathcal{C}_{\lceil \lambda n\rceil}}^{W_n^{(\lambda)}}$.
\end{lem}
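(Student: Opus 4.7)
The plan is to characterise the distribution of $(\beta_i)_{i\in\mathcal{C}_{\lceil\lambda n\rceil}}$ by computing its Laplace transform and checking that it coincides with the one appearing in \eqref{transfolaplace} with the weights $W_n^{(\lambda)}$. The computation is essentially the one already carried out in the proof of Lemma \ref{lem:easyconstruction}; the circular topology actually removes the "free boundary" term that appeared there, so matters simplify slightly.

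First, for $t = (t_i)_{i\in\mathcal{C}_{\lceil\lambda n\rceil}} \in \R_+^{\mathcal{C}_{\lceil\lambda n\rceil}}$, I would use the cyclic structure to reindex
$$\sum_i t_i\beta_i = \frac{n}{2}\sum_i t_i A_{i+1} + \frac{n}{2}\sum_i\frac{t_i}{A_i} = \frac{n}{2}\sum_i\!\left(t_{i-1}A_i + \frac{t_i}{A_i}\right),$$
which is legitimate since on the circle the change of variables $j=i+1$ is a bijection. Because the $(A_i)$ are independent, the Laplace transform factorizes as
$$\E\!\left[e^{-\langle t,\beta\rangle}\right] = \prod_i \E\!\left[\exp\!\left(-\tfrac{n}{2}\Bigl(t_{i-1}A_i+\tfrac{t_i}{A_i}\Bigr)\right)\right].$$

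Next I would compute one such factor. For $A\sim IG(1,n)$ and $a,b\ge 0$, expanding $\frac{n(x-1)^2}{2x}=\frac{nx}{2}-n+\frac{n}{2x}$ inside the density of $A$ brings the integrand to
$$\frac{\sqrt n}{\sqrt{2\pi x^3}}\exp\!\left(-\frac{n(1+a)}{2}x - \frac{n(1+b)}{2x} + n\right).$$
Recognising this (up to a normalising constant) as the density of $IG\!\left(\sqrt{(1+b)/(1+a)},\,n(1+b)\right)$ — equivalently, completing the square — gives
$$\E\!\left[e^{-\frac{n}{2}(aA+b/A)}\right] = \frac{1}{\sqrt{1+b}}\,\exp\!\left(-n\bigl(\sqrt{(1+a)(1+b)}-1\bigr)\right).$$
This is precisely the computation appearing inside the proof of Lemma \ref{lem:easyconstruction}, and it can be quoted from there.

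Taking the product over $i\in\mathcal{C}_{\lceil\lambda n\rceil}$ then yields
$$\E\!\left[e^{-\langle t,\beta\rangle}\right] = \prod_i \frac{1}{\sqrt{1+t_i}}\,\exp\!\left(-n\sum_i\bigl(\sqrt{(1+t_{i-1})(1+t_i)}-1\bigr)\right),$$
in which the sum in the exponent runs over each edge of $\mathcal{C}_{\lceil\lambda n\rceil}$ exactly once and every term carries weight $W_n^{(\lambda)}(i-1,i)=n$. This matches the Laplace transform \eqref{transfolaplace} characterising $\nu_{\mathcal{C}_{\lceil\lambda n\rceil}}^{W_n^{(\lambda)}}$, and by Proposition \ref{prop:resumebeta} this determines the law of $\beta$. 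The only genuine obstacle is the Gaussian-type integral in the second step, which is routine and already executed in the proof of Lemma \ref{lem:easyconstruction}; the remaining bookkeeping is entirely driven by the cyclic reindexing.
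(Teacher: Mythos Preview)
Your proposal is correct and follows exactly the approach the paper intends: the paper's proof consists of a single sentence referring back to Lemma \ref{lem:easyconstruction}, and you have carried out precisely that Laplace transform computation, with the only modification being the cyclic reindexing that eliminates the boundary term. There is nothing to add.
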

\begin{proof}[Proof of Lemma \ref{simpledescription}]
The proof follows the same lines as the proof of Lemma \ref{lem:easyconstruction}.

\end{proof}

\subsection{Computation of $G_{\beta}^{(\lambda,n)}$}
In the sequel of this article, we will assume that the random potential $\beta$ with distribution $\nu_{\mathcal{C}_{\lceil \lambda n\rceil}}^{W_n^{(\lambda)}}$ is constructed with the random field $A$ introduced in the previous section.
A remarkable fact is that this is possible to compute explicitely the matrix $G_{\beta}^{(\lambda,n)}$ as a function of the field $A$. In order to make the computation simpler, let us make a small change of variables. For every $i\in\mathcal{C}_{\lceil \lambda n\rceil}$, let us introduce $u_i=\sqrt{A_iA_{i+1}}$ and $D$ the matrix on $\mathcal{C}_{\lceil \lambda n\rceil}^2$ whose diagonal coefficients are $\sqrt{A_{-\lceil \lambda_n\rceil}},\sqrt{A_{-\lceil \lambda_n\rceil+1}},\cdots$ and so on. Moreover, we introduce the matrix $R_u^{(\lambda,n)}$ which is defined by 
\begin{itemize}
\item $R_u^{(\lambda,n)}(i,i)=u_i^2+1$ for every $i\in\mathcal{C}_{\lceil \lambda n\rceil}$.
\item $R_u^{(\lambda,n)}(i,i+1)=-u_{i}$ for every $i\in\mathcal{C}_{\lceil \lambda n\rceil}$.
\item $R_u^{(\lambda,n)}(i,i-1)=-u_{i-1}$ for every $i\in\mathcal{C}_{\lceil \lambda n\rceil}$.
\item $R_u^{(\lambda,n)}(i,j)=0$ elsewhere.
\end{itemize}
Remark that 
\begin{align}
H_{\beta}^{(\lambda,n)}=nD^{-1}R_u^{(\lambda,n)}D^{-1}.\label{matrixprod}
\end{align}
Therefore, this is enough to compute the inverse of $R_u^{(\lambda,n)}$. Now, we are going to describe the inverse of $R_u^{(\lambda,n)}$. However, some new notation is required. Recall that  $\mathcal{C}_{\lceil \lambda n\rceil}=\{-\lceil \lambda n \rceil,\cdots,\lceil \lambda n\rceil\}$. This means that the discrete circle $\mathcal{C}_{\lceil \lambda n\rceil}$ is oriented by the $+1$ increment. For $i,j\in\mathcal{C}_{\lceil \lambda n\rceil}$, when we write $\vec{\prod}_{k=i}^j$ or $\vec{\sum}_{k=i}^j$, we mean that $k$ is in the set $\{i,i+1,\cdots,j-1,j\}$. 
Then, the inverse of $R_u^{(\lambda,n)}$ is given by the following proposition.
\begin{prop}\label{inversemo}
Let $n\in\N^*$. Let $\lambda>0$. Let $i,j\in\mathcal{C}_{\lceil \lambda n\rceil}$.
If $j\notin\{i-1,i,i+1\}$, then
\begin{align}
(R_u^{(\lambda,n)})^{-1}(i,j)=\frac{\vec{\prod}_{k=i}^{j-1}u_k\times \left(1+\vec{\sum}_{k=j+1}^{i-1}\vec{\prod}_{l=k}^{i-1}u_l^2\right)+\vec{\prod}_{k=j}^{i-1}u_k\times \left(1+\vec{\sum}_{k=i+1}^{j-1}\vec{\prod}_{l=k}^{j-1}u_l^2\right)}{\left(\prod\limits_{k\in \mathcal{C}_{\lceil \lambda n\rceil}}\hspace{-0.3 cm}u_k -1\right)^2}.\label{formule1}
\end{align}
If  $j=i+1$, then
\begin{align}
(R_u^{(\lambda,n)})^{-1}(i,i+1)=\frac{u_i\times \left(1+\vec{\sum}_{k=i+2}^{i-1}\vec{\prod}_{l=k}^{i-1}u_l^2\right)+\vec{\prod}_{k=i+1}^{i-1}u_k}{\left(\prod\limits_{k\in \mathcal{C}_{\lceil \lambda n\rceil}}\hspace{-0.3 cm}u_k -1\right)^2}\label{formule2}
\end{align}
if $j=i-1$, then
\begin{align}
(R_u^{(\lambda,n)})^{-1}(i,i-1)=\frac{u_{i-1}\times \left(1+\vec{\sum}_{k=i+1}^{i-2}\vec{\prod}_{l=k}^{i-2}u_l^2\right)+\vec{\prod}_{k=i}^{i-2}u_k }{\left(\prod\limits_{k\in \mathcal{C}_{\lceil \lambda n\rceil}}\hspace{-0.3 cm}u_k -1\right)^2}
\end{align}
Moreover,
\begin{align}
(R_u^{(\lambda,n)})^{-1}(i,i)=\frac{1+\vec{\sum}_{k=i+1}^{i-1}\vec{\prod}_{l=k}^{i-1}u_l^2}{\left(\prod\limits_{k\in \mathcal{C}_{\lceil \lambda n\rceil}}\hspace{-0.3 cm}u_k -1\right)^2}.\label{formule4}
\end{align}
\end{prop}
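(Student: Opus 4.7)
The statement is a purely algebraic identity about inverting a cyclic tridiagonal matrix, so I will prove it by explicitly constructing $(R_u^{(\lambda,n)})^{-1}$ from a factorization. The key observation is that $R_u^{(\lambda,n)}$ admits a factorization $R_u^{(\lambda,n)} = S_u S_u^T$, where $S_u$ is the bidiagonal cyclic matrix defined by $S_u(i,i)=1$ and $S_u(i,i+1)=-u_i$ (indices read cyclically in $\mathcal{C}_{\lceil\lambda n\rceil}$), with all other entries zero. A direct check of the three cases $j=i$, $j=i+1$, $|i-j|\geq 2$ confirms this factorization: for instance $(S_u S_u^T)(i,i) = 1 + u_i^2$ and $(S_u S_u^T)(i,i+1) = -u_i$.

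Next I would invert $S_u$ by a geometric series. Writing $S_u = I - U$ with $U(i,j) = u_i \mathbf{1}\{j=i+1\}$, the cyclic nilpotency gives $U^N = \bigl(\prod_{k\in\mathcal{C}_{\lceil\lambda n\rceil}} u_k\bigr)I$ where $N=2\lceil\lambda n\rceil+1$, hence $(I-U)\sum_{m=0}^{N-1} U^m = (1-\prod u)I$. Extracting entries yields
\[
S_u^{-1}(i,j) = \frac{1}{1-\prod_{k}u_k}\,\vec\prod_{l=i}^{j-1} u_l,
\]
with the convention that an empty forward cyclic product equals $1$. Then $(R_u^{(\lambda,n)})^{-1}=(S_u^{-1})^T S_u^{-1}$, so
\[
(R_u^{(\lambda,n)})^{-1}(i,j) = \frac{1}{(1-\prod_k u_k)^2}\sum_{k\in\mathcal{C}_{\lceil\lambda n\rceil}} \vec\prod_{l=k}^{i-1} u_l\;\vec\prod_{l=k}^{j-1} u_l.
\]

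The last step is to organize this sum by the position of the index $k$ on the cycle relative to $i$ and $j$. Set $P_1 = \vec\prod_{l=i}^{j-1} u_l$ and $P_2 = \vec\prod_{l=j}^{i-1} u_l$, so $P_1 P_2 = \prod_k u_k$. For $k$ on the forward arc strictly between $i$ and $j$, the product $\vec\prod_{l=k}^{i-1} u_l$ equals $P_2\cdot\vec\prod_{l=k}^{j-1} u_l$, so the contribution factors as $P_2\bigl(\vec\prod_{l=k}^{j-1}u_l\bigr)^2$; symmetrically, for $k$ strictly between $j$ and $i$ on the forward arc, the contribution is $P_1\bigl(\vec\prod_{l=k}^{i-1}u_l\bigr)^2$; and the boundary terms $k=i$ and $k=j$ contribute $P_1$ and $P_2$ respectively. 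Grouping everything gives
\[
\sum_k \vec\prod_{l=k}^{i-1} u_l\,\vec\prod_{l=k}^{j-1} u_l
= P_1\bigl(1+\vec\sum_{k=j+1}^{i-1}\vec\prod_{l=k}^{i-1}u_l^2\bigr) + P_2\bigl(1+\vec\sum_{k=i+1}^{j-1}\vec\prod_{l=k}^{j-1}u_l^2\bigr),
\]
which is exactly \eqref{formule1}, since $(1-\prod u)^2=(\prod u-1)^2$. The formulas \eqref{formule2}, the $j=i-1$ case, and the diagonal case \eqref{formule4} follow from the same decomposition by letting one of the two arcs be empty; the diagonal case is even cleaner because only the single boundary term $k=i$ contributes the leading $1$.

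The computation is completely mechanical once the factorization is spotted, so there is no real obstacle; the only delicate point is bookkeeping the forward cyclic products $\vec\prod$ across the two arcs $[i,j-1]$ and $[j,i-1]$, and being careful with the degenerate adjacent and diagonal cases where one of the two arcs collapses to a single point or to the empty set.
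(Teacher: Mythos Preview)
Your proof is correct and takes a genuinely different route from the paper. The paper simply \emph{guesses} the matrix $L_u^{(\lambda,n)}$ with the stated entries and then verifies $R_u^{(\lambda,n)}L_u^{(\lambda,n)}=I$ by a brute-force entrywise computation (they write out $(R L)(i,i)=1$ and $(R L)(i,i+1)=0$ in full, calling the computation ``quite awful'' and omitting the remaining cases). Your approach instead \emph{derives} the inverse: you spot the Cholesky-type factorization $R_u^{(\lambda,n)}=S_uS_u^T$ with $S_u=I-U$ cyclic bidiagonal, invert $S_u$ via the finite geometric series using $U^N=(\prod_k u_k)I$, and then read off $(R_u^{(\lambda,n)})^{-1}=(S_u^{-1})^TS_u^{-1}$ as a single sum over $k$ which you split according to the two arcs between $i$ and $j$.

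Your argument is more conceptual and explains where the formula comes from, whereas the paper's verification is opaque but requires no insight once the answer is given. One small point of care: your convention that $\vec\prod_{l=i}^{i-1}u_l=1$ (empty product) differs from the paper's convention, under which that same symbol would denote the full cyclic product; you handle this correctly by stating your convention explicitly, but it is worth flagging since the paper's $\vec\prod$ notation would literally give the wrong value at $k=i$ in your sum.
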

\begin{rem}
If we compare the formula for $(R_u^{(\lambda,n)})^{-1}(i,j)$ given in Proposition \ref{inversemo} and the formula given in identiy \eqref{calculhorrible}, one remarks that $(R_u^{(\lambda,n)})^{-1}(i,j)$ looks like a combination of $\hat{\bf{G}}_n(i,j)$ into two directions. This is possible to find an explicit link between $(R_u^{(\lambda,n)})^{-1}$ and $\hat{\bf{G}}_n$ but this is quite involved and we prefer to show a more direct and brutal proof for Proposition \ref{inversemo}.
\end{rem}
\begin{proof}[Proof of Proposition \ref{inversemo}]
Let $L_u^{(\lambda,n)}$ be a matrix with the coefficients given by Proposition \ref{inversemo}. We have to check that $R_u^{(\lambda,n)}L_u^{(\lambda,n)}$ is the identity matrix. The computation is quite awful. That is why we will only show that for every $i\in\mathcal{C}_{\lceil \lambda n\rceil}$, $(R_u^{(\lambda,n)}L_u^{(\lambda,n)})(i,i)$ equals 1 and $(R_u^{(\lambda,n)}L_u^{(\lambda,n)})(i,i+1)$ equals 0. The other computations are not more difficult and we omit it in this paper for sake of convenience. 
Let $i\in\mathcal{C}_{\lceil \lambda n\rceil}$.
Then, by definition of $R_u^{(\lambda,n)}$,
\begin{align}
(R_u^{(\lambda,n)}L_u^{(\lambda,n)})(i,i)&=(1+u_i^2)L_u^{(\lambda,n)}(i,i)-u_{i-1}L_u^{(\lambda,n)}(i,i-1)-u_iL_u^{(\lambda,n)}(i,i+1).\label{calcul1}
\end{align}
For sake of convenience, we multiply identity \eqref{calcul1} by $$\left(\prod\limits_{k\in \mathcal{C}_{\lceil \lambda n\rceil}}\hspace{-0.3 cm}u_k -1\right)^2.$$ Then, using the definition of the coefficients of $L_u^{(\lambda,n)}$, we get
$$\begin{array}{ll}
(R_u^{(\lambda,n)}L_u^{(\lambda,n)})(i,i)\times \left(\prod\limits_{k\in \mathcal{C}_{\lceil \lambda n\rceil}}\hspace{-0.3 cm}u_k -1\right)^2&\vspace{0.1cm}\\
=(1+u_i^2)\times\left( 1+\vec{\sum}_{k=i+1}^{i-1}\vec{\prod}_{l=k}^{i-1}u_l^2\right)-u_{i-1}\times \left(u_{i-1}\times \left(1+\vec{\sum}_{k=i+1}^{i-2}\vec{\prod}_{l=k}^{i-2}u_l^2\right)+\vec{\prod}_{k=i}^{i-2}u_k \right)&\nonumber\vspace{0.1cm}\\
\hspace{0.4 cm}-u_i\times\left( u_i\times \left(1+\vec{\sum}_{k=i+2}^{i-1}\vec{\prod}_{l=k}^{i-1}u_l^2\right)+\vec{\prod}_{k=i+1}^{i-1}u_k\right)&\nonumber\vspace{0.1cm}\\
=1+u_i^2+\vec{\sum}_{k=i+1}^{i-1}\vec{\prod}_{l=k}^{i-1}u_l^2+\vec{\sum}_{k=i+1}^{i-1}\vec{\prod}_{l=k}^{i}u_l^2-u_{i-1}^2-\vec{\sum}_{k=i+1}^{i-2}\vec{\prod}_{l=k}^{i-1}u_l^2-\prod\limits_{k\in \mathcal{C}_{\lceil \lambda n\rceil}}\hspace{-0.3 cm}u_k\nonumber\\
\hspace{0.4 cm}-u_i^2-\vec{\sum}_{k=i+2}^{i-1}\vec{\prod}_{l=k}^{i}u_l^2-\prod\limits_{k\in \mathcal{C}_{\lceil \lambda n\rceil}}\hspace{-0.3 cm}u_k\nonumber\vspace{0.1cm}\\
=1-2\prod\limits_{k\in \mathcal{C}_{\lceil \lambda n\rceil}}\hspace{-0.3 cm}u_k-u_{i-1}^2+\left(\vec{\sum}_{k=i+1}^{i-1}\vec{\prod}_{l=k}^{i-1}u_l^2- \vec{\sum}_{k=i+1}^{i-2}\vec{\prod}_{l=k}^{i-1}u_l^2\right)\nonumber\vspace{0.1cm}\\
\hspace{0.4 cm}+\left(\vec{\sum}_{k=i+1}^{i-1}\vec{\prod}_{l=k}^{i}u_l^2- \vec{\sum}_{k=i+2}^{i-1}\vec{\prod}_{l=k}^{i}u_l^2\right)\nonumber\vspace{0.1cm}\\
=1-2\prod\limits_{k\in \mathcal{C}_{\lceil \lambda n\rceil}}\hspace{-0.3 cm}u_k-u_{i-1}^2+u_{i-1}^2+\vec{\prod}_{l=i+1}^{i}u_l^2\nonumber\\
=\left(1-\prod\limits_{k\in \mathcal{C}_{\lceil \lambda n\rceil}}\hspace{-0.3 cm}u_k\right)^2.\nonumber
\end{array}$$
Thus, we get that $$(R_u^{(\lambda,n)}L_u^{(\lambda,n)})(i,i)=1.$$ Now, let us look at $(R_u^{(\lambda,n)}L_u^{(\lambda,n)})(i,i+1)$.
By definition of $R_{(\lambda,n)}$,
\begin{align}
(R_u^{(\lambda,n)}L_u^{(\lambda,n)})(i,i+1)&\nonumber\\
&\hspace{-2.5 cm}=(1+u_i^2)L_u^{(\lambda,n)}(i,i+1)-u_{i-1}L_u^{(\lambda,n)}(i-1,i+1)-u_iL_u^{(\lambda,n)}(i+1,i+1).\label{calcul2}
\end{align}
As previously, we multiply identity \eqref{calcul2} by $\left(\prod\limits_{k\in \mathcal{C}_{\lceil \lambda n\rceil}}\hspace{-0.3 cm}u_k -1\right)^2$. Then, using the definition of the coefficients of $L_u^{(\lambda,n)}$, we get
$$\begin{array}{ll}
(1+u_i^2)\times\left(u_i\times \left(1+\vec{\sum}_{k=i+2}^{i-1}\vec{\prod}_{l=k}^{i-1}u_l^2\right)+\vec{\prod}_{k=i+1}^{i-1}u_k\right)&\vspace{0.15cm}\\
\hspace{0.4 cm}-u_{i-1}\times\left( u_{i-1}u_i\left(1+\vec{\sum}_{k=i+2}^{i-2}\vec{\prod}_{l=k}^{i-2}u_l^2 \right)+\vec{\prod}_{k=i+1}^{i-2}u_k\times(1+u_i^2)\right)&\vspace{0.15cm}\\
\hspace{0.4 cm}
-u_i\times\left(1+\vec{\sum}_{k=i+2}^{i}\vec{\prod}_{l=k}^{i}u_l^2 \right)&\vspace{0.15cm}\\
=u_i+u_i^3+u_i\vec{\sum}_{k=i+2}^{i-1}\vec{\prod}_{l=k}^iu_l^2+u_i\vec{\sum}_{k=i+2}^{i-1}\vec{\prod}_{l=k}^{i-1}u_l^2+(1+u_i^2)\vec{\prod}_{k=i+1}^{i-1}u_k&\vspace{0.15cm}\\
\hspace{0,4 cm}-u_iu_{i-1}^2-u_i\vec{\sum}_{k=i+2}^{i-2}\vec{\prod}_{l=k}^{i-1}u_l^2-(1+u_i^2)\vec{\prod}_{k=i+1}^{i-1}u_k-u_i-u_i\vec{\sum}_{k=i+2}^i\vec{\prod}_{l=k}^iu_l^2&\vspace{0.15cm}\\
=u_i^3-u_iu_{i-1}^2+u_i\left(\vec{\sum}_{k=i+2}^{i-1}\vec{\prod}_{l=k}^{i-1}u_l^2- \vec{\sum}_{k=i+2}^{i-2}\vec{\prod}_{l=k}^{i-1}u_l^2\right)&\vspace{0.15cm}\\
\hspace{0.4 cm}+u_i\left(\vec{\sum}_{k=i+2}^{i-1}\vec{\prod}_{l=k}^iu_l^2- \vec{\sum}_{k=i+2}^i\vec{\prod}_{l=k}^iu_l^2\right)\vspace{0.15cm}\\
=u_i^3-u_iu_{i-1}^2+u_iu_{i-1}^2-u_i^3\vspace{0.15cm}\\
=0.
\end{array}$$
This concludes the proof.
\end{proof}
\section{Convergence of the discretized Green function}
Before proving Theorem \ref{thm:convergence}, we need to prove a first lemma.
For every $t\in[-\lambda,\lambda]$ and for every $n\in\N^*$, let us define a continuous random fonction $t\mapsto X_t^{(n)}$ such that if $j/n\leq t<(j+1)/n$,
$$X_t^{(n)}=\prod\limits_{i=-\lceil \lambda n\rceil}^{ j}u_i +n(t-j/n)\left(\prod\limits_{i=-\lceil \lambda n\rceil}^{ j+1}u_i-\prod\limits_{i=-\lceil \lambda n\rceil}^{ j}u_i \right)$$
where for every $i\in \mathcal{C}_{\lceil \lambda n\rceil}$, $u_i=\sqrt{A_iA_{i+1}}$ where $(A_i)_{i\in\mathcal{C}_{\lceil \lambda n\rceil}}$ is a family of independent Inverse Gaussian random variables with parameters $(1,n)$. Then we have the following lemma:
\begin{lem} \label{cvchap5}For the topology of uniform convergence on $[-\lambda,\lambda]$,

\begin{align}
(X^{(n)}_t)_{t\in[-\lambda,\lambda]}\xrightarrow[n\rightarrow+\infty]{law}(X_t)_{t\in[-\lambda,\lambda]}:=\left(e^{B_{t+\lambda}-(t+\lambda)/2}\right)_{t\in[-\lambda,\lambda]}.\nonumber
\end{align}
\end{lem}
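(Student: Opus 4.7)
My plan is to reduce this to Lemma \ref{lem:convergenceprocessus} by a telescoping computation plus a boundary-error estimate. First I would unfold the product structure of $\prod_{i=-\lceil \lambda n\rceil}^{j} u_i$: since $u_i = \sqrt{A_i A_{i+1}}$, each $A_k$ with $-\lceil\lambda n\rceil +1 \le k \le j$ appears in exactly two factors ($u_{k-1}$ and $u_k$), whereas $A_{-\lceil\lambda n\rceil}$ and $A_{j+1}$ appear only once. Hence
\begin{equation*}
\prod_{i=-\lceil \lambda n\rceil}^{j} u_i \; = \; \sqrt{A_{-\lceil \lambda n\rceil}}\,\sqrt{A_{j+1}} \, \prod_{k=-\lceil \lambda n\rceil+1}^{j} A_k.
\end{equation*}

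Next I would reindex by setting $\tilde A^{(n)}_k = A^{(n)}_{k-\lceil\lambda n\rceil-1}$ for $k \in \mathbb{N}^*$; the $\tilde A^{(n)}_k$ are i.i.d. $IG(1,n)$, so the process $Y^{(n)}$ built from them as in Lemma \ref{lem:convergenceprocessus} is well defined and converges in law, uniformly on compacts, to $(e^{\alpha_s - s/2})_{s \ge 0}$. Matching indices, the telescoped product above coincides, up to the two square-root factors, with the discrete values of $Y^{(n)}$ at time $s = t + \lambda$. The linear interpolations $X_t^{(n)}$ and $Y^{(n)}_{t+\lambda}$ therefore agree on the same grid up to the multiplicative perturbations $\sqrt{A_{-\lceil\lambda n\rceil}}$ and $\sqrt{A_{\lfloor tn\rfloor + 1}}$, and up to the $O(1/n)$ discrepancy coming from the difference between $j/n$ and the shifted grid points.

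Then I would show that these perturbations vanish uniformly in $t \in [-\lambda,\lambda]$. Lemma \ref{lem:supremum}, applied with $c = 2\lambda$, gives
\begin{equation*}
\sup_{i \in \mathcal{C}_{\lceil\lambda n\rceil}} |\ln A_i^{(n)}| \xrightarrow[n\to +\infty]{\mathbb{P}} 0,
\end{equation*}
so the supremum over $t$ of $|\sqrt{A_{-\lceil\lambda n\rceil}}\sqrt{A_{\lfloor tn\rfloor + 1}} - 1|$ goes to zero in probability. Combined with uniform continuity on $[0, 2\lambda]$ of the limit process $(e^{\alpha_s - s/2})$, the $O(1/n)$ time shift between $X^{(n)}_t$ and $Y^{(n)}_{t+\lambda}$ is also negligible in the sup-norm.

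Putting the three ingredients together (telescoping, control of the boundary factors, and applying Lemma \ref{lem:convergenceprocessus}), I get $X^{(n)}_t \to e^{\alpha_{t+\lambda} - (t+\lambda)/2}$ in law for the uniform topology on $[-\lambda,\lambda]$, which is exactly the claimed limit $e^{B_{t+\lambda} - (t+\lambda)/2}$ (since $(B_s)_{s \ge 0}$ and $(\alpha_s)_{s \ge 0}$ are both standard Brownian motions started at $0$). The only real point requiring care is the uniformity in $t$ of the boundary and reindexing errors; everything else is essentially bookkeeping, and I expect no genuine obstacle given the estimates already established in Section \ref{sec:premlem}.
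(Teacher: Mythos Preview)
Your proposal is correct and follows essentially the same route as the paper. The paper writes the decomposition as $X_t^{(n)}=\tilde Y_t^{(n)}+E_t^{(n)}$, where $\tilde Y_t^{(n)}$ is the linear interpolation of $\prod_{i=-\lceil\lambda n\rceil}^{j}A_i$, invokes Lemma~\ref{lem:supremum} to kill the error term, and then appeals to Lemma~\ref{lem:convergenceprocessus}; your telescoping identity $\prod_i u_i=\sqrt{A_{-\lceil\lambda n\rceil}}\sqrt{A_{j+1}}\prod_{k}A_k$ makes explicit exactly why that error term is controlled by $\sup_i|\ln A_i^{(n)}|$, and your handling of the reindexing and $O(1/n)$ time shift spells out what the paper leaves implicit.
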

\begin{proof}[Proof of lemma \ref{cvchap5}]
First, remark that for every $t\in [-\lambda,\lambda]$ and for every $n\in\N^*$,
\begin{align}
X_t^{(n)}=\tilde{Y}_t^{(n)}+E_t^{(n)}
\end{align}
where $t\mapsto \tilde{Y}_t^{(n)}$ is a continuous function such that  if $j/n\leq t<(j+1)/n)$,
$$\tilde{Y}_t^{(n)}=\prod\limits_{i=-\lceil \lambda n\rceil}^{ j}A_i +n(t-j/n)\left(\prod\limits_{i=-\lceil \lambda n\rceil}^{ j+1}A_i-\prod\limits_{i=-\lceil \lambda n\rceil}^{ j}A_i \right)$$ and $t\mapsto E_t^{(n)}$ is a random error function. By Lemma \ref{lem:supremum}, $\underset{t\in[-\lambda,\lambda]}\sup\hspace{0.1 cm} E_t^{(n)}$ goes to zero in probability as $n$ goes to infinity. Consequently, we only have to focus on $\tilde{Y}^{(n)}$. This random function converges in law toward a geometric Brownian motion according to Lemma \ref{lem:convergenceprocessus}.
\end{proof}
Now, we are ready to prove Theorem \ref{thm:convergence}.
\begin{proof}[Proof of Theorem \ref{thm:convergence}]
 Let us define a rescaled bilinear continuous interpolation $(\tilde{I}_u^{(\lambda,n)})$ of $(R_u^{(\lambda,n)})^{-1} $ exactly as in \eqref{interpo}.  Besides, recall that by Lemma \ref{cvchap5} 
\begin{align}
(X^{(n)}_t)_{t\in[-\lambda,\lambda]}\xrightarrow[n\rightarrow+\infty]{law}(X_t)_{t\in[-\lambda,\lambda]}\label{cvrap}
\end{align}
where the convergence holds for the topology of uniform convergence.
Now, the idea of the proof is to write $\frac{1}{n}\tilde{I}_u^{(\lambda,n)}$ as a function of $X^{(n)}$.
Remark that
\begin{itemize}
\item If $j>i$ (for the usual order in $\{-\lceil \lambda n \rceil,\cdots,\lceil \lambda n\rceil\}$) then $$\vec{\prod}_{l=j}^iu_l=X_{i/n}^{(n)}\frac{X_{\lceil \lambda n \rceil/n}^{(n)}}{X_{(j-1)/n}^{(n)}}.$$
\item If $j<i$, then
$$\vec{\prod}_{l=j}^iu_l=\frac{X_{i/n}^{(n)}}{X_{(j-1)/n}^{(n)}}.$$
\end{itemize}
Therefore, by Proposition \ref{inversemo}, it holds that:
\begin{itemize}
\item From \eqref{formule4}, if $t\in[-\lambda,\lambda]$,
\begin{align}
\frac{1}{n}\tilde{I}_u^{(\lambda,n)}(t,t)=\frac{\displaystyle\int_t^{\lambda}\left(\frac{X_{\lambda}^{(n)}X_t^{(n)}}{X_s^{(n)}}\right)^2ds+\int_{-\lambda}^t\left(\frac{X_t^{(n)}}{X_s^{(n)}}\right)^2ds}{(X_{\lambda}^{(n)}-1)^2} +o_n^{\P}(1)\label{identitn1}
\end{align}
where $o_n^{\P}(1)$ is a random variable which goes to zero in probability uniformly in $t$. (The uniformity in $t$ comes from Lemma \ref{lem:supremum}.)
\item From \eqref{formule1}, if $t,t'\in[-\lambda,\lambda]$ with $t<t'$, 
\begin{align}
\frac{1}{n}\tilde{I}_u^{(\lambda,n)}(t,t')=&\nonumber\\
&\hspace{-3.5 cm}\frac{\displaystyle \frac{X_{t'}^{(n)}}{X_t^{(n)}}\left(\int_{t'}^{\lambda} \left(\frac{X_{\lambda}^{(n)}X_t^{(n)}}{X_s^{(n)}} \right)^2ds+\int_{-\lambda}^t\left(\frac{X_t^{(n)}}{X_s^{(n)}}\right)^2ds\right)+\frac{X_{\lambda}^{(n)}X_t^{(n)}}{X_{t'}^{(n)}}\int_t^{t'}\left( \frac{X_{t'}^{(n)}}{X_s^{(n)}}\right)^2ds}{(X_{\lambda}^{(n)}-1)^2} +o_n^{\P}(1)\label{identitn2}
\end{align}
where $o_n^{\P}(1)$ goes to $0$ in probability uniformly in $t$ and $t'$ thanks to Lemma \ref{lem:supremum}.
\end{itemize}
Remark that \eqref{identitn1} is just a special case of \eqref{identitn2}. Therefore, in the rest of the proof, we will only focus on \eqref{identitn2}.
From \eqref{identitn2} and \eqref{cvrap} and the fact that the Lebesgue integral is a continuous functional for the topology of uniform convergence, we obtain that $\frac{1}{n}\tilde{I}_u^{(\lambda,n)}(\cdot,\cdot)$ converges in law for the topology of uniform convergence toward some symmetric random kernel $(\mathcal{R}^{(\lambda)}(t,t'))_{(t,t')\in(\mathcal{C}^{(\lambda)})^2}$ which is defined for $t,t'\in[-\lambda,\lambda]$ such that $t\leq t'$ by the formula
\begin{align}
\mathcal{R}^{(\lambda)}(t,t')&=\frac{\displaystyle \frac{X_{t'}}{X_t}\left(\int_{t'}^{\lambda} \left(\frac{X_{\lambda}X_t}{X_s} \right)^2ds+\int_{-\lambda}^t\left(\frac{X_t}{X_s}\right)^2ds\right)+\frac{X_{\lambda}X_t}{X_{t'}}\int_t^{t'}\left( \frac{X_{t'}}{X_s}\right)^2ds}{(X_{\lambda}-1)^2}\nonumber\\
&=\frac{X_{t'}X_t}{(X_{\lambda}-1)^2}\left(X_{\lambda}^2\int_{t'}^{\lambda}\frac{ds}{X_s^2}+X_{\lambda}\int_t^{t'}\frac{ds}{X_s^2}+\int_{-\lambda}^t\frac{ds}{X_s^2} \right).\label{identitn3}
\end{align}
Now, let us remark that $(X_t/X_0)_{t\in[-\lambda,\lambda]}$ is distributed as $(M_t)_{t\in[-\lambda,\lambda]}$ where $M$ was defined in subsection \ref{contimod}. Moreover, by \eqref{identitn3}, for every $t,t'\in[-\lambda,\lambda]$ such that $t\leq t'$
\begin{align*}
\mathcal{R}^{(\lambda)}(t,t')\\
&\hspace{-1 cm}=\frac{(X_{t'}/X_0)(X_t/X_0)}{(X_{\lambda}/X_0-1/X_0)^2}\left((X_{\lambda}/X_0)^2\int_{t'}^{\lambda}\frac{X_0^2}{X_s^2}ds+(X_{\lambda}/X_0)X_0^{-1}\int_t^{t'}\frac{X_0^2}{X_s^2}ds+X_0^{-2}\int_{-\lambda}^t\frac{X_0^2}{X_s^2}ds \right).
\end{align*}
Consequently, it holds that
\begin{align*}
\left(\mathcal{R}^{(\lambda)}(t,t')\right)_{-\lambda\leq t\leq t'\leq \lambda}\\
&\hspace{-2.6 cm}\overset{law}=\left(\frac{M_{t'}M_t}{(M_{\lambda}-M_{-\lambda})^2}\left(M_{\lambda}^2\int_{t'}^{\lambda}\frac{ds}{M_s^2} +M_{\lambda}M_{-\lambda}\int_t^{t'}\frac{ds}{M_s^2}+M_{-\lambda}^2\int_{-\lambda}^t\frac{ds}{M_s^2}\right)\right)_{-\lambda\leq t\leq t'\leq \lambda}.
\end{align*}
Therefore, $\mathcal{R}^{(\lambda)}$ has the same distribution as $\mathcal{G}^{(\lambda)}$ which is introduced in subsection \ref{contimod}. It implies that $\frac{1}{n}\tilde{I}_u^{(\lambda,n)}(\cdot,\cdot)$ converges in law toward $\mathcal{G}^{(\lambda)}$.
In order to conclude the proof, we only have to justify that $\frac{1}{n}\tilde{I}_u^{(\lambda,n)}(\cdot,\cdot)$ has the same limit as $\tilde{G}_{\beta}^{(\lambda,n)}(\cdot,\cdot)$. Recall that, by \eqref{matrixprod},
\begin{align}
G_{\beta}^{(\lambda,n)}=\frac{1}{n}D(R_u^{(\lambda,n)})^{-1}D\nonumber
\end{align}
where $D$ is a diagonal matrix whose entries are $(\sqrt{A_i})_{i\in\mathcal{C}_{\lceil \lambda n\rceil}}$ and $(A_i)_{i\in\mathcal{C}_{\lceil \lambda n\rceil}}$ are independent Inverse Gaussian random variables with parameters $(1,n)$. However by Lemma \ref{lem:supremum}, $\sup\hspace{0.1 cm}\{|\ln(A_i)|,i\in\mathcal{C}_{\lceil \lambda n\rceil}\}$ goes to $0$ in probability as $n$ goes to infinity. Therefore, $D$ goes to the identity matrix in probability  uniformly in its coefficients as $n$ goes to infinity. Consequently, the limits in law of $n^{-1}\tilde{I}_u^{(\lambda,n)}$ and $\tilde{G}_{\beta}^{(\lambda,n)}$ are the same, that is, $\mathcal{G}^{(\lambda)}$. This concludes the proof.
\end{proof}
\section{Study of $\mathcal{G}^{(\lambda)}$}
\begin{proof}[Proof of Proposition \ref{formeg}]
Remark that for every $t<t'$, $\mathcal{G}^{(\lambda)}(t,t')$ can be divided into three parts as follows
\begin{align}
(M_{\lambda}-M_{-\lambda})^2\frac{\mathcal{G}^{(\lambda)}(t,t')}{M_tM_{t'}}=M_{\lambda}^2A(t,t')+M_{\lambda}M_{-\lambda}B(t,t')+M_{-\lambda}^2C(t,t')\label{decoupage}
\end{align}
with
$$A(t,t')=\int_{t'}^{\lambda}\frac{ds}{M_s^2},\hspace{0.2 cm}B(t,t')=\int_t^{t'}\frac{ds}{M_s^2} \text{ and }C(t,t')=\int_{-\lambda}^t\frac{ds}{M_s^2}.$$
Therefore, we have three symmetric kernels $A$, $B$ and $C$. Let $f\in L^2([-\lambda,\lambda])$.
We can observe that
\begin{align}
\int_{-\lambda}^{\lambda}\int_{-\lambda}^{\lambda}A(t,t')f(t)\bar{f}(t')dtdt'&=\int_{-\lambda}^{\lambda}\int_{-\lambda}^{t'}\int_{t'}^{\lambda}\frac{ds}{M_s^2}f(t)\bar{f}(t')dtdt'+\int_{-\lambda}^{\lambda}\int_{t'}^{\lambda}\int_t^{\lambda}\frac{ds}{M_s^2}f(t)\bar{f}(t')dtdt'.\nonumber
\end{align}
By Fubini's theorem, this implies that
\begin{align}
\int_{-\lambda}^{\lambda}\int_{-\lambda}^{\lambda}\hspace{-0.2 cm}A(t,t')f(t)\bar{f}(t')dtdt'&=\int_{-\lambda}^{\lambda}\frac{1}{M_s^2}\int_{-\lambda}^s\int_{-\lambda}^{t'}f(t)\bar{f}(t')dtdt'ds+\int_{-\lambda}^{\lambda}\frac{1}{M_s^2}\int_{-\lambda}^s\int_{t'}^sf(t)\bar{f}(t')dtdt'ds\nonumber\\
&=\int_{- \lambda}^{\lambda}\frac{1}{M_s^2}\left|\int_{-\lambda}^sf(t)dt\right|^2ds.\label{fubini1}
\end{align}
In the same way, one can show that,
\begin{align}
\int_{-\lambda}^{\lambda}\int_{-\lambda}^{\lambda}C(t,t')f(t)\bar{f}(t')dtdt'&=\int_{-\lambda}^{\lambda}\frac{1}{M_s^2}\left|\int_s^{\lambda}f(t)dt\right|^2ds.\label{fubini2}
\end{align}
By Fubini's theorem again, we get
\begin{align}
\int_{-\lambda}^{\lambda}\int_{-\lambda}^{\lambda}B(t,t')f(t)\bar{f}(t')dtdt'&=\int_{-\lambda}^{\lambda}\int_{-\lambda}^{t'}\int_t^{t'}\frac{ds}{M_s^2}f(t)\bar{f}(t')dtdt'+\int_{-\lambda}^{\lambda}\int_{t'}^{\lambda}\int_{t'}^{t}\frac{ds}{M_s^2}f(t)\bar{f}(t')dtdt'\nonumber\\
&=\int_{-\lambda}^{\lambda}\frac{1}{M_s^2}\int_{-\lambda}^sf(t)dt\int_s^{\lambda}\bar{f}(t)dtds+\int_{-\lambda}^{\lambda}\frac{1}{M_s^2}\int_{-\lambda}^s\bar{f}(t)dt\int_s^{\lambda}f(t)dtds.\label{fubini3}
\end{align}
These identities hold for every $f\in L^2([-\lambda,\lambda])$. Let $f\in L^2([-\lambda,\lambda])$.
Combining identities \eqref{fubini1}, \eqref{fubini2}, \eqref{fubini3} and \eqref{decoupage}, we get
$$
\begin{array}{ll}
\displaystyle \left(M_{\lambda}-M_{-\lambda}\right)^2\int_{-\lambda}^{\lambda}\int_{-\lambda}^{\lambda}\mathcal{G}^{(\lambda)}(t,t')f(t)\bar{f}(t')dtdt'&\\
=\displaystyle M_{\lambda}^2\int_{- \lambda}^{\lambda}\frac{1}{M_s^2}\left|\int_{-\lambda}^sf(t)M_tdt\right|^2ds+M_{\lambda}M_{-\lambda}\int_{-\lambda}^{\lambda}\frac{1}{M_s^2}\int_{-\lambda}^sf(t)M_tdt\int_s^{\lambda}\bar{f}(t)M_tdtds\\
\displaystyle \hspace{0.4 cm}+M_{\lambda}M_{-\lambda}\int_{-\lambda}^{\lambda}\frac{1}{M_s^2}\int_{-\lambda}^s\bar{f}(t)M_tdt\int_s^{\lambda}f(t)M_tdtds+M_{-\lambda}^2\int_{-\lambda}^{\lambda}\frac{1}{M_s^2}\left|\int_s^{\lambda}f(t)M_tdt\right|^2ds\\
\displaystyle=\int_{-\lambda}^{\lambda}\frac{1}{M_s^2}\left|M_{\lambda}\int_{-\lambda}^sf(t)M_tdt+M_{-\lambda}\int_s^{\lambda}f(t)M_tdt\right|^2ds.
\end{array}
$$
This proves that $\mathcal{G}^{\lambda}$ is non-negative. Now, let us check that it is positive. Let $f\in L^2([-\lambda,\lambda])$ such that
$$\int_{-\lambda}^{\lambda}\frac{1}{M_s^2}\left|M_{\lambda}\int_{-\lambda}^sf(t)M_tdt+M_{-\lambda}\int_s^{\lambda}f(t)M_tdt\right|^2ds=0.$$
Then, for almost every $s\in[-\lambda,\lambda]$,
$$M_{\lambda}\int_{-\lambda}^sf(t)M_tdt+M_{-\lambda}\int_s^{\lambda}f(t)M_tdt=0.$$
Thus, by Lebesgue differentiation theorem, we get
that for almost every $s\in[-\lambda,\lambda]$,
$$ M_{\lambda}f(s)M_s-M_{-\lambda} f(s)M_s=0.$$
Moreover $M_{\lambda}-M_{-\lambda}\neq0$ almost surely. Therefore, almost surely, $f$ is zero almost everywhere. This concludes the proof.

\end{proof}
\section{Proof of identities in law}
\begin{proof}[Proof of Proposition \ref{idenloipoint}]
Let $t\in\mathcal{C}^{(\lambda)}$.
By Theorem \ref{thm:convergence},
$$G_{\beta}^{(\lambda,n)}(\lceil tn\rceil,\lceil tn\rceil)\xrightarrow[n\longrightarrow+\infty]{law}\mathcal{G}^{(\lambda)}(t,t).$$
However, by Theorem 3 in \cite{SZT}, for every $n\in\N^*$, $G_{\beta}^{(\lambda,n)}(\lceil tn\rceil,\lceil tn\rceil)$ is distributed as $1/(2\gamma)$ where $\gamma$ is a Gamma distribution with parameters $(1/2,1)$. Therefore, we get that
$$\frac{M_t^2}{(M_{\lambda}-M_{-\lambda})^2}\left(M_{\lambda}^2\int_{t}^{\lambda}\frac{ds}{M_s^2} +M_{-\lambda}^2\int_{-\lambda}^t\frac{ds}{M_s^2}\right)=\mathcal{G}^{(\lambda)}(t,t)\overset{law}=\frac{1}{2\gamma}.$$
\end{proof}
\begin{proof}[Proof of Corollary \ref{identityaladufresne}]
\textbf{First proof}: Let us use Proposition \ref{idenloipoint} with $t=-\lambda$. This gives that
$$\frac{M_{-\lambda}^2M_{\lambda}^2}{(M_{\lambda}-M_{-\lambda})^2}\int_{-\lambda}^{\lambda}\frac{ds}{M_s^2}\overset{law}=\frac{1}{2\gamma}$$
where $\gamma$ is a Gamma distribution with parameters $(1/2,1)$.
However, we can rewrite the left-hand side as
\begin{align*}
\frac{M_{-\lambda}^2M_{\lambda}^2}{(M_{\lambda}-M_{-\lambda})^2}\int_{-\lambda}^{\lambda}\frac{ds}{M_s^2}&=\frac{1}{\left(\frac{M_{\lambda}}{M_{-\lambda}}-1\right)^2 }\int_{0}^{2\lambda}\frac{M_{\lambda}^2}{M_{s-\lambda}^2}ds\\
&=\frac{1}{\left(\frac{M_{\lambda}}{M_{-\lambda}}-1\right)^2 }\int_{0}^{2\lambda}\frac{M_{\lambda}^2}{M_{\lambda-s}^2}ds.
\end{align*}
However, recall that for every $s\in[-\lambda,\lambda]$, $M_s=e^{B_s-s/2}$ where $B$ is a Brownian motion such that $B(0)=0$. Consequently, we get that
\begin{align*}
\frac{M_{-\lambda}^2M_{\lambda}^2}{(M_{\lambda}-M_{-\lambda})^2}\int_{-\lambda}^{\lambda}\frac{ds}{M_s^2}&=\frac{1}{\left(e^{B_{\lambda}-B_{-\lambda}-\lambda}-1\right)^2 }\int_{0}^{2\lambda}e^{2(B_{\lambda}-B_{\lambda -s})-s}ds.
\end{align*}
Remark that $\tilde{B}:=(B_{\lambda}-B_{\lambda-s})_{s\geq 0}$ is a standard Brownian motion such that $\tilde{B}(0)=0$. This gives exactly the formula in Corollary \ref{identityaladufresne} with $\tilde{B}$ and $2\lambda$.
This first proof of Corollary \ref{identityaladufresne} uses directly the new tools developped in this paper. However, this is also possible to prove it thanks to the Matsumoto-Yor properties whose a new proof is given in this paper.\\
\textbf{Second proof:} Let $\lambda>0$. Let us consider a Brownian motion $\alpha$ on $\R_+$ such that $\alpha_0=0$. For every $t\geq 0$, we define $e_t=e^{\alpha_t-t/2}$, $T_t=\int_0^t e_s^2ds$ and $Z_t=T_t/e_t$.
By (iii) in Theorem \ref{thm:matsumotoyor}, the law of $e_t$ conditionally on $Z_t$ is an Inverse Gaussian distribution with parameters $(1,1/Z_t)$.
Therefore, conditionally on $Z_t=z$,
$$\frac{(e_t-1)^2}{T_t}=\frac{(e_t-1)^2}{z e_t}$$ is distributed as 
$$\frac{(X-1)^2}{zX}$$ where $X$ is an Inverse Gaussian distribution with parameters $(1,1/z)$. However, it is true generally that if $Y$ is an Inverse Gaussian distribution with parameters $(\mu,r)$, then
$$r\frac{(Y-\mu)^2}{\mu^2Y}\overset{law}=2\gamma
$$
where $\gamma$ is a Gamma distribution with parameters $(1/2,1)$. (This stems from a direct computation involving the density of an Inverse Gaussian distribution.) Consequently, $\frac{(e_t-1)^2}{T_t}$ is distributed like $2\gamma$. This is exacly what we wanted to prove.

\end{proof}
Now, let us prove functional identities in law.
\begin{proof}[Proof of Proposition \ref{idenloifonc}]
Let $f$ be a deterministic continuous non-negative function on $\mathcal{C}^{(\lambda)}$.
By Lemma 8.1 in \cite{TG}, for every $n\in\N^*$, for every $\eta\in\R_+^{\mathcal{C}_{\lceil \lambda n\rceil}}$,
$$\sum\limits_{i\in \mathcal{C}_{\lceil \lambda n\rceil}}\sum\limits_{j\in\mathcal{C}_{\lceil \lambda n\rceil}}G_{\beta}^{(\lambda,n)}(i,j)\eta_i\eta_j\overset{law}=\frac{\left(\sum\limits_{i\in\mathcal{C}_{\lceil \lambda n\rceil}}\eta_i\right)^2}{2\gamma}$$
where $\gamma$ is distributed like a gamma distribution with parameters $(1/2,1)$.
Now, let us apply this fact with $\eta_i=n^{-1}f(i/n)$. Then we obtain that
\begin{align*}
\left(\frac{1}{n}\sum\limits_{i\in\mathcal{C}_{\lceil \lambda n\rceil}}f(i/n)\right)^2\frac{1}{2\gamma}&\overset{law}=\frac{1}{n^2}\sum\limits_{i\in \mathcal{C}_{\lceil \lambda n\rceil}}\sum\limits_{j\in\mathcal{C}_{\lceil \lambda n\rceil}}G_{\beta}^{(\lambda,n)}(i,j)f(i/n)f(j/n).
\end{align*}
The left-hand side converges in law toward $\left(\int_{-\lambda}^{\lambda}f(x)dx\right)^2\frac{1}{2\gamma}$ because $f$ is assumed to be continuous. Now, let us focus on the right-hand side. By Lemma \ref{lem:supremum}, it holds that,
\begin{align}
\frac{1}{n^2}\sum\limits_{i\in \mathcal{C}_{\lceil \lambda n\rceil}}\sum\limits_{j\in\mathcal{C}_{\lceil \lambda n\rceil}}G_{\beta}^{(\lambda,n)}(i,j)f(i/n)f(j/n)&\nonumber\\
&\hspace{-6 cm}=\sum\limits_{i\in \mathcal{C}_{\lceil \lambda n\rceil}}\sum\limits_{j\in\mathcal{C}_{\lceil \lambda n\rceil}}\int_{i/n}^{(i+1)/n}\int_{j/n}^{(j+1)/n}\tilde{G}_{\beta}^{(\lambda,n)}(t,t')dtdt'f(i/n)f(j/n)+o_n^{\P}(1)\label{idenfonc1}
\end{align}
where $o_n^{\P}(1)$ goes to $0$ in probability.
Moreover, by \eqref{idenfonc1} and the continuity of $f$, we get that
\begin{align}
\frac{1}{n^2}\sum\limits_{i\in \mathcal{C}_{\lceil \lambda n\rceil}}\sum\limits_{j\in\mathcal{C}_{\lceil \lambda n\rceil}}G_{\beta}^{(\lambda,n)}(i,j)f(i/n)f(j/n)&=\int_{-\lambda}^{\lambda}\int_{-\lambda}^{\lambda}\tilde{G}_{\beta}^{(\lambda,n)}(t,t')f(t)f(t')dtdt'+o_n^{\P}(1). \label{idenfonc2}
\end{align}
We know that $\tilde{G}_{\beta}^{(\lambda,n)}$ converges in law toward $\mathcal{G}^{(\lambda)}$. Besides, if $W(\mathcal{C}^{(\lambda)}\times\mathcal{C}^{(\lambda)})$ is the normed vector space of continuous functions on $\mathcal{C}^{(\lambda)}\times\mathcal{C}^{(\lambda)}$ with the $L^{\infty}$-norm, then $$H\mapsto \int_{-\lambda}^{\lambda}\int_{-\lambda}^{\lambda}H(t,t')f(t)f(t')dtdt'$$
is a continuous function on $W(\mathcal{C}^{(\lambda)}\times\mathcal{C}^{(\lambda)})$. Together with \eqref{idenfonc2}, this implies that
$$\frac{1}{n^2}\sum\limits_{i\in \mathcal{C}_{\lceil \lambda n\rceil}}\sum\limits_{j\in\mathcal{C}_{\lceil \lambda n\rceil}}G_{\beta}^{(\lambda,n)}(i,j)f(i/n)f(j/n)\xrightarrow[n\longrightarrow+\infty]{law}\int_{-\lambda}^{\lambda}\int_{-\lambda}^{\lambda}\mathcal{G}^{(\lambda)}(t,t')f(t)f(t')dtdt'.$$
Moreover, by Proposition \ref{formeg},
\begin{align*}\int_{-\lambda}^{\lambda}\int_{-\lambda}^{\lambda} \mathcal{G}^{(\lambda)}(t,t')f(t)f(t')dxdy\\
&\hspace{-4 cm}=\frac{1}{(M_{\lambda}-M_{-\lambda})^2} \int_{-\lambda}^{\lambda}\frac{1}{M_u^2}\left(M_{-\lambda}\int_u^{\lambda}f(t)M_tdt+M_{\lambda}\int_{-\lambda}^uM_tf(t)dt \right)^2du.
\end{align*}
It concludes the proof.
\end{proof}

\section{Study of $\mathcal{H}^{(\lambda)}$}
\begin{proof}[Proof of Theorem \ref{thm:definitionofH}]
\textbf{Step 1:} First let us show that the range of $\mathcal{G}^{(\lambda)}$ is included in $\mathcal{D}\left(\mathcal{H}^{(\lambda)} \right)$. Let $f\in L^2([-\lambda,\lambda])$. For every $x\in\mathcal{C}^{(\lambda)}$, it holds that
\begin{align}
(M_{\lambda}-M_{-\lambda})^2\mathcal{G}^{(\lambda)}f(x)&=(M_{\lambda}-M_{-\lambda})^2\left(\int_{-\lambda}^{x}f(t)\mathcal{G}^{(\lambda)}(x,t)dt+\int_{x}^{\lambda}f(t)\mathcal{G}^{(\lambda)}(x,t)dt\right)\nonumber\\
&\hspace{-2 cm}=\int_{-\lambda}^xf(t)M_xM_t\left(M_{\lambda}^2\int_x^{\lambda}\frac{ds}{M_s^2}+M_{\lambda}M_{-\lambda}\int_t^x\frac{ds}{M_s^2}+M_{-\lambda}^2\int_{-\lambda}^t\frac{ds}{M_s^2} \right)dt\nonumber\\
&\hspace{-1.6 cm}+\int_x^{\lambda}f(t)M_xM_t\left( M_{\lambda}^2\int_t^{\lambda}\frac{ds}{M_s^2}+M_{\lambda}M_{-\lambda}\int_x^t\frac{ds}{M_s^2}+M_{-\lambda}^2\int_{-\lambda}^x\frac{ds}{M_s^2} \right)dt.\label{developpement1}
\end{align}
Therefore, $\mathcal{G}^{(\lambda)}f$ is continuous. In particular it is in $L^2([-\lambda,\lambda])$.
Besides, by looking at \eqref{developpement1}, one can remark that $\mathcal{G}^{(\lambda)}f(-\lambda)$ and $\mathcal{G}^{(\lambda)}f(\lambda)$ are both equal to
$$\frac{1}{(M_{\lambda}-M_{-\lambda})^2}\int_{-\lambda}^{\lambda}f(t)M_t\left(M_{\lambda}^2M_{-\lambda}\int_t^{\lambda}\frac{ds}{M_s^2}+M_{\lambda}M_{-\lambda}^2\int_{-\lambda}^t\frac{ds}{M_s^2} \right)dt.$$
Now, we have to look at the derivative of $\left(\frac{\mathcal{G}^{(\lambda)}}{M}\right)$. By differentiating \eqref{developpement1}, we get that for every $x\in\mathcal{C}^{(\lambda)}$
\begin{align}
(M_{\lambda}-M_{-\lambda})^2\left(\frac{\mathcal{G}^{(\lambda)}f}{M}\right)'(x)&\nonumber\\
&\hspace{-4 cm}=\frac{1}{M_x^2}\left( M_{\lambda}(M_{-\lambda}-M_{\lambda})\int_{-\lambda}^xf(t)M_tdt+M_{-\lambda}(M_{-\lambda}-M_{\lambda})\int_x^{\lambda}f(t)M_tdt\right).\label{developpement2}
\end{align}
By \eqref{developpement2}, It is clear that $\left(\frac{\mathcal{G}^{(\lambda)}f}{M}\right)'$ is continuous. In particular, this is in $L^2([-\lambda,\lambda])$. Moreover, by looking at \eqref{developpement2}, one can remark that $M_{-\lambda}\left(\frac{\mathcal{G}^{(\lambda}f}{M}\right)'(-\lambda)$ and $M_{\lambda}\left(\frac{\mathcal{G}^{(\lambda}f}{M}\right)'(\lambda)$ are both equal to 
$$\frac{1}{M_{-\lambda}-M_{\lambda}}\int_{-\lambda}^{\lambda}f(t)M_tdt. $$
Finally, one can differentiate also \eqref{developpement2}. This gives that for almost every $x\in\mathcal{C}^{(\lambda)}$,
\begin{align}
\left(M^2\left(\frac{\mathcal{G}^{(\lambda)}f}{M}\right)'\right)'(x)=-f(x)M_x.\label{developpement3}
\end{align}
Recall that $f\in L^2([-\lambda,\lambda])$. Together with \eqref{developpement3}, this implies that $\left(M^2\left(\frac{\mathcal{G}^{(\lambda)}f}{M}\right)'\right)'\in L^2([-\lambda,\lambda]).$ Therefore, $\mathcal{G}^{(\lambda)}f\in\mathcal{D}\left(\mathcal{H}^{(\lambda)} \right)$.

\textbf{Step 2:} Now, we have to show that $\mathcal{D}\left(\mathcal{H}^{(\lambda)} \right)$ is included in the range of $\mathcal{G}^{(\lambda)}$. Let $g\in\mathcal{D}\left(\mathcal{H}^{(\lambda)} \right)$. Let us define $$f=-\frac{1}{M}\left(M^2\left(\frac{g}{M}\right)'\right)'.$$
Let us show that $g=\mathcal{G}^{(\lambda)}f$. Exactly as in \eqref{developpement1}, it holds that for every $x\in\mathcal{C}^{(\lambda)}$,

\begin{align}
(M_{\lambda}-M_{-\lambda})^2\mathcal{G}^{(\lambda)}f(x)&=(M_{\lambda}-M_{-\lambda})^2\left(\int_{-\lambda}^{x}f(t)\mathcal{G}^{(\lambda)}(x,t)dt+\int_{x}^{\lambda}f(t)\mathcal{G}^{(\lambda)}(x,t)dt\right)\nonumber\\
&\hspace{-3.6 cm}=\int_{-\lambda}^x-\frac{1}{M_t}\left(M^2\left(\frac{g}{M}\right)'\right)'(t)M_xM_t\left(M_{\lambda}^2\int_x^{\lambda}\frac{ds}{M_s^2}+M_{\lambda}M_{-\lambda}\int_t^x\frac{ds}{M_s^2}+M_{-\lambda}^2\int_{-\lambda}^t\frac{ds}{M_s^2} \right)dt\nonumber\\
&\hspace{-3.2 cm}+\int_x^{\lambda}-\frac{1}{M_t}\left(M^2\left(\frac{g}{M}\right)'\right)'(t)M_xM_t\left( M_{\lambda}^2\int_t^{\lambda}\frac{ds}{M_s^2}+M_{\lambda}M_{-\lambda}\int_x^t\frac{ds}{M_s^2}+M_{-\lambda}^2\int_{-\lambda}^x\frac{ds}{M_s^2} \right)dt\nonumber\\
&\hspace{-3.6 cm}=-\int_{-\lambda}^x\left(M^2\left(\frac{g}{M}\right)'\right)'(t)M_x\left(M_{\lambda}^2\int_x^{\lambda}\frac{ds}{M_s^2}+M_{\lambda}M_{-\lambda}\int_t^x\frac{ds}{M_s^2}+M_{-\lambda}^2\int_{-\lambda}^t\frac{ds}{M_s^2} \right)dt\nonumber\\
&\hspace{-3.2 cm}-\int_x^{\lambda}\left(M^2\left(\frac{g}{M}\right)'\right)'(t)M_x\left( M_{\lambda}^2\int_t^{\lambda}\frac{ds}{M_s^2}+M_{\lambda}M_{-\lambda}\int_x^t\frac{ds}{M_s^2}+M_{-\lambda}^2\int_{-\lambda}^x\frac{ds}{M_s^2} \right)dt.\label{developpement4}
\end{align}
Thanks to integration by parts, on can check that for every $x\in\mathcal{C}^{(\lambda)}$,
\begin{align}
\frac{(M_{\lambda}-M_{-\lambda})^2}{M_x}\mathcal{G}^{(\lambda)}f(x)&=-M_{\lambda}^2\int_x^{\lambda}\frac{ds}{M_s^2}\left(M_x^2\left(\frac{g}{M}\right)'(x)-M_{-\lambda}^2\left(\frac{g}{M}\right)'(-\lambda) \right)\nonumber\\
&\hspace{0.5 cm}-M_{\lambda}M_{-\lambda}\int_{-\lambda}^x\left(\frac{g}{M}\right)'(t)dt+M_{\lambda}M_{-\lambda}^3\left( \frac{g}{M}\right)'(-\lambda)\int_{-\lambda}^x\frac{ds}{M_s^2}\nonumber\\
&\hspace{0.5 cm}+M_{-\lambda}^2\int_{-\lambda}^x\left(\frac{g}{M}\right)'(t)dt-M_{-\lambda}^2M_x^2\left(\frac{g}{M}\right)'(x)\int_{-\lambda}^x\frac{ds}{M_s^2}\nonumber\\
&\hspace{0.5 cm}-M_{\lambda}^2\int_x^{\lambda}\left(\frac{g}{M}\right)'(t)dt+M_{\lambda}^2M_x^2\left(\frac{g}{M}\right)'(x) \int_x^{\lambda}\frac{ds}{M_s^2}\nonumber\\
&\hspace{0.5 cm}+M_{\lambda}M_{-\lambda}\int_x^{\lambda}\left(\frac{g}{M}\right)'(t)dt-M_{\lambda}^3M_{-\lambda}\left(\frac{g}{M} \right)'(\lambda)\int_x^{\lambda}\frac{ds}{M_s^2}\nonumber\\
&\hspace{0.5 cm}-M_{-\lambda}^2\int_{-\lambda}^x\frac{ds}{M_s^2}\left(M_{\lambda}^2\left(\frac{g}{M}\right)'(\lambda)-M_x^2\left(\frac{g}{M}\right)'(x) \right).\label{developpement5}
\end{align}
This expression seems to be quite awful. However, recall from the definition of $\mathcal{D}\left(\mathcal{H}^{(\lambda)}\right)$ that $M_{-\lambda}\left(\frac{g}{M}\right)'(-\lambda)=M_{\lambda}\left(\frac{g}{M}\right)'(\lambda)$. Therefore, we can simplify many terms in \eqref{developpement5} and we obtain that for every $x\in\mathcal{C}^{(\lambda)}$,
\begin{align}
\frac{(M_{\lambda}-M_{-\lambda})^2}{M_x}\mathcal{G}^{(\lambda)}f(x)&=-M_{\lambda}M_{-\lambda}\int_{-\lambda}^x\left(\frac{g}{M}\right)'(t)dt+M_{-\lambda}^2\int_{-\lambda}^x\left(\frac{g}{M}\right)'(t)dt\nonumber\\
&\hspace{0.5 cm}-M_{\lambda}^2\int_x^{\lambda}\left(\frac{g}{M}\right)'(t)dt+M_{\lambda}M_{-\lambda}\int_x^{\lambda}\left(\frac{g}{M}\right)'(t)dt\nonumber\\
&=M_{\lambda}\left(g(\lambda)-g(-\lambda)\right)+M_{-\lambda}\left(g(\lambda)-g(-\lambda)\right)\nonumber\\
&\hspace{0.5 cm}+\frac{g(x)}{M_x}\left(-M_{\lambda}M_{-\lambda}+M_{-\lambda}^2-M_{\lambda}M_{-\lambda}+M_{\lambda}^2 \right)\nonumber\\
&=\frac{g(x)}{M_x}(M_{\lambda}-M_{-\lambda})^2\label{developpment6}
\end{align}
where in the last equality, we used the fact that $g(\lambda)=g(-\lambda)$. Finally, we proved that for every $x\in\mathcal{C}^{(\lambda)}$,
$$\mathcal{G}^{(\lambda)}f(x)=g(x).$$ Therefore, $g$ is in the range of $\mathcal{G}^{(\lambda)}$.
This concludes the proof of the fact that $\mathcal{D}\left(\mathcal{H}^{(\lambda)}\right)$ is exactly the image of $\mathcal{G}^{(\lambda)}$.

\textbf{Step 3:}
We know that $\mathcal{G}^{(\lambda)}$ is a surjection from $L^2([-\lambda,\lambda])$ onto $\mathcal{D}\left(\mathcal{H}^{(\lambda)} \right)$. Moreover, by Proposition \ref{formeg}, $\mathcal{G}^{(\lambda)}$ is positive definite. In particular it is injective. Therefore, $\mathcal{G}^{(\lambda)}$ is a one-to-one mapping from $L^2([-\lambda,\lambda])$ onto $\mathcal{D}\left(\mathcal{H}^{(\lambda)} \right)$. Let us denote its inverse by $\mathcal{H}^{(\lambda)}$. The computation in the two first steps implies directly that for every $g\in\mathcal{D}\left(\mathcal{H}^{(\lambda)}\right)$,
$$H^{(\lambda)}g=- \frac{1}{M}\left(M^2\left(\frac{g}{M}\right)'\right)'.$$
Moreover, remark that $\mathcal{G}^{(\lambda)}$ is self-adjoint because it is a bounded symmetric operator. Moreover, we have seen that it is injective. Thus, $\mathcal{H}^{(\lambda)}=\left(\mathcal{G}^{(\lambda)}\right)^{-1}$ is self-adjoint according to Corollary 2.5 in \cite{Tes}.
Furthermore, $\mathcal{H}^{(\lambda)}$ is positive definite because it is the inverse of $\mathcal{G}^{(\lambda)}$ which is itself positive definite by Proposition \ref{formeg}.
\end{proof}

Now, let us show that the spectrum of $\mathcal{H}^{(\lambda)}$ consists in a sequence of increasing positive eigenvalues.
\begin{proof}[Proof of Proposition \ref{prop:spectrum}]
Recall that the operator $\mathcal{G}^{(\lambda)}$ on $L^2([-\lambda,\lambda])$ comes from a continuous kernel on the compact set $\mathcal{C}^{(\lambda)}\times \mathcal{C}^{(\lambda)}$. Then, a classical proof using Arzelà-Ascoli Theorem shows that $\mathcal{G}^{(\lambda)}$ is a compact operator on $L^2([-\lambda,\lambda])$. (See the beginning of section VI.5 in \cite{reedsimon1}.) Moreover, we know that $\mathcal{G}^{(\lambda)}$ is  self-adjoint on $L^2([-\lambda,\lambda])$. Therefore, by the theorem of diagonalization of self-adjoint compact operators (see Theorem VI.16 in \cite{reedsimon1}), the set of non-zero spectral values of $\mathcal{G}^{(\lambda)}$ consists in a random sequence $(A_k(\lambda))_{k\geq 0}$ which represents the eigenvalues of $\mathcal{G}^{(\lambda)}$ which are counted with multiplicity. The sequence $(A_k(\lambda))_{k\geq 0}$ is bounded, decreasing and converges to 0 when $k$ goes to infinity. Moreover,
$$\sigma(\mathcal{G}^{(\lambda)})=\{0\}\cup \{(A_k(\lambda))_{k\geq 0}\}. $$
Indeed, $0$ is also a spectral value of $\mathcal{G}^{(\lambda)}$ but not an eigenvalue because we know by Theorem \ref{thm:definitionofH} that $\mathcal{G}^{(\lambda)}$ is injective but not surjective because its image is $\mathcal{D}\left(\mathcal{H}^{(\lambda)}\right)\subsetneqq L^2([-\lambda,\lambda])$. Now, remark that $E$ is a spectral value of $\mathcal{H}^{(\lambda)}$ if and only if $Id-E\mathcal{G}^{(\lambda)}$ is invertible. Therefore, 
$$\sigma\left(\mathcal{H}^{(\lambda)}\right)\cap\R^*=\{(1/A_k(\lambda))_{k\geq 0}\}:=\{(E_k(\lambda))_{k\geq 0}\}.$$
Moreover, 0 is not a spectral value of $\mathcal{H}^{(\lambda)}$ because it is a bijection from its domain onto $L^2([-\lambda,\lambda])$ by Theorem \ref{thm:definitionofH}.
\end{proof}
\section{Proof of asymptotic results on $\mathcal{H}^{(\lambda)}$}

Now, let us prove Proposition \ref{volumeinfini}.
\begin{proof}[Proof of Proposition \ref{volumeinfini}]
 By subsection \ref{contimod} for every $\lambda>0$ and for every $t,t'\in[-\lambda,\lambda]$ such that $t\geq t'$,
\begin{align}\mathcal{G}^{(\lambda)}(t,t')&=\frac{M_{t'}M_t}{(M_{\lambda}-M_{-\lambda})^2}\left(M_{\lambda}^2\int_{t'}^{\lambda}\frac{ds}{M_s^2} +M_{\lambda}M_{-\lambda}\int_t^{t'}\frac{ds}{M_s^2}+M_{-\lambda}^2\int_{-\lambda}^t\frac{ds}{M_s^2}\right)\nonumber\\
&=\frac{M_{t'}M_t}{(M_{\lambda}/M_{-\lambda}-1)^2}\left(\frac{M_{\lambda}^2}{M_{-\lambda}^2}\int_{t'}^{\lambda}\frac{ds}{M_s^2} +\frac{M_{\lambda}}{M_{-\lambda}}\int_t^{t'}\frac{ds}{M_s^2}+\int_{-\lambda}^t\frac{ds}{M_s^2}\right).\label{limite1}
\end{align}
We know that $M_{\lambda}/M_{-\lambda}$ goes to 0 when $\lambda$ goes to infinity. Therefore, we can replace the term $\frac{1}{(M_{\lambda}/M_{-\lambda}-1)^2}$ by 1 when $\lambda$ goes to infinity. Moreover, for every $\lambda>0$, for every $t'\in [-\lambda,\lambda]$, it holds almost surely that
\begin{align*}
\frac{M_{\lambda}^2}{M_{-\lambda}^2}\int_{t'}^{\lambda}\frac{ds}{M_s^2}&=e^{2B_{\lambda}-2B_{-\lambda}-2\lambda}\int_{t'}^{\lambda}e^{-2B_s+s}ds=O(e^{-\lambda})
\end{align*}
because $|B_t|\underset{t\rightarrow+\infty}o\hspace{-0.3 cm}( t^{3/4})$. In the same way, one can show that
\begin{align*}
\frac{M_{\lambda}}{M_{-\lambda}}\int_t^{t'}\frac{ds}{M_s^2}\xrightarrow[\lambda \longrightarrow+\infty]{a.s}0.
\end{align*}
Using this in \eqref{limite1} implies that for every $T>0$,
\begin{align}
\underset{-T\leq t\leq t'\leq T}\sup\hspace{0,1 cm}\left|\mathcal{G}^{(\lambda)}(t,t')-M_{t'}M_t\int_{-\infty}^{t}\frac{ds}{M_s^2}\right|\xrightarrow[\lambda\longrightarrow+\infty]{a.s}0.\label{limite2}
\end{align}
It concludes the proof.
\end{proof}
Thanks to Proposition \ref{volumeinfini}, we can prove a surprising identity in law.
\begin{proof}[Proof of Corollary \ref{MBG}]
By Proposition \ref{volumeinfini},
\begin{align}
\left(\frac{\mathcal{G}^{(\infty)}(0,t)}{\mathcal{G}^{(\infty)}(0,0) }\right)_{t\geq 0}=\left(M_t \right)_{t\geq 0}\label{MBG1}
\end{align}
and 
\begin{align}
\left(\frac{\mathcal{G}^{(\infty)}(0,-t)}{\mathcal{G}^{(\infty)}(0,0) }\right)_{t\geq 0}=\left(M_{-t}\frac{\displaystyle\int_{-\infty}^{-t}\frac{ds}{M_s^2}}{\displaystyle\int_{-\infty}^0\frac{ds}{M_s^2}} \right)_{t\geq 0}.\label{MBG2}
\end{align}
Besides,
\begin{align}
\left(\frac{\mathcal{G}^{(\infty)}(0,t)}{\mathcal{G}^{(\infty)}(0,0) }\right)_{t\geq 0}\overset{law}=\left(\frac{\mathcal{G}^{(\infty)}(0,-t)}{\mathcal{G}^{(\infty)}(0,0) }\right)_{t\geq 0}.\label{MBG3}
\end{align}
Indeed, the law of the discrete process $\mathcal{G}^{(\lambda,n)}$ is clearly symmetric and this property remains true when we take the limit. Recall that $M_t=e^{B_t-t/2}$ for every $t\in\R$. Therefore, combining \eqref{MBG1}, \eqref{MBG2} and \eqref{MBG3}, we get that
$$\left(e^{B_{-t}+t/2}\frac{\displaystyle\int_t^{+\infty}e^{-2B_{-s}-s}ds}{\displaystyle\int_0^{+\infty}e^{-2B_{-s}-s}ds}\right)_{t\geq 0}$$
is a geometric Brownian motion starting from 1. Moreover, $(-B_{-t})_{t\geq 0}$ is a standard Brownian motion starting from 0. Therefore, we proved Corollary \ref{MBG}.
\end{proof}

Now, let us prove the asymptotic behaviour of the density of states of $\mathcal{H}^{(\lambda)}$. Our proof is inspired from a paper of Fukushima and Nakao (see \cite{Funak}) and it is based on the study of a Sturm-Liouville equation.\\
\textbf{Strategy of the proof:}
The equation $\mathcal{H}^{(\lambda)}\varphi_E=E\varphi_E$ is equivalent to the Sturm-Liouville equation
$$\left(M^2\varphi_E'\right)'+EM^2\varphi_E=0. $$
Some classical results on Sturm-Liouville equations imply that $\varphi_E$ can be written as $\varphi_E=R_E\sin(\theta_E)$ where $R_E$ never vanishes. Therefore, $\varphi_E$ vanishes $k$ times in $[-\lambda,\lambda]$ if and only if $\theta_E(\lambda)\in[k\pi,(k+1)\pi)$. Moreover, Sturm-Liouville oscillation theorem states that $\varphi_{E_k(\lambda)}$ vanishes approximately $k$ times. Consequently, if $N_{\lambda}(E)=k$, then $E_k(\lambda)\leq E<E_{k+1}(\lambda)$ which implies that 
$$N_{\lambda}(E)=k\simeq\frac{\theta_{E_k(\lambda)}(\lambda)}{\pi}\simeq \frac{\theta_E(\lambda)}{\pi}.$$
Therefore, for every fixed $E>0$, we only have to study the asymptotic behaviour of $\theta_E(\lambda)$ when $\lambda$ goes to infinity. However, according to the theory of Sturm-Liouville equations, $\theta_E$ is solution of the following ODE with random coefficients:
$$\theta_E'=M_t^{-2}\cos(\theta_E)^2+EM_t^2\sin(\theta_E)^2.$$
Moreover a very surprising fact is that $\zeta_E:=-\frac{\cot(\theta_E)}{M^2}$ is a Markov process with explosions. Actually explosions of $\zeta_E$ occur precisely when $\theta_E$ is a multiple of $\pi$. Therefore, we only have to count the explosions of $\zeta_E$. However, by the Markov property, the explosion times of $\zeta_E$ are i.i.d random variables. As a consequence, the number of explosions of $\zeta_E$ before time $t$, that is the number of times where $\theta_E$ is a multiple of $\pi$ before $t$ is a renewal process which can be studied thanks to classical tools. However, one problem of the Sturm-Liouville equation is that $M$ is not differentiable. Consequently, we can not apply directly the Sturm-Liouville theory. However, we can apply it with a regularized version of $M$. Let us replace $M$ by $M^{(n)}$ which is a $C^2$ random function such that $M^{(n)}$ converges uniformly to $M$ almost surely. (For example, $M^{(n)}$ can be taken as a polynomial interpolation of $M$.) Let us prove the following lemma about the effects of this approximation on the spectrum.
\begin{lem}\label{approxspectrum}
Let $\lambda>0$. Let $(M^{(n)})_{n\in\N^*}$ be a sequence of $C^2$ functions which converges almost surely uniformly to $M$ on $[-\lambda,\lambda]$. 
Let $\mathcal{H}_n^{(\lambda)}$ be a random self-adjoint positive operator defined by
$$\mathcal{H}_n^{(\lambda)}g=-\frac{1}{M^{(n)}}\left((M^{(n)})^2\left(\frac{g}{M^{(n)}}\right)'\right)'$$
where $g$ is in the domain $\mathcal{D}\left(\mathcal{H}_n^{(\lambda)}\right)$ which is defined by
$$\begin{Bmatrix}
g\in L^2([-\lambda,\lambda]),&\left(\frac{g}{M^{(n)}}\right)'\in L^2([-\lambda,\lambda]),& \left((M^{(n)})^2\left(\frac{g}{M^{(n)}}\right)'\right)'\in L^2([-\lambda,\lambda]),\\
g(-\lambda)=g(\lambda),& M_{-\lambda}^{(n)}\left(\frac{g}{M^{(n)}}\right)'(-\lambda)=M_{\lambda}^{(n)}\left(\frac{g}{M^{(n)}}\right)'(\lambda)\end{Bmatrix}.$$
Then, the spectrum of $\mathcal{H}_n^{(\lambda)}$ is a random increasing positive sequence $(E_{n,k}(\lambda))_{k\geq 0}$ which diverges to infinity. Moreover, these spectral values of $\mathcal{H}_n^{(\lambda)}$ are eigenvalues which are counted with multiplicity. Furthermore, for every $k\in\N$,
$$ E_{n,k}(\lambda)\xrightarrow[n\rightarrow+\infty]{a.s}E_{k}(\lambda).$$
In particular, if $N_{n,\lambda}(E)$ is the number of eigenvalues of $\mathcal{H}_n^{(\lambda)}$ which are lower than $E$, then for every $E>0$
$$N_{n,\lambda}(E)\xrightarrow[n\rightarrow+\infty]{a.s}N_{\lambda}(E).$$
\end{lem}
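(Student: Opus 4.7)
The plan is to reduce the statement to a norm-continuity property for the corresponding \emph{Green} operators $\mathcal{G}_n^{(\lambda)}$. Define $\mathcal{G}_n^{(\lambda)}$ by the same formula as $\mathcal{G}^{(\lambda)}$ in subsection \ref{contimod}, but with $M$ replaced by $M^{(n)}$. Inspecting the proofs of Theorem \ref{thm:definitionofH} and of Proposition \ref{prop:spectrum}, one notices that they never use the specific structure of the geometric Brownian motion: they use only that $M$ is continuous and almost surely bounded below on the compact set $[-\lambda,\lambda]$. Since $M^{(n)}\to M$ uniformly almost surely and $M$ is almost surely bounded away from $0$ on $[-\lambda,\lambda]$, we have almost surely, for $n$ large enough, $\inf_{[-\lambda,\lambda]}M^{(n)}>0$. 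Hence, verbatim, $\mathcal{G}_n^{(\lambda)}$ is a compact, self-adjoint, positive operator on $L^2([-\lambda,\lambda])$ whose image is exactly $\mathcal{D}(\mathcal{H}_n^{(\lambda)})$, whose inverse on this image is $\mathcal{H}_n^{(\lambda)}$, and whose non-zero spectrum is a decreasing sequence of positive eigenvalues $(A_{n,k}(\lambda))_{k\geq 0}$ with finite multiplicity tending to $0$. Setting $E_{n,k}(\lambda)=1/A_{n,k}(\lambda)$ then gives the required description of $\sigma(\mathcal{H}_n^{(\lambda)})$.

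Next, I would show that $\mathcal{G}_n^{(\lambda)}\to\mathcal{G}^{(\lambda)}$ in operator norm, almost surely. Conditionally on the event $\Omega_0$ where $M^{(n)}\to M$ uniformly and $\inf_{[-\lambda,\lambda]}M>0$, and on the event that $n$ is large enough that $\inf_{[-\lambda,\lambda]} M^{(n)} \geq c>0$, the kernel
\[
\mathcal{G}_n^{(\lambda)}(t,t')=\frac{M^{(n)}_{t'}M^{(n)}_t}{(M^{(n)}_{\lambda}-M^{(n)}_{-\lambda})^2}\left((M^{(n)}_{\lambda})^2\int_{t'}^{\lambda}\frac{ds}{(M^{(n)}_s)^2}+M^{(n)}_{\lambda}M^{(n)}_{-\lambda}\int_t^{t'}\frac{ds}{(M^{(n)}_s)^2}+(M^{(n)}_{-\lambda})^2\int_{-\lambda}^t\frac{ds}{(M^{(n)}_s)^2}\right)
\]
is a continuous function of $M^{(n)}$ (for the uniform norm) which converges uniformly on $[-\lambda,\lambda]^2$ to $\mathcal{G}^{(\lambda)}(t,t')$. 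This uniform convergence on a compact square implies that $\|\mathcal{G}_n^{(\lambda)}-\mathcal{G}^{(\lambda)}\|_{\mathrm{HS}}\to 0$ (Hilbert-Schmidt norm of the difference of the integral operators), and in particular $\|\mathcal{G}_n^{(\lambda)}-\mathcal{G}^{(\lambda)}\|_{\mathrm{op}}\to 0$ almost surely.

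The convergence of eigenvalues is then a standard consequence of the min-max principle (Courant-Fischer) for compact self-adjoint operators: for every $k\geq 0$, $|A_{n,k}(\lambda)-A_k(\lambda)|\leq \|\mathcal{G}_n^{(\lambda)}-\mathcal{G}^{(\lambda)}\|_{\mathrm{op}}$, so $A_{n,k}(\lambda)\to A_k(\lambda)$ almost surely. Taking inverses gives $E_{n,k}(\lambda)\to E_k(\lambda)$ almost surely for every $k$. For the counting function, fix $E>0$. Almost surely $E$ is not an eigenvalue of $\mathcal{H}^{(\lambda)}$ (because the law of $\mathcal{G}^{(\lambda)}(0,0)$, and more generally of the spectrum, has no atom at any fixed point, which follows from the continuous distribution of the underlying Brownian motion, or equivalently one may simply argue at continuity points of the piecewise constant function $N_\lambda$, which form a set of full measure). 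On this event there exists $k_0$ with $E_{k_0}(\lambda)<E<E_{k_0+1}(\lambda)$; by the eigenvalue convergence the same inequalities hold with $E_{n,k_0}(\lambda), E_{n,k_0+1}(\lambda)$ for $n$ large, hence $N_{n,\lambda}(E)=N_\lambda(E)$ for all $n$ large enough. The main (minor) obstacle is the last step: handling the event $\{E\in\sigma(\mathcal{H}^{(\lambda)})\}$, which is dealt with by the null-probability argument just described.
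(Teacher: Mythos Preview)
Your approach is essentially the same as the paper's: define $\mathcal{G}_n^{(\lambda)}$ by the same kernel formula with $M^{(n)}$ in place of $M$, observe that the proofs of Theorem \ref{thm:definitionofH} and Proposition \ref{prop:spectrum} apply verbatim, use uniform convergence of the kernels to get operator-norm convergence, and conclude via the min-max principle that $A_{n,k}(\lambda)\to A_k(\lambda)$, hence $E_{n,k}(\lambda)\to E_k(\lambda)$. The paper's proof is actually terser than yours and does not spell out the passage from eigenvalue convergence to $N_{n,\lambda}(E)\to N_\lambda(E)$; your extra argument that a fixed $E$ is almost surely not an eigenvalue is a reasonable way to close that gap (and in any case an error of at most $1$ in the counting function is irrelevant for the subsequent use in Theorem \ref{DOS}).
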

\begin{proof}[Proof of Lemma \ref{approxspectrum}]
The fact that $\mathcal{H}_n^{(\lambda)}$ is positive and self-adjoint can be proved exactly as for $\mathcal{H}^{(\lambda)}$. Moreover, exactly as for $\mathcal{H}^{(\lambda)}$ in the proof of Proposition \ref{prop:spectrum}, the eigenvalues of $\mathcal{H}_n^{(\lambda)}$ are the inverse of the eigenvalues of $\mathcal{G}_n^{(\lambda)}$ where $\mathcal{G}_n^{(\lambda)}(t,t')$ is defined by $$\frac{M_{t'}^{(n)}M_t^{(n)}}{(M_{\lambda}^{(n)}-M_{-\lambda}^{(n)})^2}\left((M_{\lambda}^{(n)})^2\int_{t'}^{\lambda}\frac{ds}{(M_s^{(n)})^2} +M_{\lambda}^{(n)}M_{-\lambda}^{(n)}\int_t^{t'}\frac{ds}{(M_s^{(n)})^2}+(M_{-\lambda}^{(n)})^2\int_{-\lambda}^t\frac{ds}{(M_s^{(n)})^2}\right)$$ for every $t\leq t'\in \mathcal{C}^{(\lambda)}$.
By the expression above, this is clear that $(t,t')\mapsto\mathcal{G}_n^{(\lambda)}(t,t')$ converges uniformly toward $(t,t')\mapsto \mathcal{G}^{(\lambda)}(t,t')$. This implies that
$\mathcal{G}_n^{(\lambda)}f$ converges almost surely in $L^2([-\lambda,\lambda])$ toward $\mathcal{G}^{(\lambda)}f$ uniformly in $f\in L^2([-\lambda,\lambda])$ such that $||f||_2=1$. Furthermore, by the min-max principle,
$$\frac{1}{E_{n,k}(\lambda)}=\underset{V_k}\max\hspace{0.1 cm}\underset{f\in V_k, ||f||_2=1}\min\hspace{0.1 cm}\int_{-\lambda}^{\lambda}\left|\mathcal{G}_n^{(\lambda)}f(t)\right|^2dt$$
where $V_k$ is in the set of vector spaces of dimension $k$ in $L^2([-\lambda,\lambda])$.
This implies directly that $E_{n,k}(\lambda)$ converges toward $E_k( \lambda)$ when $n$ goes to infinity.

\end{proof}
\begin{proof}[Proof of Theorem \ref{DOS}]
\textbf {Step 1: Link with the Sturm-Liouville equation.} Let $\lambda>0$. Let $M^{(n)}$ be a sequence of $C^2$ random functions which converges almost surely uniformly toward $M$ and let $\mathcal{H}_n^{(\lambda)}$ be the operator which is associated with $M^{(n)}$. Let $E_{n,k}(\lambda)$ be some eigenvalue of $\mathcal{H}_n^{(\lambda)}$. There exists an eigenvector $\psi_{E_{n,k}(\lambda)}\in \mathcal{D}\left(\mathcal{H}_n^{(\lambda)} \right)$ such that 
\begin{align}
\left((M^{(n)})^2\left(\frac{\psi_{E_{n,k}(\lambda)}}{M^{(n)}}\right)'\right)'+E_{n,k}(\lambda)M^{(n)}\psi_{E_{n,k}(\lambda)}=0.\label{Dos1}
\end{align}
Then, let us define $\varphi_{n,k}=\psi_{E_{n,k}(\lambda)}/M^{(n)}$. We omit the dependence in $\lambda$ for sake of convenience. Then, $\varphi_{n,k}$ belongs to the set
$$\begin{Bmatrix}
g\in L^2([-\lambda,\lambda]),&g'\in L^2([-\lambda,\lambda]),& \left((M^{(n)})^2g'\right)'\in L^2([-\lambda,\lambda]),\\
M_{-\lambda}^{(n)}g(-\lambda)=M_{\lambda}^{(n)}g(\lambda),& M_{-\lambda}^{(n)}g'(-\lambda)=M_{\lambda}^{(n)}g'(\lambda)\end{Bmatrix}.$$
Moreover, $\varphi_{n,k}$ is solution of
\begin{align}
\left((M^{(n)})^2\varphi_{n,k}'\right)'+E_{n,k}(\lambda)(M^{(n)})^2\varphi_{n,k} =0.\label{Dos2}
\end{align}
Remark that the ODE in \eqref{Dos2} is a Sturm-Liouville equation. By Theorem 2.1 in Chapter 8 of \cite{codlev}, there exists an increasing sequence of real numbers $(\mu_{n,k}(\lambda))_{k\geq 0}$ which goes to infinity such that for every $k\in\N$, there exists a non trivial $C^2$ function $\xi_{n,k}$ such that
$$\left((M^{(n)})^2\xi_{n,k}'\right)'+ \mu_{n,k}(\lambda)(M^{(n)})^2\xi_{n,k}=0$$
and $\xi_{n,k}(-\lambda)=\xi_{n,k}(\lambda)=0$.
These numbers $(\mu_{n,k}(\lambda))_{k\geq 0}$ are called the Dirichlet eigenvalues of our Sturm Liouville equation. $(\xi_{n,k})_{k\in\N}$ are the Dirichlet eigenstates which are associated with $(\mu_{n,k}(\lambda))_{k\geq 0}$. For every $E>0$, let us define $\tilde{N}_{n,\lambda}(E)$ be the number of Dirichlet eigenvalues which are lower than $E$.
Moreover, by the Liouville transformation (see 4.3 in \cite{Everitt1982}) the eigenvalues of the Sturm-Liouville problem
$$(I)\left\{
\begin{array}{c}
((M^{(n)})^2\varphi')'+E(M^{(n)})^2\varphi=0\\
M_{-\lambda}^{(n)}\varphi(-\lambda)=M_{\lambda}^{(n)}\varphi(\lambda)\\
M_{-\lambda}^{(n)}\varphi'(-\lambda)=M_{\lambda}^{(n)}\varphi'(\lambda)

\end{array}
\right. $$
are the same as the eigenvalues of the Sturm-Liouville problem
$$(II)\left\{
\begin{array}{c}
\Phi''+Q\Phi+E\Phi=0\\
\Phi(-\lambda)=\Phi(\lambda)\\
\Phi'(-\lambda)=\Phi'(\lambda)+\left(M^{(n)}_{\lambda}\left(\frac{1}{M^{(n)}}\right)'_{\lambda}-M^{(n)}_{-\lambda}\left(\frac{1}{M^{(n)}}\right)'_{-\lambda}\right)\Phi(\lambda)

\end{array}
\right.$$
where $Q=\frac{1}{M^{(n)}}\left((M^{(n)})^2\left( \frac{1}{M^{(n)}}\right)'\right)'$.
Besides, by Theorem 1.3 in \cite{Plaksina}, the eigenvalues of $(II)$ are interlaced with the eigenvalues of the following Sturm-Liouville problem:
$$(III)\left\{
\begin{array}{c}
\Phi''+Q\Phi+E\Phi=0\\
\Phi(-\lambda)=\Phi(\lambda)=0
\end{array}
\right. .$$
Furthermore, by the Liouville tranformation again, the eigenvalues of $(III)$ are the same as the eigenvalues of the following Sturm-Liouville problem:
$$(IV)\left\{
\begin{array}{c}
((M^{(n)})^2\varphi')'+E(M^{(n)})^2\varphi=0\\
\varphi(-\lambda)=\varphi(\lambda)=0\\
\end{array}
\right. .$$
Therefore, the eigenvalues of $(I)$ and $(IV)$ are interlaced. In particular, almost surely, for every $E>0$
\begin{align}
|N_{n,\lambda}(E)-\tilde{N}_{n,\lambda}(E)|\leq 1
.\label{dirichlet}
\end{align}
Now, let $\mu_{n,k}(\lambda)$ be an eigenvalue of the Sturm Liouville equation with Dirichlet condition. Let $\xi_{n,k}$ be the associated eigenstate.
According to chapter 8 in \cite{codlev}, there exists two functions $R_{n,\mu_{n,k}(\lambda)}$ and $\theta_{n,\mu_{n,k}(\lambda)}$ such that
\begin{align}
\xi_{n,k}=R_{n,\mu_{n,k}(\lambda)}\sin\left(\theta_{n,\mu_{n,k}(\lambda)} \right)&\label{Dos3}
\end{align}
\begin{align}
R_{n,\mu_{n,k}(\lambda)}^2=((M^{(n)})^2\xi_{n,k}')^2+\xi_{n,k}^2&\label{Dos4}
\end{align}
\begin{align}
\theta_{n,\mu_{n,k}(\lambda)}'=\frac{1}{(M^{(n)})^2}\cos(\theta_{n,\mu_{n,k}(\lambda)})^2+\mu_{n,k}(\lambda)(M^{(n)})^2\sin(\theta_{n,\mu_{n,k}(\lambda)})^2&\label{Dos5}
\end{align}
where $\theta_{n,\mu_{n,k}(\lambda)}(-\lambda)=0$. Remark that for any $\mu>0$, the function $\theta_{n,\mu}$ depends also on $\lambda$ but we omit this dependence in the notation for sake of convenience.
By \eqref{Dos4}, $R_{n,\mu_{n,k}(\lambda)}$ can never be zero at any point. Otherwise, this would imply that there exists some point $t_0$ such that $\xi_{n,k}'(t_0)=\xi_{n,k}(t_0)=0$. By Cauchy-Lipschitz theorem, this would imply that $\xi_{n,k}$ is zero everywhere which is false because this is an eigenvector. Therefore, as $R_{n,\mu_{n,k}(\lambda)}$ never vanishes, \eqref{Dos3} implies that $\xi_{n,k}$ vanishes when $\theta_{n,\mu_{n,k}(\lambda)}$ is a multiple of $\pi$. Moreover, by Theorem 2.1 in chapter 8 of \cite{codlev}, the number of zeros of $\xi_{n,k}$ in $[-\lambda,\lambda]$ is always $k+2$.  Now, let $E\in\R_+^*$. There exists some $k\in\N$ such that $\tilde{N}_{n,\lambda}(E)=k$, that is, $\mu_{n,k-1}(\lambda)\leq E< \mu_{n,k}(\lambda)$ with the convention stating that $\mu_{n,-1}(\lambda)=0$.
We said that $\xi_{n,k-1}$ vanishes exactly $k+1$ times in $[-\lambda,\lambda]$. Therefore, $$\theta_{n,\mu_{n,k-1}(\lambda)}(\lambda)= k\pi.$$
Consequently,
$$\tilde{N}_{n,\lambda}(E)= \frac{\theta_{n,\mu_{n,k-1}(\lambda)}(\lambda)}{\pi}\leq \frac{\theta_{n,E}(\lambda)}{\pi}$$
where the inequality stems from the increasingness of $E\mapsto \theta_{n,E}(\lambda)$
where $\theta_{n,E}$ is solution of
$$\theta_{n,E}'=\frac{1}{(M^{(n)})^2}\cos(\theta_{n,E})^2+E\times(M^{(n)})^2\sin(\theta_{n,E})^2.$$
with $\theta_{n,E}(-\lambda)=0$.
We can obtain a lower bound in the same way which yields
\begin{align}
-1+\frac{\theta_{n,E}(\lambda)}{\pi}\leq \tilde{N}_{n,\lambda}(E)\leq \frac{\theta_{n,E}(\lambda)}{\pi}.\label{Dos6}
\end{align}
Together with \eqref{dirichlet}, this yields
\begin{align}
-2+\frac{\theta_{n,E}(\lambda)}{\pi}\leq N_{n,\lambda}(E)\leq 1+\frac{\theta_{n,E}(\lambda)}{\pi}.\label{Dos7}
\end{align}
As $M^{(n)}$ converges uniformly to $M$ on $[-\lambda,\lambda]$, $\theta_{n,E}$ converges uniformly on $[-\lambda,\lambda]$ toward $\theta_E$ which is the solution of the random ODE
$$\theta_E'=\frac{1}{M^2}\cos(\theta_E)^2+EM^2\sin(\theta_E)^2$$
with $\theta_E(-\lambda)=0$. This solution is well-defined on $[-\lambda,\lambda]$ because $(t,\theta)\mapsto \frac{1}{M_t^2}\cos(\theta)^2+EM_t^2\sin(\theta)^2$ is continuous and globally lipischitz in the second variable on any set of the type $[-r,r]\times\R$. Now, let us assume that $E$ and $\lambda$ are fixed. By Lemma \ref{approxspectrum}, $N_{n,\lambda}(E)$ converges toward $N_{\lambda}(E)$ almost surely. Moreover, $\theta_{n,E}$ converges uniformly toward $\theta_E$ almost surely. As a consequence, we can take the limit in \eqref{Dos7} which implies that for every $E>0$ and every $\lambda>0$, almost surely,
\begin{align}
-2+\frac{\theta_E(\lambda)}{\pi}\leq N_{\lambda}(E)\leq 1+\frac{\theta_E(\lambda)}{\pi}.\label{Dos8}
\end{align}
\textbf{Step 2: Study of $\theta_{E}$.} 
Let us shift $\theta_E$ in order to start from 0. For every $t\geq 0$, let us define $\tilde{\theta}_E$ such that for every $t\geq 0$, $\tilde{\theta}_E(t)=\theta_E(t-\lambda)$. This implies that $\tilde{\theta}_E$ is a solution of the random ODE
\begin{align}\tilde{\theta}_E'(t)=\frac{1}{M_{-\lambda}^2\tilde{M}_t^2}\cos(\tilde{\theta}_E)^2+EM_{-\lambda}^2\tilde{M}_t^2\sin(\tilde{\theta}_E)^2\label{eqdos1}
\end{align}
where $M_{-\lambda}=e^{B_{-\lambda}+\lambda/2}$ and $\tilde{M}_t=e^{B_{t-\lambda}-B_{-\lambda}-t/2}$ which is a geometric Brownian motion which is equal to 1 at $0$. Moreover, $\tilde{M}$ is independent of $M_{-\lambda}$.  Of course, $\tilde{\theta}_E$ depends on $\lambda$ (as well as $\theta_E$ actually) but we omit this in the notation for sake of convenience. For every $t\geq 0$, we define $\mathcal{F}_t$ as the right-continuous completion of $\sigma(M_{-\lambda},\tilde{M}_s,s\leq t)$. This is clear that $\tilde{\theta}_E$ is adapted with the filtration $(\mathcal{F}_t)_{t\geq 0}$. By \eqref{eqdos1}, remark that $\tilde{\theta}_E$ is stricly increasing. Therefore, it goes to infinity or it has a finite random limit $\Theta^*$. As $(1/\tilde{M}_t)_{t\geq 0}$ goes to infinity at exponential speed almost surely, \eqref{eqdos1} implies that $\Theta^*$ must be of the form $\pi/2+K^*\pi$ with $K^*$ which is random and possibly infinite. For every $k\in\N$, let us define the stopping times
$$T_k=\inf\{t\geq 0, \tilde{\theta}_E(t)=k\pi\}\text{ and }\tau_k=\inf\{t\geq 0,\tilde{\theta}_E(t)=\pi/2+k\pi\}. $$
By definition we have a sequence of inequalities
$$0=T_0<\tau_0<T_1<\tau_1<\cdots<T_{K^*}<\tau_{K^*}=+\infty.$$
Now, let us prove that $\tilde{\theta}_E$ goes to infinity almost surely. In order to do that, it is enough to show that for every $k\in\N^*$, $\E\left(\tau_k \right)<+\infty.$
Further, for technical purposes, we need to introduce other stopping times: for every $k\in\N$ and for every $n\in\N^*$, let us define
$$T_{k,n}^+=\inf\{t\geq 0, \tilde{\theta}_E(t)=k\pi+1/n\}\text{ and }T_{k,n}^-=\inf\{t\geq 0,\tilde{\theta}_E(t)=k\pi-1/n\}. $$
Now, we need a lemma whose proof is postponed to the end of this section.
\begin{lem}\label{lem:stoppingtime}
For every $n\in\N^*$,
$\E\left[T_{0,n}^+\right]<+\infty$.
\end{lem}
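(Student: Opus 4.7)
My plan is to dominate $T_{0,n}^+$ by a hitting time of the exponential functional $f(t):=\int_0^t ds/\tilde M_s^2$, and then control the tail of this hitting time. On $[0,T_{0,n}^+]$ the angle $\tilde\theta_E(t)$ stays in $[0,1/n]$, so $\cos^2(\tilde\theta_E(t))\geq c_n:=\cos^2(1/n)>0$. Dropping the second non-negative term in \eqref{eqdos1} gives $\tilde\theta_E'(t)\geq c_n/(M_{-\lambda}^2\tilde M_t^2)$, and integrating from $0$ to $T_{0,n}^+$ yields
$$T_{0,n}^+\leq\tau:=\inf\bigl\{t\geq 0:f(t)\geq M_{-\lambda}^2/(nc_n)\bigr\}.$$
So it is enough to show $\E[\tau]<+\infty$.

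The key observation is that $1/\tilde M_s^2=e^{-2\tilde B_s+s}$ has a positive drift in the exponent, so $f(t)$ grows at least like $e^{t/2}$. To make this quantitative I would introduce the good event $\mathcal{E}_t:=\{\sup_{s\leq t}|\tilde B_s|\leq t/8\}$: on $\mathcal{E}_t$, one has $1/\tilde M_s^2\geq e^{-t/4+s}$ for every $s\in[t/2,t]$, hence $f(t)\geq (t/2)e^{t/4}$, while the reflection principle gives $\P(\mathcal{E}_t^c)\leq 4e^{-t/128}$. Splitting according to $\mathcal{E}_t$ and using that $M_{-\lambda}$ and $\tilde B$ are independent (as displayed in \eqref{eqdos1}), I would then get
$$\P(\tau>t)\leq 4e^{-t/128}+\P\bigl(M_{-\lambda}^2>nc_n(t/2)e^{t/4}\bigr).$$
Since $\log M_{-\lambda}^2=2(B_{-\lambda}+\lambda/2)$ is Gaussian with variance $4\lambda$, the second term decays faster than any exponential in $t$, and writing $\E[\tau]=\int_0^{+\infty}\P(\tau>t)\,dt$ yields a finite integral.

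I do not foresee any real obstacle: the one step that requires a little care is the decomposition along $\mathcal{E}_t$ and keeping track of the two independent Gaussian randomnesses, but this is purely bookkeeping. No identity beyond the positive drift of $\log(1/\tilde M_s^2)$ and the Brownian reflection principle is needed.
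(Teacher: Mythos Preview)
Your argument is correct. Both your proof and the paper's begin identically: drop the non-negative sine term in \eqref{eqdos1}, use $\cos^2(\tilde\theta_E)\geq c_n$ on $[0,T_{0,n}^+]$, and arrive at the inequality
\[
\int_0^{T_{0,n}^+}\frac{ds}{\tilde M_s^2}\;\leq\;\frac{M_{-\lambda}^2}{n\,c_n}.
\]
From this point the two proofs diverge. The paper factors $e^{-2\tilde B_s+s}=e^{-2\tilde B_s+s/2}\cdot e^{s/2}$, pulls out $\inf_{s\geq 0}e^{-2\tilde B_s+s/2}$, and obtains $(e^{T_{0,n}^+/2}-1)\lesssim M_{-\lambda}^2\cdot\sup_{s\geq 0}e^{2\tilde B_s-s/2}$; it then takes the $1/8$-th moment (the small power is needed so that the supremum of the drifted exponential has finite expectation) and controls $\E[\sup_{s\geq 0}e^{\tilde B_s-s}]$ via a Brownian scaling, a Girsanov change of measure, and the L\'evy identity $\sup_{s\leq t}\tilde B_s-\tilde B_t\overset{d}{=}|\tilde B_t|$. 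Your route is more direct: you dominate $T_{0,n}^+$ by the hitting time $\tau$ of the increasing functional $f$, and bound $\P(\tau>t)$ by splitting on the Brownian event $\{\sup_{s\leq t}|\tilde B_s|\leq t/8\}$ and using only the reflection principle together with the Gaussian tail of $\log M_{-\lambda}^2$. This avoids the fractional-moment trick and Girsanov entirely. On the other hand, the paper's approach yields the stronger conclusion $\E\bigl[(e^{T_{0,n}^+/2}-1)^{1/8}\bigr]<\infty$, hence finiteness of all moments of $T_{0,n}^+$, whereas your tail bound gives $\E[T_{0,n}^+]<\infty$ (which is all the lemma asks for).
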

Thanks to this lemma, let us show that $\E\left[\tau_0\right]<+\infty$. Let us introduce $j_E:=-\cot(\tilde{\theta}_E)$. Observe that $j_E(t)$ explodes when $t$ is some stopping time $T_k$ for any $k$ and vanishes when $t$ is some stopping time $\tau_k$ for any $k$. Moreover, on each interval of type $]T_k, T_{k+1}[$, $j_E$ is solution of the following Riccati equation:
\begin{align}
j_E'=\frac{1}{M_{-\lambda}^2\tilde{M}^2}j_E^2+EM_{-\lambda}^2\tilde{M}^2. \label{Dos9}
\end{align}
Now, consider $n\in\N^*$ and let us work on the interval $]T_{0,n}^+,\tau_0[$. (For now, we do not know whether $\tau_0$ is finite or not.) Then, we define the stochastic process $\zeta_E:=\frac{j_E}{M_{-\lambda}^2\tilde{M}^2}$. This stochastic process explodes at some times. However, it is a continuous locally bounded process on $]T_{0,n}^+,\tau_0[$ almost surely. Therefore, we can use stochastic calculus on this interval.
 Recall that $\tilde{M}$ is a geometric Brownian motion on $[0,2\lambda]$ which is $1$ at time $0$. In particular, $\tilde{M}$ satisfies the SDE
\begin{align}
d\tilde{M}_t=\tilde{M}_tdB_t.\label{Dos10}
\end{align}
Therefore, on $]T_{0,n}^+,\tau_0[$, it holds that
\begin{align*}
d\zeta_E(t)=\frac{1}{M_{-\lambda}^2}\left(j_E\cdot d\left(\tilde{M}_t^{-2} \right)+\frac{j_E'(t)}{\tilde{M}_t^2}dt\right).
\end{align*}
Then, using Ito's formula together with \eqref{Dos9} and \eqref{Dos10}, we get
\begin{align*}
d\zeta_E(t)&=\frac{1}{M_{-\lambda}^2}\left(-\frac{2j_E(t)}{\tilde{M}_t^3}d\tilde{M}_t+\frac{3j_E(t)}{\tilde{M}_t^4}d\langle \tilde{M}\rangle_t+\frac{1}{\tilde{M}_t^2}\left(\frac{j_E(t)^2}{M_{-\lambda}^2\tilde{M}_t^2}+EM_{-\lambda}^2\tilde{M}_t^2 \right)dt\right)\\
&=-2\frac{j_E(t)}{M_{-\lambda}^2\tilde{M}_t^2}dB_t+3\frac{j_E(t)}{M_{-\lambda}^2\tilde{M}_t^2}dt+\frac{j_E(t)^2}{M_{-\lambda}^4\tilde{M}_t^4}dt+Edt\\
&=-2\zeta_E(t)dB_t+(\zeta_E(t)^2+3\zeta_E(t)+E)dt.
\end{align*}
Finally, on $]T_{0,n}^+,\tau_0[$, $\zeta_E$ is solution of the SDE:
\begin{align}
d\zeta_E(t)=-2\zeta_E(t)dB_t+(\zeta_E(t)^2+3\zeta_E(t)+E)dt.\label{Dos11}
\end{align}
Therefore, until its explosion, this is a Markov process with generator,
$$\mathcal{L}f=2z^2f''(z)+(z^2+3z+E)f'(z)$$
for any regular function $f$. Now, let us define a function $f_-$ on $\R_-$ such that for every $z\in\R_-$,
\begin{align}
f_-(z)=-\int_z^0\frac{e^{-u/2+E/(2u)}}{2|u|^{3/2}}\int_{-\infty}^u\frac{1}{|t|^{1/2}}e^{t/2-E/(2t)}dtdu.\label{fm}
\end{align}
One can check that $f_-$ is well-defined (the singularity at $0$ in the integral is not a real one) and smooth on $\R_-^*$, that  $f_-$ is bounded and has a finite limit at $-\infty$. Besides $f_-$ satisfies $\mathcal{L}f_-=1$ on $\R_-^*$. Then, by Propositions 2.6 and 2.2 in Chapter VII of \cite{Revuz_Yor}, conditionally on $\mathcal{F}_{T_{0,n}^+}$
, it holds that
$$\left(f_-\left(\zeta_E((T_{0,n}^++t)\wedge \tau_0)\right)-(T_{0,n}^++t)\wedge \tau_0\right)_{t\geq 0}$$
is a martingale with respect to the filtration $\left(\mathcal{F}_{T_{0,n}^++t}\right)_{t\geq 0}$. (The integrability of this martingale comes from the boundedness of $f_-$ and Lemma \ref{lem:stoppingtime}.) This implies that
\begin{align}
\E\left[(T_{0,n}^++t)\wedge \tau_0 \right]=\E\left[ T_{0,n}^+\right]+\E\left[f_-\left(\zeta_E((T_{0,n}^++t)\wedge \tau_0)\right) \right]-\E\left[f_-\left(\zeta_E(T_{0,n}^+)\right)\right].\label{Dos12}
\end{align}
By monotone convergence and dominated convergence (recall that $f_-$ is bounded), we can make $t$ go toward infinity in \eqref{Dos12} which implies that
\begin{align*}
\E\left[ \tau_0 \right]=\E\left[ T_{0,n}^+\right]+\E\left[f_-\left(\zeta_E( \tau_0)\right) \right]-\E\left[f_-\left(\zeta_E(T_{0,n}^+)\right)\right].
\end{align*}
Thanks to Lemma \ref{lem:stoppingtime} and the boundedness of $f_-$, this implies that $\E\left[\tau_0\right]<+\infty$. Actually, this is not difficult to iterate this method in order to prove that $\E\left[\tau_k\right]<+\infty$ for every $k\in\N^*$. In particular, this yields

\begin{align}
\underset{t\rightarrow+\infty}\lim \tilde{\theta}_E(t)=+\infty\hspace{1 cm}a.s.\label{Dos13}
\end{align}
\textbf{Step 3: Link with a renewal process.} On any interval of the form $[T_{k,n}^+,T_{k+1,n}^-]$, $\zeta_E$ is a continuous stochastic process which is the solution of the SDE \eqref{Dos11}. This is very tempting to say that $\zeta_E$ is a Markov diffusion process on $\R$ whose values are in $\R\cup\{\infty\}$ which would imply that the lengths of time $T_{k+1}-T_{k}$ are i.i.d as the succesive hitting times of $\infty$ by a Markov process. However, to our knowledge, the case of a process whose values can be $\infty$ is not contained in the theory of Markov diffusions. In order to avoid this theoritical problem, let us make a change of variable.
We define $\Gamma_E:=\frac{\zeta_E-i}{\zeta_E+i}$. The idea under this transformation is to map $\R\cup\{\infty\}$ on the unit circle. Remark that $\Gamma_E=1$ when $\zeta_E=\infty$. Moreover, $\zeta_E$ is continuous on intervals  of the form $]T_k,T_{k+1}[$. Furthermore, for every $k\in\N$, $\zeta_E(t)$ goes to $+\infty$ when $t$ goes to $T_k$ on the left and $\zeta_E(t)$ goes to $-\infty$ when $t$ goes to $T_k$ on the right. Therefore, $\Gamma_E$ is continuous on $\R_+$. Further, remark that $\zeta_E=-i\frac{\Gamma_E+1}{\Gamma_E-1}.$ Then, on each interval of the form $[T_{k,n}^+,T_{k+1,n}^-]$, by \eqref{Dos11} and Ito's formula,
\begin{align}
d\Gamma_E(t)&=\frac{2i}{(\zeta_E(t)+i)^2}d\zeta_E(t)-\frac{2i}{(\zeta_E(t)+i)^3}d\langle \zeta_E\rangle_t\nonumber\\
&=4i\frac{\zeta_E(t)}{(\zeta_E(t)+i)^2}dB_t+2i\left(\frac{\zeta_E(t)^2+3\zeta_E(t)+E}{(\zeta_E(t)+i)^2}-4\frac{\zeta_E(t)^2}{(\zeta_E(t)+i)^3} \right)dt.\label{Dos14}
\end{align}
Moreover, by a straightforward computation:
\begin{align}
\frac{1}{(\zeta_E+i)^2}=-\frac{(1-\Gamma_E)^2}{4},\hspace{0.5 cm}\frac{\zeta_E}{(\zeta_E+i)^2}=-\frac{i}{4}(1-\Gamma_E^2),\hspace{0.5 cm}\frac{\zeta_E^2}{(\zeta_E+i)^2}=\frac{(1+\Gamma_E)^2}{4}\nonumber
\end{align}
and
\begin{align}
\frac{\zeta_E^2}{(\zeta_E+i)^3}=-\frac{i}{8}(1+\Gamma_E)^2(1-\Gamma_E).\nonumber
\end{align}
Together with \eqref{Dos14}, this implies that on each interval of the form $]T_{k,n}^+,T_{k+1,n}^-[$,
\begin{align}
d\Gamma_E(t)\nonumber\\
&\hspace{-1 cm}=(1-\Gamma_E^2)dB_t+2i\left(\frac{1}{4}(1+\Gamma_E)^2-\frac{3i}{4}(1-\Gamma_E^2)-\frac{E}{4}(1-\Gamma_E)^2+\frac{i}{2}(1+\Gamma_E)^2(1-\Gamma_E) \right)dt.\label{Dos15}
\end{align}
Besides, $\Gamma_E$ is a continuous and uniformly bounded stochastic process. Therefore, we can prove that $\Gamma_E$ satisfies the SDE \eqref{Dos15} on the whole set $\R_+$ and not only on intervals of the form $[T_{k,n}^+,T_{k+1,n}^-]$ by using the continuity of $\Gamma_E$ and the dominated convergence theorem for the stochastic integral. (See Theorem 2.12 in \cite{Revuz_Yor}.) 
Now, we want to apply Definition 7.1.1 of \cite{Oksendal} in order to say that $\Gamma_E$ is an Ito diffusion. However, Definition 7.1.1 in \cite{Oksendal} requires that the functions inside the SDE are globally lipschitz. Here, this is not true at first sight. Nevertheless, $|\Gamma_E|$ is bounded by $1$. Consequently, we can consider compactly supported functions which coincide with $\gamma\mapsto 1-\gamma^2, \gamma\mapsto(1+\gamma)^2, \gamma\mapsto(1-\gamma)^2$ and $\gamma\mapsto(1+\gamma)^2(1-\gamma)$ on the unit circle. Then, these compactly supported functions are lipschitz and we can consider a modified version of the SDE \eqref{Dos15} with lipschitz coefficients which is satisfied by $\Gamma_E$. Therefore, according to Definition 7.1.1 in \cite{Oksendal}, $\Gamma_E$ is a complex Itô diffusion. Now, recall that the stopping times $(T_k)_{k\geq 0}$ are the successive hitting times of $k\pi$ by $\tilde{\theta}_E$. This implies that the stopping times $(T_k)_{k\geq 0}$ are the successive hitting times of $\infty$ by $\zeta_E$, that is, the successive hitting times of $1$ by $\Gamma_E$. As $\Gamma_E$ is an Itô diffusion, by Theorem 7.2.4 in \cite{Oksendal} it satisfies the strong Markov property. Then, by classical arguments, this implies that $(T_{k+1}-T_k)_{k\in\N}$ is an i.i.d sequence of random variables. Therefore
\begin{align}
\left(\left\lfloor\frac{\tilde{\theta}_E(t)}{\pi}\right\rfloor\right)_{t\geq 0}=\left(\#\{k\in\N^*, T_k\leq t\}\right)_{t\geq 0}=:(R_t)_{t\geq 0}\label{Dos155}
\end{align}
where $(R_t)_{t\geq 0}$ is a renewal process whose $k$-th interarrival time is $T_{k}-T_{k-1}$ for every $k\in\N^*$. One can refer to chapter 10 in \cite{Gristir} for more information on renewal processes.
 Combining \eqref{Dos155} with \eqref{Dos8} implies that for every $E>0$,
\begin{align}
\frac{N_{\lambda}(E)}{2\lambda}=\frac{R_{2\lambda}}{2\lambda}+o^{\P}_{\lambda}(1)\label{Dos16}
\end{align}
where $o^{\P}_{\lambda}(1)$ is a random variable which goes to 0 in probability when $\lambda$ goes to infinity.
Therefore, by Theorem 10.2(1) and \eqref{Dos16}, we know that
\begin{align*}
\frac{N_{\lambda}(E)}{2\lambda}\xrightarrow[\lambda\rightarrow+\infty]{\P}\frac{1}{\E\left[T_1\right]}.
\end{align*}
Finally, we only have to compute explicitely $\E\left[T_1\right]$.\\
\textbf{Step 4: Computation of $\E\left[T_1\right]$.} 
Let $n\in\N^*$. By \eqref{Dos11}, on the interval $[T_{0,n}^+,T_{1,n}^-]$, recall that $\zeta_E$ is a solution of the SDE
\begin{align}
d\zeta_E(t)=-2\zeta_E(t)dB_t+(\zeta_E(t)^2+3\zeta_E(t)+E)dt.\nonumber
\end{align}
As before, the generator associated with this SDE is given by
$$\mathcal{L}f=2z^2f''(z)+(z^2+3z+E)f'(z)$$
for any function $f$ which is enough regular.
Now, for every $z\in\R_-$, we define as before
$$f^*(z)=f_-(z)=-\int_z^0\frac{e^{-u/2+E/(2u)}}{2|u|^{3/2}}\int_{-\infty}^u\frac{1}{|t|^{1/2}}e^{t/2-E/(2t)}dtdu.$$
Moreover, for any $z\in\R_+$, let us define 
$$f^*(z):=\int_0^z\frac{e^{-u/2+E/(2u)}}{2u^{3/2}}\int_0^u\frac{1}{\sqrt{t}}e^{t/2-E/(2t)}dtdu.$$
This is not difficult to check that $f^*$ is well-defined, bounded, smooth everywhere, excepted at $0$ where it may be only $C^1$. Moreover, $\mathcal{L}f^*=1$ on $\R^*$. Consequently, conditionally on $\mathcal{F}_{T_{0,n}^+}$,
$$\left(f^*\left(\zeta_E((T_{0,n}^++t)\wedge T_{1,n}^-)\right)-(T_{0,n}^++t)\wedge T_{1,n}^-\right)_{t\geq 0}$$
is a martingale with respect to the filtration $\left(\mathcal{F}_{T_{0,n}^++t}\right)_{t\geq 0}$.
In particular, for every $n\in\N^*$ and for every $t\geq 0$,
\begin{align}
\E\left[f^*\left(\zeta_E((T_{0,n}^++t)\wedge T_{1,n}^-)\right)\right]-\E\left[f^*\left(\zeta_E(T_{0,n}^+)\right)\right]&=\E\left[(T_{0,n}^++t)\wedge T_{1,n}^- \right]-\E\left[T_{0,n}^+ \right].\label{Dos17}
\end{align}
By monotone convergence and dominated convergence theorem (recall that $f^*$ is bounded), we can make $t$ go to infinity in \eqref{Dos17}. This yields for every $n\in\N^*$
\begin{align}
\E\left[f^*\left(\zeta_E(T_{1,n}^-) \right)\right]-\E\left[f^*\left(\zeta_E(T_{0,n}^+)\right)\right]&=\E\left[T_{1,n}^- \right]-\E\left[T_{0,n}^+ \right].\label{Dos18}
\end{align}
Then, by monotone convergence and by dominated convergence again, we can make $n$ go to infinity in \eqref{Dos18} which yields
\begin{align*}
\int_{-\infty}^0\frac{e^{-u/2+E/(2u)}}{2|u|^{3/2}}\int_{-\infty}^u\frac{1}{|t|^{1/2}}e^{t/2-E/(2t)}dtdu+\int_{0}^{\infty}\frac{e^{-u/2+E/(2u)}}{2u^{3/2}}\int_0^u\frac{1}{\sqrt{t}}e^{t/2-E/(2t)}dtdu&\\
&\hspace{-14 cm}=\underset{n\rightarrow+\infty}\lim\hspace{0.1 cm}\E\left[f^*\left(\zeta_E((T_{1,n}^-) \right)\right]-\E\left[f^*\left(\zeta_E(T_{0,n}^+)\right)\right]\\
&\hspace{-14 cm}=\underset{n\rightarrow+\infty}\lim\hspace{0.1 cm}\E\left[T_{1,n}^- \right]-\E\left[T_{0,n}^+ \right]\\
&\hspace{-14 cm}=\E\left[T_1\right].
\end{align*}
Moreover, by using the change of variable $(u',t')=(\pm1/u,\pm1/t)$, then by using Fubini's theorem and finally by making a simple translation change of variables, one can check that for every $E>0$,
\begin{align}
\int_{-\infty}^0\frac{e^{-u/2+E/(2u)}}{2|u|^{3/2}}\int_{-\infty}^u\frac{1}{|t|^{1/2}}e^{t/2-E/(2t)}dtdu+\int_{0}^{\infty}\frac{e^{-u/2+E/(2u)}}{2u^{3/2}}\int_0^u\frac{1}{\sqrt{t}}e^{t/2-E/(2t)}dtdu&\nonumber\\
&\hspace{-14 cm}=\frac{1}{2}\int_0^{+\infty}\int_{0}^{+\infty}\frac{e^{-\frac{t}{2u(t+u)}-Et/2}}{\sqrt{u}\sqrt{t+u}}\times\frac{2u+t}{u(t+u)} dudt.\label{Dos19}
\end{align}
Now, we are going to compute explicitely the integral of \eqref{Dos19}. Let us make the change of variables 
$$t=\frac{s}{E},\hspace{0.4 cm} \frac{t}{u(t+u)}=\frac{s}{v^2}$$
which maps $(\R_{+}^*)^2$ onto itself.
This is equivalent to
$$(t,u):=\phi(v,s)=\left(\frac{s}{E},\frac{1}{2E}\left(-s+\sqrt{s^2+4Ev^2} \right)\right).$$
We can compute the Jacobian of $\phi$ which is 
$$Jac(\phi)=\frac{2v}{E\sqrt{s^2+4Ev^2}}.$$
Moreover, we can remark that
$$u(u+t)=\frac{v^2}{E}\text{ and }2u+t=\frac{1}{E}\sqrt{s^2+4Ev^2}.$$
Therefore,
\begin{align*}
\frac{1}{2}\int_0^{+\infty}\int_{0}^{+\infty}\frac{e^{-\frac{t}{2u(t+u)}-Et/2}}{\sqrt{u}\sqrt{t+u}}\times\frac{2u+t}{u(t+u)}dudt&\nonumber\\
&\hspace{-7 cm}=\frac{1}{2}\int_0^{+\infty}\int_0^{+\infty}e^{-s/(2v^2)-s/2}\frac{1}{(v^2/E)^{3/2}}\times\frac{1}{E}\sqrt{s^2+4Ev^2}\times \frac{1}{E}\frac{2v}{\sqrt{s^2+4Ev^2}}ds dv\nonumber\\
&\hspace{-7 cm}=\frac{1}{\sqrt{E}}\int_0^{+\infty}\int_0^{+\infty}\frac{e^{-s/(2v^2)-s/2}}{v^2}ds dv\\
&\hspace{-7 cm}=\frac{1}{\sqrt{E}}\int_0^{+\infty}\frac{2}{1+v^2}dv\\
&\hspace{-7 cm}=\frac{\pi}{\sqrt{E}}.
\end{align*}
This completes the proof of Theorem \ref{DOS}.
\end{proof}
\begin{flushleft}
Now, let us prove Lemma \ref{lem:stoppingtime}.
\end{flushleft}
\begin{proof}[Proof of lemma \ref{lem:stoppingtime}.]
Let $n\in\N^*$. By \eqref{eqdos1},
\begin{align}
\tilde{\theta}_E(T_{0,n}^+)-\tilde{\theta}_E(0)\geq\frac{1}{M_{-\lambda}^2}\int_0^{T_{0,n}^+}\frac{\cos(\tilde{\theta}_E(s))^2}{\tilde{M}_s^2}ds.\nonumber
\end{align}
Therefore,
\begin{align}
\frac{M_{-\lambda}^2}{n}&\geq\cos(1/n)^2\int_0^{T_{0,n}^+}e^{-2\tilde{B}_s+s}ds\nonumber\\
&\geq \cos(1/n)^2\underset{s\in[0,+\infty]}\inf\hspace{0.1 cm}e^{-2\tilde{B}_s+s/2}\times2\left(e^{T_{0,n}^+/2}-1 \right).\label{eqlem1}
\end{align}
Then, \eqref{eqlem1} and the independence between $M_{-\lambda}$ and $\tilde{B}$ yield
\begin{align}
\E\left[\left(e^{T_{0,n}^+/2}-1\right)^{1/8}\right]&\leq \frac{\E\left[M_{-\lambda}^{1/4}\right]}{(2n)^{1/8}\cos(1/n)^{1/4}}\E\left[\underset{s\geq 0}\sup \hspace{0.1 cm}e^{\tilde{B}_s/4-s/16} \right]\nonumber\\
&=\frac{\E\left[M_{-\lambda}^{1/4}\right]}{(2n)^{1/8}\cos(1/n)^{1/4}}\E\left[\underset{s\geq 0}\sup \hspace{0.1 cm}e^{\tilde{B}_s-s} \right]\label{eqlem2}
\end{align}
where in the equality, we used a change of time for the Brownian motion. Then, by Girsanov's theorem (see Theorem VIII.1.7 in \cite{Revuz_Yor}), it holds that for every $t\geq 0$,
\begin{align}
\E\left[\underset{s\in[0,t]}\sup \hspace{0.1 cm}e^{\tilde{B}_s-s} \right]&=e^{-t/2}\E\left[e^{ \underset{s\in [0,t]}\sup\hspace{0.1 cm} \tilde{B}_s-\tilde{B}_t}\right]\nonumber\\
&=e^{-t/2}\E\left[e^{|\tilde{B}_t|} \right]\nonumber\\
&\leq e^{-t/2}\E\left[e^{B_t}+e^{-B_t} \right]\nonumber\\
&\leq 2\label{eqlem3}
\end{align}
where in the second equality we used Theorem 2.23 in \cite{MoPer}. The combination of \eqref{eqlem2} and \eqref{eqlem3} concludes the proof of Lemma \ref{lem:stoppingtime}.
\end{proof}
\section{Acknowledgements}
We would like to thank the ANR grant LOCAL ANR-22-CE40-0012-02. We also acknowledge Cyril Labbé which made some very interesting remarks regarding some preliminary version of this paper. His pieces of advice enabled us to improve our paper significantly.
\bibliographystyle{alpha}
\bibliography{Bibli}

\begin{thebibliography}{DRRMZ22}

\bibitem[BBO05]{PBO1}
P.~Biane, P.~Bougerol, and N.~O'Connell.
\newblock Littelmann paths and {B}rownian paths.
\newblock {\em Duke Math. J.}, 130(1):127--167, 2005.

\bibitem[BBO09]{PBO2}
P.~Biane, P.~Bougerol, and N.~O'Connell.
\newblock Continuous crystal and {D}uistermaat-{H}eckman measure for {C}oxeter
  groups.
\newblock {\em Adv. Math.}, 221(5):1522--1583, 2009.

\bibitem[CL55]{codlev}
E.~A. Coddington and N.~Levinson.
\newblock {\em Theory of ordinary differential equations}.
\newblock McGraw-Hill Book Co., Inc., New York-Toronto-London, 1955.

\bibitem[DFZ92]{Zirnbauerorigins1}
W.~Drunk, D.~Fuchs, and M.~R. Zirnbauer.
\newblock Migdal-kadanoff renormalization of a nonlinear supervector model with
  hyperbolic symmetry.
\newblock {\em Annalen der Physik}, 504(2):134--150, 1992.

\bibitem[DL20]{Dumazlabbe}
L.~Dumaz and C.~Labb\'{e}.
\newblock Localization of the continuous {A}nderson {H}amiltonian in 1-{D}.
\newblock {\em Probab. Theory Related Fields}, 176(1-2):353--419, 2020.

\bibitem[DRRMZ22]{DOS}
M.~Disertori, V.~Rapenne, C.~Rojas-Molina, and X.~Zeng.
\newblock Phase transition in the integrated density of states of the anderson
  model arising from a supersymmetric sigma model, 2022.

\bibitem[DS10]{Disp}
M.~Disertori and T.~Spencer.
\newblock Anderson localization for a supersymmetric sigma model.
\newblock {\em Commun. Math. Phys.}, 300(3):659--671, 2010.

\bibitem[DSZ10]{DSZ}
M.~Disertori, T.~Spencer, and M.~R. Zirnbauer.
\newblock {Quasi-diffusion in a 3D Supersymmetric Hyperbolic Sigma Model}.
\newblock {\em {Communications in Mathematical Physics}}, 300(2):435--486,
  2010.

\bibitem[Duf90]{Duf}
D.~Dufresne.
\newblock The distribution of a perpetuity, with applications to risk theory
  and pension funding.
\newblock {\em Scand. Actuar. J.}, pages 39--79, 1990.

\bibitem[EK86]{EK}
S.~N. Ethier and T.~G. Kurtz.
\newblock {\em Markov processes}.
\newblock Wiley Series in Probability and Mathematical Statistics: Probability
  and Mathematical Statistics. John Wiley \& Sons, Inc., New York, 1986.
\newblock Characterization and convergence.

\bibitem[Eve82]{Everitt1982}
W.~N. Everitt.
\newblock On the transformation theory of ordinary second-order linear
  symmetric differential expressions.
\newblock {\em Czechoslovak Mathematical Journal}, 32(2):275--306, 1982.

\bibitem[FL60]{frilo}
H.~L. Frisch and S.~P. Lloyd.
\newblock Electron levels in a one-dimensional random lattice.
\newblock {\em Phys. Rev., II. Ser.}, 120:1175--1189, 1960.

\bibitem[FN77]{Funak}
M.~Fukushima and S.~Nakao.
\newblock On spectra of the {S}chr\"{o}dinger operator with a white {G}aussian
  noise potential.
\newblock {\em Z. Wahrscheinlichkeitstheorie und Verw. Gebiete},
  37(3):267--274, 1977.

\bibitem[Ger20]{TG}
T.~Gerard.
\newblock Representations of the vertex reinforced jump process as a mixture of
  {Markov} processes on {{\(\mathbb{Z}^d\)}} and infinite trees.
\newblock {\em Electron. J. Probab.}, 25:45, 2020.
\newblock Id/No 108.

\bibitem[GRSZ23]{GRSZ}
T.~Gérard, V.~Rapenne, C.~Sabot, and X.~Zeng.
\newblock A multi-dimensional version of lamperti's relation and the
  matsumoto-yor processes, 2023.

\bibitem[GS01]{Gristir}
G.~R. Grimmett and D.~R. Stirzaker.
\newblock {\em Probability and random processes}.
\newblock Oxford University Press, New York, third edition, 2001.

\bibitem[Hal65]{Halperin}
B.~I. Halperin.
\newblock Green's functions for a particle in a one-dimensional random
  potential.
\newblock {\em Phys. Rev.}, 139:A104--A117, 1965.

\bibitem[kse03]{Oksendal}
B.~\O ksendal.
\newblock {\em Stochastic differential equations}.
\newblock Universitext. Springer-Verlag, Berlin, sixth edition, 2003.
\newblock An introduction with applications.

\bibitem[Lab19]{Labbe}
C.~Labb\'{e}.
\newblock The continuous {A}nderson {H}amiltonian in {$d\leq 3$}.
\newblock {\em J. Funct. Anal.}, 277(9):3187--3235, 2019.

\bibitem[LP16]{LP}
R.~Lyons and Y.~Peres.
\newblock {\em Probability on Trees and Networks}, volume~42 of {\em Cambridge
  Series in Statistical and Probabilistic Mathematics}.
\newblock Cambridge University Press, New York, 2016.

\bibitem[LST20]{LST}
T.~Lupu, C.~Sabot, and P.~Tarr\`es.
\newblock Fine mesh limit of the {VRJP} in dimension one and {B}ass-{B}urdzy
  flow.
\newblock {\em Probab. Theory Related Fields}, 177(1-2):55--90, 2020.

\bibitem[LW20]{Letac}
G.~Letac and J.~Wesoowski.
\newblock Multivariate reciprocal inverse {G}aussian distributions from the
  {S}abot-{T}arr\`es-{Z}eng integral.
\newblock {\em J. Multivariate Anal.}, 175:104559, 18, 2020.

\bibitem[MP10]{MoPer}
P.~M{\"o}rters and Y.~Peres.
\newblock {\em Brownian motion. {With} an appendix by {Oded} {Schramm} and
  {Wendelin} {Werner}}, volume~30 of {\em Camb. Ser. Stat. Probab. Math.}
\newblock Cambridge University Press, 2010.

\bibitem[MSH76]{wiki}
J.~R. Michael, W.~R. Schucany, and R.~W. Haas.
\newblock Generating random variates using transformations with multiple roots.
\newblock {\em The American Statistician}, 30(2):88--90, 1976.

\bibitem[MY00]{matyorpit1}
H.~Matsumoto and M.~Yor.
\newblock An analogue of {P}itman's {$2M-X$} theorem for exponential {W}iener
  functionals. {I}. {A} time-inversion approach.
\newblock {\em Nagoya Math. J.}, 159:125--166, 2000.

\bibitem[MY01]{matyorpit}
H.~Matsumoto and M.~Yor.
\newblock An analogue of {P}itman's {$2M-X$} theorem for exponential {W}iener
  functionals. {II}. {T}he role of the generalized inverse {G}aussian laws.
\newblock {\em Nagoya Math. J.}, 162:65--86, 2001.

\bibitem[OY01]{ocoyor}
N.~O'Connell and M.~Yor.
\newblock Brownian analogues of {B}urke's theorem.
\newblock {\em Stochastic Process. Appl.}, 96(2):285--304, 2001.

\bibitem[Pla86]{Plaksina}
O.~A. Plaksina.
\newblock Inverse problems of spectral analysis for the {S}turm-{L}iouville
  operators with nonseparated boundary conditions.
\newblock {\em Mat. Sb. (N.S.)}, 131(173)(1):3--26, 126, 1986.

\bibitem[Pou19]{Poudevigne}
R.~Poudevigne.
\newblock Monotonicity and phase transition for the vrjp and the errw.
  preprint., 2019.

\bibitem[Rap22]{rapmart}
V.~Rapenne.
\newblock About the asymptotic behaviour of the martingale associated with the
  vertex reinforced jump process on trees and $\mathbb{Z}^d$. preprint., 2022.

\bibitem[RS80]{reedsimon1}
M.~Reed and B.~Simon.
\newblock Methods of modern mathematical physics. {I}: {Functional} analysis.
  {Rev}. and enl. ed.
\newblock New {York} etc.: {Academic} {Press}, {A} {Subsidiary} of {Harcourt}
  {Brace} {Jovanovich}., 1980.

\bibitem[RY98]{Revuz_Yor}
D.~Revuz and M.~Yor.
\newblock {\em Continuous martingales and Brownian motion}.
\newblock Grundlehren der mathematischen Wissenschaften. Springer, 3rd edition,
  1998.

\bibitem[ST15]{sabot_tarres}
C.~{Sabot} and P.~{Tarr\`es}.
\newblock {Edge-reinforced random walk, vertex-reinforced jump process and the
  supersymmetric hyperbolic sigma model}.
\newblock {\em {J. Eur. Math. Soc. (JEMS)}}, 17(9):2353--2378, 2015.

\bibitem[STZ17]{SZT}
C.~Sabot, P.~Tarrès, and X.~Zeng.
\newblock The vertex reinforced jump process and a random schrödinger operator
  on finite graphs.
\newblock {\em The Annals of Probability}, 45:3967--3986, 2017.

\bibitem[SZ19]{sabotzeng}
C.~Sabot and X.~Zeng.
\newblock {A random Schr\"odinger operator associated with the vertex
  reinforced jump process on infinite graphs}.
\newblock {\em {J. Am. Math. Soc.}}, 32(2):311--349, 2019.

\bibitem[Tes14]{Tes}
G.~Teschl.
\newblock {\em Mathematical methods in quantum mechanics. {With} applications
  to {Schr{\"o}dinger} operators.}, volume 157 of {\em Grad. Stud. Math.}
\newblock Providence, RI: American Mathematical Society (AMS), 2nd ed. edition,
  2014.

\bibitem[Yor92]{yorexpo}
M.~Yor.
\newblock Sur certaines fonctionnelles exponentielles du mouvement brownien
  r\'{e}el.
\newblock {\em J. Appl. Probab.}, 29(1):202--208, 1992.

\bibitem[Zir91]{Zirnbauerorigins2}
M.~R. Zirnbauer.
\newblock Fourier analysis on a hyperbolic supermanifold with constant
  curvature.
\newblock {\em Comm. Math. Phys.}, 141(3):503--522, 1991.

\end{thebibliography}
\end{document}